\newcommand\compareStringsDo[5]{%
  \lowercase{\ifcase\pdf@strcmp{#1}{#2}}%
    #4\or
    #5\else
    #3\fi
}
\newtheoremstyle{thmstyle}
  {\medskipamount}
  {\smallskipamount}
  {\slshape}
  {0pt}
  {\bfseries}
  {.}
  { }
  {\thmname{#1}\thmnumber{ #2}{\normalfont\thmnote{ (#3)}}}
\newtheoremstyle{plainstyle}
  {\medskipamount}
  {\smallskipamount}
  {\rmfamily}
  {0pt}
  {\bfseries}
  {.}
  { }
  {\thmname{#1}\thmnumber{ #2}{\normalfont\thmnote{ (#3)}}}
\theoremstyle{thmstyle}
\newtheorem{theorem}{Theorem}[section]
\newtheorem{lemma}[theorem]{Lemma}
\newtheorem{proposition}[theorem]{Proposition}
\theoremstyle{plainstyle}
\newtheorem{definition}[theorem]{Definition}
\newtheorem{remark}[theorem]{Remark}
\newtheorem{discussion}[theorem]{Discussion}
\newtheorem{example}[theorem]{Example}
\newenvironment{proofof}[1]{\begin{proof}[Proof of #1.]}{\end{proof}}
\setlist[enumerate]{label={\roman*.}, ref={(\roman*)}}
\newcommand{\rn}{\bm}
\newcommand{\df}{\stackrel{\text{def}}{=}}
\newcommand{\place}{\mathord{-}}
\def\symdiff{\mathbin{\triangle}}
\newcommand{\comp}{\mathbin{\circ}}
\newcommand{\rest}{\mathord{\vert}}
\newcommand{\floor}[1]{\ensuremath{\left\lfloor#1\right\rfloor}}
\newcommand{\cat}{\textsc}
\newcommand{\leqLL}{\leq_{\operatorname{LL}}}
\newcommand{\lessLL}{<_{\operatorname{LL}}}
\newcommand{\function}[2]{\colon #1 \rightarrow #2}
\newcommand{\injection}[2]{\colon #1 \rightarrowtail #2}
\newcommand{\interpret}[2]{\colon #1 \leadsto #2}
\newcommand{\tot}{\leftrightarrow}
\DeclareMathOperator{\Hom}{Hom}
\DeclareMathOperator{\Aut}{Aut}
\DeclareMathOperator{\im}{im}
\newcommand{\HomT}[1]{\Hom^+(\cA[#1],\RR)}
\def\Tind{T_{\operatorname{ind}}}
\def\tind{t_{\operatorname{ind}}}
\def\Dopen{D_{\operatorname{open}}}
\newcommand{\NN}{\mathbb{N}}
\newcommand{\RR}{\mathbb{R}}
\newcommand{\One}{\mathbbm{1}}
\newcommand{\cA}{\mathcal{A}}
\newcommand{\cC}{\mathcal{C}}
\newcommand{\cD}{\mathcal{D}}
\newcommand{\cE}{\mathcal{E}}
\newcommand{\cF}{\mathcal{F}}
\newcommand{\cG}{\mathcal{G}}
\newcommand{\cH}{\mathcal{H}}
\newcommand{\cK}{\mathcal{K}}
\newcommand{\cL}{\mathcal{L}}
\newcommand{\cM}{\mathcal{M}}
\newcommand{\cN}{\mathcal{N}}
\newcommand{\cP}{\mathcal{P}}
\def\prop{\texttt}
\def\UInduce{\prop{UInduce}}
\def\AEHP{\prop{AEHP}}
\def\EHP{\prop{EHP}}
\newcommand{\TGraph}{T_{\operatorname{Graph}}}
\newcommand{\TLinOrder}{T_{\operatorname{LinOrder}}}
\newcommand{\TPerm}{T_{\operatorname{Perm}}}
\DeclareMathOperator{\Th}{Th}
\DeclareMathOperator{\Forb}{Forb}
\def\Erdos{Erd\H{o}s}
\def\Los{\L o\'{s}}
\def\Dolezal{Dole\v{z}al}
\def\Grebik{Gre\'{i}k}
\def\Hladky{Hladk\'{y}}
\def\Rozhon{Rozho\v{n}}
\def\Caratheodory{Carath\'{e}odory}
\def\Lovasz{Lov\'{a}sz}
\def\Szemeredi{Szemer\'{e}di}
\title{Countable Ramsey}
\author{Leonardo N.~Coregliano\thanks{Institute for Advanced Study, {\tt lenacore@ias.edu}. This
    material is based upon work supported by a grant from the Institute for Advanced Study School of
    Mathematics.} \and%
  Maryanthe Malliaris\thanks{University of Chicago, {\tt mem@math.uchicago.edu}. Research partially
    supported by NSF-BSF 2051825.}%
}
\date{\today}
\begin{document}
\maketitle

\begin{abstract}
  \footnotesize
  The celebrated \Erdos--Hajnal Conjecture says that in any proper hereditary class of finite graphs
  we are guaranteed to have a clique or anti-clique of size $n^c$, which is a much better bound than
  the logarithmic size that is provided by Ramsey's Theorem in general. On the other hand, in
  uncountable cardinalities, the model-theoretic property of stability guarantees a uniform set much
  larger than the bound provided by the \Erdos--Rado Theorem in general.

  Even though the consequences of stability in the finite have been much studied in the literature,
  the countable setting seems a priori quite different, namely, in the countably infinite the notion
  of largeness based on cardinality alone does not reveal any structure as Ramsey's Theorem already
  provides a countably infinite uniform set in general. In this paper, we show that the natural
  notion of largeness given by upper density reveals that these phenomena meet in the countable: a
  countable graph has an almost clique or anti-clique of positive upper density if and only if it
  has a positive upper density almost stable set. Moreover, this result also extends naturally to
  countable models of a universal theory in a finite relational language.

  Our methods explore a connection with the notion of convergence in the theory of limits of dense
  combinatorial objects, introducing and studying a natural approximate version of the
  \Erdos--Hajnal property that allows for a negligible error in the edges (in general, predicates)
  but requires linear-sized uniform sets in convergent sequences of models (this is much stronger
  than what stable regularity can provide as the error is required to go to zero). Finally,
  surprisingly, we completely characterize all hereditary classes of finite graphs that have this
  approximate \Erdos--Hajnal property. The proof highlights both differences and similarities with
  the original conjecture.
\end{abstract}

\section{Introduction}

The celebrated Ramsey's Theorem~\cite{Ram29} guarantees that sufficiently large structures have
uniform substructures. Without any extra restrictions, the size of the guaranteed uniform
substructure is typically tiny in comparison to the ambient structure: for example, to guarantee
a clique or independent set of size $n$ in a graph, its size must be exponential in $n$. The famous
\Erdos--Hajnal Conjecture~\cite{EH89} then asks if this bound can be improved to polynomial in $n$
if we restrict the problem to any hereditary (i.e., closed under induced subgraphs) proper subclass
of finite graphs. Several hereditary proper subclasses of finite graphs are known to satisfy the
\Erdos--Hajnal Conjecture (see~\cite{Chu14} for a survey).

In the uncountable, Ramsey type theorems also detect important differences between structures. This
is best stated in the language of logic, specifically set theory and model theory: in general, the
\Erdos--Rado Theorem~\cite{ER56} gives a tower bound on the size of a uniform subset, however, in
structures which are stable in the sense of model theory, see below, we can extract uniform subsets
of essentially the same size as the model.

However, in the case of countable structures balanced between these two, all structures seem to
behave in the same way since the infinite version of Ramsey's Theorem yields a uniform set of the
maximum possible cardinality of $\aleph_0$, so it is natural to ask if there is any version of
uniformity that would be able to distinguish between countable structures.

We propose an answer through probability, more specifically through the language of graph limits and
continuous combinatorics (see~\cite{Lov12} for the graph case and~\cite{Aus08,AC14,CR20a} for the
case of universal theories on finite relational languages). When considering universal theories of
graphs, we work with graphons, which are continuum-sized limits of convergent sequences of finite
graphs, in which relative sizes of sets of vertices are encoded by a probability measure. We start
by proving a dichotomy theorem for graphons (Theorem~\ref{thm:stablegraphon}): a graphon contains a
positive measure almost clique or a positive measure almost independent set if and only if it has an
almost stable positive measure subgraphon (see Definitions~\ref{def:subgraphon}
and~\ref{def:almoststablegraphon}). We also give examples to illustrate how the negative side works,
preventing any positive measure uniform sets for basic instances of instability (quasirandom graphs
and ``recursive'' half-graphs). We also generalize this theorem for arbitrary universal theories $T$
in finite relational languages (Theorem~\ref{thm:stabletheon}): a $T$-on (i.e., a limit of a
convergent sequence of finite models of $T$) has a positive measure ``uniform'' sub-object if and
only if it has a positive measure sub-object in which all predicate symbols are stable (see
Definition~\ref{def:almoststabletheon}).

Since graphons and theons are limits of convergent sequences, the dichotomy can be pushed down to
convergent sequences (Theorems~\ref{thm:stableconvseqgraph}) as: a convergent sequence of finite
graphs $(H_n)_{n\in\NN}$ has non-negligible (i.e., linear-sized) sets $U_n\subseteq V(H_n)$ such
that the edge density in $(H_n\rest_{U_n})_{n\in\NN}$ either converges to $1$ or to $0$, (i.e., it
is an almost clique or an almost independent set) if and only if there are non-negligible sets
$U'_n\subseteq V(H_n)$ that make the edge relation almost stable in the induced sequence
$(H_n\rest_{U'_n})_{n\in\NN}$. A similar theorem is also obtained for arbitrary universal theories
(Theorem~\ref{thm:stableconvsequniversal}).

For a countable graph (or model) $G$, this dichotomy (Theorems~\ref{thm:stablecountablegraph}
and~\ref{thm:stablecountablemodel}) takes the following form: there exists a set $U\subseteq\NN_+$
and an increasing sequence $(n_\ell)_{\ell\in\NN}$ of positive integers along which $(G\rest_{U\cap
  [n_\ell]})_{\ell\in\NN}$ is almost ``uniform'' and $U\cap [n_\ell]$ is non-negligible in
$[n_\ell]$ if and only if there exists a set $U'\subseteq\NN$ and an increasing sequence
$(n'_\ell)_{\ell\in\NN}$ of positive integers along which $(G\rest_{U'\cap [n'_\ell]})_{\ell\in\NN}$
is almost stable and $U'\cap [n'_\ell]$ is non-negligible in $[n'_\ell]$.

From a general point of view, the theorems above on convergent sequences may be understood as
characterizing existence of a linear-sized subset which is an almost clique or almost empty
graph. This characterization is in terms of almost-absence of a certain finite structure (but notice
that we remain agnostic about some of the edges; this is in some ways very satisfying as it
coincides with the major structural property mentioned above, stability, in model theory). Focusing
on the case of graphs, by analogy to the usual \Erdos--Hajnal Conjecture, this begs the question:
does there exist a family of finite graphs (with no agnostic edges) whose absence precisely
characterizes the existence of a linear-sized almost clique or almost anti-clique in a convergent
sequence?

In the last, substantial section of the paper, Section~\ref{sec:AEHPgraphs}, we prove that the
answer is yes and provide a complete characterization (Theorem~\ref{thm:AEHPgraph}) of this family
as all induced subgraphs of some recursive blow-up of the $4$-cycle (see
Definition~\ref{def:C4omega} and Figure~\ref{fig:C4ell}). This doubly unexpected characterization
(its existence and the nature of the forbidden family were both a surprise) sheds a quite different
light on the proofs above.

Using the characterization above, we can show (Theorem~\ref{thm:AEHP->EHP}) that this approximate
\Erdos--Hajnal property ($\AEHP$) implies the usual \Erdos--Hajnal property ($\EHP$). Namely, if a
hereditary class of graphs $\cG$ is such that every convergent sequence of graphs in $\cG$ has a
linear-sized almost clique or almost anti-clique, then there exists a constant $c_\cG > 0$ such that
every graph of $\cG$ on $n$ vertices has a clique or anti-clique of size $n^{c_\cG}$. Our proof is
somewhat indirect and crucially relies on the characterization of $\AEHP$ and in the concluding
section, we ask if a more direct construction is possible.

These points are further explored in the text once the details of the proofs are available for
discussion and commentary.

We conclude the introduction with some comments on the model theoretic notion of stability, which has
been mentioned several times above. We emphasize that \emph{it is not necessary to be familiar with
  stability either to read or to appreciate the present paper}, but let us explain this remark. It
was very interesting to us to discover in the course of writing this paper that stability appears in
a characteristic sense in some of our main results, despite investigating a priori unrelated
questions. Stability is not only a key concept in Shelah's classification theory~\cite{She90} but
has had recent applications in finite combinatorics via the stable regularity lemma and stable
Ramsey's Theorem~\cite{MS14} (see also~\cite{AFP18,MS21}) and learning theory via Littlestone
dimension~\cite{ALMM19,BLM20}. Stability has several equivalent definitions in the language of model
theory. The one we will use is combinatorial and says essentially that no relation forms a
half-graph with respect to any partition of the variables (see
Definition~\ref{def:almoststabletheon} below), which may be thought of as ensuring a certain kind of
symmetry for relations. For model theorists, we note that our framework only requires working with
stability of specific formulas, not of all formulas, and we will note in the text where ideas from
stable regularity will play a role in guiding certain proofs. This said, because of the new context,
the present proofs require working by hand with the combinatorial definitions and building up
everything necessary from scratch. To the extent that stability appears, it is as a
characterization: its appearance is fully justified, so to speak, by the proofs in either direction.
Moreover, the interaction of convergence and stability in the present context seems to point to
something interesting and new about stability's effect, which was certainly not explored in the
usual infinitary context. As a result, not only is the paper self-contained in this respect, but
also readers encountering stability for the first time in the present context may bring new
understanding by taking these results as a starting point.

\medskip

The paper is organized as follows. In Section~\ref{sec:notation} we establish some basic notation
that will be used throughout the text. The remainder of the paper can be thematically divided into
three main parts.

\begin{itemize}
\item In the first part, Sections~\ref{sec:graphons} and~\ref{sec:consgraphs}, we study the
  interaction of large almost cliques or almost anti-cliques with large almost stable sets. More
  specifically, in Section~\ref{sec:graphons}, we prove the stability dichotomy theorem for
  graphons, Theorem~\ref{thm:stablegraphon}. In Section~\ref{sec:consgraphs}, we prove the finite
  and countable versions of the stability dichotomy theorem for graphs,
  Theorems~\ref{thm:stableconvseqgraph} and~\ref{thm:stablecountablegraph},
  respectively. Sections~\ref{sec:graphons} and~\ref{sec:consgraphs} also contain a gentle
  introduction to the concepts of the theory of graphons needed for the results.

\item In the second part, Sections~\ref{sec:universaltheories} and~\ref{sec:consmodels}, we extend
  the stability results of the first part to universal theories in finite relational languages: in
  Section~\ref{sec:universaltheories} we prove the stability dichotomy theorem for arbitrary
  universal theories, Theorem~\ref{thm:stabletheon} and in Section~\ref{sec:consmodels} we prove its
  finite and countable versions, Theorems~\ref{thm:stableconvsequniversal}
  and~\ref{thm:stablecountablemodel}, respectively. These sections also contain a gentle
  introduction to the concepts of the theory of theons needed for the results.

\item In the third part, we revisit the characterization of the first part, in the pure language of
  hereditary classes. More specifically, in Section~\ref{sec:AEHP} we define and prove basic
  properties of the approximate \Erdos--Hajnal property ($\AEHP$), an analogue of the usual
  \Erdos--Hajnal property requiring linear-sized almost uniform sets in convergent sequences of
  models (rather than polynomial-sized uniform sets in a single model). In
  Section~\ref{sec:AEHPgraphs} we characterize universal theories of graphs with $\AEHP$ as
  precisely the theories that forbid some induced subgraph of some recursive blow-up of the
  $4$-cycle (Theorem~\ref{thm:AEHPgraph}) and show that the approximate \Erdos--Hajnal property
  implies the usual \Erdos--Hajnal property for graphs (Theorem~\ref{thm:AEHP->EHP}).
\end{itemize}

Although the parts can probably be read in any order (provided the readers are willing to assume the
referenced results from previous sections), reading in the given order may provide the most insight
into the interconnected aspects of the emerging theory.

We conclude in Section~\ref{sec:concl} with remarks and open questions.

\section{Some notation}
\label{sec:notation}

Throughout the text, we will use the notation $\NN\df\{0,1,\ldots\}$ for the non-negative integers
and $\NN_+\df\NN\setminus\{0\}$ for the positive integers. We also let $[n]\df\{1,\ldots,n\}$ and
$(n)_m\df n(n-1)\cdots(n-m+1)$. The usage of the arrow $\rightarrowtail$ for a function will always
presume the function to be injective.  We let $2^V\df\{A\subseteq V\}$ be the set of all the subsets
of $V$, let $\binom{V}{\ell}\df\{A\subseteq V\mid\lvert A\rvert=\ell\}$. We also let $r(V)$ be the
set of all \emph{finite non-empty} subsets of $V$ and $r(V,\ell)\df\{A\in r(V)\mid\lvert A\rvert\leq
\ell\}$ be the set of all non-empty subsets of $V$ of size at most $\ell$. Given a function
$\alpha\function{V}{W}$, we may use the notation $\alpha_v$ for $\alpha(v)$ when convenient.

Given an injection $\alpha\injection{U}{V}$ and a set $X$, we let $\alpha^*\function{X^V}{X^U}$ be
the contra-variantly defined ``projection'' given by $\alpha^*(x)_u = x_{\alpha(u)}$. We will be
using these ``projections'' both in the situation where we are interested in the coordinates of some
point $x\in X^V$ that are indexed by elements of $\im(\alpha)$ and since $\alpha\injection{U}{V}$
induces an injection $\alpha\injection{r(U)}{r(V)}$ (denoted by abuse with the same letter), this in
turn gives the projection $\alpha^*\function{X^{r(V)}}{X^{r(U)}}$ that allows us to inspect
coordinates of $x\in X^{r(V)}$ that are indexed by non-empty subsets of $\im(\alpha)\subseteq V$.

We will be frequently abusing notation by identifying $[n]$ with $n$, e.g., we will use $r(n,\ell)$
as a shorthand for $r([n],\ell)$. Random variables will always be typed in
$\rn{math\ bold\ face}$. We denote by $S_V$ the group of bijections $V\rightarrowtail V$ so that
$S_n$ is the group of permutations on $n$ elements.

We denote the complete graph on $n$ vertices by $K_n$ and the empty graph on $n$ vertices by
$\overline{K}_n$. We use the terms ``anti-clique'', ``empty graph'' and ``independent set''
interchangeably.

\section{Almost cliques or anti-cliques in graphons}
\label{sec:graphons}

In this section, we state and prove the stability dichotomy theorem for graphons,
Theorem~\ref{thm:stablegraphon}. Along the way, we give a gentle introduction to the concepts of
graphon theory that we will be using (we refer the reader to~\cite{Lov12} for a more thorough
introduction to the theory). Let us also remark that some of the techniques of limit theory can be
traced back to way before the development of graphons at least as far as~\cite{DF81}.

Given finite graphs $G$ and $H$, let $\Tind(G,H)$ be the set of all graph embeddings of $G$ in $H$
(i.e., injective functions $f\injection{V(G)}{V(H)}$ that preserve edges and non-edges) and let
\begin{align*}
  \tind(G,H)
  & \df
  \frac{\lvert\Tind(G,H)\rvert}{(\lvert H\rvert)_{\lvert G\rvert}}
\end{align*}
be the normalized number of embeddings of $G$ and $H$; this is sometimes called the \emph{labeled
  (induced) density} of $G$ in $H$. The ``labeled'' here is to differentiate from the
\emph{(induced) density} of $G$ in $H$, which is the normalized number of induced subgraphs of $H$
that are isomorphic to $G$ given by
\begin{align}
  p(G,H)
  & \df
  \frac{\lvert\{U\subseteq V(H) \mid G\rest_U\cong H\}\rvert}{\binom{\lvert H\rvert}{\lvert G\rvert}}
  =
  \frac{\lvert G\rvert!}{\lvert\Aut(G)\rvert}\cdot\tind(G,H),
  \label{eq:ptind}
\end{align}
where $\Aut(G)$ is the group of automorphisms of $G$. We denote by $\rho$ the edge graph, so that
$p(\rho,H)=\tind(\rho,H)$ denotes the edge density of $H$.

A sequence $(H_n)_{n\in\NN}$ of finite graphs is called \emph{convergent} if it is \emph{increasing}
in the sense that for every $n\in\NN$, $\lvert H_n\rvert < \lvert H_{n+1}\rvert$ and if for every
finite graph $G$, the limit $\lim_{n\to\infty} p(G,H_n)$ exists. An alternative way of seeing
convergence is that each graph $H$ defines a point $p(\place,H)\in [0,1]^{\cM}$, where $\cM$ is the
set of all finite graphs up to isomorphism and convergence of an increasing sequence
$(H_n)_{n\in\NN}$ is simply convergence of the corresponding sequence $(p(\place,H_n))_{n\in\NN}$
with respect to the product topology of $[0,1]^{\cM}$. Since $\cM$ is countable, $[0,1]^{\cM}$ is
metrizable and since it is compact, it follows that any increasing sequence of finite graphs has a
convergent subsequence.

The main point of graphon theory is that convergent sequences can be encoded by a \emph{geometric}
limit object in which limits of labeled densities can be naturally computed. Formally, given an
atomless standard probability space $\Omega=(X,\cA,\mu)$, a \emph{graphon} over $\Omega$ is a
function $W\function{X\times X}{[0,1]}$ that is symmetric and is measurable with respect to the
completion of the product $\sigma$-algebra $\cA\otimes\cA$ with respect to the product measure
$\mu\otimes\mu$. Typically, we take $\Omega$ as $[0,1]$ equipped with the Lebesgue measure $\lambda$
over Borel sets, in which case we say ``graphon over $[0,1]$'' or simply ``graphon'' (which is
simply a symmetric Lebesgue measurable function $[0,1]^2\to[0,1]$). The intuition is that a graphon
over $\Omega=(X,\cA,\mu)$ is a graph with vertex set $X$ in which edges can have fractional values
and $W(x,y)$ should be interpreted as the ``probability'' that $x$ and $y$ are adjacent. With this
intuition in mind, the \emph{labeled (induced) density} of a graph $G$ in a graphon $W$ is naturally
defined as
\begin{align}\label{eq:tindGW}
  \tind(G,W)
  & \df
  \int_{X^{V(G)}}
  \prod_{\{v,w\}\in E(G)} W(x_v,x_w)
  \prod_{\{v,w\}\in E(\overline{G})} (1 - W(x_v,x_w))
  \ d\mu(x),
\end{align}
where $E(G)\df\{\{v,w\}\in\binom{V(G)}{2} \mid G\vDash E(v,w)\}$ is the edge set of $G$ and
$\overline{G}$ is the complement graph of $G$. We also define $p(G,W)\df\lvert
G\rvert!\cdot\tind(G,W)/\lvert\Aut(G)\rvert$ in analogy with~\eqref{eq:ptind} and we let $\phi_W\df
p(\place,W)$. We say that $W$ is a limit of a convergent sequence $(H_n)_{n\in\NN}$ if
$\lim_{n\to\infty} p(G,H_n) = \phi_W(G)$ for every finite graph $G$.

The following theorem, sometimes referred to as Existence Theorem for graphons, is the main theorem of graphon
theory.
\begin{theorem}[Lov\'{a}sz--Szegedy~\protect{\cite{LS06}}]
  Let $\Omega$ be an atomless standard probability space. If $(H_n)_{n\in\NN}$ is a convergent
  sequence of graphs, then there exists a graphon $W$ over $\Omega$ that is a limit of
  $(H_n)_{n\in\NN}$. Conversely, every graphon is a limit of a convergent sequence of graphs.
\end{theorem}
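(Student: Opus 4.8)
The plan is to first reduce to the case $\Omega=([0,1],\lambda)$ and then treat the two directions separately. For the reduction, I would invoke the isomorphism theorem for atomless standard probability spaces: any such $\Omega=(X,\cA,\mu)$ admits a measure isomorphism $T$ (mod null sets) with $([0,1],\lambda)$, and if $W$ is a graphon over $[0,1]$ then $W'(x,y)\df W(Tx,Ty)$ is a graphon over $\Omega$ with $\tind(G,W')=\tind(G,W)$ for every finite graph $G$ by the change-of-variables formula. Thus $W'$ is a limit of $(H_n)_{n\in\NN}$ exactly when $W$ is, and it suffices to prove both statements over $[0,1]$.

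For the converse direction (every graphon is a limit of a convergent sequence), given a graphon $W$ over $[0,1]$ I would form the \emph{$W$-random graph} $\rn{G}(n,W)$ on vertex set $[n]$: sample $\rn{x}_1,\dots,\rn{x}_n$ i.i.d.\ uniformly in $[0,1]$, and conditionally on these include each edge $\{i,j\}$ independently with probability $W(\rn{x}_i,\rn{x}_j)$. A short computation using injectivity of embeddings and independence of the samples shows $\EE[\tind(G,\rn{G}(n,W))]=\tind(G,W)$ for all $n\geq\lvert G\rvert$, the expectation unwinding precisely to the integral in \eqref{eq:tindGW}. Since changing one sampled point alters $\tind(G,\rn{G}(n,W))$ by $O(\lvert G\rvert/n)$ and changing one edge indicator by $O(\lvert G\rvert^2/n^2)$, McDiarmid's bounded-differences inequality gives concentration exponential in $n$, and Borel--Cantelli yields $\tind(G,\rn{G}(n,W))\to\tind(G,W)$ almost surely; as $\cM$ is countable this holds for all $G$ simultaneously with probability one. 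Any realization on which it does is a sequence of graphs of strictly increasing orders that converges to $W$, proving the converse.

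For the forward direction (existence of a limit) --- which I expect to be the main obstacle --- the plan is to route through the cut metric. Each $H_n$ determines a step-function graphon $W_{H_n}$ (split $[0,1]$ into $\lvert H_n\rvert$ equal blocks and set $W_{H_n}=\One$ on a block-pair iff the corresponding vertices are adjacent, with $W_{H_n}=0$ on the diagonal blocks), and one checks $\lvert\tind(G,W_{H_n})-\tind(G,H_n)\rvert=O(\lvert G\rvert^2/\lvert H_n\rvert)\to 0$, the discrepancy coming entirely from pairs of sampled points landing in the same block. The technical heart is then the \Lovasz--Szegedy compactness theorem: the space $\widehat{\cW}_0$ of graphons modulo weak isomorphism is compact in the cut distance $\delta_\square$. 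I would prove this (or cite it) via the weak (Frieze--Kannan) regularity lemma --- every graphon lies within cut distance $\varepsilon$ of a step function with boundedly many parts --- followed by a diagonalization over $\varepsilon=1/k$ and a Bolzano--Weierstrass extraction on the finitely many step values (after relabelling parts by an optimal measure-preserving coupling), producing a $\delta_\square$-Cauchy, hence convergent, subsequence $W_{H_{n_\ell}}\to W$. Finally the counting lemma $\lvert\tind(G,U)-\tind(G,W)\rvert\leq\binom{\lvert G\rvert}{2}\,\delta_\square(U,W)$ converts cut convergence into convergence of every labeled density, so $\tind(G,H_{n_\ell})\to\tind(G,W)$; since $(H_n)_{n\in\NN}$ was assumed convergent the whole sequence shares these limits, and $W$ is a limit of $(H_n)_{n\in\NN}$. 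An alternative, regularity-free route is to realize $W$ as a Loeb-measure graphon on an ultraproduct $\prod_{n\to\cU}H_n$ for a nonprincipal ultrafilter $\cU$; I would regard checking the measurability and density computations there as of comparable difficulty to the compactness argument, so I would prefer the cut-metric route.
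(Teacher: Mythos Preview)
The paper does not supply a proof of this theorem: it is stated with attribution to \Lovasz--Szegedy~\cite{LS06} and used as a black box, so there is nothing in the paper to compare your argument against. Your sketch is a faithful outline of the standard proof from that reference --- reduction to $[0,1]$ via the isomorphism theorem for atomless standard probability spaces, the $W$-random graph with McDiarmid concentration for the converse, and cut-metric compactness (weak regularity plus diagonalization) together with the counting lemma for existence --- and is correct as written.
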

It is trivial that two convergent sequences can converge to the same limit graphon as only the tail
behavior of the convergent sequences matters and changes to $o(\lvert H_n\rvert^2)$ edges do not
affect densities. On the other side, more than one graphon can represent the limit of the same
convergent sequence. For example, any graphon $W\function{[0,1]^2}{[0,1]}$ over $[0,1]$ represents
the same limit as $W'$ given by $W'(x,y) = W(2x\bmod 1, 2y\bmod 1)$ (see
Figure~\ref{fig:halfgraphon} for an example). The Uniqueness Theorem for graphons~\cite{BCL10} (see
also~\cite[Theorem~13.10]{Lov12}) characterizes when two graphons represent the same limit using
measure-preserving functions.

\begin{figure}[htb]
  \begingroup

\def\side{3}
\def\axis{3.25}

\newcommand{\halfgraphon}[1]{%
  \begin{tikzpicture}
    \pgfmathsetmacro{\twon}{2 * #1}
    \foreach \i in {0,...,\twon}{
      \pgfmathsetmacro{\x}{\i * \side / \twon}
      \foreach \j in {0,...,\twon}{
        \pgfmathsetmacro{\y}{\j * \side / \twon}
        \coordinate (P\i-\j) at (\x,\y);
      }
    }

    \foreach \i in {1,...,#1}{
      \pgfmathtruncatemacro{\twoi}{2*\i}
      \pgfmathtruncatemacro{\twoiminusone}{\twoi - 1}
      \pgfmathtruncatemacro{\twoiminustwo}{\twoi - 2}
      \foreach \j in {1,...,#1}{
        \pgfmathtruncatemacro{\twoj}{2*\j}
        \pgfmathtruncatemacro{\twojminusone}{\twoj - 1}
        \pgfmathtruncatemacro{\twojminustwo}{\twoj - 2}

        \fill (P\twoiminusone-\twojminustwo) -- (P\twoi-\twojminustwo) -- (P\twoi-\twojminusone)
        -- cycle;
        \fill (P\twoiminustwo-\twojminusone) -- (P\twoiminusone-\twoj) -- (P\twoiminustwo-\twoj)
        -- cycle;
      }
    }

    \coordinate (X) at (\axis,0);
    \coordinate (Y) at (0,\axis);

    \draw[->] (0,0) -- (X);
    \draw[->] (0,0) -- (Y);

    \node[below right] at (X) {$x$};
    \node[above left] at (Y) {$y$};
  \end{tikzpicture}
}
\begin{center}
  \begin{subfigure}{0.3\textwidth}
    \halfgraphon{1}
    \subcaption*{$k=1$}
  \end{subfigure}
  \quad
  \begin{subfigure}{0.3\textwidth}
    \halfgraphon{2}
    \subcaption*{$k=2$}
  \end{subfigure}
  \quad
  \begin{subfigure}{0.3\textwidth}
    \halfgraphon{3}
    \subcaption*{$k=3$}
  \end{subfigure}

  \captionsetup{singlelinecheck=off}
  \caption{Different representations $W_k(x,y)\df W(k x\bmod 1, k y\bmod 1)$ of the graphon
    $W$ given by
    \begin{align*}
      W(x,y)
      & \df
      \begin{dcases*}
        1, & if $\floor{x/2}\neq\floor{y/2}$ and $(\floor{x/2} < \floor{y/2}\tot x\bmod(1/2) < y\bmod(1/2))$,\\
        0, & otherwise.
      \end{dcases*}
    \end{align*}
    The graphon $W$ is called the \emph{half-graphon}.
  }
  \label{fig:halfgraphon}
\end{center}
\endgroup

\end{figure}

\medskip

For our theorems, we will be considering subgraphons, which are a limit world generalization of the notion of
induced subgraph, but only by non-negligible sets (i.e., sets of linear size).

\begin{definition}\label{def:subgraphon}
  Given a graphon $W$ over $\Omega$, a \emph{(positive measure) subgraphon} $W'$ of $W$ is a graphon
  over a space $\Omega'$ such that there exist a sequence $(H_n)_{n\in\NN}$ converging to $W$ and
  sets $U_n\subseteq V(H_n)$ such that $\lim_{n\to\infty} \lvert U_n\rvert/\lvert H_n\rvert > 0$ and
  $(H_n\rest_{U_n})_{n\in\NN}$ converges to $W'$. When we want to be more specific, we say that $W'$
  is a subgraphon of $W$ of measure $c\in(0,1]$, if the condition above holds with
    $\lim_{n\to\infty} \lvert U_n\rvert/\lvert H_n\rvert = c$.
\end{definition}

Na\"{i}vely, one might think that each subgraphon of a graphon $W$ could be represented as
$W\rest_{U\times U}$ for some positive measure set $U\subseteq X$ over the space
$\Omega'\df(U,\cA\rest_U,\mu\rest_U)$, where $\cA_U\df\{A\cap U\mid A\in\cA\}$ and
$\mu_U(A)\df\mu(A)/\mu(U)$. There are two problems with this na\"{i}ve definition. The first is only
technical: $\Omega'$ is not necessarily a standard probability space, but this can be addressed by
conditioning the measure rather than restricting the space by using the space $\Omega_U =
(X,\cA,\mu_U)$, where $\mu_U(A) = \mu(A\cap U)/\mu(U)$. The second is more serious: not every limit
of a sequence of the aforementioned form $(H_n\rest_{U_n})_{n\in\NN}$ is necessarily encoded this
way. However, the next lemma says that this description is not too far from correct, we only need to
``rescale'' the underlying measure by a weight function.

\begin{lemma}\label{lem:subgraphon}
  Let $W$ be a graphon over $\Omega=(X,\cA,\mu)$, let $W'$ be another graphon and let $c > 0$. The
  following are equivalent.
  \begin{enumerate}
  \item There exist a convergent sequence of graphs $(H_n)_{n\in\NN}$ converging to $W$ and sets
    $U_n\subseteq V(H_n)$ with $\lim_{n\to\infty} \lvert U_n\rvert/\lvert H_n\rvert = c$ such that
    $(H_n\rest_{U_n})_{n\in\NN}$ converges to $W'$, that is, $W'$ is a subgraphon of $W$ of measure
    $c$.
  \item There exists a measurable function $f\function{X}{[0,1]}$ with $\int_X f\ d\mu = c$ such
    that $\phi_{W'} = \phi_{W_f}$, where $W_f$ is the graphon over the the space
    $\Omega_f\df(X,\cA,\mu_f)$ defined by
    \begin{align*}
      \mu_f(A) & \df \frac{\int_A f(x) \ d\mu(x)}{c},\\
      W_f(x,y) & \df W(x,y).
    \end{align*}
  \end{enumerate}
\end{lemma}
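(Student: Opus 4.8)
The plan is to prove both implications by explicitly passing between weighted graphons and honest finite graphs via random sampling.

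For the direction (ii) $\Rightarrow$ (i): given $f\function{X}{[0,1]}$ with $\int_X f\,d\mu = c$, I would build the sequence $(H_n)_{n\in\NN}$ by sampling. Sample $n$ points $\rn{x}_1,\ldots,\rn{x}_n$ i.i.d. from $\mu$ and connect $i\sim j$ with probability $W(\rn{x}_i,\rn{x}_j)$ independently; this is the usual $W$-random graph $\rn{G}(n,W)$, which converges to $W$ almost surely. Then, independently, for each $i$ flip a coin with success probability $f(\rn{x}_i)$ and let $\rn{U}_n$ be the set of successes. By the law of large numbers $\lvert\rn{U}_n\rvert/n\to\int_X f\,d\mu = c$ almost surely, and conditioned on the sample points, the induced subgraph $\rn{G}(n,W)\rest_{\rn{U}_n}$ is distributed so that its convergence statistics are governed by the reweighted measure: the probability a point lands in $\rn{U}_n$ is proportional to $f$, so for each finite graph $G$ one checks $\EE[p(G,\rn{G}(n,W)\rest_{\rn{U}_n}) \mid \rn{U}_n] \to \tind(G,W_f)$-type expressions, and a second-moment/concentration argument (McDiarmid or a standard martingale bound on $p(G,\place)$, which is a Lipschitz function of the sample) forces almost-sure convergence. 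Extracting one good sample sequence along which $\lvert U_n\rvert/n\to c$ and $(H_n\rest_{U_n})_{n\in\NN}$ converges to $W'$ (we may pass to a subsequence, using that $W'$ and $W_f$ have the same $\phi$) then gives~(i). A harmless technical step is to discard null events and replace $n$ by a rapidly growing subsequence $n_k$ so that the increasing-cardinality requirement in the definition of convergent sequence is met.

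For the direction (i) $\Rightarrow$ (ii): this is where the real content lies. Starting from $(H_n)_{n\in\NN}$ converging to $W$ and $U_n\subseteq V(H_n)$ with $\lvert U_n\rvert/\lvert H_n\rvert\to c$, I would encode the pair $(H_n, U_n)$ as a decorated graphon and invoke compactness. Concretely, turn $H_n$ into a graphon $W_{H_n}$ in the usual step-function way, simultaneously recording the indicator of $U_n$ as a $\{0,1\}$-valued function $g_n$ on the same vertex space; the pair $(W_{H_n}, g_n)$ lives in the space of graphons decorated with one extra $[0,1]$-valued coordinate, which is compact in the appropriate (cut-metric $\times$ weak) topology. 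Passing to a subsequence, $(W_{H_n},g_n)$ converges to some $(\widetilde W, f)$ with $\widetilde W$ a limit of $(H_n)_{n\in\NN}$ — hence $\phi_{\widetilde W}=\phi_W$ — and $\int f = c$. Since the densities $p(G,H_n\rest_{U_n})$ are exactly the $f$-reweighted densities of $(W_{H_n},g_n)$, their limit equals the reweighted density of $(\widetilde W,f)$, i.e. $\phi_{W'} = \phi_{\widetilde W_f}$. The final step is to move from a weight function on $\widetilde W$ to one on the given $W$: because $\widetilde W$ and $W$ represent the same limit, the Uniqueness Theorem for graphons supplies a measure-preserving coupling (or a pair of measure-preserving maps through a common graphon) transporting $f$ to a function $f'$ on $X$ with $\int_X f'\,d\mu = c$ and $\phi_{W_{f'}} = \phi_{\widetilde W_f} = \phi_{W'}$, which is~(ii).

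The main obstacle I anticipate is the transport step in (i) $\Rightarrow$ (ii): one must check that a measure-preserving equivalence between $W$ and $\widetilde W$ really does carry the weight function along so that the reweighted densities are preserved. The cleanest route is probably to avoid invoking the full Uniqueness Theorem and instead prove a small self-contained sampling lemma: the reweighted densities $\phi_{W_f}$ depend only on the joint distribution of $(W(\rn{x},\rn{y}), f(\rn{x}), f(\rn{y}))$ under i.i.d. sampling, and this joint object is itself a limit of the sampled finite data, so the compactness argument can be run directly in a space that already includes the weight as an extra coordinate — then no separate transport is needed because one never leaves the original space $\Omega$. Choosing the right ambient space up front, so that compactness hands back a function on $X$ itself rather than on some auxiliary space, is the key design decision that makes the proof go through cleanly.
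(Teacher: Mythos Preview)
Your proposal is correct and takes a genuinely different, more hands-on route than the paper. The paper defers Lemma~\ref{lem:subgraphon} entirely to the general theon-level Lemma~\ref{lem:limitsubobject}: it augments the theory with a unary predicate $U$, views $(H_n,U_n)$ as models of the augmented theory $\widehat{T}$, passes to a limit $\xi\in\HomT{\widehat{T}}$, and then---for exactly the transport step you correctly flag---invokes Proposition~\ref{prop:interpretationlifting} to realize $\xi$ as a $\widehat{T}$-on over $\Omega\times\Omega$ whose graph part is literally $\cN_E\times\cE_2(\Omega)$; the weight $f$ then drops out as the Fubini fiber-measure of $\widehat{\cN}_U$ over the first factor, so it lives on the original $X$ automatically. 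For the reverse direction the paper simply builds $\widehat{\cN}$ from $f$ and takes any sequence of models of $\widehat{T}$ converging to it---no explicit sampling or concentration. Your approach ($W$-random sampling with Bernoulli($f$) marks for (ii)$\Rightarrow$(i), and compactness of decorated graphons for (i)$\Rightarrow$(ii)) is the natural graphon-theoretic translation of the same idea; your second suggestion for the transport problem (bake the extra coordinate into the ambient space before taking the limit) is exactly what Proposition~\ref{prop:interpretationlifting} accomplishes abstractly. The trade-off: the paper's route generalizes verbatim to arbitrary universal theories, while yours is more elementary and self-contained for graphs but would require redoing the decoration machinery for each new structure.
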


We defer the proof of this lemma as it is a particular case of the more general Lemma~\ref{lem:limitsubobject}.

\medskip

As we mentioned before, one way of interpreting a graphon $W$ is as a measurable ``graph'' over
$\Omega$, except that $W(x,y)$ is the ``probability'' that $x$ and $y$ are adjacent. Under this
interpretation, $\{0,1\}$-valued graphons are simply measurable graphs and we can reinterpret the
labeled density formula~\eqref{eq:tindGW} as follows. The set of \emph{labeled (induced) copies} of
a finite graph $G$ in a graphon $W$ over $\Omega=(X,\cA,\mu)$ is the set
\begin{align*}
  \Tind(G,W)
  & \df
  \begin{multlined}[t]
    \biggl\{(x,y)\in X^{V(G)}\times [0,1)^{\binom{V(G)}{2}}
    \;\bigg\vert\;
    \\
    \forall\{v,w\}\in \binom{V(G)}{2}, (y_{\{v,w\}} < W(x_v,x_w) \tot \{v,w\}\in E(G))
    \biggr\}.
  \end{multlined}
\end{align*}
Under this definition, we have $\tind(G,W) =
(\mu^{V(G)}\otimes\lambda^{\binom{V(G)}{2}})(\Tind(G,W))$.

Note also that if $W$ is a $\{0,1\}$-valued graphon and we interpret it as simply a measurable graph
on $[0,1]$, whenever all coordinates of $x$ are distinct, we have $(x,y)\in\Tind(G,W)$ if and only
if $x$ is an embedding of $G$ in $W$.

In the same way that the usual (dense setting) Graph Removal Lemma~\cite{RS78,EFR86} (see
also~\cite[Lemma~11.64 and Theorems~15.24 and~15.25]{Lov12}) says that we can change a negligible
fraction of edges to remove graphs that have negligible density, the following graphon version says
that $\tind(G,W) = 0$ can be turned into $\Tind(G,W)$ morally empty by changing $W$ only in a
zero-measure set (see also Theorem~\ref{thm:theonremoval} for the general case). In fact,
Elek--Szegedy showed~\cite[Theorem~1]{ES12} that the finite version of the Removal Lemma follows
from a connection of limit theory via ultraproducts that we will see later.

\begin{theorem}[Graphon Removal Lemma~\protect{\cite[Theorem~1]{Pet13}}]\label{thm:graphonremoval}
  If $W$ is a graphon over $\Omega=(X,\cA,\mu)$, then there exists a graphon $W'$ over $\Omega$ such
  that $W = W'$ a.e.\ and for every finite graph $G$ such that $\tind(G,W) = 0$, we have
  $\Tind(G,W')\subseteq\cD_{V(G)}$, where
  \begin{align}\label{eq:diagonalgraphon}
    \cD_{V(G)}
    & \df
    \left\{(x,y)\in X^{V(G)}\times [0,1)^{\binom{V(G)}{2}}
    \;\middle\vert\;
    \exists v,w\in V(G), (v\neq w\land x_v = x_w)
    \right\}
  \end{align}
  denotes the diagonal set with respect to the $x$ variables.

  Furthermore, if $W$ is $\{0,1\}$-valued, then $W'$ can also be taken to be $\{0,1\}$-valued.
\end{theorem}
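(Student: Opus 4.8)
The plan is to deduce the Graphon Removal Lemma from the finite Graph Removal Lemma by passing through a convergent sequence and extracting a subsequential limit, exploiting the measurability/completion setup so that the limit graphon can be taken $\{0,1\}$-valued when $W$ is. First I would collect the finitely many graphs $G_1,\dots,G_k$ that we need to worry about — a priori there are countably many $G$ with $\tind(G,W)=0$, but by monotonicity of the relevant quantities under induced subgraphs it suffices to handle the minimal such $G$ in each ``layer''; more cleanly, one does the construction for each of the countably many bad $G$ in turn with a geometrically shrinking measure budget, so I would set up a sequence $\varepsilon_j\to 0$ with $\sum_j\varepsilon_j$ under control and modify $W$ on a set of measure $<\varepsilon_j$ to kill embeddings of $G_j$, then take the (pointwise a.e.) limit of the resulting graphons.

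The core single-graph step is the following. Fix $G$ with $\tind(G,W)=0$. Take a convergent sequence $(H_n)_n\to W$ (Lov\'asz--Szegedy). Then $p(G,H_n)\to\phi_W(G)=0$, so $\lvert\{U\subseteq V(H_n)\mid H_n\rest_U\cong G\}\rvert=o(\lvert H_n\rvert^{\lvert G\rvert})$. The finite Graph Removal Lemma gives, for each $n$, a graph $H_n'$ on $V(H_n)$ with $\lvert E(H_n)\symdiff E(H_n')\rvert=o(\lvert H_n\rvert^2)$ and with $H_n'$ containing \emph{no} induced copy of $G$ at all. Since the edit distance is $o(\lvert H_n\rvert^2)$, the sequence $(H_n')_n$ has the same limit densities as $(H_n)_n$ on a subsequence, i.e.\ after passing to a convergent subsequence $(H_{n_\ell}')_\ell$ it converges to some graphon $W''$ with $\phi_{W''}=\phi_W$; since each $H_{n_\ell}'$ is genuinely $G$-free, $\tind(G,W'')=0$ and moreover — this is the point — the $G$-freeness is ``exact'' at the finite level, which propagates through the integral representation~\eqref{eq:tindGW} to force $\Tind(G,W'')\subseteq\cD_{V(G)}$: any $(x,y)\in\Tind(G,W'')\setminus\cD_{V(G)}$ would, by continuity of the product in~\eqref{eq:tindGW} and positivity of the measure near such a point, yield $\tind(G,W'')>0$, unless the obstruction is concentrated on a null set, which is exactly what ``$W''=W'$ a.e.'' lets us absorb. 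Then I would invoke the Uniqueness Theorem to relate $W''$ and $W$ up to a measure-preserving coupling, or — more self-containedly — argue directly that since $\phi_{W''}=\phi_W$ we may modify $W$ on a null set (the modification dictated by where $W''$ and $W$ disagree, which has measure zero because the sequences differ in only $o(n^2)$ edges) to obtain the desired $W'$. For the $\{0,1\}$-valued addendum: when $W$ is $\{0,1\}$-valued the approximating graphs $H_n$ can be taken with $(H_n\rest_{U})$-densities exactly matching, the removal-lemma outputs $H_n'$ are still ordinary graphs, and their limit $W''$ is again $\{0,1\}$-valued (a limit of graphs along which $\tind(\rho,\cdot)$ and $\tind(\overline\rho,\cdot)$-type quantities force the kernel into $\{0,1\}$ a.e., or directly: step functions with $0/1$ values are closed under the relevant limit in the weak-* sense on the square).

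The main obstacle — and where I would spend the real effort — is the passage from ``$\tind(G,W'')=0$'' to the \emph{pointwise} statement ``$\Tind(G,W')\subseteq\cD_{V(G)}$'' off the diagonal. Having zero integral only gives that the integrand in~\eqref{eq:tindGW} vanishes $\mu^{V(G)}$-a.e., i.e.\ for a.e.\ $x$ the product $\prod_{E(G)}W(x_v,x_w)\prod_{E(\overline G)}(1-W(x_v,x_w))=0$; turning this into a statement holding for \emph{every} non-diagonal $x$ after a null modification requires a Lebesgue-density / Fubini argument to simultaneously fix all coordinates, and it is genuinely the heart of the matter (this is why the theorem is attributed to Petrov~\cite{Pet13} rather than being a one-line corollary). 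Concretely, I expect to argue: let $N\df\{x\in X^{V(G)}: \text{integrand}(x)\neq 0\}$, a null set; one wants a null set $Z\subseteq X$ such that redefining $W$ on $(Z\times X)\cup(X\times Z)$ makes the integrand vanish everywhere off the diagonal. Whether such a ``product-shaped'' null set exists is exactly a density-point argument applied coordinate by coordinate, and dovetailing this across the countably many bad graphs $G_j$ while keeping the total modification null is the bookkeeping I'd be most careful about. Since the excerpt permits assuming only the results it states, I would in the write-up actually lean on Theorem~\ref{thm:theonremoval} (cited as the general case) or reproduce Petrov's density-point argument; given the instruction to sketch rather than grind, I flag this as the crux and leave the density-point computation to the formal proof.
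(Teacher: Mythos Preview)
The paper does not prove this theorem; it is quoted from Petrov~\cite{Pet13} (and the general case, Theorem~\ref{thm:theonremoval}, is likewise quoted from~\cite{CR20a}). So there is no in-paper proof to compare against, only the external density-point argument you yourself flag at the end.

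That said, your finite-removal detour has a genuine gap. From $\phi_{W''}=\phi_W$ you \emph{cannot} conclude that $W''$ and $W$ agree a.e.: graphon uniqueness only gives weak isomorphism (equality up to measure-preserving pullbacks), and two weakly isomorphic graphons on the same space can differ on a set of positive measure. The sentence ``the modification dictated by where $W''$ and $W$ disagree, which has measure zero because the sequences differ in only $o(n^2)$ edges'' is simply false --- the $o(n^2)$ edit distance controls cut-norm, not $L^1$-distance, and your $W''$ is built by an abstract subsequential-limit extraction with no coupling to $W$ at the level of points of $\Omega$. So even if $W''$ had the desired off-diagonal emptiness property, you would have proved the theorem for some graphon weakly isomorphic to $W$, not for $W$ itself modified on a null set.

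There is a second, more structural issue: the detour is circular. Even for your $W''$, all that the $G$-freeness of the $H'_{n_\ell}$ buys you in the limit is $\tind(G,W'')=0$; upgrading this to $\Tind(G,W'')\subseteq\cD_{V(G)}$ after a null modification is \emph{exactly} the statement of the theorem applied to $W''$. You recognise this yourself (``turning this into a statement holding for every non-diagonal $x$\dots requires a Lebesgue-density / Fubini argument''), so the finite removal lemma has not advanced the argument at all. The actual content of Petrov's proof is precisely that density-point computation done directly on $W$, handling all countably many bad $G$ simultaneously; the finite removal lemma is neither needed nor helpful as a stepping stone.
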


The final concept needed to state our graphon dichotomy theorem is that of an almost stable graphon
defined below.

\begin{definition}\label{def:almoststablegraphon}
  Recall that a \emph{half-graph of order $n$} in a graph $G$ (see Figure~\ref{fig:halfgraph}) is
  pair of sequences $(x_1,\ldots,x_n)$ and $(y_1,\ldots,y_n)$ of vertices of $G$ such that
  $\{x_i,y_j\}\in E(G)$ if and only if $i\leq j$.

  We say that a \emph{tree of height $n$} in a graph $G$ (see Figure~\ref{fig:tree}) is pair of
  sequences $(x_\sigma \mid \sigma\in\{0,1\}^n)$ and $(y_\tau \mid m\in\{0,1,\ldots,n-1\},
  \tau\in\{0,1\}^m)$ such that for every $\sigma = (\sigma_i)_{i=1}^n\in\{0,1\}^n$ and every $m <
  n$, $\{x_\sigma,y_{\sigma\rest_{[m]}}\}\in E(G)$ if and only if $\sigma_{m+1} = 1$ (trees of
  height $n$ are also known under several different names in the literature).

  Recall also that a graph is called \emph{$n$-stable} (or more formally, its edge relation is
  $n$-stable) if it does \emph{not} contain any half-graphs of order $n$.

  A graphon $W$ is \emph{almost stable} if there exists $n\in\NN$ such that every finite graph $G$
  containing a half-graph of order $n$ satisfies $p(G,W) = 0$.
\end{definition}

Our use of stability will be mainly to provide a bound on the height of trees, since
by~\cite[Lemma~6.7.9]{Hod93}, an $n$-stable graph does not contain any trees of height $2^{n+2}-2$
and, conversely, if a graph does not contain a tree of height $n$, then it is $(2^{n+2}-2)$-stable.


\begin{figure}[htb]
  \begingroup
\def\n{7}
\def\horsep{1}
\def\vertsep{2}
\def\ptsize{2pt}

\begin{center}
  \begin{tikzpicture}
    \foreach \i in {1,...,\n}{
      \pgfmathsetmacro{\x}{\i * \horsep}
      \coordinate (X\i) at (\x,0);
      \coordinate (Y\i) at (\x,-\vertsep);

      \filldraw (X\i) circle (\ptsize);
      \filldraw (Y\i) circle (\ptsize);

      \node[above] at (X\i) {$x_{\i}$};
      \node[below] at (Y\i) {$y_{\i}$};
    }

    \foreach \i in {1,...,\n}{
      \foreach \j in {\i,...,\n}{
        \draw (X\i) -- (Y\j);
      }
    }
  \end{tikzpicture}

  \caption{Half-graph of order $\n$. Between two distinct $x_i$'s or between two distinct $y_i$'s there is no
    edge/non-edge requirement.}
  \label{fig:halfgraph}
\end{center}
\endgroup

\end{figure}

\begin{figure}[htb]
  \begingroup
\def\n{3}
\def\horsep{1}
\def\vertsep{1}
\def\bigvertsep{3}
\def\ptsize{2pt}

\begin{center}
  \begin{tikzpicture}
    \def\strings{v///,}
    \coordinate (Y) at (0,0);
    \foreach \i in {1,...,\n}{
      \let\oldstrings\strings
      \def\strings{}
      \foreach \v/\s/\sno/\syes in \oldstrings {
        \ifx\v\empty\relax
        \else
        \edef\newstrings{\strings v/\s0/{\sno Y\s,}/{\syes},v/\s1/{\sno}/{\syes Y\s,},}
        \global\let\strings\newstrings
        \pgfmathsetmacro{\step}{2^(\n - \i) * \horsep}
        \coordinate (Y\s0) at ($(Y\s) + (-\step,-\vertsep)$);
        \coordinate (Y\s1) at ($(Y\s) + (+\step,-\vertsep)$);

        \filldraw (Y\s) circle (\ptsize);

        \ifx\s\empty\relax
        \node[below] at (Y\s) {$y_\varnothing$};
        \else
        \node[below] at (Y\s) {$y_{\s}$};
        \fi
        \fi
      }
    }

    \pgfmathsetmacro{\y}{\n*\vertsep + \bigvertsep}
    \foreach \v/\s/\sno/\syes in \strings {
      \ifx\v\empty\relax
      \else
      \coordinate (X\s) at ($(Y\s) + (0,\y)$);

      \filldraw (X\s) circle (\ptsize);
      \node[above] at (X\s) {$x_{\s}$};

      \foreach \i in \sno {
        \ifx\i\empty\relax
        \else
        \draw[dashed] (X\s) -- (\i);
        \fi
      }

      \foreach \i in \syes {
        \ifx\i\empty\relax
        \else
        \draw (X\s) -- (\i);
        \fi
      }
      \fi
    }
  \end{tikzpicture}

  \caption{Tree of height $\n$. Solid lines correspond to edge requirements and dashed lines correspond to
    non-edge requirements.}
  \label{fig:tree}
\end{center}
\endgroup

\end{figure}


\begin{theorem}\label{thm:stablegraphon}
  The following are equivalent for a graphon $W$ over $\Omega=(X,\cA,\mu)$.
  \begin{enumerate}
  \item $W$ contains a subgraphon that is either constant equal to $1$ or constant equal to $0$.%
    \label{thm:stablegraphon:uniformsubgraphon}
  \item There exists a positive measure set $U\subseteq X$ such that either $W\rest_{U\times U} = 1$
    a.e.\ or $W\rest_{U\times U} = 0$ a.e.%
    \label{thm:stablegraphon:uniformsquare}
  \item $W$ contains an almost stable subgraphon.%
    \label{thm:stablegraphon:stablesubgraphon}
  \end{enumerate}
\end{theorem}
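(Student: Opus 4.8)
The plan is to prove the cycle of implications \ref{thm:stablegraphon:uniformsquare} $\Rightarrow$ \ref{thm:stablegraphon:uniformsubgraphon} $\Rightarrow$ \ref{thm:stablegraphon:stablesubgraphon} $\Rightarrow$ \ref{thm:stablegraphon:uniformsquare}. The first implication is immediate: if $W\rest_{U\times U}=1$ a.e.\ (the $0$ case is symmetric) with $\mu(U)>0$, take $f=\One_U$ and apply Lemma~\ref{lem:subgraphon}; the graphon $W_f$ is the constant $1$ graphon (up to a null set), so it is a subgraphon of $W$. The second implication is also easy: a constant graphon is vacuously almost stable — every finite graph $G$ on at least $3$ vertices has $p(G,W')=0$ when $W'$ is constant, in particular every $G$ containing a half-graph of order $2$, so we may take $n=2$ in Definition~\ref{def:almoststablegraphon}.

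The real content is \ref{thm:stablegraphon:stablesubgraphon} $\Rightarrow$ \ref{thm:stablegraphon:uniformsquare}. So suppose $W$ has an almost stable subgraphon. Using Lemma~\ref{lem:subgraphon} and absorbing the weight function, I would first reduce to the case that $W$ itself is almost stable (the weighted space $\Omega_f$ is still an atomless standard probability space once we discard the null set $\{f=0\}$ and condition, and ``finding a positive measure square'' in $W_f$ pulls back to finding one in $W$ since $\mu_f\ll\mu$ and $\mu\ll\mu_f$ on $\{f>0\}$). Next, apply the Graphon Removal Lemma (Theorem~\ref{thm:graphonremoval}): since $W$ is almost stable, there is some fixed $n$ with $p(G,W)=0$ whenever $G$ contains a half-graph of order $n$, and after modifying $W$ on a null set we may assume $\Tind(G,W)\subseteq\cD_{V(G)}$ for all such $G$. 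In particular $W$ literally contains no half-graph of order $n$ on points with distinct coordinates, and similarly — via the quoted fact from \cite[Lemma~6.7.9]{Hod93} that $n$-stability forbids trees of height $2^{n+2}-2$ — I can arrange that $W$ contains no tree of height $h\df 2^{n+2}-2$ off the diagonal.

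The heart of the argument is then a combinatorial/measure-theoretic dichotomy: a measurable graph with no tree of height $h$ must have a positive-measure $0$-square or $1$-square. Here is the mechanism I would use. Think of $W$ as a $\{0,1\}$-valued measurable graph. If $W$ were ``unstable at every scale'', one could keep splitting: pick a vertex $y$; it partitions (almost) all of $X$ into its neighborhood $N(y)$ and non-neighborhood; if for a positive-measure set of choices of $y$ both sides have positive measure, one can iterate this splitting, building (by a Fubini/density argument and choosing representatives) a binary tree of positive-measure cells whose splitting vertices $y_\tau$ realize the edge pattern of a tree of height $h$ — contradicting the removal step once the height exceeds $h$. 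Hence at some bounded stage the splitting must fail, meaning there is a positive-measure set $U$ such that for a.e.\ $y\in U$, $N(y)\cap U$ is null or co-null in $U$; a further application of Fubini then forces $W\rest_{U\times U}$ to be $0$ a.e.\ or $1$ a.e. The bookkeeping for this tree-building — ensuring the cells stay positive measure, that the chosen splitting vertices genuinely witness a tree rather than a half-graph, and that we only need height $h$ rather than something growing — is the main obstacle, and I expect the cleanest route is to phrase it as: among all positive-measure $U$, consider whether $W\rest_{U\times U}$ admits a ``$y$-split'' into two positive-measure pieces; if every positive-measure set splits, iterate $h$ times to get the forbidden tree; otherwise the non-splitting $U$ is, up to a null set, a union of a clique-part and an independent-part, one of which has positive measure. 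This last step uses that $n$-stability (equivalently bounded tree height) is inherited by subgraphons, so the dichotomy can be applied repeatedly without the forbidden height growing.
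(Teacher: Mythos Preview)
Your outline matches the paper's proof closely: the same cycle of implications, the same reduction via Lemma~\ref{lem:subgraphon} to the case where $W$ itself is almost stable, the same use of the Removal Lemma, and the same tree-building/splitting dichotomy (which the paper packages as Lemma~\ref{lem:stablegraphon}). Two points need fixing. First, a slip in the easy direction: it is not true that every graph on $\geq 3$ vertices has $p(G,W')=0$ for constant $W'$ (e.g.\ $p(K_3,1)=1$); the correct reason constant graphons are almost stable is that any graph containing a half-graph of order $2$ has both an edge and a non-edge, hence zero density in either constant graphon. Second, and this is the one real gap, you write ``think of $W$ as a $\{0,1\}$-valued measurable graph'' without justification, yet your neighborhood-splitting argument needs this literally (otherwise $N(y)$ is not a set). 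The paper handles this by citing \cite[Theorem~4.1]{LS10}, which gives that an almost stable graphon is $\{0,1\}$-valued a.e.; you should invoke this (or supply an alternative) before running the tree construction.
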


\begin{discussion}\label{dsc:subgraphon}
  As we noted before, not every subgraphon of $W$ is of the form $W\rest_{U\times U}$ and thus the
  equivalence of items~\ref{thm:stablegraphon:uniformsubgraphon}
  and~\ref{thm:stablegraphon:uniformsquare} is not trivial.

  However, if $P$ is property of graphons that is closed under taking subgraphons, then a graphon
  $W$ has a subgraphon satisfying $P$ if and only if it has a positive measure $U\subseteq X$ such
  that $W\rest_{U\times U}$ satisfies $P$. The backward implication is obvious, and the forward
  implication can be seen easily from Lemma~\ref{lem:subgraphon}: if the subgraphon $W'$ satisfying
  $P$ corresponds to a measurable function $f$ of positive integral, then for some $\epsilon > 0$,
  the set $U_\epsilon\df\{x\in X \mid f(x) > \epsilon\}$ has positive measure and
  $W\rest_{U_\epsilon\times U_\epsilon}$ satisfies $P$ as it is a subgraphon of $W'$.

  As we will see in Theorem~\ref{thm:stableconvseqgraph}, the importance of
  item~\ref{thm:stablegraphon:uniformsquare} is that if $(H_n)_{n\in\NN}$ converges to $W$, then
  subgraphons of form $W\rest_{U\times U}$ can be pulled back to $(H_n\rest_{U_n})_{n\in\NN}$
  without changing the sequence $(H_n)_{n\in\NN}$.
\end{discussion}

The main ingredient to prove this theorem is the following lemma whose main idea can be seen as a
graphon analogue of the construction of $\epsilon$-good sets in~\cite{MS14,MS21}, but with $\epsilon
= 0$ and is much easier for measure theoretic reasons.

\begin{lemma}\label{lem:stablegraphon}
  Let $W$ be an almost stable graphon over $\Omega=(X,\cA,\mu)$. Then there exists a positive
  measure set $U\subseteq X$ such that either $W\rest_{U\times U} = 1$ a.e.\ or $W\rest_{U\times U}
  = 0$ a.e.
\end{lemma}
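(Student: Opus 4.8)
The plan is to exploit the bound on the height of trees that almost stability provides, via the quoted fact that an $n$-stable graph contains no tree of height $2^{n+2}-2$. Since $W$ is almost stable, there is some $n$ such that every finite graph $G$ containing a half-graph of order $n$ has $p(G,W)=0$; equivalently $\tind(G,W)=0$ for every such $G$. In particular, letting $h\df 2^{n+2}-2$, every finite graph $G$ containing a tree of height $h$ has $\tind(G,W)=0$. First I would apply the Graphon Removal Lemma (Theorem~\ref{thm:graphonremoval}) simultaneously to all (countably many) finite graphs $G$ that contain a tree of height $h$: this yields a graphon $W'$ with $W=W'$ a.e.\ such that $\Tind(G,W')\subseteq\cD_{V(G)}$ for all such $G$. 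Since changing $W$ on a null set does not affect the conclusion we are trying to prove, we may replace $W$ by $W'$ and assume outright that $W$ realizes no tree of height $h$ on tuples with distinct $x$-coordinates. (If $W$ were not $\{0,1\}$-valued this step is a little more delicate — we still have $\Tind(G,W)$ genuinely negligible rather than combinatorially empty — so one either invokes the $\{0,1\}$-valued clause when applicable or, in the general case, argues that for $\mu\otimes\lambda$-almost every choice of thresholds the induced $\{0,1\}$-graph realizes no such tree, which suffices for the density computation below.)

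Next I would run the usual "building a tree" argument to produce the desired set $U$. Work in the (a.e.\ defined) measurable graph picture: for $x,y\in X$ say $x\sim y$ if $W(x,y)=1$. The key step is a dichotomy: for a positive-measure set $A\subseteq X$, either there is a positive-measure $A'\subseteq A$ with $W\rest_{A'\times A'}=1$ a.e., or $W\rest_{A'\times A'}=0$ a.e., OR one can "split" $A$ — find a positive-measure $A_0\subseteq A$, a positive-measure $A_1\subseteq A$ with $A_0\cap A_1$ null, and a vertex $y\in A$ (in fact a positive-measure set of such $y$) with $W(y,\cdot)=0$ a.e.\ on $A_0$ and $W(y,\cdot)=1$ a.e.\ on $A_1$. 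The point is that if no such split exists, then for almost every $y\in A$ the set $\{x\in A\mid W(y,x)=1\}$ is, up to null sets, either all of $A$ or empty; a Fubini/density argument on the "neighborhood function" $x\mapsto \mu(\{z\in A\mid W(x,z)=1\})$ then forces $W\rest_{A\times A}$ to be a.e.\ a block graphon with (essentially) two classes that are each homogeneous, and one of those classes has positive measure and is all-$1$ or all-$0$ inside itself. Iterating the split at most $h$ times builds, on a shrinking sequence of positive-measure sets, the branch vertices $x_\sigma$ and splitter vertices $y_\tau$ of a tree of height $h$ — realizable because the sets stay positive measure so we can choose the $x_\sigma$ with pairwise distinct coordinates avoiding all relevant null sets. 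This contradicts the tree-height bound, so the iteration must terminate in the homogeneous alternative, giving $U$.

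The main obstacle I anticipate is making the dichotomy/split step rigorous at the level of measure theory — i.e.\ turning "no split is possible" into genuine a.e.\ homogeneity — and, relatedly, controlling the null sets accumulated over the $h$ stages so that at the end one can actually pick the tree witnesses with distinct $x$-coordinates outside the bad set. Concretely: at each stage we discard a null set (the fibers where the Removal Lemma's conclusion fails, plus the null symmetric differences coming from conditional expectations), and we must ensure the surviving positive-measure set still has full "generic" position. This is routine but needs care; a clean way is to first fix, once and for all, a conull set $X_0$ on which all the finitely many relevant almost-everywhere statements hold simultaneously at every finite level, then do the combinatorial tree construction entirely inside $X_0$ using only the ordinary (non-measure-theoretic) Ramsey-type argument on vertices. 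A secondary subtlety, already flagged, is the reduction from a general $[0,1]$-valued graphon to the $\{0,1\}$-valued case; since almost stability is stated via $p(G,W)=0$ and the tree-height bound is combinatorial, one wants to argue that $\tind(G,W)=0$ for all tree-rich $G$ already forces the required a.e.\ structure without passing to thresholds at all — for instance by noting that $\tind(G,W)=0$ for the two-vertex "gadget" graphs controlling the split already pins $W$ down — which would make the whole proof stay inside the $[0,1]$-valued world and sidestep the thresholding entirely.
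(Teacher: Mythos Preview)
Your proposal is correct and follows essentially the same approach as the paper: apply the Removal Lemma, then build a tree by repeatedly splitting on a vertex whose neighborhood is neither null nor conull in the current set, and use the tree-height bound to force termination at a set $X_{\widetilde\sigma}$ where every vertex's neighborhood is full or empty, whence one of the two classes $Z_0,Z_1$ is a positive-measure homogeneous set. The one simplification you are missing is that the paper first invokes \cite[Theorem~4.1]{LS10} to observe that an almost stable graphon is automatically $\{0,1\}$-valued a.e., which entirely dissolves your flagged worry about thresholding and lets the whole argument run in the measurable-graph picture from the start.
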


\begin{proof}
  By~\cite[Theorem~4.1]{LS10}, we know that $W$ is $\{0,1\}$-valued almost everywhere, so we can
  change it in a zero-measure set so that it is $\{0,1\}$-valued\footnote{As we will see in
    Theorem~\ref{thm:stabletheon} for general universal theories, this step is not actually
    necessary, but it simplifies the proof in the graphon case.}.
  
  By Theorem~\ref{thm:graphonremoval}, we can further replace $W$ with a $\{0,1\}$-valued graphon
  $W'$ such that there exists $n\in\NN$ such that every finite graph $G$ containing a half-graph of
  order $n$ satisfies $\Tind(G,W')\subseteq\cD_{V(G)}$, that is, $W'$ as a measurable graph over
  $\Omega$ is $n$-stable (except for potential half-graphs that collide vertices).

  For $x\in X$, let $N_{W'}(x)\df\{y\in X \mid W'(x,y) = 1\}$ denote the ``neighborhood'' of $x$ in
  $W'$ and let $X'$ be the set of $x\in X$ such that $N_{W'}(x)$ is measurable with respect to the
  completion $(\cA',\mu')$ of $(\cA,\mu)$. Fubini's Theorem gives $\mu'(X')=1$.

  We now construct sequences $(X_\sigma)_\sigma$ and $(y_\sigma)_\sigma$ indexed by finite strings
  over $\{0,1\}$ inductively in the length of $\sigma$ as follows.
  \begin{enumerate}[label={\arabic*.}]
  \item Set $X_\varnothing\df X'$.
  \item Given $X_\sigma$, if there exists $z\in X'$ such that $0 < \mu'(N_{W'}(z)\cap X_\sigma) <
    \mu'(X_\sigma)$, then set $y_\sigma\df z$, $X_{\sigma 1}\df X_\sigma\cap N_{W'}(z)$ and $X_{\sigma 0}\df
    X_\sigma\setminus N_{W'}(z)$; otherwise stop the construction.
  \end{enumerate}

  Let also $Y$ be the (countable) set of all $y_\sigma$ that get defined in the construction
  above. By induction, it follows that if $X_\sigma$ is defined for every $\sigma\in\{0,1\}^t$ of a
  fixed length $t$, then $\{X_\sigma \mid \sigma\in\{0,1\}^t\}$ forms a measurable partition of $X'$
  into sets of positive measure (hence non-empty). Furthermore, if $x_\sigma\in X_\sigma\setminus Y$
  ($\sigma\in\{0,1\}^t$) then $(x_\sigma\mid\sigma\in\{0,1\}^t)$ and $\{y_\tau \mid
  m\in\{0,1,\ldots,t-1\},\tau\in\{0,1\}^m\}$ form a tree of height $t$ in $W'$.

  By~\cite[Lemma~6.7.9]{Hod93}, we know that $n$-stable graphs do not contain trees of height
  $2^{n+2}-2$, so the construction above must stop otherwise it would produce an off-diagonal copy
  of a graph containing a half-graph of order $n$ (the fact that the copy is off-diagonal follows
  since no $x_\sigma$ is equal to any $y_\tau$). Thus there exists $\widetilde{\sigma}$ such that
  for every $z\in X'$, we have $\mu'(N_{W'}(z)\cap
  X_{\widetilde{\sigma}})/\mu'(X_{\widetilde{\sigma}})\in\{0,1\}$. Let
  \begin{align*}
    Z_i
    & \df
    \left\{z\in X_{\widetilde{\sigma}} \;\middle\vert\;
    \frac{\mu'(N_{W'}(z)\cap X_{\widetilde{\sigma}})}{\mu'(X_{\widetilde{\sigma}})} = i
    \right\} \quad (i\in\{0,1\})
  \end{align*}
  and since $X_{\widetilde{\sigma}} = Z_0\cup Z_1$ and $\mu'(X_{\widetilde{\sigma}}) > 0$, there
  exists $i_0\in\{0,1\}$ such that $\mu'(Z_{i_0}) > 0$. Finally, taking $U\in\cA$ such that
  $\mu'(Z_{i_0}\symdiff U) = 0$ gives $\mu(U) = \mu'(Z_{i_0}) > 0$ and $W\rest_{U\times U} =
  W'\rest_{U\times U} = i_0$ a.e.
\end{proof}

\begin{discussion}\label{dsc:goodsets}
  As we mentioned before, the proof of Lemma~\ref{lem:stablegraphon} can be seen as the construction
  of a \emph{$0$-good} set $X_{\widetilde{\sigma}}$ in the graphon $W$, that is, a positive measure
  set $U\subseteq X$ such that almost every $x\in X$ is either adjacent to almost all of $U$ or
  almost none of $U$ in the sense that
  \begin{align*}
    \frac{1}{\mu(U)}\int_U W(x,y)\ d\mu(y) & \in \{0,1\}.
  \end{align*}

  Another important concept in~\cite{MS14,MS21} that can be generalized to graphons is that of
  excellent sets. Let us say that a \emph{$0$-excellent} set in a graphon $W$ is a $0$-good
  set\footnote{In the finite case, we do not need to explicitly require excellent sets to be good as
    the goodness property follows from the excellent property when $V$ is a single vertex (which is
    necessarily a good set in the finite). However, in the limit, a single vertex is not good as it
    does not have positive measure.} $U$ such that for every $0$-good set $V$ we either have almost
  all edges between $U$ and $V$ or we have almost no edges between $U$ and $V$ in the sense that
  \begin{align*}
    \frac{1}{\mu(U\times V)}\int_{U\times V} W(x,y)\ d\mu(x,y) & \in \{0,1\}.
  \end{align*}

  Under this definition, it is easy to generalize the proof of Lemma~\ref{lem:stablegraphon} to
  prove that every $0$-good set $U$ in an almost stable graphon $W$ contains some $0$-excellent set:
  one can simply repeat the inductive construction by starting with $X_\varnothing\df U$ an use
  $0$-good sets $Y_\sigma$ for the internal nodes instead of single vertices $y_\sigma$. Composing
  these two and with a transfinite induction, it then follows that if $W$ is an almost stable
  graphon over $\Omega=(X,\cA,\mu)$, then there exists a countable partition $(U_i)_{i\in I}$ of $X$
  into positive measure sets such that for each $i,j\in I$, there exists $b_{i,j}\in\{0,1\}$ such
  that $W\rest_{U_i\times U_j}\equiv b_{i,j}$ a.e., that is, $W$ is a $\{0,1\}$-valued ``countable
  step-graphon''. This can be seen as a $0$-error version of the stable regularity
  lemma~\cite{MS14,MS21} in the limit.
\end{discussion}

Let us now show the stability dichotomy theorem for graphons.

\begin{proofof}{Theorem~\ref{thm:stablegraphon}}
  The
  implication~\ref{thm:stablegraphon:uniformsubgraphon}$\implies$\ref{thm:stablegraphon:stablesubgraphon}
  is trivial as the constant $0$ and constant $1$ graphons are almost stable.

  \medskip

  For the
  implication~\ref{thm:stablegraphon:uniformsquare}$\implies$\ref{thm:stablegraphon:uniformsubgraphon},
  using Lemma~\ref{lem:subgraphon} with the indicator function $f\df\One_U$ of $U$, we obtain a subgraphon
  $W_f$ that is either a.e.\ equal to $1$ or a.e.\ equal to $0$.

  \medskip

  For the final
  implication~\ref{thm:stablegraphon:stablesubgraphon}$\implies$\ref{thm:stablegraphon:uniformsquare},
  if $W'$ is an almost stable subgraphon of $W$, then by Lemma~\ref{lem:subgraphon} there exists
  $f\function{X}{[0,1]}$ with $\int_X f\ d\mu > 0$ such that $\phi_{W'} = \phi_{W_f}$ for the
  graphon $W_f$ over $\Omega_f=(X,\cA,\mu_f)$ given by $W_f(x,y)\df W(x,y)$. Let $V\df\{x\in X \mid
  f(x) > 0\}$, let $g\df\One_V$ be the indicator function of $V$ and consider the subgraphon $W_g$
  of $W$ over the space $\Omega_g = (X,\cA,\mu_g)$ corresponding to $g$ via
  Lemma~\ref{lem:subgraphon}.

  We claim that $W_g$ is almost stable. This is a standard measure theoretic trick: for $\epsilon >
  0$ and a finite graph $G$, let
  \begin{align*}
    \Tind^\epsilon(G,W_f)
    & \df
    \{(x,y)\in\Tind(G,W_f) \mid \forall v\in V(G), f(x_v) > \epsilon\},
    \\
    \Tind^\epsilon(G,W_g)
    & \df
    \{(x,y)\in\Tind(G,W_g) \mid \forall v\in V(G), f(x_v) > \epsilon\},
  \end{align*}
  then it is easy to see that
  \begin{multline*}
    (\mu_f^{V(G)}\otimes\lambda^{\binom{V(G)}{2}})(\Tind^\epsilon(G,W_f))
    \\
    \geq
    \left(\epsilon\cdot\frac{\mu(V)}{\int_X f\ d\mu}\right)^{\lvert G\rvert}
    (\mu_g^{V(G)}\otimes\lambda^{\binom{V(G)}{2}})(\Tind^\epsilon(G,W_g)).
  \end{multline*}
  On the other hand, since we have $\Tind(G,W_f) = \bigcup_{n\in\NN_+}\Tind^{1/n}(G,W_f)$
  $\mu_f$-a.e.\ and $\Tind(G,W_g) = \bigcup_{n\in\NN_+}\Tind^{1/n}(G,W_g)$ $\mu_g$-a.e., it follows
  that
  \begin{align*}
    \forall G, (\tind(G,W_f) = 0 \implies\tind(G,W_g) = 0)
  \end{align*}
  and since $W_f$ is almost stable, we get that $W_g$ is almost stable.

  By Lemma~\ref{lem:stablegraphon}, there exists a measurable set $\widetilde{U}\subseteq X$ such
  that $\mu_g(\widetilde{U}) > 0$ and either $W_g\rest_{\widetilde{U}\times\widetilde{U}} = 1$
  $\mu_g$-a.e.\ or $W_g\rest_{\widetilde{U}\times\widetilde{U}} = 0$ $\mu_g$-a.e. The result now
  follows by setting $U\df\widetilde{U}\cap V$ since $\mu_g(A) = \mu(A\cap V)/\mu(V)$ for every
  measurable set $A\subseteq X$.
\end{proofof}

\begin{remark}
  Recall that the set $U$ produced by Lemma~\ref{lem:stablegraphon} actually has a stronger property
  than simply almost clique or almost anti-clique, namely, it is a $0$-good set. By tracking down
  the application of this lemma in the proof of Theorem~\ref{thm:stablegraphon} above, we conclude
  that if a graphon $W$ contains some almost stable subgraphon $W'$ of measure $c$, then it contains
  positive measure sets $U$ and $V$ such that $\mu(V)\geq c$, $W\rest_{V\times V}$ is almost stable
  and $U$ is a $0$-good set in $W\rest_{V\times V}$.
\end{remark}

A natural question that arises is whether it is possible for a graphon to not contain any almost
stable subgraphon. A trivial example is obtained by considering quasirandom graphons: for $p\in
[0,1]$, let $W_p$ be the constant $p$ graphon. It is not hard to see from Lemma~\ref{lem:subgraphon}
that $W_p$ is the only subgraphon of $W_p$. In fact, the content of one of the original graph
quasirandomness equivalences~\cite[$P_1\iff P_4$]{CGW89} (see also~\cite[Theorem~3.4]{SS97}) is
precisely that these are the only graphons with this property. Since $W_p$ is not almost stable if
$0 < p < 1$ (as $\tind(G,W_p) = p^{\lvert E(G)\rvert}(1-p)^{\lvert E(\overline{G})\rvert} > 0$ for
every finite graph $G$), it follows that none of its subgraphons are either.

One can then ask if this is not an artifact of the fact that $W_p$ has fractional values, that is,
could it be that $\{0,1\}$-valued graphons must necessarily contain some almost stable subgraphon?
The next example answers this in the negative. We will also show in
Lemma~\ref{lem:phiC4notrivialsubobject} that the recursive blow-up of $C_4$ (see
Definition~\ref{def:recC4}) is another such example.

\begin{example}\label{ex:quasirandompermuton}
  Let $\Omega$ be the space $[0,1]^2$ equipped with the $2$-dimensional Lebesgue measure over Borel
  sets and consider the graphon $W$ over $\Omega$ given by
  \begin{align*}
    W((x_1,x_2),(y_1,y_2))
    & \df
    \begin{dcases*}
      1, & if $x_1 < y_1\tot x_2 < y_2$ and $x_1,x_2,y_1,y_2$ are distinct;\\
      0, & otherwise
    \end{dcases*}
  \end{align*}
  Clearly $W$ is $\{0,1\}$-valued.

  We claim that for every positive measure set $U\subseteq [0,1]^2$, $W\rest_{U\times U}$ is not
  a.e.\ constant. By Theorem~\ref{thm:stablegraphon}, this in particular means that $W$ does not
  have any almost stable subgraphon.
  
  Note that if $W\rest_{U\times U} = 1$ a.e., then for every $n\in\NN$, we must have $\tind(K_n,W)
  \geq\lambda(U)^n$. On the other hand, if $W\rest_{U\times U} = 0$ a.e., then for every $n\in\NN$,
  we must have $\tind(\overline{K}_n,W)\geq\lambda(U)^n$. In fact, by~\cite[Theorem~6]{CKP21}, for
  any graphon $W'$, we have
  \begin{align*}
    \sup\{\lambda(U) \mid W'\rest_{U\times U} = 1\text{ a.e.}\}
    & =
    \lim_{n\to\infty} \tind(K_n,W')^{1/n},
    \\
    \sup\{\lambda(U) \mid W'\rest_{U\times U} = 0\text{ a.e.}\}
    & =
    \lim_{n\to\infty} \tind(\overline{K}_n,W')^{1/n}.
  \end{align*}

  However, it is easy to see that
  \begin{align*}
    \tind(K_n,W) = \tind(\overline{K}_n,W) = \frac{1}{n!}
  \end{align*}
  as $\tind(K_n,W)$ is the probability that the relative order of the coordinates of $\rn{x}$
  matches that of the coordinates of $\rn{y}$ when both are picked independently and uniformly in
  $[0,1]^n$ and $\tind(\overline{K}_n,W)$ is that these relative orders are the precise inverses of
  each other (a more detailed explanation will be given in
  Example~\ref{ex:quasirandompermuton2}). Since for every $c > 0$, there exists $n\in\NN$ such that
  $1/n! < c^n$, the claim follows.

  To visualize $W$, we can consider the standard measure-isomorphism $F$ modulo $0$ from $[0,1]^2$
  to $[0,1]$ that maps $(w,z) = (0.w_1w_2\cdots,0.z_1z_2\cdots)$ to $0.w_1z_1w_2z_2\cdots$ using the
  binary expansions of the numbers $w$ and $z$. The graphon $W'$ over $[0,1]$ given indirectly by
  $W'(F(x),F(y))\df W(x,y)$ then represents the same limit as $W$, see
  Figure~\ref{fig:agreementsgraphon}.
\end{example}

\begin{figure}[htb]
  \begingroup
\def\order{3}
\def\side{5}
\def\axis{5.5}

\begin{center}
  \begin{tikzpicture}
    \draw (0,0) -- (\side,0) -- (\side,\side) -- (0,\side) -- cycle;
    \draw[->] (0,0) -- (\axis,0);
    \draw[->] (0,0) -- (0,\axis);

    \node[below right] at (\axis,0) {$x$};
    \node[above left] at (0,\axis) {$y$};
    
    \pgfmathsetmacro{\smallside}{2^(-2*\order) * \side}
    \def\strings{v//0/0,}
    \foreach \i in {1,...,\order}{
      \let\oldstrings\strings
      \def\strings{}
      \foreach \v/\s/\done/\dtwo in \oldstrings {
        \ifx\v\empty\relax
        \else
        \pgfmathsetmacro{\newdone}{\done + 2^(-2*\i+1) * \side}
        \pgfmathsetmacro{\newdtwo}{\dtwo + 2^(-2*\i) * \side}
        \edef\newstrings{\strings v/\s0/\done/\dtwo,v/\s1/\newdone/\newdtwo,}
        \global\let\strings\newstrings
        \fi
      }
    }

    \foreach \v/\s/\done/\dtwo in \strings {
      \ifx\v\empty\relax
      \else
      \foreach \V/\S/\Done/\Dtwo in \strings {
        \ifx\V\empty\relax
        \else
        \compareStringsDo{\s}{\S}{
          \foreach \w/\t/\eone/\etwo in \strings {
            \ifx\w\empty\relax
            \else
            \foreach \W/\T/\Eone/\Etwo in \strings {
              \ifx\W\empty\relax
              \else
              \compareStringsDo{\t}{\T}{
                \fill ($(\done,\Done) + (\etwo,\Etwo)$) -- ++ (\smallside,0) -- ++ (0,\smallside)
                -- ++ (-\smallside,0) -- cycle;
              }{
              }{
              }
              \fi
            }
            \fi
          }
        }{
        }{
          \foreach \w/\t/\eone/\etwo in \strings {
            \ifx\w\empty\relax
            \else
            \foreach \W/\T/\Eone/\Etwo in \strings {
              \ifx\W\empty\relax
              \else
              \compareStringsDo{\t}{\T}{
              }{
              }{
                \fill ($(\done,\Done) + (\etwo,\Etwo)$) -- ++ (\smallside,0) -- ++ (0,\smallside)
                -- ++ (-\smallside,0) -- cycle;
              }
              \fi
            }
            \fi
          }
        }
        \fi
      }
      \fi
    }
  \end{tikzpicture}

  \caption{Approximation of the graphon $W'$ of Example~\ref{ex:quasirandompermuton}. The graphon $W'$ has a
    fractal structure, whose first $\order$ steps are represented in the picture.}
  \label{fig:agreementsgraphon}
\end{center}
\endgroup

\end{figure}


\begin{discussion}
  Another way of seeing Example~\ref{ex:quasirandompermuton} is as a ``recursive half-graphon'' that
  does not contain any almost stable graphon: we start by splitting the space $[0,1]^2$ into two
  parts $A_0\df[0,1]\times[0,1/2]$ and $A_1\df[0,1]\times(1/2,1]$ and put a half-graphon (see
    Figure~\ref{fig:halfgraphon}) between these two parts by setting $W((x_1,y_1),
    (x_2,y_2))\df\One[x_1 < x_2]$ for every $(x_1,y_1)\in A_0$ and every $(x_2,y_2)\in A_1$. We then
    split each of the halves in two and proceed recursively splitting the space along the dyadics in
    the second coordinate. It is easy to see that this recursive construction gives the graphon of
    Example~\ref{ex:quasirandompermuton}, which intuitively has half-graphons within every
    subgraphon. We will see in Example~\ref{ex:quasirandompermuton2} that another way of
    interpreting this graphon is as the graphon of agreements of the quasirandom permuton.
\end{discussion}

\section{Consequences for finite graphs}
\label{sec:consgraphs}

The objective of this section is to transfer Theorem~\ref{thm:stablegraphon} to the finite
world. There are several different ways that one can construct different geometric limit objects
that encode convergent sequences, each of the different approaches brings to light new connections
between the finite and the infinite. The approach of \Lovasz--Szegedy~\cite{LS06} (see
also~\cite{Lov12}) relied on \Szemeredi's Regularity Lemma~\cite{Sze78} and the graph cut-norm, the
approach of Diaconis--Janson~\cite{DJ08} uses the theory of exchangeable arrays (see~\cite{Kal05}
for more on this theory), the approach of Elek--Szegedy~\cite{ES12} uses ultraproducts and, more
recently, the approach of \Dolezal--\Grebik--\Hladky--Rocha--\Rozhon~\cite{DGHRR21} uses weak${}^*$
convergence when we think of the space of graphons as $L^\infty(\Omega^2)$.

To transfer Theorem~\ref{thm:stablegraphon}, the ultraproduct method Elek--Szegedy~\cite{ES12} (and
its generalization by Aroskar--Cummings~\cite{AC14}) will be of particular importance as it allows
pulling back properties from the infinite to convergent sequences via \Los's Theorem for
ultraproducts. We describe this method informally here and defer formal definitions to
Appendix~\ref{sec:ultraproduct} (we refer the combinatorially oriented reader to~\cite[\S 2.7]{ES12}
for an application-oriented introduction to (countable) ultrafilters and ultraproducts and
to~\cite[Chapter~4]{CK90} for a more thorough introduction).

Consider a sequence of graphs $(H_n)_{n\in\NN}$ of increasing sizes and let $V_n\df V(H_n)$. Note
that for each graph $G$, the set $\Tind(G,H_n)$ of embeddings of $G$ in $H_n$ can be seen as a
subset of $V_n^{V(G)}$ and we have
\begin{align*}
  \tind(G,H_n) & = \mu_n^{V(G)}(\Tind(G,H_n)) + o_G(1),
\end{align*}
where $\mu_n^{V(G)}$ is the normalized counting measure on $V_n^{V(G)}$ given by $\mu_n^{V(G)}(A) =
\lvert A\rvert/\lvert H_n\rvert^{\lvert G\rvert}$ and the error term $o_G(1)$ goes to $0$ as
$n\to\infty$ for each fixed $G$ and accounts for the fact that the normalization in $\tind$ is
$(\lvert H_n\rvert)_{\lvert G\rvert}$ instead of $\lvert H_n\rvert^{\lvert G\rvert}$. We then
consider the ultraproduct $H\df\prod_{n\in\NN} H_n/\cD$ and note that \Los's Theorem
(see~\cite[Theorem~9.5.1]{Hod93}) implies that the set of embeddings $\Tind(G,H)$ of $G$ in $H$ is
an internal set of $\prod_{n\in\NN} V_n^{V(G)}/\cD$, namely, we have $\Tind(G,H) =
\prod_{n\in\NN}\Tind(G,H_n)/\cD$. Going one step further if $\mu^{V(G)}$ is the Loeb measure
corresponding to $(\mu_n^{V(G)})_{n\in\NN}$, then we have
\begin{align*}
  \mu^{V(G)}(\Tind(G,H))
  & =
  \mu^{V(G)}\left(\prod_{n\in\NN}\Tind(G,H_n)/\cD\right)
  \\
  & =
  \lim_{n\to\cD} \mu_n^{V(G)}(\Tind(G,H_n))
  =
  \lim_{n\to\cD} \tind(G,H_n).
\end{align*}
If the sequence $(H_n)_{n\in\NN}$ is convergent, this ultralimit must be equal to the actual limit
$\lim_{n\to\infty} \tind(G,H_n)$; this means that the ultraproduct $\prod_{n\in\NN} H_n/\cD$ along
with the Loeb measures $\mu^U$ for each finite set $U$ successfully encode the ``limit'' of the
sequence $(H_n)_{n\in\NN}$. The ``problem'' with this encoding is that equipping $\prod_{n\in\NN}
V_n^U/\cD$ with the Loeb measure $\mu^U$ gives a probability space that is far from being standard,
namely it is non-separable. Moreover, if $\sigma(U)$ is the $\sigma$-algebra of $\mu^U$, then for
$U_1,U_2$ disjoint and non-empty, $\sigma(U_1\cup U_2)$ contains many more sets than the completion
of the product $\sigma$-algebra $\sigma(U_1)\otimes\sigma(U_2)$ with respect to the product measure
$\mu^{U_1}\otimes\mu^{U_2}$ (even though Fubini's Theorem still holds: the $\mu^U$-measure of a set
in $\sigma(U)$ can be computed via iterated integrals with respect to $\mu^{U_1}$ and $\mu^{U_2}$,
see Theorem~\ref{thm:FubiniLoeb}).

To address this issue, Elek and~Szegedy encode this ultraproduct probability space in the
probability space $[0,1]^{r(k)}$ equipped with the Lebesgue measure (recall that
$r(k)\df\{A\subseteq[k]\mid A\neq\varnothing\}$); this encoding is done via separable realizations,
which can be seen as a structured version of Maharam's Theorem~\cite{Mah42}. Informally, a
\emph{separable realization of order $k\in\NN_+$} is a measure-preserving function
$\Theta\function{\prod_{n\in\NN} V_n^k/\cD}{[0,1]^{r(k)}}$ that preserves enough structure of the
probability space so that:
\begin{enumerate}
\item For each $m\in[k]$, there exists a \emph{restriction of $\Theta$ of order $m$}, that is, a
  separable realization $\Theta_m\function{\prod_{n\in\NN} V_n^m/\cD}{[0,1]^{r(m)}}$ of order $m$
  such that the diagram
  \begin{equation*}
    \begin{tikzcd}
      \prod_{n\in\NN} V_n^k/\cD \arrow[r, "\Theta"]\arrow[d, "\alpha^*"'] &
      {[0,1]}^{r(k)} \arrow[d, "\alpha^*"]
      \\
      \prod_{n\in\NN} V_n^m/\cD \arrow[r, "\Theta_m"] &
      {[0,1]}^{r(m)}
    \end{tikzcd}
  \end{equation*}
  commutes for every injection $\alpha\injection{[m]}{[k]}$ (recall that $\alpha^*$ is the
  ``projection'' given by $\alpha^*(x)_u = x_{\alpha(u)}$).
\item For every $m\geq k$, there exists a \emph{lifting of $\Theta$ of order $m$}, that is, a
  measure-preserving $\Theta_m\function{\prod_{n\in\NN} V_n^m/\cD}{[0,1]^{r(m,k)}}$ such that the
  diagram
  \begin{equation*}
    \begin{tikzcd}
      \prod_{n\in\NN} V_n^m/\cD \arrow[r, "\Theta_m"]\arrow[d, "\alpha^*"'] &
      {[0,1]}^{r(m,k)} \arrow[d, "\alpha^*"]
      \\
      \prod_{n\in\NN} V_n^k/\cD \arrow[r, "\Theta"] &
      {[0,1]}^{r(k)}
    \end{tikzcd}
  \end{equation*}
  commutes for every injection $\alpha\injection{[k]}{[m]}$.
\end{enumerate}

The main theorem of the ultraproduct method for hypergraphs is then the following, which we state
only for the graph case (see also Theorem~\ref{thm:theonultraproduct} for the general case).

\begin{theorem}[Elek--Szegedy~\protect{\cite{ES12}}]\label{thm:graphonultraproduct}
  For every sequence of graphs of increasing sizes $(H_n)_{n\in\NN}$ and every non-principal
  ultrafilter $\cD$ over $\NN$, there exists a separable realization
  $\Theta\function{\prod_{n\in\NN} V(H_n)^2/\cD}{[0,1]^{r(2)}}$ of order $2$ and a measurable set
  $\cN\subseteq [0,1]^{r(2)}$ such that
  \begin{align*}
    \mu^2\left(\Theta^{-1}(\cN)\symdiff\prod_{n\in\NN} E^{H_n}/\cD\right) & = 0,
  \end{align*}
  where $E^{H_n}\df\{(v,w)\in V(H_n)^2 \mid H_n\vDash E(v,w)\}$ is the set of edges of $H_n$ as
  ordered pairs.
\end{theorem}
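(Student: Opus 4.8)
The plan is to follow the ultraproduct/Loeb-space construction of Elek--Szegedy, whose crux is a ``richness'' property of the Loeb spaces of ultraproducts of finite sets. Write $V_n\df V(H_n)$ (so $\lvert V_n\rvert\to_\cD\infty$ by hypothesis) and, for each $k\in\NN_+$, let $\bigl(\prod_{n\in\NN} V_n^k/\cD,\sigma(k),\mu^k\bigr)$ be the Loeb space of the normalized counting measures on the $V_n^k$. By Fubini for Loeb measures (Theorem~\ref{thm:FubiniLoeb}), the coordinate projections $\alpha^*$ for injections $\alpha\injection{[m]}{[k]}$ and the action of $S_k$ on coordinates are measure-preserving, and by \Los's Theorem the edge set $E\df\prod_{n\in\NN} E^{H_n}/\cD$ lies in $\sigma(2)$ and is invariant under the coordinate swap. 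The goal is to produce a coherent, $S_k$-equivariant family of measure-preserving maps $\Theta_k\function{\prod_n V_n^k/\cD}{[0,1]^{r(k)}}$ (onto Lebesgue measure) with $E$ measurable with respect to $\Theta_2$; then $\cN\df\Theta_2(E)$ works.

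The only substantial input is the structural richness of these Loeb spaces. Let $\sigma_{<k}\subseteq\sigma(k)$ denote the sub-$\sigma$-algebra generated by the lower-order pullbacks $(\alpha^*)^{-1}(\sigma(m))$ over $m<k$ and $\alpha\injection{[m]}{[k]}$. We need: (a) $\sigma(k)$ is atomless over $\sigma_{<k}$, and this persists on passing to $S_k$-invariant parts; and (b) over distinct subsets $T,T'\subseteq[k]$ of the same size the pullbacks $(\pi_T^*)^{-1}(\sigma(\lvert T\rvert))$ and $(\pi_{T'}^*)^{-1}(\sigma(\lvert T'\rvert))$ are relatively independent over the lower-arity $\sigma$-algebra. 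I expect this to be the main obstacle: it is precisely where the finiteness of the $V_n$ and the growth of $\lvert V_n\rvert$ genuinely enter, proved by a counting/diagonalization argument showing that any positive-measure ``cell'' cut out by bounded-arity data still admits internally-many finer subdivisions, so that no countable family resolves it modulo $\sigma_{<k}$; the ``$S_k$-invariant'' and ``relative'' refinements are what make it delicate. (These are exactly the independence phenomena that let Loeb spaces model hypergraph limits.)

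Granting this, I would construct the $\Theta_k$ by recursion on $k$. For $k=1$: since $\prod_n V_n/\cD$ is atomless, it maps onto $[0,1]^{r(1)}\cong[0,1]$ in a measure-preserving way, and one may require the chosen $\Theta_1$ to refine any prescribed separable subalgebra of $\sigma(1)$; we do so for the separable algebra $\cC\subseteq\sigma(1)$ guaranteed by a Schmidt/singular-value decomposition of $\One_E$ inside $L^2(\mu^1)\otimes L^2(\mu^1)$ to satisfy $\EE[\One_E\mid\sigma_{<2}]\in\sigma\bigl((\pi_1^*)^{-1}\cC,(\pi_2^*)^{-1}\cC\bigr)$ (using $\mu^2\rest_{\sigma_{<2}}=\mu^1\otimes\mu^1$). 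For $k\geq2$, given $\Theta_{k-1}$: the restriction axiom forces, for each $(k-1)$-subset $T\subseteq[k]$ with projection $\pi_T$, the $[0,1]^{r(k)}$-coordinates indexed by subsets of $T$ to equal $\Theta_{k-1}\comp\pi_T$, and by coherence and equivariance of $\Theta_{k-1}$ these agree on overlaps, pinning down every coordinate but the top one, indexed by $[k]$. For that coordinate use richness (a) to pick an $S_k$-invariant $\psi_k\function{\prod_n V_n^k/\cD}{[0,1]}$ that is independent of $\sigma_{<k}$ and uniform, requiring moreover, when $k=2$, that $E$ be measurable with respect to $\sigma(\sigma_{<2},\psi_2)$: run the conditional ``halving'' construction of $\psi_2$ starting from the $S_2$-invariant partition $\{E,E^c\}$, which is legitimate because $\sigma(2)$ is atomless over $\sigma(\sigma_{<2},E)$ within the invariant part. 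Set $\Theta_k\df(\Theta_{k-1}\comp\pi_T)_T\times\psi_k$ after discarding repeated coordinates. Then $\Theta_k$ pushes $\mu^k$ to Lebesgue measure on $[0,1]^{r(k)}$ (the lower block by induction together with (b); $\psi_k$ contributes an independent uniform factor by (a)), it is coherent and $S_k$-equivariant by construction, and the forced liftings of $\Theta_2$ to orders $m\geq2$, namely $(\Theta_1\comp\pi_i)_{i\in[m]}\times(\psi_2\comp\pi_{\{i,j\}})_{\{i,j\}}$, are measure-preserving for the same reason.

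Finally, $E$ is measurable with respect to $\Theta_2=(\Theta_1\comp\pi_1,\Theta_1\comp\pi_2,\psi_2)$ — its $\sigma_{<2}$-conditional expectation factors through $\Theta_1\comp\pi_1$ and $\Theta_1\comp\pi_2$ by the choice of $\cC$, and the remaining information through $\psi_2$ by construction — so there is a Borel set $\cN\subseteq[0,1]^{r(2)}$ with $\mu^2\bigl(\Theta_2^{-1}(\cN)\symdiff E\bigr)=0$, which is the assertion of the theorem.
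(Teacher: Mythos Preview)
The paper does not prove this theorem: it is quoted as a result of Elek--Szegedy~\cite{ES12}, with the formal definitions of separable realizations, restrictions and liftings collected in Appendix~\ref{sec:ultraproduct}, but no argument given. So there is no ``paper's own proof'' to compare against; the statement is used as a black box (and later subsumed by the quoted generalization, Theorem~\ref{thm:theonultraproduct}).

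That said, your sketch is a faithful outline of the Elek--Szegedy strategy. You correctly isolate the two structural facts about Loeb spaces that do all the work --- relative atomlessness of $\sigma(k)$ over $\sigma_{<k}$ (their ``random partition'' phenomenon) and relative independence of the pullbacks along distinct coordinate injections --- and you use them in the right way: build $\Theta_k$ by recursion, with the top coordinate $\psi_k$ chosen as a conditionally uniform, $S_k$-invariant variable refining a prescribed separable sub-$\sigma$-algebra, arranging at level $2$ that this prescribed algebra contains $E$. Your device of extracting a separable $\cC\subseteq\sigma(1)$ via an SVD of $\One_E$ so that $\EE[\One_E\mid\sigma_{<2}]$ is $\cC\otimes\cC$-measurable is not literally how Elek--Szegedy proceed (they work directly with countable generating families and a Maharam-type argument), but it is a legitimate alternative and leads to the same conclusion. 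The one place to be careful is the passage ``run the conditional halving construction of $\psi_2$ starting from $\{E,E^c\}$'': you need atomlessness of the $S_2$-invariant part of $\sigma(2)$ over $\sigma(\sigma_{<2},E)$, not just over $\sigma_{<2}$, and this is indeed what the richness lemma delivers, but it is worth stating explicitly.
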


With the theorem above, one can then define the graphon
\begin{align*}
  W(x,y) & \df \lambda(\{z\in [0,1] \mid (x,y,z)\in\cN\})
\end{align*}
and from the properties of restrictions and liftings of the separable realization $\Theta$, it
follows that if $G$ is a finite graph with $V(G) = [m]$, then
\begin{align*}
  \mu^m\left(\Tind\left(G,\prod_{n\in\NN} H_n/\cD\right)\right)
  & =
  \mu^m(\Theta_m^{-1}(\Tind(G,\cN)))
  =
  \lambda(\Tind(G,\cN))
  \\
  & =
  \lambda(\Tind(G,W))
  =
  \tind(G,W),
\end{align*}
where
\begin{align*}
  \Tind(G,\cN)
  \df
  \biggl\{
  x\in [0,1]^{r(m,2)}
  &
  \mathrel{\bigg\vert}
  \forall\{v,w\}\in\binom{V(G)}{2},
  \\
  & (\{v,w\}\in E(G)\tot (x_{\{v\}},x_{\{w\}},x_{\{v,w\}})\in\cN)
  \biggr\}.
\end{align*}
This completes the translation of convergent sequences into graphons.

\medskip

In the next theorem, we explore this connection by pulling back the almost clique or almost empty graphon
$W\rest_{U\times U}$ provided by Theorem~\ref{thm:stablegraphon} through the separable realization of
Theorem~\ref{thm:graphonultraproduct} to the ultraproduct and producing a linear-sized almost clique or almost
empty graph in the convergent sequence. Naturally, we need the analogue of almost stability for increasing
sequences of graphs.

\begin{definition}
  We say that an increasing sequence of graphs $(H_n)_{n\in\NN}$ is \emph{almost stable} if there
  exists $\ell\in\NN$ such that every finite graph $G$ containing a half-graph of order $\ell$
  satisfies $\lim_{n\to\infty} p(G,H_n) = 0$.
\end{definition}

The next theorem is the stability dichotomy for convergent sequences of graphs, which in plain
English says that a convergent sequence of graphs $(H_n)_{n\in\NN}$ contains a \emph{sequence} of
linear-sized induced subgraphs that is either an almost clique or an almost anti-clique if and only
if it contains a \emph{subsequence} of linear-sized induced subgraphs that is almost stable. A
posteriori, it is clear that these conditions are also equivalent to $(H_n)_{n\in\NN}$ containing a
\emph{subsequence} of linear-sized induced subgraphs that is either an almost clique or an almost
anti-clique and equivalent to $(H_n)_{n\in\NN}$ containing a \emph{sequence} of linear-sized induced
subgraphs that is almost stable.

\begin{theorem}\label{thm:stableconvseqgraph}
  The following are equivalent for a convergent sequence of graphs $(H_n)_{n\in\NN}$.
  \begin{enumerate}
  \item There exist $c > 0$ and sets $U_n\subseteq V(H_n)$ such that $\lvert U_n\rvert\geq
    c\cdot\lvert H_n\rvert$ for every $n\in\NN$ and $\lim_{n\to\infty}
    p(\rho,H_n\rest_{U_n})\in\{0,1\}$.%
    \label{thm:stableconvseqgraph:uniform}
  \item There exist a subsequence $(H_{n_\ell})_{\ell\in\NN}$ of $(H_n)_{n\in\NN}$ and sets
    $U_{n_\ell}\subseteq V(H_{n_\ell})$ such that $\limsup_{\ell\to\infty} \lvert U_{n_\ell}\rvert /
    \lvert H_{n_\ell}\rvert > 0$ and $(H_{n_\ell}\rest_{U_{n_\ell}})_{\ell\in\NN}$ is almost
    stable.%
    \label{thm:stableconvseqgraph:stable}
  \end{enumerate}
\end{theorem}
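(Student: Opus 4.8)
The plan is to transfer the graphon dichotomy, Theorem~\ref{thm:stablegraphon}, to convergent sequences through the ultraproduct/separable realization of Theorem~\ref{thm:graphonultraproduct}: one analyzes a limit graphon of $(H_n)_{n\in\NN}$ via Theorem~\ref{thm:stablegraphon} and pulls the resulting almost-uniform square back to the sequence along the measure-preserving separable realization. The implication \ref{thm:stableconvseqgraph:uniform}$\implies$\ref{thm:stableconvseqgraph:stable} needs no limit machinery; all the content is in the converse.

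For \ref{thm:stableconvseqgraph:uniform}$\implies$\ref{thm:stableconvseqgraph:stable}, suppose $\lim_n p(\rho,H_n\rest_{U_n}) = b\in\{0,1\}$. Any finite graph $G$ containing a half-graph of order $2$ has both an edge and a non-edge, so, using the elementary bound $p(G',H')\le\binom{\lvert G'\rvert}{2}p(\rho,H')$ valid whenever $G'$ has an edge, one gets $p(G,H_n\rest_{U_n})\to 0$ (apply the bound to $G$ when $b=0$ and to $\overline G$ and $\overline{H_n\rest_{U_n}}$ when $b=1$). Passing to a subsequence along which $\lvert U_{n_\ell}\rvert$ is strictly increasing (possible since $\lvert U_n\rvert\ge c\lvert H_n\rvert\to\infty$), the sequence $(H_{n_\ell}\rest_{U_{n_\ell}})_{\ell\in\NN}$ is then almost stable with parameter $2$, and $\limsup_\ell\lvert U_{n_\ell}\rvert/\lvert H_{n_\ell}\rvert\ge c>0$, giving~\ref{thm:stableconvseqgraph:stable}.

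For the converse, let $\ell_0$ witness almost stability of $(H_{n_\ell}\rest_{U_{n_\ell}})_{\ell\in\NN}$. Using $\limsup_\ell\lvert U_{n_\ell}\rvert/\lvert H_{n_\ell}\rvert>0$ and compactness of the space of graph limits, pass to a subsequence along which $\lvert U_{n_\ell}\rvert/\lvert H_{n_\ell}\rvert\to c'>0$, $\lvert U_{n_\ell}\rvert$ is strictly increasing, and $(H_{n_\ell}\rest_{U_{n_\ell}})_\ell$ converges to a graphon $W'$. Since $(H_{n_\ell})_\ell$ converges to every limit graphon of $(H_n)_{n\in\NN}$, Definition~\ref{def:subgraphon} makes $W'$ a subgraphon of each such graphon, and $p(G,W')=0$ for every $G$ containing a half-graph of order $\ell_0$, so $W'$ is almost stable. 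Now fix a non-principal ultrafilter $\cD$ over $\NN$ and let $\Theta$, $\cN$, and the graphon $W(x,y)\df\lambda(\{z:(x,y,z)\in\cN\})$ over $[0,1]$ be as produced from Theorem~\ref{thm:graphonultraproduct}; since $W$ is a limit of $(H_n)_{n\in\NN}$ it contains the almost stable subgraphon $W'$, so Theorem~\ref{thm:stablegraphon} (\ref{thm:stablegraphon:stablesubgraphon}$\implies$\ref{thm:stablegraphon:uniformsquare}) gives $U\subseteq[0,1]$ with $\lambda(U)>0$ and $W\rest_{U\times U}=b$ a.e.\ for some $b\in\{0,1\}$. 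Writing $\Theta_1\function{\prod_{n\in\NN}V(H_n)/\cD}{[0,1]}$ for the order-$1$ restriction of $\Theta$, the set $\Theta_1^{-1}(U)$ is Loeb-measurable of measure $\lambda(U)$, hence agrees up to a null set with $\prod_{n\in\NN}U_n/\cD$ for sets $U_n\subseteq V(H_n)$ with $\lvert U_n\rvert/\lvert H_n\rvert\to_\cD\lambda(U)$. Using $\prod_{n\in\NN}E^{H_n}/\cD=\Theta^{-1}(\cN)$ up to null (Theorem~\ref{thm:graphonultraproduct}), the compatibility of $\Theta$ with restrictions (which identifies $\Theta_1^{-1}(U)\times\Theta_1^{-1}(U)$ with $\Theta^{-1}(\{x:x_{\{1\}},x_{\{2\}}\in U\})$ up to null), and Fubini for the Loeb measure $\mu^2$, one computes
\begin{align*}
  \lim_{n\to\cD}p(\rho,H_n\rest_{U_n})\cdot\lambda(U)^2
  &=\mu^2\Bigl(\bigl(\prod_{n\in\NN}E^{H_n}/\cD\bigr)\cap\bigl(\Theta_1^{-1}(U)\times\Theta_1^{-1}(U)\bigr)\Bigr)\\
  &=\int_{U\times U}W\ d(\lambda\times\lambda)=b\cdot\lambda(U)^2,
\end{align*}
where on the left one uses $\lvert E^{H_n}\cap U_n^2\rvert=2p(\rho,H_n\rest_{U_n})\binom{\lvert U_n\rvert}{2}$ and $\lvert U_n\rvert/\lvert H_n\rvert\to_\cD\lambda(U)$. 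Dividing by $\lambda(U)^2>0$ yields $\lim_{n\to\cD}p(\rho,H_n\rest_{U_n})=b$ and $\lim_{n\to\cD}\lvert U_n\rvert/\lvert H_n\rvert>0$.

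The step requiring the most care --- and the expected main obstacle --- is upgrading this conclusion, which \emph{a priori} holds only along a $\cD$-large set of indices (equivalently, gives only one of the subsequence formulations mentioned after the statement), to the genuine ``for every $n$, with an ordinary limit'' form of~\ref{thm:stableconvseqgraph:uniform}. I would fix $c\df\lambda(U)/2$ once and for all --- note $c$ is determined \emph{before} any further ultrafilter is chosen --- assume $b=0$ (the case $b=1$ being symmetric, with cliques in place of anti-cliques), and take $U_n\subseteq V(H_n)$ of size at least $c\lvert H_n\rvert$ minimizing $p(\rho,H_n\rest_{U_n})$ (setting $U_n\df V(H_n)$ for the finitely many $n$ with $\lvert H_n\rvert<1/c$), so that $\lvert U_n\rvert\ge c\lvert H_n\rvert$ for every $n$. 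If $p(\rho,H_n\rest_{U_n})\not\to 0$, pick $\eta>0$ and an infinite $A\subseteq\NN$ with $p(\rho,H_n\rest_{U_n})\ge\eta$ on $A$, and rerun the pullback above along a non-principal ultrafilter containing $A$; since $\sup\{\lambda(V):W''\rest_{V\times V}=0\text{ a.e.}\}=\lim_m\tind(\overline{K}_m,W'')^{1/m}$ depends only on the limit of $(H_n)_{n\in\NN}$ (by~\cite[Theorem~6]{CKP21}), the limit graphon arising from that ultrafilter still admits a constant-$0$ square of measure exceeding $c$, and pulling it back produces, for some $n\in A$, a subset of $V(H_n)$ of size at least $c\lvert H_n\rvert$ with edge density below $\eta$ --- contradicting the choice of $U_n$. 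Hence $\lim_n p(\rho,H_n\rest_{U_n})=0\in\{0,1\}$ with $\lvert U_n\rvert\ge c\lvert H_n\rvert$ for every $n$, which is exactly~\ref{thm:stableconvseqgraph:uniform}.
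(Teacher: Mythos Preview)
Your proof is correct and follows essentially the same architecture as the paper's: pass to a subsequence to get an almost stable subgraphon of a limit graphon, apply Theorem~\ref{thm:stablegraphon} to obtain a constant square $W\rest_{U\times U}\equiv b$, pull this back through the separable realization of Theorem~\ref{thm:graphonultraproduct} to internal sets, and finish by the minimizer-plus-contradiction argument to upgrade from an ultralimit to a genuine limit over all $n$.

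The one substantive difference is how you ensure the size of the constant square is an invariant of the limit (so that the graphon produced by the \emph{second} ultrafilter still has a large enough square). The paper proves an explicit claim that any graphon $W'$ with $\phi_{W'}=\phi_W$ contains a set $U'$ of measure at least $\mu(U)$ with $W'\rest_{U'\times U'}=b$ a.e., using Lemma~\ref{lem:subgraphon} (pull the square through a convergent sequence and back). You instead invoke~\cite[Theorem~6]{CKP21} to identify $\sup\{\lambda(V):W''\rest_{V\times V}=b\text{ a.e.}\}$ with $\lim_m\tind(K_m^{(b)},W'')^{1/m}$, which manifestly depends only on $\phi_{W''}$. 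Both arguments are valid; yours is a shade slicker but relies on the external input of~\cite{CKP21} (which the paper only uses elsewhere for examples), while the paper's route is self-contained via Lemma~\ref{lem:subgraphon} and generalizes verbatim to the theon setting of Theorem~\ref{thm:stableconvsequniversal}.
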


\begin{proof}
  The implication~\ref{thm:stableconvseqgraph:uniform}$\implies$\ref{thm:stableconvseqgraph:stable} is
  trivial as $\lim_{n\to\infty} p(\rho,H_n\rest_{U_n})\in\{0,1\}$ implies $(H_n\rest_{U_n})_{n\in\NN}$ is
  almost stable.

  \medskip

  For the implication~\ref{thm:stableconvseqgraph:stable}$\implies$\ref{thm:stableconvseqgraph:uniform}, fix
  any graphon $W$ over some space $\Omega=(X,\cA,\mu)$ that is a limit of $(H_n)_{n\in\NN}$ and note that
  since this sequence is convergent, every subsequence of $(H_n)_{n\in\NN}$ converges to $W$. By hypothesis,
  possibly passing to a further subsequence $(H_{m_\ell})_{\ell\in\NN}$, there are sets $U_{m_\ell}\subseteq
  V(H_{m_\ell})$ with $\lim_{\ell\to\infty} \lvert U_{m_\ell}\rvert/\lvert H_{m_\ell}\rvert > 0$ such that
  $(H_{m_\ell}\rest_{U_{m_\ell}})_{\ell\in\NN}$ is both almost stable and convergent and if $\widehat{W}$ is a
  limit graphon of this sequence, then it is a stable subgraphon of $W$. By Theorem~\ref{thm:stablegraphon},
  we conclude that there exists a positive measure set $U\subseteq X$ such that $W\rest_{U\times U} = 1$
  a.e.\ or $W\rest_{U\times U} = 0$ a.e.; we let $b\in\{0,1\}$ be the a.e.\ value of $W\rest_{U\times U}$.

  Let $c\df\mu(U)$. We claim now that if $W'$ is another graphon over some space $\Omega'=(X',\cA',\mu')$ that
  is a limit of $(H_n)_{n\in\NN}$, then there exists a positive measure set $U'\subseteq X'$ such that
  $W'\rest_{U'\times U'} = b$ a.e. and $\mu'(U')\geq c$.

  This is completely trivial from the Graphon Uniqueness Theorem~\cite{BCL10} (see
  also~\cite[Theorem~13.10]{Lov12}), but we offer an ad hoc proof here: by
  Lemma~\ref{lem:subgraphon} applied to $W$ and the indicator function $\One_U$, there exist a
  sequence of graphs $(H'_n)_{n\in\NN}$ converging to $W$ and sets $U'_n\subseteq V(H'_n)$ with
  $\lim_{n\to\infty} \lvert U'_n\rvert/\lvert H'_n\rvert = c$ and $(H_n'\rest_{U'_n})_{n\in\NN}$
  converging to the constant $b$ graphon. Since $(H'_n)_{n\in\NN}$ also converges to $W'$, by the
  same lemma, we get a measurable function $f\function{X'}{[0,1]}$ with $\int_{X'} f\ d\mu' = c$
  such that the graphon $W_f$ over $\Omega_f$ given by $W_f(x,y) = W'(x,y)$ is $\mu'_f$-a.e.\ equal
  to $b$. Taking $U'\df\{x\in X' \mid f(x)>0\}$ gives $\mu'(U')\geq c$ and $W'\rest_{U'\times U'} =
  b$ $\mu'$-a.e.

  Therefore, we have shown that there exists $c > 0$ such that every graphon $W$ over some space
  $\Omega=(X,\cA,\mu)$ that is a limit of $(H_n)_{n\in\NN}$ has a measurable set $U\subseteq X$ such
  that $\mu(U)\geq c$ and $W\rest_{U\times U} = b$ a.e.

  For each $n\in\NN$, let $U_n^c\subseteq V(H_n)$ be a set that minimizes $\lvert
  p(\rho,H_n\rest_{U_n}) - b\rvert$ over all possible sets $U_n\subseteq V(H_n)$ such that $\lvert
  U_n\rvert\geq (c/2)\cdot\lvert H_n\rvert$. To conclude the proof, it is sufficient to show that
  $\lim_{n\to\infty} p(\rho,H_n\rest_{U_n^c}) = b$. Suppose not. Then there exists a subsequence
  $(H_{m_\ell})_{\ell\in\NN}$ of $(H_n)_{n\in\NN}$ such that $\lim_{\ell\to\infty} \lvert
  p(\rho,H_{m_\ell}\rest_{U^c_{m_\ell}}) - b\rvert > 0$ and by possibly passing to a further
  subsequence, we can also assume that $(H_{m_\ell}\rest_{U^c_{m_\ell}})_{\ell\in\NN}$ is
  convergent.

  We now let $H\df\prod_{\ell\in\NN} H_{m_\ell}/\cD$ for some non-principal ultrafilter $\cD$ over
  $\NN$ and let $\Theta\function{\prod_{\ell\in\NN} V(H_{m_\ell})^2/\cD}{[0,1]^{r(2)}}$ be a
  separable realization of order $2$ and $\cN$ be as in Theorem~\ref{thm:graphonultraproduct}. We
  also let $\Theta_1$ be a restriction of $\Theta$ of order $1$, let $W'$ be the graphon over
  $[0,1]$ defined by
  \begin{align*}
    W'(x,y) & \df \lambda(\{z\in[0,1] \mid (x,y,z)\in\cN\})
  \end{align*}
  and per our previous claim, let $U'\subseteq[0,1]$ be such that $\lambda(U')\geq c$ and
  $W'\rest_{U'\times U'} = b$ a.e. We define further $\widehat{U}\df\Theta_1^{-1}(U')\subseteq
  \prod_{\ell\in\NN} V(H_{m_\ell})/\cD$ and since $\Theta_1$ is measure-preserving, it follows that
  $\mu^1(\widehat{U})\geq c$ for the Loeb measure $\mu^1$.

  Consider now the graph $H\rest_{\widehat{U}}$ and note that $E^{H\rest_{\widehat{U}}} =
  \Theta^{-1}(\cN\cap(U'\times U'\times[0,1]))$ a.e.\ and since $W'\rest_{U'\times U'} = b$ a.e., it
  follows that $\mu^2(E^{H\rest_{\widehat{U}}}) = b\cdot \mu^2(\widehat{U}\times\widehat{U}) =
  b\cdot\mu^1(\widehat{U})^2$, where the last equality follows from Fubini's Theorem for Loeb
  measures, Theorem~\ref{thm:FubiniLoeb}. Let now $U\df \prod_{\ell\in\NN} U_\ell/\cD$ be an
  internal set such that $\mu^1(U\symdiff\widehat{U})=0$. By Fubini's Theorem again, it follows that
  $\mu^2(E^{H\rest_U}) = b\cdot\mu^1(U)^2$ so we must have
  \begin{align*}
    \lim_{\ell\to\cD} \frac{\lvert U_\ell\rvert}{\lvert H_{m_\ell}\rvert} & = \mu^1(U) \geq c,\\
    \lim_{\ell\to\cD} p(\rho,H_{m_\ell}\rest_{U_\ell}) & = \frac{\mu^2(E^{H\rest_U})}{\mu^1(U)^2} = b.
  \end{align*}

  However, this is a contradiction because it implies that along some subsequence we have $\lvert
  U_\ell\rvert/\lvert H_{m_\ell}\rvert\geq c/2$ and $p(\rho,H_{m_\ell}\rest_{U_\ell})\to b$
  contradicting the fact that the former implies $\lvert p(\rho,H_{m_\ell}\rest_{U_\ell}) -
  b\rvert\geq\lvert p(\rho,H_{m_\ell}\rest_{U^c_{m_\ell}}) - b\rvert$ and we have $\lvert
  p(\rho,H_{m_\ell}\rest_{U^c_{m_\ell}}) - b\rvert\not\to 0$.
\end{proof}

\begin{discussion}\label{dsc:convergence}
  Note that the convergence condition in Theorem~\ref{thm:stableconvseqgraph} is necessary for a
  very simple reason: if we take a sequence of increasing graphs that alternates between complete
  graphs (say, when $n$ is even) and empty graphs (say, when $n$ is odd), it is clearly not
  convergent and any linear-sized induced subgraph also alternates between almost clique or almost
  anti-clique.
\end{discussion}

\begin{discussion}\label{dsc:disjunioncliques}
  Na\"{i}vely, one might conjecture that if the sequence $(H_n)_{n\in\NN}$ itself is almost stable
  and we know the order of its stability, say, we know that $\lim_{n\to\infty} p(G,H_n) = 0$ for
  every finite graph $G$ containing a half-graph of order $\ell$, then one would be able to know
  bounds on the relative size $c$ of the sets $U_n$ depending only on $\ell$. However, this is not
  the case since if $H_{n,m}$ is the disjoint union of $m$ cliques of size $n$, then for each fixed
  $m\in\NN_+$, the sequence $(H_{n,m})_{n\in\NN}$ is convergent, does not contain any half-graphs of
  order $2$ and the maximum asymptotic relative size of an almost clique or almost anti-clique is
  $1/m$.

  This also shows the necessity of requiring almost cliques or almost anti-cliques as opposed to
  cliques or anti-cliques: the diagonal sequence $(H_{n,n})_{n\in\NN}$ is convergent but the largest
  cliques or anti-cliques in $H_{n,n}$ have size $n = \sqrt{\lvert H_{n,n}\rvert}$. However, the
  edge density in the sequence $(H_{n,n})_{n\in\NN}$ itself goes to zero so it is an almost
  anti-clique.
\end{discussion}

\begin{discussion}
  A posteriori, the example of Discussion~\ref{dsc:convergence} shows that we cannot get
  Theorem~\ref{thm:stableconvseqgraph} by simply applying the removal lemma followed by the stable
  regularity lemma~\cite{MS14,AFP18,MS21} to each of the $H_{n_\ell}\rest_{U_{n_\ell}}$ with a
  precision $\epsilon_{n_\ell} > 0$ as such argument does not use the required property of
  convergence of the sequence $(H_n)_{n\in\NN}$ in any way. The reason why the stable regularity
  lemma is not enough is that when applied to $\epsilon > 0$, it provides some $c(\epsilon) > 0$
  such that every sufficiently large $H$ has some set $U\subseteq V(H)$ of size at least
  $c(\epsilon)\cdot\lvert H\rvert$ that has edge density either at least $1-\epsilon$ or at most
  $\epsilon$. However, to obtain the almost clique or almost anti-clique, we need to make
  $\epsilon_{n_\ell}\to 0$ which also destroys our guaranteed lower bound on the relative size of
  the sets: $c(\epsilon_{n_\ell})\to 0$.
\end{discussion}

We now proceed to transfer the stability dichotomy to countable graphs.

\begin{theorem}\label{thm:stablecountablegraph}
  The following are equivalent for a graph $G$ with $V(G)=\NN_+$.
  \begin{enumerate}
  \item There exist a set $U\subseteq\NN_+$ and an increasing sequence $(n_\ell)_{\ell\in\NN}$ of
    positive integers such that $\lim_{\ell\to\infty} p(\rho,G\rest_{U\cap[n_\ell]})\in\{0,1\}$ and
    $\lim_{\ell\to\infty} \lvert U\cap [n_\ell]\rvert/n_\ell > 0$.%
    \label{thm:stablecountablegraph:uniform}
  \item There exist a set $U\subseteq\NN_+$ and an increasing sequence $(n_\ell)_{\ell\in\NN}$ of
    positive integers such that $(G\rest_{U\cap[n_\ell]})_{\ell\in\NN}$ is almost stable and
    $\lim_{\ell\to\infty} \lvert U\cap [n_\ell]\rvert/n_\ell > 0$.%
    \label{thm:stablecountablegraph:stable}
  \end{enumerate}
\end{theorem}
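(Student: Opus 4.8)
The plan is to deduce Theorem~\ref{thm:stablecountablegraph} from Theorem~\ref{thm:stableconvseqgraph} by turning a single countable graph $G$ with $V(G)=\NN_+$ into an increasing sequence of its finite restrictions $G\rest_{[n]}$, passing to a convergent subsequence, and transferring the dichotomy back and forth. The key observation is that the two items of Theorem~\ref{thm:stablecountablegraph} are ``local'' along a subsequence $(n_\ell)_\ell$ of initial segments, which is exactly the data that Theorem~\ref{thm:stableconvseqgraph} consumes and produces once we restrict attention to a convergent subsequence.

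First I would set up the sequence. Since $(G\rest_{[n]})_{n\in\NN_+}$ is an increasing sequence of finite graphs, by compactness of $[0,1]^{\cM}$ (as discussed after the definition of convergence) there is an increasing sequence $(m_k)_{k\in\NN}$ along which $(G\rest_{[m_k]})_{k\in\NN}$ is convergent; call this convergent sequence $(\widetilde{H}_k)_{k\in\NN}$. The strategy is then: item~\ref{thm:stablecountablegraph:stable} for $G$ will be massaged into the hypothesis of item~\ref{thm:stableconvseqgraph:stable} for $(\widetilde{H}_k)_k$ (after a further thinning so that the given $(n_\ell)$ becomes a subsequence of the $(m_k)$), Theorem~\ref{thm:stableconvseqgraph} then yields sets $U_k\subseteq V(\widetilde{H}_k)=[m_k]$ with $\lvert U_k\rvert\geq c\cdot m_k$ and $p(\rho,\widetilde{H}_k\rest_{U_k})\to b\in\{0,1\}$, and then comes the one genuinely new point: these $U_k$ live inside different finite restrictions of $G$ and need to be amalgamated into a single infinite set $U\subseteq\NN_+$. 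The reverse direction (item~\ref{thm:stablecountablegraph:uniform}$\implies$\ref{thm:stablecountablegraph:stable}) is immediate exactly as in Theorem~\ref{thm:stableconvseqgraph}, since $\lim_\ell p(\rho,G\rest_{U\cap[n_\ell]})\in\{0,1\}$ forces $(G\rest_{U\cap[n_\ell]})_\ell$ to be almost stable (any graph with a half-graph of order, say, $2$ has positive edge density bounded away from $0$ and $1$).

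The main obstacle is the amalgamation step: in the finite/convergent-sequence version, the sets $U_{n_\ell}$ are allowed to be completely unrelated subsets of the disjoint vertex sets $V(H_{n_\ell})$, whereas here we must produce \emph{one} $U\subseteq\NN_+$ whose initial segments $U\cap[n_\ell]$ simultaneously realize the required density $c$ and the required limiting edge density $b$ (resp.\ almost stability). To handle this I would run the argument of Theorem~\ref{thm:stableconvseqgraph} ``greedily along the diagonal'': pick a strictly increasing subsequence $n_1<n_2<\cdots$ of the $(m_k)$ growing fast enough that, for each $\ell$, a near-optimal set $U_\ell^\ast\subseteq[n_\ell]$ with $\lvert U_\ell^\ast\rvert\geq (c/2)\,n_\ell$ minimizing $\lvert p(\rho,G\rest_{U_\ell^\ast})-b\rvert$ already has this quantity within $1/\ell$ of its limiting value (which is $0$, by the claim inside the proof of Theorem~\ref{thm:stableconvseqgraph} that every limit graphon of $(\widetilde{H}_k)_k$ has a positive-measure $b$-homogeneous square of measure $\geq c$). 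Then one cannot simply set $U=\bigcup_\ell U_\ell^\ast$ since the $U_\ell^\ast$ need not be nested; instead I would choose the gaps $n_{\ell+1}/n_\ell\to\infty$ so large that replacing $U_{\ell+1}^\ast$ by $(U_{\ell+1}^\ast\setminus[n_\ell])\cup(\text{some fixed tail-compatible choice on }[n_\ell])$ changes densities by $o(1)$, and build $U$ by transfinitely (here just countably) reconciling the choices: formally, define $U\cap([n_{\ell+1}]\setminus[n_\ell])$ at stage $\ell$ so that $\lvert U\cap[n_{\ell+1}]\rvert\geq (c/2)n_{\ell+1}$ and $p(\rho,G\rest_{U\cap[n_{\ell+1}]})$ is pulled close to $b$, using that the already-committed part $U\cap[n_\ell]$ is negligible relative to $n_{\ell+1}$. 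The densities then converge to $b$ and the relative sizes stay $\geq c/2>0$ along $(n_\ell)_\ell$, giving item~\ref{thm:stablecountablegraph:uniform}.

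An alternative, and perhaps cleaner, route for the hard direction is to go through the ultraproduct directly: form $H\df\prod_{k\in\NN}G\rest_{[m_k]}/\cD$ for a non-principal $\cD$, note $V(H)=\prod_k[m_k]/\cD$ and that $H$ carries the Loeb measure as in Theorem~\ref{thm:graphonultraproduct}, obtain via Theorem~\ref{thm:stablegraphon} a positive-measure internal set $\widehat{U}=\prod_k U_k/\cD$ with $\mu^1(\widehat{U})=c>0$ and $\mu^2(E^{H\rest_{\widehat{U}}})=b\cdot c^2$, and then \emph{extract} a subsequence along which $\lvert U_k\rvert/m_k\to c$ and $p(\rho,G\rest_{[m_k]}\rest_{U_k})\to b$; setting $n_\ell\df m_{k_\ell}$ and letting $U\subseteq\NN_+$ be built from the $U_{k_\ell}$ as above finishes the proof. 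Either way, the only real work beyond quoting Theorem~\ref{thm:stableconvseqgraph} is bookkeeping to make the per-index sets cohere into a single subset of $\NN_+$; I expect this to be a short lemma-style argument of the ``choose $n_{\ell+1}$ large enough'' variety, with the convergence of $(\widetilde{H}_k)_k$ doing all the heavy lifting exactly as in Discussion~\ref{dsc:convergence}.
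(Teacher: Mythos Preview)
Your proposal is correct and follows essentially the same route as the paper: thin $(n_\ell)$ so that $(G\rest_{[n_\ell]})_\ell$ is convergent, apply Theorem~\ref{thm:stableconvseqgraph} to obtain sets $U_\ell\subseteq[n_\ell]$ of relative size $\geq c$ with edge density tending to $b\in\{0,1\}$, and then amalgamate them into a single $U\subseteq\NN_+$ by passing to a rapidly growing subsequence so that earlier blocks are negligible. The paper's amalgamation is exactly the ``choose $n_{\ell+1}$ large enough'' bookkeeping you anticipate: it takes $m_{t+1}\geq 2^t m_t$ and sets $U\df\bigcup_t U_{\ell_t}\cap([m_t]\setminus[m_{t-1}])$, giving $\lvert(U\cap[m_t])\symdiff U_{\ell_t}\rvert\leq m_{t-1}=o(\lvert U_{\ell_t}\rvert)$, so both the relative size and the edge density transfer.
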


\begin{proof}
  The
  implication~\ref{thm:stablecountablegraph:uniform}$\implies$\ref{thm:stablecountablegraph:stable}
  is trivial as $\lim_{\ell\to\infty} p(\rho,G\rest_{U\cap[n_\ell]})\in\{0,1\}$ implies
  $(G\rest_{U\cap[n_\ell]})_{\ell\in\NN}$ is almost stable.

  \medskip

  For the
  implication~\ref{thm:stablecountablegraph:stable}$\implies$\ref{thm:stablecountablegraph:uniform},
  by possibly passing to a subsequence of $(n_\ell)_{\ell\in\NN}$, we may further assume that
  $(G\rest_{[n_\ell]})_{\ell\in\NN}$ is convergent, so by Theorem~\ref{thm:stableconvseqgraph},
  there exist $c > 0$ and sets $U_\ell\subseteq [n_\ell]$ such that $\lvert U_\ell\rvert\geq c\cdot
  n_\ell$ for every $\ell\in\NN$ and $\lim_{\ell\to\infty} p(\rho,G\rest_{U_\ell})\in\{0,1\}$.

  Define then the sequence $(m_t)_{t\in\NN}$ recursively by
  \begin{align*}
    m_0 & \df n_0, &
    m_{t+1} & \df \min\{n_\ell \mid \ell\in\NN\land n_\ell\geq 2^t\cdot m_t\}
  \end{align*}
  and for each $t\in\NN$, let $\ell_t\in\NN$ be such that $m_t = n_{\ell_t}$. Let also
  \begin{align*}
    U & \df \bigcup_{t\in\NN} U_{\ell_t}\cap ([m_t]\setminus [m_{t-1}]),
  \end{align*}
  where $m_{-1}\df 0$.

  Note that
  \begin{align*}
    \lvert(U\cap [m_t])\symdiff U_{\ell_t}\rvert
    & \leq
    m_{t-1}
    \leq
    2^{-t+1}\cdot\lvert U_{\ell_t}\rvert,
  \end{align*}
  hence $\lim_{t\to\infty} \lvert U\cap [m_t]\rvert/\lvert U_{\ell_t}\rvert = 1$, which implies that
  \begin{align*}
    \liminf_{t\to\infty} \frac{\lvert U\cap[m_t]\rvert}{m_t}
    & =
    \liminf_{t\to\infty}
    \frac{\lvert U_{\ell_t}\rvert}{n_{\ell_t}}
    \geq
    c
    >
    0,
  \end{align*}
  and
  \begin{align*}
    \lim_{t\to\infty} p(\rho,G\rest_{U\cap[m_t]})
    & =
    \lim_{t\to\infty}
    p(\rho,G\rest_{U_{\ell_t}})
    \in
    \{0,1\},
  \end{align*}
  completing the proof.
\end{proof}

\begin{discussion}\label{dsc:countable}
  One might na\"{i}vely hope that in the countable case one would be able to produce an almost
  clique or almost anti-clique $U$ of positive density (as opposed to positive upper density as in
  Theorem~\ref{thm:stablecountablegraph}), but a simple counter-example shows this is not possible:
  if $G$ is the graph over $\NN_+$ with edge set
  \begin{align*}
    E(G)
    & \df
    \{\{v,w\}
    \mid
    \lfloor\sqrt{\log_2(v)}\rfloor\equiv\lfloor\sqrt{\log_2(w)}\rfloor\equiv 0 \pmod{2}\}
  \end{align*}
  then $G$ is stable (as it is a union of cliques) and does not have any positive density almost
  clique or anti-clique simply because for each $\epsilon > 0$ and each $n_0\in\NN_+$, there exist
  $n,n'\geq n_0$ such that the edge density of the marginals $G\rest_{[n]}$ and $G\rest_{[n']}$ are
  at most $\epsilon$ away from $0$ and $1$, respectively.
\end{discussion}

\section{Trivial sub-objects in theons}
\label{sec:universaltheories}

In this section, we state and prove the stability dichotomy theorem for theons, Theorem~\ref{thm:stabletheon},
which is a generalization of Theorem~\ref{thm:stablegraphon} of Section~\ref{sec:graphons} for universal
theories over finite relational languages. Before we do so, let us give a gentle introduction to the theories
of flag algebras~\cite{Raz07} and theons~\cite{CR20a}, which generalize the theory of graphons to universal
theories.

First, given finite models $M$ and $N$ of a universal theory $T$ over a finite relational language $\cL$, we
let $\Tind(M,N)$ be the set of all model embeddings of $M$ in $N$ (i.e., injective functions
$f\injection{V(M)}{V(N)}$ that preserve all relations and negations of relations) and let
\begin{align*}
  \tind(M,N)
  & \df
  \frac{\lvert\Tind(M,N)\rvert}{(\lvert N\rvert)_{\lvert M\rvert}}
\end{align*}
be the normalized number of embeddings of $M$ in $N$, called the \emph{labeled (induced) density} of
$M$ in $N$. We also define the \emph{(induced) density} of $M$ in $N$ as the normalized number of
induced submodels of $N$ that are isomorphic to $M$ given by
\begin{align*}
  p(M,N)
  & \df
  \frac{\lvert\{U\subseteq V(N) \mid N\rest_U\cong M\}\rvert}{\binom{\lvert N\rvert}{\lvert M\rvert}}
  =
  \frac{\lvert M\rvert!}{\lvert\Aut(M)\rvert}\cdot\tind(M,N),
\end{align*}
where $\Aut(M)$ is the group of automorphisms of $M$. For each $n\in\NN$, we let $\cM_n[T]$ be the
set of models of $T$ of size $n$ up to isomorphism and we let\footnote{In the framework of limits,
  it is very convenient to assume that the vertex set of a structure/model can be empty and thus
  $\cM_0[T]$ is included in this union.\label{ftnt:emptyvertexset}}
$\cM[T]\df\bigcup_{n\in\NN}\cM_n[T]$. Note that the fact that $T$ is a universal theory implies that
$\cM[T]$ is closed under substructures, which in turn implies that for every $N\in\cM[T]$ and every
$n\leq\lvert N\rvert$, we have $\sum_{M\in\cM_n[T]} p(M,N) = 1$. For a set $V$, we also let
$\cK_V[T]$ be the set of models of $T$ whose vertex set is $V$ (we do \emph{not} factor isomorphisms
for $\cK_V[T]$).

Recall that for universal theories $T_1$ and $T_2$ over finite relational languages $\cL_1$ and
$\cL_2$, respectively, an \emph{open interpretation} (also known under the name \emph{definition})
from $T_1$ to $T_2$ is a function $I$ (denoted $I\interpret{T_1}{T_2}$) that maps each predicate
symbol $P\in\cL_1$ to an open (i.e., quantifier-free) formula $I(P)(x_1,\ldots,x_{k(P)})$ of
$\cL_2$, where $k(P)$ is the \emph{arity} of $P$ and such that for each axiom
$\forall\vec{x},F(\vec{x})$ of $T_1$, we have $T_2\vdash\forall\vec{x}, I(F)(\vec{x})$ when we
declare $I$ to commute with logical connectives. An open interpretation $I\interpret{T_1}{T_2}$
contra-variantly naturally defines maps $\cK_V[T_2]\to\cK_V[T_1]$ for each set $V$ given by
$(I(M)\vDash P(\vec{x}))\iff (M\vDash I(P)(\vec{x}))$ for each $P\in\cL_1$.

Two open interpretations $I_1,I_2\interpret{T_1}{T_2}$ are \emph{equivalent} if
$T_2\vdash\forall\vec{x},(I_1(P)(\vec{x})\tot I_2(P)(\vec{x}))$ for every predicate symbol
$P\in\cL_1$. Equivalently, the open interpretations $I_1,I_2\interpret{T_1}{T_2}$ are equivalent if
they induce the same maps $\cK_V[T_2]\to\cK_V[T_1]$ for every set $V$ (in fact, it is enough to know
that this is true for $V=[k]$, where $k$ is the maximum arity of a predicate of $T_1$). We let
$\cat{Int}$ be the category of universal theories in finite relational languages whose morphisms are
open interpretations up to equivalence. Note that if $I\interpret{T_1}{T_2}$ is an isomorphism of
$\cat{Int}$, then $p(I(M),I(N)) = p(M,N)$ for every $M,N\in\cM[T_2]$, which means that isomorphic
theories are \emph{indistinguishable} for the purposes of densities of submodels. Isomorphisms in
the category $\cat{Int}$ are also known under the name \emph{interdefinitions}.

It will be more convenient to work with \emph{canonical theories}, which are theories in which every
relation contains only injective tuples, that is, theories that entail
\begin{align}\label{eq:canonical}
  \forall x_1,\ldots,x_{k(P)}, &
  \left(\bigvee_{1\leq i < j\leq k(P)} x_i = x_j\to \neg P(x_1,\ldots,x_{k(P)})\right)
\end{align}
for every predicate symbol $P$. By~\cite[Theorem~2.3]{CR20a} (see also~\cite[\S 2.2]{AC14}), every
universal theory is isomorphic (in~$\cat{Int}$) to a canonical theory. From this point forward,
unless explicitly mentioned otherwise, \emph{all theories are assumed to be canonical}. For a finite
relational language, we let $T_\cL$ be the \emph{pure canonical theory over $\cL$}, that is, the
theory whose axioms are precisely the ones in~\eqref{eq:canonical} for each $P\in\cL$; the models of
$T_\cL$ are sometimes referred to as \emph{canonical structures in $\cL$}.

The notion of convergence is now defined analogously to the graph case: a sequence $(N_n)_{n\in\NN}$
of finite models of a canonical theory $T$ is called \emph{convergent} if it is \emph{increasing} in
the sense that for every $n\in\NN$, $\lvert N_n\rvert < \lvert N_{n+1}\rvert$ and if for every
$M\in\cM[T]$, the limit $\lim_{n\to\infty} p(M,N_n)$ exists. Again, another way of seeing this is as
convergence in the (compact and metrizable) product topology of $[0,1]^{\cM[T]}$ of the sequence
$(p(\place,N_n))_{n\in\NN}$.

The simplest way of encoding the limit of a convergent sequence $(N_n)_{n\in\NN}$ is
syntactically/algebraically by defining $\phi\in [0,1]^{\cM[T]}$ by $\phi(M)\df\lim_{n\to\infty}
p(M,N_n)$. The theory of flag algebras then describes which points of $[0,1]^{\cM[T]}$ can arise as
limits of convergent sequences. It turns out that this description boils down to some polynomial
restrictions plus a positivity condition. Namely, let $\RR\cM[T]$ be the vector space of formal
$\RR$-linear combinations of elements of $\cM[T]$. We then extend each $\phi\in [0,1]^{\cM[T]}$
linearly to a function $\phi\function{\RR\cM[T]}{\RR}$ (which we denote by abuse with the same
letter) as
\begin{align*}
  \phi\left(\sum_{M\in\cM[T]} c_M M\right) & \df \sum_{M\in\cM[T]} c_M \phi(M).
\end{align*}
Let $\cK[T]$ be the linear subspace of $\RR\cM[T]$ spanned by elements of the form
\begin{align*}
  M - \sum_{N\in\cM_n[T]} p(M,N)N
\end{align*}
for $n\geq\lvert M\rvert$ and let $\cA[T]\df\RR\cM[T]/\cK[T]$. It is straightforward to see that if
$\phi\df \lim_{n\to\infty} p(\place,N_n)$ for some convergent sequence $(N_n)_{n\in\NN}$, then
$\cK[T]\subseteq\ker(\phi)$, which means that we can think of $\phi$ instead as a linear map
$\phi\function{\cA[T]}{\RR}$ by factoring out $\cK[T]$. It turns out that $\cA[T]$ becomes an
$\RR$-algebra when equipped with the (bilinear) product operation defined by
\begin{align*}
  M_1\cdot M_2 & \df \sum_{M\in\cM_n[T]} p(M_1,M_2;M) M,
\end{align*}
for $n\geq\lvert M_1\rvert + \lvert M_2\rvert$, where
\begin{multline*}
  p(M_1,M_2;M)
  \\
  \df
  \frac{%
    \lvert\{(U_1,U_2)\in 2^{V(M)}\times 2^{V(M)}
    \mid M\rest_{U_1}\cong M_1\land M\rest_{U_2}\cong M_2
    \land U_1\cap U_2 = \varnothing
    \}\rvert
  }{%
    \binom{\lvert M\rvert}{\lvert M_1\rvert}\binom{\lvert M\rvert - \lvert M_1\rvert}{\lvert M_2\rvert}
  },
\end{multline*}
and the unit of $\cA[T]$ is the equivalence class of the element $\sum_{M\in\cM_n[T]} M$ for any
given $n\in\NN$. Furthermore, any $\phi$ coming from a convergent sequence respects this product
operation, in other words, $\phi$ is necessarily in the set $\Hom(\cA[T],\RR)$ of $\RR$-algebra
homomorphisms from $\cA[T]$ to $\RR$. In fact, by letting
\begin{align*}
  \HomT{T} & \df \{\phi\in\Hom(\cA[T],\RR) \mid \forall M\in\cM[T], \phi(M)\geq 0\}
\end{align*}
be the set of \emph{positive homomorphisms}, any $\phi$ coming from a convergent sequence is
necessarily a positive homomorphism. The main theorem below of flag algebra theory (sometimes
referred to as Existence Theorem for flag algebras) says that in fact the set $\HomT{T}$ is
precisely the set of all limits of convergent sequences.

\begin{theorem}[Lov\'{a}sz--Szegedy~\protect{\cite{LS06}}, Razborov~\protect{\cite{Raz07}}]
  Let $T$ be a universal theory in a finite relational language.

  If $(N_n)_{n\in\NN}$ is a convergent sequence of finite models of $T$, then there exists
  $\phi\in\HomT{T}$ such that $\lim_{n\to\infty} p(M,N_n)=\phi(M)$ for every
  $M\in\cM[T]$. Conversely, if $\phi\in\HomT{T}$, then there exists a convergent sequence
  $(N_n)_{n\in\NN}$ of $T$ such that $\lim_{n\to\infty} p(M,N_n)=\phi(M)$ for every $M\in\cM[T]$.
\end{theorem}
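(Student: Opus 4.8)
The plan is to prove the two directions separately: the first (convergent sequence $\Rightarrow$ positive homomorphism) is a routine averaging-and-continuity argument, and the second ($\phi\in\HomT{T}\Rightarrow$ convergent sequence) is the substantive one, which I would carry out by building a random countable model of $T$ and invoking a law of large numbers. For the forward direction, set $\phi(M)\df\lim_{n\to\infty}p(M,N_n)$ (the limit exists by convergence) and extend $\phi$ linearly to $\RR\cM[T]$. First I would check $\cK[T]\subseteq\ker\phi$: sampling a uniform $\lvert M\rvert$-subset of a finite model $N$ with $\lvert N\rvert\geq n\geq\lvert M\rvert$ is the same as first sampling a uniform $n$-subset and then a uniform $\lvert M\rvert$-subset inside it, giving $p(M,N)=\sum_{N'\in\cM_n[T]}p(N',N)\,p(M,N')$; specializing to $N=N_k$ and letting $k\to\infty$ annihilates each generator of $\cK[T]$, so $\phi$ descends to $\cA[T]$. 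For multiplicativity, observe that $\sum_{M\in\cM_n[T]}p(M_1,M_2;M)\,p(M,N_k)$ is the probability that two \emph{disjoint} uniformly chosen subsets of $V(N_k)$ of sizes $\lvert M_1\rvert,\lvert M_2\rvert$ induce copies of $M_1$ and $M_2$; since two \emph{independent} such subsets are disjoint with probability $1-O(1/\lvert N_k\rvert)$, this differs from $p(M_1,N_k)\,p(M_2,N_k)$ by $o(1)$, and $k\to\infty$ yields $\phi(M_1\cdot M_2)=\phi(M_1)\phi(M_2)$. Finally $\phi(M)\geq0$ and $\phi$ sends the unit $\sum_{M\in\cM_n[T]}M$ of $\cA[T]$ to $\lim_k\sum_{M\in\cM_n[T]}p(M,N_k)=1$, so $\phi\in\HomT{T}$.

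For the converse, fix $\phi\in\HomT{T}$. For each $n$, positivity of $\phi$ together with $\phi\bigl(\sum_{M\in\cM_n[T]}M\bigr)=1$ lets me define an $S_n$-invariant probability measure $\PP_n$ on $\cK_{[n]}[T]$ by assigning mass $\lvert\Aut(M)\rvert\,\phi(M)/n!$ to each of the $n!/\lvert\Aut(M)\rvert$ labelled copies on $[n]$ of each $M\in\cM_n[T]$. The relation $\cK[T]\subseteq\ker\phi$ proved above is exactly what makes the family $(\PP_n)_{n\in\NN}$ \emph{consistent}: deleting vertex $n+1$ (equivalently, by invariance, a uniformly random vertex) pushes $\PP_{n+1}$ forward onto the $S_n$-invariant measure on $\cK_{[n]}[T]$ whose isomorphism-type distribution is $M\mapsto\sum_{M'\in\cM_{n+1}[T]}p(M,M')\,\phi(M')=\phi(M)$, i.e.\ onto $\PP_n$. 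Kolmogorov's extension theorem then yields a random canonical model $\rn{N}\in\cK_{\NN_+}[T]$, invariant under every finite permutation of $\NN_+$, with $\rn{N}\rest_{[n]}\sim\PP_n$ for all $n$; in particular $\EE\bigl[p(M,\rn{N}\rest_{[n]})\bigr]=\phi(M)$ whenever $n\geq\lvert M\rvert$.

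The remaining and decisive task is to upgrade this to \emph{almost sure} convergence $p(M,\rn{N}\rest_{[n]})\to\phi(M)$. Here I would use that, by exchangeability of $\rn{N}$, the labeled densities $\tind(M,\rn{N}\rest_{[n]})$ — equivalently, up to the fixed factor $\lvert\Aut(M)\rvert/\lvert M\rvert!$, the densities $p(M,\rn{N}\rest_{[n]})$ — form a \emph{reversed martingale}: each is a symmetric $U$-statistic of the first $n$ coordinates of $\rn{N}$, and averaging in one more coordinate is a conditional expectation, so by reversed martingale convergence these converge a.s.\ to some random limit $L_M$. Separately, multiplicativity of $\phi$ gives the $L^2$-computation $\EE\bigl[p(M,\rn{N}\rest_{[n]})^2\bigr]\to\phi(M\cdot M)=\phi(M)^2$ — two independent uniform $\lvert M\rvert$-subsets of $[n]$ are disjoint with probability $1-o(1)$, and on the disjoint event exchangeability identifies the joint probability with $\EE\bigl[p(M,M;\rn{N}\rest_{[2\lvert M\rvert]})\bigr]=\phi(M\cdot M)$ — so together with $\EE\bigl[p(M,\rn{N}\rest_{[n]})\bigr]=\phi(M)$ we obtain $L^2$-convergence $p(M,\rn{N}\rest_{[n]})\to\phi(M)$, forcing $L_M=\phi(M)$ a.s.; in the language of \cite{AC14,CR20a} this is precisely the statement that a multiplicative $\phi$ corresponds to an \emph{ergodic} (i.e.\ dissociated) exchangeable model. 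Since $\cM[T]$ is countable, almost surely this holds for all $M$ simultaneously; picking a sample point $\omega$ in that event and setting $N_n\df\rn{N}(\omega)\rest_{[n]}$ gives an increasing sequence of finite models of $T$ with $\lim_{n\to\infty}p(M,N_n)=\phi(M)$, as required. I expect the delicate points to be exactly the two ingredients of this last step — setting up the reversed-martingale/exchangeability apparatus cleanly and squeezing the a.s.\ limit against the $L^2$ limit computed from multiplicativity; everything preceding it is routine bookkeeping.
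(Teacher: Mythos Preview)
The paper does not give its own proof of this theorem; it is stated as background with citations to \cite{LS06,Raz07} and then used. So there is no paper proof to compare against, and the question is simply whether your argument is correct. It is.

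The forward direction is the standard verification (chain rule for $p$ gives $\cK[T]\subseteq\ker\phi$; asymptotic disjointness of two independent uniform subsets gives multiplicativity). For the converse, your route --- build the exchangeable random model on $\NN_+$ via Kolmogorov extension, observe that $p(M,\rn{N}\rest_{[n]})$ is a reversed martingale (conditional expectation of $\One[\rn{N}\rest_{[\lvert M\rvert]}\cong M]$ given the $S_n$-invariant $\sigma$-algebra), and kill the randomness of the a.s.\ limit by the variance computation $\EE\bigl[p(M,\rn{N}\rest_{[n]})^2\bigr]\to\phi(M\cdot M)=\phi(M)^2$ --- is one of the standard proofs, close in spirit to the exchangeability approach of Diaconis--Janson~\cite{DJ08} (see also~\cite{Aus08,Kal05}). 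The paper in fact alludes to this circle of ideas when it mentions exchangeable arrays and the sampling construction~\eqref{eq:rnK}.

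One cosmetic point: in the converse you write ``the relation $\cK[T]\subseteq\ker\phi$ proved above,'' but what you proved above was for the $\phi$ built \emph{from} a convergent sequence. For an arbitrary $\phi\in\HomT{T}$ this inclusion is tautological, since $\phi$ is by definition a homomorphism on the quotient $\cA[T]=\RR\cM[T]/\cK[T]$; you should just say so. Everything else (consistency of the $\PP_n$, that the Kolmogorov limit is a.s.\ a model of the universal theory $T$, and the a.s./$L^2$ squeeze) is in order.
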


Note that because of the minimalist nature of the flag algebraic description, uniqueness here is
obvious: $\phi_1,\phi_2\in\HomT{T}$ represent the limit of the same convergent sequence
$(N_n)_{n\in\NN}$ if and only if $\phi_1=\phi_2$. For this reason, it is very convenient to use the
set $\HomT{T}$ when talking about limits of finite models of the theory $T$.

\medskip

For a semantic/geometric description of the limit objects we use the theory of theons~\cite{CR20a},
which generalizes the theory of graphons to describe limits of finite models of canonical theories.

Given an atomless standard probability space $\Omega=(X,\cA,\mu)$ and a set $V$, let
\begin{align*}
  \cE_V(\Omega) & \df X^{r(V)}
\end{align*}
and equip it with the completion of the product measure, which by abuse we also denote by $\mu$. We
also define the \emph{diagonal set} as (cf.~Equation~\eqref{eq:diagonalgraphon})
\begin{align*}
  \cD_V(\Omega) & \df \{x\in\cE_V(\Omega) \mid \exists v,w\in V, (v\neq w\land x_{\{v\}} = x_{\{w\}})\}.
\end{align*}
Clearly, the diagonal has zero-measure (and this is precisely the reason why we need to work with
canonical theories so that no information is lost). Again, we will typically take $\Omega$ to be
$[0,1]$ equipped with the Lebesgue measure over Borel sets and in this case, we will omit $\Omega$
from the notation.

We will also be abusing the notation slightly by identifying the spaces $\cE_V(\Omega\times\Omega)$
and $\cE_V(\Omega)\times\cE_V(\Omega)$ naturally via the correspondence
$\cE_V(\Omega\times\Omega)\ni x\tot (y,z)\in\cE_V(\Omega)\times\cE_V(\Omega)$ given by $y_A\df
(x_A)_1$ and $z_A\df (x_A)_2$.

For a predicate symbol $P$, a \emph{$P$-on} over $\Omega$ is a measurable subset of
$\cE_{k(P)}(\Omega)$, where $k(P)$ is the arity of $P$. An \emph{Euclidean structure} in a finite
relational language $\cL$ over $\Omega$ is a function $\cN$ that maps each predicate symbol
$P\in\cL$ to a $P$-on $\cN_P\subseteq\cE_{k(P)}(\Omega)$.

Analogously to the way that solution sets are defined, given an open formula $F(x_1,\ldots,x_n)$ in
$\cL$ and an Euclidean structure $\cN$ in $\cL$ over $\Omega$, the \emph{truth set}
$T(F,\cN)\subseteq\cE_n(\Omega)$ of $F$ is defined by
\begin{enumerate}
\item $T(x_i = x_i,\cN) \df \cE_n(\Omega)$.
\item $T(x_i = x_j,\cN) \df \varnothing$, if $i\neq j$.%
  \label{it:Tnoninjeq}
\item $T(P(x_{i_1},\ldots,x_{i_{k(P)}}),\cN) \df \varnothing$, if $i\function{[k(P)]}{[n]}$ is not
  injective.%
  \label{it:Tnoninj}
\item $T(P(x_{i_1},\ldots,x_{i_{k(P)}}),\cN) \df (i^*)^{-1}(\cN_P)$, if $i\injection{[k(P)]}{[n]}$
  is injective (recall that $i^*\function{\cE_n(\Omega)}{\cE_{k(P)}(\Omega)}$ is given by
  $i^*(x)_A\df x_{i(A)}$).
\item $T(\place,\cN)$ commutes with logical connectives (so, e.g., $T(\neg F,\cN)\df
  \cE_n(\Omega)\setminus T(F,\cN)$ and $T(F_1\land F_2,\cN)\df T(F_1,\cN)\cap T(F_2,\cN)$).
\end{enumerate}

One might complain that items~\ref{it:Tnoninjeq} and~\ref{it:Tnoninj} above should not be defined as
the empty set but rather as particular subsets of the diagonal $\cD_n(\Omega)$, but since all
information on the diagonal will be lost regardless, the definition uses the empty set for
simplicity.

Truth sets allow us to define the set of copies of a finite canonical structure $M$ as follows: if
$V(M)\df [n]$, then the \emph{open diagram} $\Dopen(M)(x_1,\ldots,x_n)$ of $M$ is the conjunction of
all formulas of the form
\begin{align*}
  x_i\neq x_j & & \text{with } i\neq j,\\
  P(x_{i_1},\ldots,x_{i_k}) & & \text{with } M\vDash P(i_1,\ldots,i_k),\\
  \neg P(x_{i_1},\ldots,x_{i_k}) & & \text{with } M\vDash \neg P(i_1,\ldots,i_k).
\end{align*}
Equivalently, it is the open formula that completely encodes the quantifier-free type (over the
empty set) of the tuple $(1,\ldots,n)$ in $M$ (recall that the language is finite). The set of
\emph{labeled (induced) copies} of $M$ in $\cN$ is defined as $\Tind(M,\cN)\df T(\Dopen(M),\cN)$. If
the vertex set $V(M)$ of $M$ is not $[n]$, then we simply relabel its vertices with a bijection
$\alpha\injection{V(M)}{[n]}$, where $n\df\lvert M\rvert$ to get a canonical structure $N$ with
vertex set $[n]$ such that
\begin{align*}
   (M\vDash P(v_1,\ldots,v_n)) & \iff (N\vDash P(\alpha(v_1),\ldots,\alpha(v_{k(P)})))
\end{align*}
and define $\Tind(M,\cN)\df \alpha^*(\Tind(N,\cN))\subseteq\cE_{V(M)}(\Omega)$ (it is easy to see
that this does not depend on the choice of $\alpha$). The \emph{labeled (induced) density} and the
\emph{(induced) density} of $M$ in $\cN$ are then defined respectively as
\begin{align*}
  \tind(M,\cN) & \df \mu(\Tind(M,\cN)), &
  \phi_\cN(M) & \df \frac{\lvert M\rvert!}{\lvert\Aut(M)\rvert}\cdot\tind(M,\cN).
\end{align*}

Finally, given a canonical theory $T$ over $\cL$ and an Euclidean structure $\cN$ in $\cL$ over
$\Omega$, we say that $\cN$ is a \emph{(weak) $T$-on} if every $\cL$-structure $M$ that is
\emph{not} a model of $T$ satisfies $\tind(M,\cN) = 0$ and we say that $\cN$ is a \emph{strong
  $T$-on} if every $\cL$-structure $M$ that is \emph{not} a model of $T$ satisfies
$\Tind(M,\cN)\subseteq\cD_{V(M)}(\Omega)$. We say that a weak or strong $T$-on $\cN$ is a limit of a
convergent sequence $(N_n)_{n\in\NN}$ of models of $T$ if $\lim_{n\to\infty} p(M,N_n) = \phi_\cN(M)$
for every model $M$ of $T$ (see Theorem~\ref{thm:substclosed} below for an equivalent definition in
terms of axioms of $T$).

The main theorem of the theory of theons is naturally the Existence Theorem for theons below.
\begin{theorem}[\protect{\cite[Theorem~3.4]{CR20a}}, see also~\protect{\cite[\S 3.1]{AC14}}]
  Let $T$ be a canonical universal theory in a finite relational language and $\Omega$ be an
  atomless probability space. If $(N_n)_{n\in\NN}$ is a convergent sequence of models of $T$, then
  there exists a $T$-on $\cN$ over $\Omega$ that is a limit of $(N_n)_{n\in\NN}$. Conversely, every
  $T$-on over $\Omega$ is a limit of a convergent sequence of models of $T$.
\end{theorem}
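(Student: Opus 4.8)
\noindent
The plan is to follow the ultraproduct/Loeb-measure route used for graphons (cf.\ the discussion around Theorem~\ref{thm:graphonultraproduct}) for the ``hard'' direction and a sampling argument for the converse. Since any two atomless standard probability spaces are isomorphic modulo null sets (and a theon is by definition over a standard space), fixing such an isomorphism $\psi\function{X}{[0,1]}$ induces for every finite $V$ a measure isomorphism $\cE_V(\Omega)\to\cE_V([0,1])$ (apply $\psi$ in each of the countably many coordinates indexed by $r(V)$); this correspondence commutes with all projections $\alpha^*$, sends $P$-ons to $P$-ons and preserves every labeled density, so it suffices to prove both directions with $\Omega=[0,1]$ equipped with Lebesgue measure and then transport the resulting object back along $\psi$.

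\medskip\noindent\emph{Forward direction.} Let $(N_n)_{n\in\NN}$ be convergent, fix a non-principal ultrafilter $\cD$ over $\NN$, and form $H\df\prod_{n\in\NN} N_n/\cD$; by \Los's Theorem $H$ is a canonical model of $T$, and for every finite canonical $\cL$-structure $M$ that is \emph{not} a model of $T$ we have $\Tind(M,N_n)=\varnothing$ for all $n$ (an embedding would exhibit $M$ as a substructure of a model of the universal theory $T$). Let $k$ be the maximum arity in $\cL$, and equip each $\prod_{n\in\NN} V(N_n)^m/\cD$ with the Loeb measure $\mu^m$ of the normalized counting measures, so that $\Tind(M,H)=\prod_{n\in\NN}\Tind(M,N_n)/\cD$ is internal with $\mu^m(\Tind(M,H))=\lim_{n\to\cD}\tind(M,N_n)=\lim_{n\to\infty}\tind(M,N_n)$, the last equality because the sequence converges. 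Now invoke the general separable-realization theorem (the theon analogue of Theorem~\ref{thm:graphonultraproduct}, i.e.\ Theorem~\ref{thm:theonultraproduct}): it produces a measure-preserving $\Theta\function{\prod_{n\in\NN} V(N_n)^k/\cD}{[0,1]^{r(k)}}$ compatible with restrictions and liftings, together with, for each $P\in\cL$, a measurable $\cN_P\subseteq[0,1]^{r(k(P))}=\cE_{k(P)}([0,1])$ such that $\mu^{k(P)}\bigl(\Theta_{k(P)}^{-1}(\cN_P)\symdiff\prod_{n\in\NN} R_P^{N_n}/\cD\bigr)=0$, where $R_P^{N_n}\subseteq V(N_n)^{k(P)}$ is the interpretation of $P$ in $N_n$ and $\Theta_{k(P)}$ is the order-$k(P)$ restriction of $\Theta$. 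Set $\cN\df(\cN_P)_{P\in\cL}$. Since the truth set $T(\Dopen(M),\cN)$ refers only to coordinates indexed by subsets of size at most $k$, the restriction/lifting compatibility of $\Theta$ yields, exactly as in the graphon computation, $\Tind(M,H)=\Theta_m^{-1}(\Tind(M,\cN))$ modulo null sets, hence $\tind(M,\cN)=\mu^m(\Tind(M,H))$ for every $M$ with $V(M)=[m]$. Combining this with the previous identity: $\tind(M,\cN)=0$ when $M$ is not a model of $T$, so $\cN$ is a $T$-on, and $\phi_\cN(M)=\tfrac{\lvert M\rvert!}{\lvert\Aut(M)\rvert}\tind(M,\cN)=\lim_{n\to\infty}p(M,N_n)$ when $M\models T$, so $\cN$ is a limit of $(N_n)_{n\in\NN}$.

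\medskip\noindent\emph{Converse.} Given a $T$-on $\cN$ over $[0,1]$, let $(\bm{x}_A)_{A\in r(\NN_+)}$ be i.i.d.\ uniform on $[0,1]$ and, for each $n$, let $\bm{N}_n$ be the canonical structure on $[n]$ in which, for each predicate $P$ and each injective $(v_1,\dots,v_{k(P)})\in[n]^{k(P)}$, we declare $\bm{N}_n\vDash P(v_1,\dots,v_{k(P)})$ iff $i^*(\bm{x})\in\cN_P$, where $\bm{x}\df(\bm{x}_A)_{A\in r([n])}$ and $i\function{[k(P)]}{[n]}$ is $i(j)\df v_j$. If $M$ is not a model of $T$ then $\phi_\cN(M)=0$ since $\cN$ is a $T$-on, so each $\lvert M\rvert$-element subset of $[n]$ induces a copy of $M$ with probability $0$; a union bound over subsets and over the countably many pairs $(n,M)$ shows that almost surely no $\bm{N}_n$ has an induced substructure outside $\cM[T]$, and since $T$ is universal with axioms in at most $k$ variables, almost surely $\bm{N}_n\in\cM[T]$ for every $n$. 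For a fixed $M\in\cM[T]$ and $n\ge\lvert M\rvert$, the product structure of $\cE_{\lvert M\rvert}([0,1])$ gives $\EE[p(M,\bm{N}_n)]=\phi_\cN(M)$; revealing the variables in the $n$ stages $\{\bm{x}_A:\max A=i\}$ for $i=1,\dots,n$ changes $p(M,\bm{N}_n)$ by at most $\lvert M\rvert/n$ at each stage, so Azuma's inequality gives $\Pr\bigl[\lvert p(M,\bm{N}_n)-\phi_\cN(M)\rvert>\epsilon\bigr]\le 2\exp\bigl(-\epsilon^2 n/(2\lvert M\rvert^2)\bigr)$, which is summable. By Borel--Cantelli and intersecting over the countably many $M\in\cM[T]$, almost surely $(\bm{N}_n)_{n\in\NN}$ is a convergent sequence of models of $T$ with $\lim_{n\to\infty}p(M,\bm{N}_n)=\phi_\cN(M)$ for all $M$, i.e.\ with limit $\cN$; any realization in this almost sure event is the desired sequence.

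\medskip\noindent\emph{Main obstacle.} The only step that is not routine is the separable realization $\Theta$ in the forward direction: a structured form of Maharam's Theorem that is simultaneously compatible with all the projections $\alpha^*$. This is precisely the technical core isolated by Elek--Szegedy (for hypergraphs) and Aroskar--Cummings (in general), and once it is granted, the remaining verifications parallel the graphon computation verbatim. Everything in the converse is standard concentration of measure, the one mild subtlety being the need to certify that the sampled finite structures actually lie in $\cM[T]$, which is where universality (bounded-arity axioms) and the defining property $\tind(M,\cN)=0$ for $M\notin\cM[T]$ are used.
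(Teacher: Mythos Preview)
The paper does not prove this theorem; it is quoted as a black-box result from~\cite{CR20a} and~\cite{AC14}. That said, the paper's informal discussion immediately after Theorem~\ref{thm:theonultraproduct} does sketch the forward direction, and it is exactly the route you take: build the ultraproduct, apply the separable-realization theorem to obtain the peons $\cN_P$, and use the restriction/lifting compatibility to identify $\lambda(T(F,\cN))$ with the Loeb measure of the corresponding set in the ultraproduct, hence with $\lim_{n\to\infty}\tind(M,N_n)$. So on the forward direction your proposal matches the paper's (informal) argument essentially verbatim, including the honest admission that the separable realization itself is the non-trivial input being cited.

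The converse direction is not discussed in the paper at all. Your sampling argument is the standard one and is correct: the key points---that $\EE[\tind(M,\bm{N}_n)]=\tind(M,\cN)$ exactly (off-diagonal coordinates are i.i.d.\ uniform), that the martingale filtration by $\{\bm{x}_A:\max A=i\}$ has bounded differences of order $\lvert M\rvert/n$, and that a.s.\ membership in $\cM[T]$ follows from the countable union bound over non-models $M$ and subsets of $[n]$---are all handled correctly. The reduction to $\Omega=[0,1]$ via a measure isomorphism applied coordinatewise in $\cE_V$ is also fine, since densities depend only on the measure algebra.
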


\begin{remark}\label{rmk:TGraphons}
  If $\TGraph$ is the theory of graphs, then a $\TGraph$-on $\cN$ is not exactly the same object as
  a graphon $W$, but there is a (not one-to-one) correspondence preserving densities of finite
  graphs given by
  \begin{align*}
    W_\cN(x,y) & \df \lambda(\{z\in[0,1] \mid (x,y,z)\in\cN\}),\\
    \cN & \df \{x\in\cE_2 \mid x_{\{1,2\}} < W(x_{\{1\}},x_{\{2\}})\}.
  \end{align*}
\end{remark}

Just as the Graphon Removal Lemma, Theorem~\ref{thm:graphonremoval}, allows us to remove graphs of
density zero from a graphon by only changing a zero-measure set, the Induced Euclidean Removal Lemma
below does the same for theons.
\begin{theorem}[Induced Euclidean Removal Lemma~\protect{\cite[Theorem~3.3]{CR20a}}]\label{thm:theonremoval}
  If $\cN$ is a $T$-on over $\Omega=(X,\cA,\mu)$, then there exists a strong $T$-on $\cN'$ over
  $\Omega$ such that $\mu(\cN_P\symdiff\cN'_P)=0$ for every predicate symbol $P$.
\end{theorem}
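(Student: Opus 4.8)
The plan is to generalize the graph case, Theorem~\ref{thm:graphonremoval}, to arbitrary arity using the Elek--Szegedy ultraproduct/Loeb-measure machinery (which treats all predicate symbols uniformly), and then to upgrade the conclusion from ``density zero'' to ``empty off the diagonal'' by a Lebesgue-density cleanup. Replacing $\Omega$ by an isomorphic copy, we may assume $\Omega = [0,1]$ with Lebesgue measure: a strong $T$-on over $[0,1]$ agreeing a.e.\ with the transported $\cN$ pulls back to one over the original $\Omega$. Since $T$ is universal, $\cM[T]$ consists of the $\cL$-structures embedding none of a countable family $\cF$ of finite obstructions, and ``$\cN$ is a weak $T$-on'' says exactly that $\tind(M,\cN) = 0$ for every $M \in \cF$; the goal is to produce $\cN'$ with $\cN'_P = \cN_P$ a.e.\ such that no \emph{injective} tuple is an induced copy of any $M \in \cF$, i.e.\ $\Tind(M,\cN') \subseteq \cD_{V(M)}(\Omega)$ whenever $M$ is not a model of $T$.

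First I would realize $\cN$ through an ultraproduct: fix a non-principal ultrafilter $\cD$ on $\NN$, take a convergent sequence $(N_n)_{n \in \NN}$ of finite models of $T$ with $\lim_n p(\place, N_n) = \phi_\cN$ (Existence Theorem for theons), pass to $\prod_n N_n / \cD$ with its Loeb measures, and apply the general separable realization theorem (Theorem~\ref{thm:theonultraproduct}). The gain is that, by \Los's theorem, $\prod_n N_n / \cD$ is \emph{literally} a model of $T$, so $\Tind(M, \prod_n N_n/\cD) = \prod_n \Tind(M,N_n)/\cD = \varnothing$ for every $M \in \cF$, since no $N_n$ embeds $M$. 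Transporting this rigidity down through the restrictions and liftings of the realization only yields $\tind(M,\cdot) = 0$ for all $M \in \cF$, so at this stage one still has merely a weak $T$-on over $[0,1]$. To finish I would apply the cleanup: replace each $P$-on $\cN_P$ by its set of Lebesgue density-$1$ points, which changes it only on a null set. Given a prospective injective copy $\bar a$ of some $M \in \cF$ in the cleaned-up structure, reveal the coordinates $a_A$ (for $A \in r(V(M))$) of $\bar a$ in order of increasing $\lvert A\rvert$: once the index set of a relation-slot is completed it sits at a density-$1$ point of the relevant $\cN_P$ and is stable under small perturbations, while a negated-slot is likewise stable unless its subtuple lies at a boundary point of $\cN_P$; away from such boundary points all but an $o(1)$-fraction of the perturbations of $\bar a$ inside a small enough ball are again copies of $M$, contradicting $\tind(M,\cN) = 0$. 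Finally, to land a.e.\ on the \emph{prescribed} $\cN$ over the \emph{prescribed} $\Omega$ one runs the cleanup directly on $\cN$, using the ultraproduct only to certify the absence of off-diagonal obstructions (or chooses $(N_n)$ compatibly with a given representation of $\cN$).

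The step I expect to be the real obstacle is precisely this passage from ``$\tind(M,\cN) = 0$'' to ``$\Tind(M,\cN') \subseteq \cD_{V(M)}(\Omega)$''. A null family of obstruction-copies cannot in general be destroyed by a null modification if one argues obstruction-by-obstruction --- this is the phenomenon that forces the regularity method into the finite removal lemma --- so the cleanup must kill \emph{all} obstructions of \emph{all} arities simultaneously. The perturbation argument above shows that any surviving injective copy of a non-model must place some subtuple at an \emph{oscillating boundary point} of a $P$-on (a point at which $\cN_P$ has local density neither $0$ nor $1$), and although such copies form only a null set, making that set genuinely \emph{empty} is where the work lies: the favourable radii for the several relation-slots of a copy need not align, so one must either pass to a suitably purified Borel representative of $\cN$ before the cleanup, or exploit the extra coordinates $x_A$ with $\lvert A\rvert \ge 2$ of $\cE_{k(P)}(\Omega)$ to resolve boundary points in a globally consistent way, in the spirit of the probabilistic $z$-coordinate that appears for graphons.
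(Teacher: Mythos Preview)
The paper does not prove Theorem~\ref{thm:theonremoval}: it is quoted verbatim from~\cite[Theorem~3.3]{CR20a} and used as a black box (just as the graphon case, Theorem~\ref{thm:graphonremoval}, is quoted from~\cite{Pet13}). There is therefore no ``paper's own proof'' to compare your proposal against.

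As for the proposal itself: your outline correctly isolates the two ingredients---an ultraproduct/separable-realization step that certifies $\tind(M,\cN)=0$ for every non-model $M$, and a density-point cleanup that is supposed to upgrade this to $\Tind(M,\cN')\subseteq\cD_{V(M)}(\Omega)$---and you are honest that the second step is where the real content lies. But note that the first step is redundant for the theorem as stated: you are \emph{given} a weak $T$-on $\cN$, so $\tind(M,\cN)=0$ for all non-models $M$ is already part of the hypothesis, and the ultraproduct detour buys you nothing extra (the fact that $\prod_n N_n/\cD$ is literally a model of $T$ lives on the non-separable side and does not survive the separable realization as anything stronger than what you started with). So your proposal reduces entirely to the cleanup step, which you yourself flag as incomplete. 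The actual argument in~\cite{CR20a} does proceed via a density-point style modification, but making it work simultaneously for all predicate symbols and all finite obstructions, and genuinely emptying (not just nulling) the off-diagonal copies, requires a careful inductive treatment of the higher-order coordinates $x_A$ with $\lvert A\rvert\geq 2$ that your sketch only gestures at.
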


\begin{remark}\label{rmk:theonremoval}
  Theorem~\ref{thm:theonremoval} above can also be used to ensure that all open formulas that are
  false a.e.\ in $\cN$ become false everywhere off-diagonal in $\cN'$. Namely, given a $T$-on $\cN$
  over $\Omega=(X,\cA,\mu)$, we let $\Th(\phi_\cN)$ be the canonical theory whose finite models are
  precisely those $M$ such that $\phi_\cN(M) > 0$. Note that $\cN$ is also a (weak)
  $\Th(\phi_\cN)$-on, so we can apply Theorem~\ref{thm:theonremoval} above to get a strong
  $\Th(\phi_\cN)$-on $\cN'$ whose peons differ from those of $\cN$ only by zero-measure sets. If
  $\mu(T(F,\cN)) = 0$ for some open formula $F(x_1,\ldots,x_n)$, then for any $\cL$-structure $M$
  with $V(M)=[n]$ and $M\vDash F(1,\ldots,n)$, we must have $\tind(M,\cN)=0$ and thus
  $\Tind(M,\cN')\subseteq\cD_n(\Omega)$, which in turn implies $T(F,\cN')\subseteq\cD_n(\Omega)$.
\end{remark}

As expected from the graphon case, the same convergent sequence can converge to different theons and
this is completely characterized by the Theon Uniqueness Theorem~\cite[Theorems~3.9 and~3.11 and
  Proposition~7.7]{CR20a}, which has a very technical statement. Fortunately, we will only need a
consequence of it concerning open interpretations, Proposition~\ref{prop:interpretationlifting}
below. But before we state it, we need some preliminary definitions and properties.

First, open interpretations behave naturally with respect to convergence: it is not hard to see that
if $I\interpret{T_1}{T_2}$ is an open interpretation and $(N_n)_{n\in\NN}$ is a convergent sequence
of models of $T_2$, then $(I(N_n))_{n\in\NN}$ is a convergent sequence of models of $T_1$. It turns
out that there are natural operations that encode this operation for limit objects. Namely, for flag
algebras, Razborov~\cite[Theorem~2.6]{Raz07} showed that the linear map
$\pi^I\function{\cA[T_1]}{\cA[T_2]}$ given by
\begin{align*}
  \pi^I(M) & \df \sum\{M'\in\cM_{\lvert M\rvert}[T_2] \mid I(M')\cong M\}
\end{align*}
is an $\RR$-algebra homomorphism and if $\phi$ is the limit of $(N_n)_{n\in\NN}$, then the
composition $\phi^I\df \phi\comp\pi^I\in\HomT{T_1}$ is the limit of $(I(N_n))_{n\in\NN}$. For
theons~\cite[Remark~6]{CR20a}, if $\cN$ is a $T_2$-on that is the limit of $(N_n)_{n\in\NN}$, then
the $T_1$-on $I(\cN)$ defined via truth sets by $I(\cN)_P\df T(P,\cN)$ for every predicate symbol
$P$ is the limit of $(I(N_n))_{n\in\NN}$. We can combine these results neatly as $\phi_{I(\cN)} =
\phi_\cN^I$, or in plain English, the limit encoded by $I(\cN)$ is the same as the interpreted limit
of $\cN$ via $I$.

One natural question that arises is whether theons can be lifted through open interpretations in the
following sense: if $\cN$ is a $T_1$-on and $\phi\in\HomT{T_2}$ is such that $\phi^I = \phi_\cN$ for
some open interpretation $I\interpret{T_1}{T_2}$, then is there a $T_2$-on $\cH$ such that $\phi_\cH
= \phi$ and $I(\cH) = \cN$ a.e.? In plain English, if $\cN$ encodes the limit $\phi^I$, then is it
of the form $\cN = I(\cH)$ a.e.\ for some limit $\cH$ encoding $\phi$?

While the answer to this question is no (see~\cite[Example~45]{CR20a}), the following proposition
says that if we allow ourselves to add ``dummy variables'', the answer becomes yes.

\begin{proposition}[\protect{\cite[Proposition~4.3]{CR20b}}]\label{prop:interpretationlifting}
  Let $I\interpret{T_1}{T_2}$ be an open interpretation, let $\phi\in\HomT{T_2}$ and let $\cN$ be a
  $T_1$-on over $\Omega$ such that $\phi^I = \phi_\cN$. Then there exists a $T_2$-on $\cH$ over
  $\Omega\times\Omega$ such that $\phi_\cH = \phi$ and $I(\cH)_P = \cN_P\times\cE_{k(P)}(\Omega)$
  a.e.\ for every predicate symbol $P$ in the language of $T_1$.
\end{proposition}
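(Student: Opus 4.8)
The plan is to realize $\phi$ by \emph{some} $T_2$-on and then re-coordinatize that theon so that its $I$-reduct becomes a first-coordinate cylinder over $\cN$; the second copy of $\Omega$ in $\Omega\times\Omega$ is precisely the room needed to carry out the re-coordinatization. First I would apply the Existence Theorem for theons to $\phi\in\HomT{T_2}$ to produce a $T_2$-on $\cG$ over an atomless standard space, which we may take to be $\Omega$, with $\phi_\cG=\phi$. Since interpreting a theon and then taking its limit is the same as interpreting its limit, we get $\phi_{I(\cG)}=\phi_\cG^I=\phi^I=\phi_\cN$, so $\cN$ and $I(\cG)$ are two $T_1$-ons over $\Omega$ encoding the \emph{same} limit. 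It costs nothing to first replace $\cN$ and $\cG$ by strong $T$-ons via the Induced Euclidean Removal Lemma (Theorem~\ref{thm:theonremoval}): this changes each peon only on a null set, hence affects neither densities nor any of the a.e.\ conclusions to be proved, and it puts us in the regime in which the Theon Uniqueness Theorem is cleanest to apply.

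The heart of the argument is the Theon Uniqueness Theorem applied to the weakly isomorphic $T_1$-ons $\cN$ and $I(\cG)$: they admit a common coupling, namely an atomless standard space $\Omega_0$ and measure-preserving maps $p_1,p_2\function{\Omega_0}{\Omega}$ of the vertex spaces (this is the shape uniqueness takes for strong theons, in analogy with graphon uniqueness) whose coordinatewise extensions $\widehat{p_i}\function{\cE_k(\Omega_0)}{\cE_k(\Omega)}$ satisfy $\widehat{p_1}^{-1}(\cN_P)=\widehat{p_2}^{-1}(I(\cG)_P)$ a.e.\ for every predicate symbol $P$ of $T_1$. I would then \emph{fatten} the coupling: replace $\Omega_0$ by $\Omega_0\times\Omega$ and each $p_i$ by $\widetilde{p_i}\df p_i\comp\pi_1$. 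The fibers of $\widetilde{p_1}$ are products $p_1^{-1}(\place)\times X$, so its conditional measures are atomless, whence $\widetilde{p_1}$ is measure-isomorphic to a first-coordinate projection $\Omega\times\Omega\to\Omega$; transporting everything along this isomorphism yields measure-preserving maps $p_1'=\pi_1$ and $p_2'\function{\Omega\times\Omega}{\Omega}$ with $\widehat{p_1'}^{-1}(\cN_P)=\widehat{p_2'}^{-1}(I(\cG)_P)$ a.e. Under the identification $\cE_k(\Omega\times\Omega)\cong\cE_k(\Omega)\times\cE_k(\Omega)$, the left-hand side is exactly $\cN_P\times\cE_{k(P)}(\Omega)$.

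Now set $\cH\df(p_2')^*\cG$, the pullback of $\cG$ along $p_2'$, i.e.\ $\cH_Q\df\widehat{p_2'}^{-1}(\cG_Q)$ for each predicate symbol $Q$ of $T_2$. Pullback along a measure-preserving map preserves all labeled densities, so $\tind(M,\cH)=\tind(M,\cG)$ for every finite $\cL_2$-structure $M$; in particular $\phi_\cH=\phi_\cG=\phi$ and $\cH$ is again a $T_2$-on. Since truth sets commute with vertex-level pullback --- one checks $i^*\comp\widehat{p_2'}=\widehat{p_2'}\comp i^*$ for injections $i$, hence $T(F,(p_2')^*\cG)=\widehat{p_2'}^{-1}(T(F,\cG))$ for every open $\cL_2$-formula $F$ --- the interpretation commutes with it as well: $I(\cH)_P=T(I(P),\cH)=\widehat{p_2'}^{-1}(T(I(P),\cG))=\widehat{p_2'}^{-1}(I(\cG)_P)=\cN_P\times\cE_{k(P)}(\Omega)$ a.e.\ for every $P$ in the language of $T_1$, which is the conclusion.

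The main obstacle is the middle step. The Theon Uniqueness Theorem is genuinely technical because a theon records data on every coordinate set $r(V)$, not merely on vertices, so the identification of $\cN$ with $I(\cG)$ must be witnessed compatibly at all arities simultaneously; pinning it down to honest measure-preserving maps of the vertex space --- which is where the reduction to strong theons earns its keep --- and then normalizing the coupling space into the product form $\Omega\times\Omega$ with the $\cN$-side becoming a first projection is exactly the manoeuvre that forces the ``dummy variables'' and is the reason the statement is over $\Omega\times\Omega$ rather than over $\Omega$. Everything downstream of that normalization --- pullback preserving $T_2$-ness and all densities, and truth sets commuting with pullback --- is routine.
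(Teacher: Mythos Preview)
The paper does not prove this proposition: it is quoted verbatim from~\cite[Proposition~4.3]{CR20b} and used as a black box. So there is no ``paper's own proof'' here to compare against, and your write-up is effectively a sketch of what the proof in~\cite{CR20b} presumably looks like. The overall architecture you propose --- realize $\phi$ by some $T_2$-on $\cG$, note $I(\cG)$ and $\cN$ are weakly isomorphic $T_1$-ons, couple them via the Theon Uniqueness Theorem, fatten the coupling so that the $\cN$-leg becomes a first-coordinate projection $\Omega\times\Omega\to\Omega$, and finally pull $\cG$ back along the other leg --- is the natural one and almost certainly the intended argument.

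One caution: the step you flag as the ``main obstacle'' really is the whole difficulty, and your phrasing of it is optimistic. You assert that for strong theons, uniqueness is witnessed by a pair of measure-preserving maps $p_1,p_2\function{\Omega_0}{\Omega}$ of the \emph{vertex} space, whose coordinate-wise extensions $\widehat{p_i}\function{\cE_k(\Omega_0)}{\cE_k(\Omega)}$ (i.e., applying $p_i$ to every coordinate $x_A$, including higher-order ones) pull back the peons. That is essentially the graphon picture; the theon uniqueness theorem as stated in~\cite[Theorems~3.9 and~3.11, Proposition~7.7]{CR20a} is more delicate precisely because higher-order coordinates need not be handled by the same map as the vertex coordinates. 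Getting from the actual technical statement of theon uniqueness to the clean ``vertex-level coupling with coordinatewise extension'' form you use is nontrivial and is where the real work in~\cite{CR20b} lies; you should not present it as a direct analogue of graphon uniqueness without justification. The downstream steps --- the fiber-atomlessness argument turning $\widetilde{p_1}$ into a projection, and the commutation of truth sets with pullback --- are, as you say, routine once that form of the coupling is in hand.
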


\medskip

We now define limit sub-objects in analogy to subgraphons.

\begin{definition}
  Given a limit object $\phi\in\HomT{T}$, a \emph{(positive measure) limit sub-object} of $\phi$ is
  a limit object $\psi\in\HomT{T}$ such that there exists a sequence $(N_n)_{n\in\NN}$ converging to
  $\phi$ and sets $U_n\subseteq V(H_n)$ such that $(N_n\rest_{U_n})_{n\in\NN}$ converges to $\psi$
  and $\lim_{n\to\infty} \lvert U_n\rvert/\lvert N_n\rvert > 0$; when we want to be more specific,
  for $c\df \lim_{n\to\infty} \lvert U_n\rvert/\lvert N_n\rvert > 0$ we say that $\psi$ is a measure
  $c$ limit sub-object of $\phi$.
\end{definition}

Similarly to the graphon case, if $\cN$ is a $T$-on over $\Omega=(X,\cA,\mu)$ with $\phi_\cN=\phi$,
then not every sub-object of $\phi$ can be represented by conditioning the vertex variables
$x_{\{v\}}$ to be in some positive measure set $U\subseteq X$.

More precisely, given a $T$-on $\cN$ over a space $\Omega=(X,\cA,\mu)$ and a positive measure set
$U\subseteq X$, we let $\mu_U$ be the measure over $(X,\cA)$ defined by $\mu_U(A)\df \mu(A\cap
U)/\mu(U)$ and for a measure-isomorphism $F$ modulo $0$ from $\Omega_U\df(X,\cA,\mu_U)$ to $\Omega$,
we let $\cN\rest^F_U$ be the $T$-on over $\Omega_U$ defined by
\begin{align*}
  (\cN\rest^F_U)_P & \df \{x\in\cE_{k(P)}(\Omega_U) \mid x^F\in\cN_P\},
\end{align*}
where
\begin{align}\label{eq:xF}
  x^F_A & \df
  \begin{dcases*}
    x_A, & if $\lvert A\rvert = 1$;\\
    F(x_A), & if $\lvert A\rvert\geq 2$.
  \end{dcases*}
\end{align}

Under this definition, not every sub-object $\psi$ of $\phi_\cN$ is of the form
$\phi_{\cN\rest_U^F}$ for some choice of $(U,F)$ as above. However, just as in the graphon case,
this description is not far from correct, we only need to ``rescale'' the underlying measure by a
weight function.

\begin{lemma}\label{lem:limitsubobject}
  Let $\cN$ be a $T$-on over $\Omega=(X,\cA,\mu)$, let $c > 0$ and let $\psi\in\HomT{T}$. The
  following are equivalent.
  \begin{enumerate}
  \item There exist a convergent sequence $(N_n)_{n\in\NN}$ converging to $\phi_\cN$ and sets
    $U_n\subseteq V(N_n)$ with $\lim_{n\to\infty} \lvert U_n\rvert/\lvert N_n\rvert = c$ such that
    $(N_n\rest_{U_n})_{n\in\NN}$ converges to $\psi$, that is, $\psi$ is a measure $c$ limit
    sub-object of $\phi_\cN$.%
    \label{lem:limitsubobject:limitsubobject}
  \item There exists a measurable function $f\function{X}{[0,1]}$ with $\int_X f\ d\mu = c$ such
    that for the space $\Omega_f\df(X,\cA,\mu_f)$ defined by
    \begin{align}\label{eq:limitsubobject:muf}
      \mu_f(A) & \df \frac{\int_A f(x) \ d\mu(x)}{c},
    \end{align}
    there exists a measure-isomorphism $F$ modulo $0$ from $\Omega_f$ to $\Omega$ such that $\psi =
    \phi_{\cN\rest_f^F}$ for the $T$-on $\cN\rest_f^F$ over the space $\Omega_f$ defined by
    \begin{align}\label{eq:limitsubobject:cNrestfF}
      (\cN\rest_f^F)_P & \df \{x\in\cE_{k(P)}(\Omega_f) \mid x^F\in\cN_P\},
    \end{align}
    where $x^F\in\cE_{k(P)}(\Omega)$ is given by~\eqref{eq:xF}.%
    \label{lem:limitsubobject:rescaleexistsF}
  \item Item~\ref{lem:limitsubobject:rescaleexistsF} holds for every measure-isomorphism $F$ modulo
    $0$ from $\Omega_f$ to $\Omega$.%
    \label{lem:limitsubobject:rescaleforallF}
  \end{enumerate}
\end{lemma}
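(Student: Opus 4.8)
The plan is to prove this as a generalization of Lemma~\ref{lem:subgraphon} using the ultraproduct method and separable realizations (which, for theons, is Theorem~\ref{thm:theonultraproduct} together with the lifting property recorded in Proposition~\ref{prop:interpretationlifting}). The implications \ref{lem:limitsubobject:rescaleforallF}$\implies$\ref{lem:limitsubobject:rescaleexistsF} is vacuous, so the real content is \ref{lem:limitsubobject:limitsubobject}$\implies$\ref{lem:limitsubobject:rescaleforallF} and \ref{lem:limitsubobject:rescaleexistsF}$\implies$\ref{lem:limitsubobject:limitsubobject}.

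For the direction \ref{lem:limitsubobject:rescaleexistsF}$\implies$\ref{lem:limitsubobject:limitsubobject}, I would argue directly on the limit side. Given $f\function{X}{[0,1]}$ with $\int_X f\,d\mu = c$ and a fixed measure-isomorphism $F$, the claim is that $\phi_{\cN\rest_f^F}$ is a measure $c$ limit sub-object of $\phi_\cN$. First take any convergent sequence $(N_n)_{n\in\NN}$ of models of $T$ converging to $\phi_\cN$; we will build from it a sequence witnessing the sub-object. The idea is to realize $f$ as a limit of relative sizes: approximate $f$ by simple functions, and in $N_n$ select a subset $U_n\subseteq V(N_n)$ where each vertex $v$ is included independently with probability roughly $f(x_v)$ for a sampled point $x_v\in X$; more cleanly, pass through the theon sampling process. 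Concretely, $N_n$ can be taken to be a $\mu^{V(N_n)}$-sample from $\cN$ (this is how the Existence Theorem produces the converging sequence), and then one selects $U_n$ by a second round of independent coin flips with bias $f$. A routine second-moment/concentration computation shows $\lvert U_n\rvert/\lvert N_n\rvert\to c$ almost surely and that the induced densities $p(M,N_n\rest_{U_n})$ converge to $\phi_{\cN\rest_f^F}(M)$ — the reweighting by $f$ is exactly what changes the underlying measure from $\mu$ to $\mu_f$, and the role of $F$ is merely to relabel the higher-arity coordinates, which does not affect densities. Derandomizing gives the required deterministic sequence.

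For the direction \ref{lem:limitsubobject:limitsubobject}$\implies$\ref{lem:limitsubobject:rescaleforallF}, I would use the ultraproduct. Suppose $(N_n)_{n\in\NN}$ converges to $\phi_\cN$ and $U_n\subseteq V(N_n)$ with $\lvert U_n\rvert/\lvert N_n\rvert\to c$ and $(N_n\rest_{U_n})_{n\in\NN}$ converging to $\psi$. Form the ultraproduct $N\df\prod_n N_n/\cD$ with Loeb measures $\mu^k$, and let $\widehat U\df\prod_n U_n/\cD$; then $\mu^1(\widehat U)=c$. Apply the separable realization machinery (Theorem~\ref{thm:theonultraproduct}) to get a measure-preserving $\Theta$ sending $N$ to a theon-like structure on $[0,1]^{r(k)}$-type spaces, with a restriction $\Theta_1$ to order $1$; this produces a $T$-on $\cN'$ over $[0,1]$ with $\phi_{\cN'}=\phi_\cN$. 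Now define $f'\function{[0,1]}{[0,1]}$ as the conditional expectation (push-forward) encoding the fiberwise density of $\Theta_1(\widehat U)$ — more precisely, $f'(t)$ is the Loeb-conditional probability that a point above $t$ lies in $\widehat U$. Then $\int f'=c$, and the induced structure on $\widehat U$ corresponds, via $\Theta$ and Fubini for Loeb measures (Theorem~\ref{thm:FubiniLoeb}), to $\cN'\rest_{f'}^{F'}$ for a suitable $F'$; hence $\psi=\phi_{\cN'\rest_{f'}^{F'}}$. Finally, transfer back from $\cN'$ to the original $\cN$: since $\phi_{\cN'}=\phi_\cN$, the Theon Uniqueness Theorem (or rather its consequence via Proposition~\ref{prop:interpretationlifting}) lets one move $f'$ and $F'$ across to get $f\function{X}{[0,1]}$ with $\int_X f\,d\mu=c$ and $\psi=\phi_{\cN\rest_f^F}$; and because the statement is about $\phi_{\cN\rest_f^F}$, which only depends on the measure $\mu_f$ and not the choice of relabeling, it holds for \emph{every} measure-isomorphism $F$, giving \ref{lem:limitsubobject:rescaleforallF}.

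The main obstacle I anticipate is the transfer step at the end of the second direction: passing the weight function $f'$ obtained on the separable realization $\cN'$ back to a weight function $f$ on the original space $\Omega$ of $\cN$. The two theons $\cN$ and $\cN'$ encode the same homomorphism $\phi$ but may be genuinely different (non-isomorphic as measure structures), so one cannot simply pull $f'$ back along a measure isomorphism. This is where the full strength of the Theon Uniqueness Theorem — in the packaged form of Proposition~\ref{prop:interpretationlifting} applied to the identity interpretation together with an extra ``selector'' predicate marking $U$ — is needed: one treats ``being in the sub-object'' as an additional unary relation, so that $(\cN,\text{selector})$ and $(\cN',\text{selector})$ become theons of a common expanded theory, and uniqueness there produces the desired correspondence (at the cost of moving to $\Omega\times\Omega$, which is harmless since $\Omega\times\Omega\cong\Omega$ for standard atomless spaces). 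Handling the bookkeeping of arities $\geq 2$ under this expansion, and checking that the diagonal sets are correctly ignored throughout, is the fiddly part; everything else is the same concentration and Fubini-for-Loeb-measures bookkeeping already used in the proof of Theorem~\ref{thm:stableconvseqgraph}.
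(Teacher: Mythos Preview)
Your overall strategy is correct, and in fact the ``main obstacle'' paragraph already contains the key idea the paper uses. But the paper's route is considerably shorter than yours, because it bypasses the ultraproduct entirely for both nontrivial directions.

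For \ref{lem:limitsubobject:limitsubobject}$\implies$\ref{lem:limitsubobject:rescaleforallF}, the paper does exactly what you describe at the end: augment $T$ by a unary predicate $U$ to get $\widehat T$, mark each $N_n$ by declaring $U$ true on $U_n$, pass to a subsequence so that the marked sequence converges to some $\xi\in\HomT{\widehat T}$, and observe that for the structure-erasing $I\interpret{T}{\widehat T}$ one has $\xi^I=\phi_\cN$ and $\psi=\xi\comp\pi^{(U,I)}$ (the flag-algebra localization operator). Then Proposition~\ref{prop:interpretationlifting} lifts $\xi$ to a $\widehat T$-on $\widehat\cN$ over $\Omega\times\Omega$ with $I(\widehat\cN)_P=\cN_P\times\cE_{k(P)}(\Omega)$ a.e., and one simply sets $f(x)\df\mu(\{y:(x,y)\in\widehat\cN_U\})$. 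The density computation $\phi_{\cN\rest_f^F}=\xi\comp\pi^{(U,I)}=\psi$ is then a direct Fubini calculation, and it works for \emph{any} $F$ because only $\mu_f$ matters. Your ultraproduct detour produces an auxiliary $\cN'$ over $[0,1]$ and a weight $f'$ there, and then you still need the selector-predicate-plus-Proposition~\ref{prop:interpretationlifting} argument to get back to $\cN$; so the ultraproduct step buys nothing.

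For \ref{lem:limitsubobject:rescaleexistsF}$\implies$\ref{lem:limitsubobject:limitsubobject}, your sampling-with-biased-coin-flips argument works, but again the paper is more direct: from $f$ it \emph{defines} a $\widehat T$-on $\widehat\cN$ over $\Omega\times\Omega$ by $\widehat\cN_P\df\cN_P\times\cE_{k(P)}(\Omega)$ and $\widehat\cN_U$ any set with vertical fibers of measure $f(x)$, takes any sequence $(\widehat N_n)$ converging to $\widehat\cN$, and sets $N_n\df I(\widehat N_n)$, $U_n\df U^{\widehat N_n}$. No randomness or concentration is needed; the Existence Theorem for theons supplies the sequence. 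The unifying point is that both directions are the same $\pi^{(U,I)}$ computation read forward or backward.
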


\begin{proof}
  The implication~\ref{lem:limitsubobject:rescaleforallF}$\implies$\ref{lem:limitsubobject:rescaleexistsF} is
  trivial.

  \medskip

  For the other implications, we will use the operator $\pi^{(U,I)}$ of the theory of flag
  algebras~\cite[Theorem~2.6]{Raz07}. Let $\widehat{T}$ be the theory obtained from $T$ by
  augmenting it with a unary predicate symbol $U$ and for each $n\in\NN$, let
  \begin{align*}
    \cM_n^U[\widehat{T}] & \df \{M\in\cM_n[\widehat{T}] \mid M\vDash\forall x, U(x)\}
  \end{align*}
  be the set of all models of $\widehat{T}$ of size $n$ in which all vertices satisfy $U$. Let
  $u\df\sum_{M\in\cM_1^U[\widehat{T}]} M$ and let $\cA_u[\widehat{T}]$ be the localization of
  $\cA[\widehat{T}]$ with respect to the multiplicative system $\{u^n \mid n\in\NN\}$. Finally, let
  $I\interpret{T}{\widehat{T}}$ be the structure-erasing interpretation that acts identically on
  $T$. By~\cite[Theorem~2.6]{Raz07}, the linear map
  $\pi^{(U,I)}\function{\cA[T]}{\cA_u[\widehat{T}]}$ given by
  \begin{align*}
    \pi^{(U,I)}(M) & \df \frac{M^U}{u^{\lvert M\rvert}},
  \end{align*}
  where $M^U\in\cM_{\lvert M\rvert}[\widehat{T}]$ is the model of $\widehat{T}$ obtained from $M$ by declaring
  all its vertices to satisfy $U$ is an $\RR$-algebra homomorphism. The intuition is that if
  $\psi\in\HomT{\widehat{T}}$ is such that $\psi(u) > 0$, then $\psi$ has a non-negligible fraction of
  ``vertices'' satisfying $U$ and the composition $\psi\comp\pi^{(U,I)}\in\HomT{T}$ is the limit object of $T$
  induced by the ``vertices'' of $\psi$ satisfying $U$ (hence the need for the localization).

  \medskip

  Let us prove the
  implication~\ref{lem:limitsubobject:limitsubobject}$\implies$\ref{lem:limitsubobject:rescaleforallF}.

  For each $n\in\NN$, let $\widehat{N}_n$ be the model of $\widehat{T}$ obtained from $N_n$ by
  declaring the predicate symbol $U$ to be true exactly in the set $U_n$ and by possibly passing to
  a subsequence, we may suppose that $(\widehat{N}_n)_{n\in\NN}$ converges to some homomorphism
  $\xi\in\HomT{\widehat{T}}$. Note that since $\lim_{n\to\infty} \lvert U_n\rvert/\lvert N_n\vert =
  c$, we have $\xi(u) = c$. Note further that since $(N_n)_{n\in\NN}$ and
  $(N_n\rest_{U_n})_{n\in\NN}$ converge to $\phi_\cN$ and $\psi$, respectively and $I(\widehat{N}_n)
  = N_n$, we must have $\phi_\cN = \xi^I$ and $\psi = \xi\comp\pi^{(U,I)}$. By
  Proposition~\ref{prop:interpretationlifting}, there exists a $\widehat{T}$-on $\widehat{\cN}$ over
  $\Omega\times\Omega$ such that $\phi_{\widehat{\cN}} = \xi$ and $I(\widehat{\cN})_P =
  \cN_P\times\cE_{k(P)}(\Omega)$ a.e., for every predicate symbol $P$ in the language of $T$. Since
  $\widehat{\cN}_U\subseteq\cE_1(\Omega)\times\cE_1(\Omega)\cong X\times X$, we can define the
  function $f\function{X}{[0,1]}$ by
  \begin{align}\label{eq:fx}
    f(x) & \df \mu(\{y\in X \mid (x,y)\in\widehat{\cN}_U\})
  \end{align}
  (defining it arbitrarily when the set above is not measurable) and Fubini's Theorem ensures that
  $f$ is measurable.

  Note also that
  \begin{align}\label{eq:intfc}
    c
    & =
    \xi(u)
    =
    \sum_{M\in\cM_1^U[\widehat{T}]} \tind(M,\widehat{\cN})
    =
    \mu(T(U,\widehat{\cN}))
    =
    \mu(\widehat{\cN}_U)
    =
    \int_X f\ d\mu.
  \end{align}

  Define $\Omega_f\df(X,\cA,\mu_f)$ with $\mu_f$ given by~\eqref{eq:limitsubobject:muf} and the
  $T$-on $\cN\rest_f^F$ by~\eqref{eq:limitsubobject:cNrestfF} for an arbitrary measure-isomorphism
  $F$ modulo $0$ from $\Omega_f$ to $\Omega$ and note that for every $M\in\cM_n[T]$, we have
  \begin{equation}\label{eq:xipiUI}
    \begin{aligned}
      \phi_{\cN\rest_f^F}(M)
      & =
      \frac{\lvert M\rvert!}{\lvert\Aut(M)\rvert}\cdot
      \tind(M,\cN\rest_f^F)
      =
      \frac{\lvert M^U\rvert!}{\lvert\Aut(M^U)\rvert}\cdot
      \frac{\tind(M^U,\widehat{\cN})}{\mu(\widehat{\cN}_U)^n}
      \\
      & =
      (\xi\comp\pi^{(U,I)})(M)
      =
      \psi(M),
    \end{aligned}
  \end{equation}
  so $\phi_{\cN\rest_f^F} = \psi$ as required.

  \medskip

  For the final
  implication~\ref{lem:limitsubobject:rescaleexistsF}$\implies$\ref{lem:limitsubobject:limitsubobject},
  we define the $\widehat{T}$-on $\widehat{\cN}$ over $\Omega\times\Omega$ from $\cN$ by letting
  $\widehat{\cN}_P\df\cN_P\times\cE_{k(P)}(\Omega)$ for every predicate symbol of $T$ and letting
  $\widehat{\cN}_U\subseteq\cE_1(\Omega)\times\cE_1(\Omega)$ be any measurable set such
  that~\eqref{eq:fx} holds. By also letting $\xi\df\phi_{\widehat{\cN}}$, we can deduce the
  equalities in~\eqref{eq:xipiUI} in a different order:
  \begin{align*}
    \psi(M)
    & =
    \phi_{\cN\rest_f^F}(M)
    =
    \frac{\lvert M\rvert!}{\lvert\Aut(M)\rvert}\cdot
    \tind(M,\cN\rest_f^F)
    \\
    & =
    \frac{\lvert M^U\rvert!}{\lvert\Aut(M^U)\rvert}\cdot
    \frac{\tind(M^U,\widehat{\cN})}{\mu(\widehat{\cN}_U)^n}
    =
    (\xi\comp\pi^{(U,I)})(M).
  \end{align*}
  Similarly, the equalities in~\eqref{eq:intfc} also hold deduced in a different order:
  \begin{align*}
    c
    & =
    \int_X f\ d\mu
    =
    \mu(\widehat{\cN}_U)
    =
    \mu(T(U,\widehat{\cN}))
    =
    \xi(u).
  \end{align*}

  Finally, we let $(\widehat{N}_n)_{n\in\NN}$ be a sequence of models of $\widehat{T}$ converging to
  $\widehat{\cN}$, let $N_n\df I(\widehat{N}_n)$ and $U_n\df U^{\widehat{N}_n}\df\{v\in
  V(\widehat{N}_n) \mid \widehat{N}_n\vDash U(v)\}$ and note that $\lim_{n\to\infty} \lvert
  U_n\rvert/\lvert N_n\rvert = \xi(u) = c$ and for every $M\in\cM[T]$, we have
  \begin{align*}
    \lim_{n\to\infty} p(M,N_n\rest_{U_n})
    & =
    \lim_{n\to\infty} p(M^U,\widehat{N}_n)
    \cdot\left(\frac{\lvert\widehat{N}_n\rvert}{\lvert U_n\rvert}\right)^{\lvert M\rvert}
    =
    \frac{\xi(M^U)}{\xi(u^{\lvert M\rvert})}
    \\
    & =
    (\xi\comp\pi^{(U,I)})(M)
    =
    \psi(M),
  \end{align*}
  concluding the proof.
\end{proof}

From this theorem, Lemma~\ref{lem:subgraphon} on subgraphons follows trivially.

\begin{proofof}{Lemma~\ref{lem:subgraphon}}
  Follows directly from Lemma~\ref{lem:limitsubobject} via the correspondence between $\TGraph$-ons
  and graphons of Remark~\ref{rmk:TGraphons}.
\end{proofof}

For general universal theories, the role of complete or empty graphons (i.e., $W$ constant equal to
$0$ or $1$) is played by trivial limits defined below.

\begin{definition}
  A limit $\phi\in\HomT{T}$ is called \emph{trivial} if there exists a $T$-on $\cN$ with $\phi_\cN =
  \phi$ and each $P$-on $\cN_P$ either has measure $0$ or $1$. Equivalently, a limit
  $\phi\in\HomT{T}$ is trivial if and only if it is of the form $\phi = \psi^I$ for some open
  interpretation $I\interpret{T}{T_0}$ and the unique $\psi\in\HomT{T_0}$, where $T_0$ is the
  trivial universal theory, that is, the theory over the empty language without any axioms.
\end{definition}

Before we can finally state the stability dichotomy theorem for limits of arbitrary universal
theories, we also need to define stability in this more general setting.

\begin{definition}\label{def:almoststabletheon}
  Recall that for a formula $F(\vec{x},\vec{y})$ with a particular partition of its free variables
  into two parts $\vec{x}$ and $\vec{y}$, a \emph{half-graph of order $n$ with respect to
    $F(\vec{x},\vec{y})$} in a structure $M$ is a pair of sequences $(\vec{a}_1,\ldots,\vec{a}_n)$
  and $(\vec{b}_1,\ldots,\vec{b}_n)$ of tuples of vertices of $M$ with $\lvert\vec{a}_i\rvert =
  \lvert\vec{x}\rvert$, $\lvert\vec{b}_i\rvert = \lvert\vec{y}\rvert$ and such that $M\vDash
  F(\vec{a}_i,\vec{b}_j)$ if and only if $i\leq j$. A \emph{tree of height $n$ with respect to
    $F(\vec{x},\vec{y})$} in a structure $M$ is a pair of sequences $(\vec{a}_\sigma \mid
  \sigma\in\{0,1\}^n)$ and $(\vec{b}_\tau \mid m\in\{0,1,\ldots,n-1\}, \tau\in\{0,1\}^m)$ such that
  $\lvert\vec{a}_\sigma\rvert=\lvert\vec{x}\rvert$, $\lvert\vec{b}_\tau\rvert = \lvert\vec{y}\rvert$
  and for every $\sigma = (\sigma_i)_{i=1}^n\in\{0,1\}^n$ and every $m < n$, $M\vDash
  F(x_\sigma,y_{\sigma\rest_{[m]}})$ if and only if $\sigma_{m+1} = 1$.

  We say that $F(\vec{x},\vec{y})$ is \emph{almost stable} in a limit $\phi\in\HomT{T}$ if there
  exists $n\in\NN$ such that every finite model $M$ of $T$ containing a half-graph of order $n$ with
  respect to $F(\vec{x},\vec{y})$ satisfies $\phi(M) = 0$. Equivalently, letting
  \begin{align}\label{eq:halfgraphformula}
    H_{n,F}(\vec{x}_1,\ldots,\vec{x}_n,\vec{y}_1,\ldots,\vec{y}_n)
    & \df
    \bigwedge_{1\leq i\leq j\leq n} F(\vec{x}_i,\vec{y}_j)\land
    \bigwedge_{1\leq j < i\leq n} \neg F(\vec{x}_i,\vec{y}_j)
  \end{align}
  be the formula encoding a half-graph of order $n$ with respect to $F(\vec{x},\vec{y})$, the
  formula $F(\vec{x},\vec{y})$ is almost stable in $\phi\in\HomT{T}$ if there exists $n\in\NN$ such
  that for every (not-necessarily injective) substitution $H$ of the variables of the formula
  $H_{n,F}$, the set $T(H,\cN)$ has measure $0$ for some (equivalently, every) $T$-on $\cN$ such
  that $\phi=\phi_\cN$.

  It will also be convenient to define a weak version of almost stability: we say that
  $F(\vec{x},\vec{y})$ is \emph{almost weakly stable} in $\phi\in\HomT{T}$ if there exists $n\in\NN$
  such that $T(H_{n,F},\cN)$ has measure $0$ for some (equivalently, every) $T$-on $\cN$ such that
  $\phi=\phi_\cN$. Thus the difference between stability and weak stability is whether the tuples of
  the half-graph are allowed to repeat vertices or not.
\end{definition}

Our stability dichotomy theorem for limits of universal theories will be particularly concerned with
the case when $F(\vec{x},\vec{y})$ is a \emph{$(1,k(P)-1)$-split} of a predicate symbol $P$ (whose
arity $k(P)$ is at least $2$), that is, we have
\begin{align*}
  F(x,y_1,\ldots,y_{k(P)-1}) & \df P(y_1,\ldots,y_{i-1},x,y_i,y_{i+1}\ldots,y_{k(P)-1})
\end{align*}
for some $i\in[k(P)]$.

\begin{theorem}\label{thm:stabletheon}
  The following are equivalent for a $T$-on $\cN$ over a space $\Omega=(X,\cA,\mu)$.
  \begin{enumerate}
  \item $\phi_\cN$ contains a trivial sub-object $\psi$.%
    \label{thm:stabletheon:trivialsubobject}
  \item There exists a positive measure $U\subseteq X$ such that for every measure-isomorphism $F$
    modulo $0$ from $\Omega_U$ to $\Omega$, the sub-object $\phi_{\cN\rest_U^F}$ is trivial.%
    \label{thm:stabletheon:trivialrestriction}
  \item $\phi_\cN$ contains a sub-object $\psi$ in which every $(1,k(P)-1)$-split of every predicate
    symbol $P$ is almost weakly stable.%
    \label{thm:stabletheon:stablesubobject}
  \end{enumerate}
\end{theorem}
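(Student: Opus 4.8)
The plan is to follow the three-implication structure of Theorem~\ref{thm:stablegraphon}. For \ref{thm:stabletheon:trivialsubobject}$\Rightarrow$\ref{thm:stabletheon:stablesubobject}, if $\cN'$ represents the trivial sub-object $\psi$ with every $\cN'_P$ of measure $0$ or $1$, then for each $(1,k(P)-1)$-split $F$ of $P$ the truth set $T(H_{2,F},\cN')$ is null: $H_{2,F}$ is $F(\vec x_1,\vec y_1)\wedge F(\vec x_1,\vec y_2)\wedge F(\vec x_2,\vec y_2)\wedge\neg F(\vec x_2,\vec y_1)$, and if $\mu(\cN'_P)=0$ its conjunct $F(\vec x_1,\vec y_1)$ is a.e.\ false while if $\mu(\cN'_P)=1$ its conjunct $\neg F(\vec x_2,\vec y_1)$ is a.e.\ false; so the split is almost weakly stable with $n=2$. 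For \ref{thm:stabletheon:trivialrestriction}$\Rightarrow$\ref{thm:stabletheon:trivialsubobject}, apply Lemma~\ref{lem:limitsubobject} with $f\df\One_U$; since $\mu_f=\mu_U$ and $\cN\rest_f^F=\cN\rest_U^F$, the lemma exhibits $\phi_{\cN\rest_U^F}$ as a sub-object of $\phi_\cN$, which is trivial by hypothesis.

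For the substantial implication \ref{thm:stabletheon:stablesubobject}$\Rightarrow$\ref{thm:stabletheon:trivialrestriction} I would first perform the reduction used in the graphon proof. Given the sub-object $\psi$, Lemma~\ref{lem:limitsubobject} produces $f\function{X}{[0,1]}$ with $\int_X f\,d\mu>0$ and $\psi=\phi_{\cN\rest_f^{F_0}}$; setting $V\df\{x:f(x)>0\}$ and running the $\epsilon$-truncation argument of the graphon proof (copies whose vertex coordinates all lie in $\{f>\epsilon\}$ carry comparable mass under $\mu_f$ and $\mu_V$, and the truncated copy sets exhaust the full ones a.e., so $\tind(M,\cN\rest_f^{F_0})=0$ forces $\tind(M,\cN\rest_V^{F_1})=0$ for every finite $M$), one sees that every $(1,k(P)-1)$-split is still almost weakly stable in $\phi_{\cN\rest_V^{F_1}}$ — here one uses that almost weak stability of a split is a property of the homomorphism, equivalently of which finite models $M$ have positive value. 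So it suffices to prove the analogue of Lemma~\ref{lem:stablegraphon}: if $\cM$ is a $T$-on over $\Theta=(Y,\cB,\nu)$ all of whose $(1,k(P)-1)$-splits are almost weakly stable in $\phi_\cM$, then there is a positive measure $U'\subseteq Y$ with $\cM\rest_{U'}^F$ trivial for every measure-isomorphism $F$; the set $U$ of \ref{thm:stabletheon:trivialrestriction} is then recovered from $U'$ exactly as $\widetilde U\cap V$ is recovered in the graphon argument.

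To prove that analogue I would: (a) apply the Removal Lemma via Remark~\ref{rmk:theonremoval} to replace $\cM$ by a strong $\Th(\phi_\cM)$-on $\cM'$, agreeing with $\cM$ off null sets of the predicate-ons, in which, for each split $F$ with stability bound $n$, the half-graph formula $H_{n,F}$ holds nowhere off-diagonal; by~\cite[Lemma~6.7.9]{Hod93} the same then holds for trees of height $2^{n+2}-2$ with respect to $F$. (b) Treat the finitely many predicate symbols one at a time, and within a predicate the finitely many split positions one at a time — legitimate since triviality of a predicate on a set is inherited by its positive measure subsets. (c) For a single predicate $P$ and split position, run a binary-tree construction generalizing Lemma~\ref{lem:stablegraphon}: maintain positive measure sets $Y_\sigma$, and at $Y_\sigma$ branch on a ``generalized vertex'' — the remaining $k(P)-1$ vertices together with the higher-arity coordinates that must be pinned down so that ``$P$ holds at this split'' becomes a genuine subset of the space on which the branching vertex varies — whenever such a datum cuts $Y_\sigma$ into two positive measure pieces, stopping otherwise. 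An unbounded branch, after choosing representatives off a countable exceptional set, would instantiate an off-diagonal tree of unbounded height for one of the finitely many fixed splits, contradicting (a); so the construction halts at a leaf on which every generalized-vertex section is a.e.\ full or a.e.\ empty. On that leaf the fibre density of $\cM'_P$ is a.e.\ constant in each coordinate over the $k(P)$-fold power, hence a.e.\ a single constant $\gamma$; and $\gamma\in\{0,1\}$, because a fractional $\gamma$ makes each cross-tuple of a half-graph a $P$-tuple with probability $\gamma$ independently, so $T(H_{m,F},\cdot)$ would be non-null for every $m$, contradicting almost weak stability. Iterating over all predicates yields $U'$ on which every $\cM'_P$ has $\nu_{U'}$-measure $0$ or $1$, i.e.\ $\cM'\rest_{U'}^F$ — hence $\cM\rest_{U'}^F$ — is trivial for every $F$.

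The main obstacle is the tree construction in step (c): unlike the graph case one cannot first pass to a $\{0,1\}$-valued object, and the ``neighbourhood of the split vertex'' genuinely involves higher-arity randomness shared between the split vertex and the changing generalized vertices, so care is needed to set up the space on which the tree lives and to verify that an unbounded branch really instantiates a single off-diagonal tree configuration with respect to one fixed split, so that~\cite[Lemma~6.7.9]{Hod93} applies. The argument that the terminal constant $\gamma$ must be $\{0,1\}$-valued is exactly what lets us bypass any higher-arity analogue of~\cite[Theorem~4.1]{LS10}. Everything else — the two easy implications, the rescaling reduction, and transferring $U'$ back through the measure-isomorphisms — follows the graphon template via Lemma~\ref{lem:limitsubobject} and the Removal Lemma.
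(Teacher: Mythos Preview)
Your two easy implications and the reduction of \ref{thm:stabletheon:stablesubobject}$\Rightarrow$\ref{thm:stabletheon:trivialrestriction} to the theon analogue of Lemma~\ref{lem:stablegraphon} (via the support set $V=\{f>0\}$ and the $\epsilon$-truncation argument) are correct and match the paper exactly.

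For that core lemma (the paper's Lemma~\ref{lem:stabletheon}) your strategy diverges from the paper's. The paper does \emph{not} iterate the tree over the split positions of a predicate; it runs the tree once, for a single $(1,k-1)$-split of a single $P$, obtains a set $A$ on which the split direction is a.e.\ $0$/$1$, and then \emph{replaces $P$ by a new predicate $P'$ of arity $k-1$} on $A$ recording which $(k-1)$-tuples have a.e.-full neighbourhood. It then verifies that every split of every predicate in the new language $(\cL\setminus\{P\})\cup\{P'\}$ remains almost weakly stable (this verification, building a half-graph for $P$ out of one for $P'$, is the technical heart of the proof) and recurses on the strictly smaller total arity $\sum_Q k(Q)$.

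Your alternative --- run trees for all $k(P)$ split positions so that on the final set the indicator $\One_{\cN_P}$ is a.e.\ a function $h$ of the higher-arity coordinates alone, with constant fibre density $\gamma$, then argue $\gamma\in\{0,1\}$ --- has a genuine gap at the last step. The cross-tuple events $F(x_i,\vec y_j)$ are \emph{not} independent with common probability $\gamma$: for fixed $j$ and varying $i$ they share all higher-arity coordinates on subsets of $\{y_j^1,\ldots,y_j^{k-1}\}$ of size $\geq 2$, so they are only conditionally independent given those, with conditional probability $p(z_j)$ that need not equal $\gamma$. Concretely, for $k=3$, if $h$ depends only on $x_{\{2,3\}}$ then $F_1(x_i,\vec y_j)$ is constant in $i$, so the split at position $1$ admits no half-graph of order $\geq 2$ even though $\gamma=\tfrac12$; your argument would falsely read off $\gamma\in\{0,1\}$ from stability of this one split. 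What actually excludes such $h$ is stability of the \emph{other} splits: the conditional-independence computation shows that stability of split $i$ forces $h$ to be a.e.\ determined by coordinates on subsets of $[k]\setminus\{i\}$, and iterating this over all $i$ forces $h$ to be a.e.\ constant. That repaired argument does work, but it is an arity reduction in disguise --- essentially the paper's inductive mechanism recast in terms of $h$ rather than via the auxiliary predicate $P'$.
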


The same observations of Discussion~\ref{dsc:subgraphon} can be made here: the equivalence between
items~\ref{thm:stabletheon:trivialsubobject} and~\ref{thm:stabletheon:trivialrestriction} is not
immediate since not every sub-object is of the form that appears in the latter item. However, by an
argument analogous to that in Discussion~\ref{dsc:subgraphon}, if $P$ is a property of limits that
is closed under sub-objects, then $\phi_\cN$ has a sub-object satisfying $P$ if and only if there
exists $U\subseteq X$ such that $\phi_{\cN\rest_U^F}$ satisfies $P$ for every measure-isomorphism
$F$ modulo $0$ from $\Omega_U$ to $\Omega$.

Naturally, the main ingredient to prove the theorem above is a generalization of
Lemma~\ref{lem:stablegraphon} for theons.

\begin{lemma}\label{lem:stabletheon}
  Let $\cN$ be a $T$-on over a space $\Omega=(X,\cA,\mu)$ such that every $(1,k(P)-1)$-split of
  every predicate symbol $P$ is almost weakly stable in $\phi_\cN$. Then there exists a positive
  measure set $U\subseteq X$ such that for every measure-isomorphism $F$ modulo $0$ from $\Omega_U$
  to $\Omega$, the sub-object $\phi_{\cN\rest_U^F}$ is trivial.
\end{lemma}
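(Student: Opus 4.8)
The plan is to mimic the proof of Lemma~\ref{lem:stablegraphon}: build a ``$0$-good'' set inside $X$ by a split-neighborhood tree construction, but now indexed by the larger coordinate set $r(k(P))$ and run simultaneously over all predicates and positions.

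\emph{Setup and reformulation.} First I would view $\cN$ as a $\Th(\phi_\cN)$-on and apply the Induced Euclidean Removal Lemma (Theorem~\ref{thm:theonremoval}, via Remark~\ref{rmk:theonremoval}) to replace it by a strong $\Th(\phi_\cN)$-on $\cN'$ with $\cN'_P = \cN_P$ a.e.\ for every $P$ and such that, for every $(1,k(P)-1)$-split $F$ of every predicate $P$ of arity $\geq 2$ and the $n_F$ witnessing almost weak stability, $T(H_{n_F,F},\cN')$ lies in the diagonal. Since $\phi_{\cN\rest_U^F}$ is computed by integrals of indicators built from the $\cN_P$'s, it equals $\phi_{\cN'\rest_U^F}$, and a change of variables $x\mapsto x^F$ shows that $\phi_{\cN'\rest_U^F}$ is trivial (indeed witnessed by $\cN'\rest_U^F$ itself, \emph{independently of $F$}) if and only if for every predicate $P$ of arity $k$ one has $\mu\bigl(\cN'_P\cap\{x\in\cE_k(\Omega):x_{\{j\}}\in U\text{ for all }j\in[k]\}\bigr)\in\{0,\mu(U)^k\}$; so this last property of $U$ is what I must arrange (arity $\leq 1$ being handled by the degenerate splits below and being automatic for arity $0$). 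For a predicate $P$ of arity $k\geq 1$, a position $i\in[k]$ and a point $q$ in $Z_{P,i}\df X^{r([k])\setminus\{\{i\}\}}$ write $S_{P,i}(q)\df\{v\in X\mid (v;q)\in\cN'_P\}$ for the corresponding \emph{split-neighborhood}, where $(v;q)\in\cE_k(\Omega)$ has $\{i\}$-coordinate $v$ and all other coordinates taken from $q$; for $k=1$ the only witness is the empty tuple and $S_{P,1}(q)=\cN'_P$.

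\emph{The construction.} Enumerate all the splits (the $(1,k(P)-1)$-splits for $k(P)\geq 2$, together with the degenerate one for each unary $P$) as $F_1,\dots,F_L$ and produce positive-measure sets $X=U_0\supseteq U_1\supseteq\cdots\supseteq U_L$ one split at a time. Given $U_{t-1}$, with $F_t$ the split at position $i$ of $P$, build a binary tree of measurable sets $X_\sigma\subseteq U_{t-1}$ ($\sigma\in\{0,1\}^{<\omega}$): set $X_\varnothing\df U_{t-1}$, and at a node $\sigma$, if $\{q\in Z_{P,i}\mid 0<\mu(S_{P,i}(q)\cap X_\sigma)<\mu(X_\sigma)\}$ has positive measure, choose such a $q_\sigma$ whose singleton coordinates are pairwise distinct and avoid those of the previously chosen $q_\tau$ (the exceptional $q$'s form a null set), and set $X_{\sigma 1}\df X_\sigma\cap S_{P,i}(q_\sigma)$, $X_{\sigma 0}\df X_\sigma\setminus S_{P,i}(q_\sigma)$ (both of positive measure); otherwise declare $\sigma$ \emph{stopped}. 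If no node is ever stopped then the full binary tree of depth $N_F\df 2^{n_{F_t}+2}-2$ appears; picking $x_\sigma\in X_\sigma$ ($\sigma\in\{0,1\}^{N_F}$) pairwise distinct and avoiding all $q_\tau$'s singletons, one assembles a point $p\in\cE_\bullet(\Omega)$ whose singleton coordinates are these $x_\sigma$ and the singletons of the $q_\tau$, whose coordinates indexed by a leaf together with an ancestor $\tau$ are read off $q_\tau$ (there is no clash, as distinct ancestors and distinct leaves use distinct coordinates), and with all remaining coordinates arbitrary; then $p$ lies off the diagonal and realizes a tree of height $N_F$ with respect to $F_t$, so by the tree bound~\cite[Lemma~6.7.9]{Hod93} (applied to the total relation induced by $\cN'_P$ on the finitely many vertices involved, exactly as in Lemma~\ref{lem:stablegraphon}) it yields an off-diagonal point of $T(H_{n_{F_t},F_t},\cN')$, a contradiction. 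Hence some node $\widetilde\sigma$ is stopped; put $U_t\df X_{\widetilde\sigma}$, so that for a.e.\ $q\in Z_{P,i}$, $\mu(S_{P,i}(q)\cap U_t)\in\{0,\mu(U_t)\}$; this ``$F_t$-homogeneity'' is visibly inherited by $U_{t+1},\dots,U_L$.

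\emph{From homogeneity to triviality.} Let $U\df U_L$. Fix a predicate $P$ of arity $k\geq 2$. For each $i\in[k]$, $F$-homogeneity of $U$ (for the split $F$ at position $i$) yields, by Fubini, a measurable $\chi_i\function{Z_{P,i}}{\{0,1\}}$ with $(v;q)\in\cN'_P\iff\chi_i(q)=1$ for a.e.\ $q$ and a.e.\ $v\in U$. Restricting all $k$ singleton coordinates to $U$ and comparing over $i=1,\dots,k$ forces $\One_{\cN'_P}$ to agree a.e.\ on that slab with a function $\psi$ of the ``edge'' coordinates alone. If $\psi$ were neither $0$ a.e.\ nor $1$ a.e., then for \emph{every} $n$ one could choose pairwise distinct vertices in $U$ and, on the pairwise disjoint edge-coordinate blocks of the $n^2$ pairs, values landing in $\{\psi=1\}$ or $\{\psi=0\}$ according to a half-graph pattern, producing a positive-measure set of off-diagonal points of $T(H_{n,F},\cN')$ and contradicting almost weak stability of $F$ (any $(1,k-1)$-split of $P$). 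So $\psi$ is a.e.\ constant, whence $\mu(\cN'_P\cap\{$all singletons in $U\})\in\{0,\mu(U)^k\}$; for unary $P$, $F$-homogeneity directly gives $\mu(\cN'_P\cap U)\in\{0,\mu(U)\}$, and arity $0$ is trivial. As noted in the first paragraph, this is exactly what makes $\phi_{\cN\rest_U^F}=\phi_{\cN'\rest_U^F}$ trivial for every measure-isomorphism $F$ modulo $0$ from $\Omega_U$ to $\Omega$, completing the proof.

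\emph{Main obstacle.} The crux is the last paragraph: homogeneity in each individual $(1,k-1)$-split only controls $\cN'_P$ after freezing all but one vertex, so a priori $\cN'_P\cap(U^k\times\text{edges})$ could be $U^k\times E$ with $0<\mu(E)<1$; ruling this out is a \emph{second} use of almost weak stability, now reading the forbidden half-graph off the edge coordinates rather than the vertices (this is the point that has no analogue in the graphon proof, where one instead invokes that stable graphons are $\{0,1\}$-valued). The accompanying technical nuisance is guaranteeing that every tree or half-graph exhibited is genuinely off-diagonal; this is precisely why the strong $T$-on from the Removal Lemma is indispensable in the ``stopped'' direction, where only finitely many frozen witnesses are available and hence a single candidate point — with no measure to spare — must do the job.
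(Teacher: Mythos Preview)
Your approach is genuinely different from the paper's. The paper argues by induction on $\sum_{P}k(P)$: at each step it runs the tree construction for a \emph{single} split of a single predicate $P$ of arity $k$, obtains a set $A$ on which that split is $0/1$, then replaces $P$ by a new predicate $P'$ of arity $k-1$ (recording ``the split-neighborhood is full'') and proves that every $(1,k-2)$-split of $P'$ is again almost weakly stable by lifting a hypothetical half-graph for $P'$ to one for $P$. You instead run the tree construction for \emph{all} splits of all predicates, intersect down to a single homogeneous $U$, deduce that on the slab $\{x:x_{\{j\}}\in U\ \forall j\}$ the indicator $\One_{\cN'_P}$ agrees a.e.\ with a function $\psi$ of the higher-order coordinates alone, and then use stability a second time to force $\psi$ to be a.e.\ constant. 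Both routes need this ``second use'' of stability; yours trades the inductive bookkeeping and the auxiliary $P'$ for a direct argument on $\psi$.

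There is, however, a real gap in your last paragraph when $k(P)\geq 3$: the edge-coordinate blocks of the $n^2$ half-graph pairs are \emph{not} pairwise disjoint. For pairs $(i,j)$ and $(i',j)$ with the same $j$, every coordinate indexed by a set $A\subseteq[k]\setminus\{i_0\}$ with $\lvert A\rvert\geq 2$ (the ``within-$\vec b_j$'' coordinates) is shared. If $\psi$ happened to be a.e.\ a function of only these shared coordinates, your construction would be inconsistent: for fixed $j$ all $(i,j)$ would be forced to the same truth value, which is not a half-graph pattern. The repair is short and in the spirit of your singleton argument: if $\psi$ were a.e.\ a function of $\{A:\lvert A\rvert\geq 2,\ i_0\notin A\}$ for \emph{every} position $i_0$, then by the same Fubini reasoning it would be a.e.\ a function of $\bigcap_{i_0}\{A:\lvert A\rvert\geq 2,\ i_0\notin A\}=\varnothing$, hence a.e.\ constant. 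So choose $i_0$ for which $\psi$ genuinely depends on some coordinate $A\ni i_0$; for \emph{that} split, decompose the edge coordinates into ``shared'' $s$ (those $A\not\ni i_0$) and ``unique'' $u$ (those $A\ni i_0$). Now the unique blocks across all $n^2$ pairs really are pairwise disjoint (any index in the $(i,j)$-block contains the variable $a_i$, which determines $i$, and its remaining elements lie among the $(b_j)_l$, which determine $j$), and for a positive-measure set of $s$ the slice $\psi(s,\cdot)$ takes both values on positive-measure sets of $u$. Choosing the $s_j$'s in that good set and then the $u_{i,j}$'s independently yields the desired positive-measure set inside $T(H_{n,F},\cN')$, and your contradiction goes through.
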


\begin{proof}
  In this proof, we will work with measurability with respect to the $\sigma$-algebra corresponding
  to the completion of the measure $\mu$. Note that the result still follows for the original
  $\sigma$-algebra by simply changing the final set $U$ in a zero-measure set. Note also that it is
  enough to show the existence of some $U$ and $F$ such that $\phi_{\cN\rest_U^F}$ is trivial as if
  $F'$ is any other measure-isomorphism modulo $0$ from $\Omega_U$ to $\Omega$, then
  $\phi_{\cN\rest_U^{F'}}$ is also trivial.
  
  Let $\cL$ be the language of $T$. The proof is by induction in the sum $\sum_{P\in\cL} k(P)$ of
  the arities of the predicate symbols.

  If the language $\cL$ is empty, the result is trivial.

  Suppose then that $\cL$ is non-empty, let $P\in\cL$ and let $k\df k(P)$ be its arity.

  If $P$ is a unary predicate, then applying the result inductively for $\cL\setminus\{P\}$, we get a
  measurable set $U'\subseteq X$ with $\mu(U') > 0$ such that for every $Q\in\cL\setminus\{P\}$, we have
  $\mu_{U'}((\cN\rest_{U'}^F)_Q)\in\{0,1\}$. Since $\mu(U')>0$, at least one of $\cN_P\cap U'$ or
  $U'\setminus\cN_P$ has positive $\mu$-measure, so letting $U$ be any of these having positive measure gives
  the desired result.

  Suppose now that $k\geq 2$. By Theorem~\ref{thm:theonremoval} and Remark~\ref{rmk:theonremoval}, we can
  replace $\cN$ with a $T$-on such that there exists $n\in\NN$ such that every $(1,k-1)$-split $S(x,\vec{y})$
  of $P$ satisfies $T(H_{n,S},\cN)\subseteq\cD_{nk}(\Omega)$ (here we are abusing the notation a bit by saying
  that the variables of $H_{n,S}$ are indexed by $[nk]$).

  Let us consider the natural $(1,k-1)$-split of $P$ given by $P(x,\vec{y})\df P(x,y_1,\ldots,y_{k-1})$, which
  we will denote simply by $P$ and for convenience of notation, let $V_k\df[k]\setminus\{1\}$.

  For each $a\in\cE_1(\Omega)\cong X$ and
  $b\in\cE_{V_k}(\Omega)\setminus\cD_{V_k}(\Omega)$, let
  \begin{align*}
    \cN_P(a,b) & \df \{c\in X^{r(k)\setminus (r(1)\cup r(V_k))} \mid (a,b,c)\in\cN_P\setminus\cD_k(\Omega)\}
  \end{align*}
  be the set of points that complete $(a,b)$ to a point of $\cN_P\setminus\cD_k(\Omega)$ (note that
  if $a\in\{b_{\{1\}},\ldots,b_{\{k-1\}}\}$, then $\cN_P(a,b)$ is immediately empty). Define further
  \begin{align*}
    N^1_P(b) & \df \{a\in\cE_1(\Omega) \mid \mu(\cN_P(a,b)) = 1\},\\
    N^0_P(b) & \df \{a\in\cE_1(\Omega) \mid \mu(\cN_P(a,b)) = 0\}
  \end{align*}
  and let $B_0$ be the set of all $b\in\cE_{V_k}(\Omega)\setminus\cD_{V_k}(\Omega)$ such that both
  $N^1_P(b)$ and $N^0_P(b)$ are measurable. Fubini's Theorem gives $\mu(B_0)=1$.

  Given a finite collection of points $\{b_1,\ldots,b_t\}\subseteq B_0$, let
  \begin{align*}
    C(\{b_1,\ldots,b_t\})
    & \df
    \{(b_i)_{\{v\}} \mid i\in[t], v\in V_k\}
    \subseteq
    X
  \end{align*}
  be the set of coordinates of the $b_i$ that are indexed by singletons and let
  \begin{align*}
    B(\{b_1,\ldots,b_t\})
    & \df
    \{b\in B_0 \mid \forall v\in V_k, b_{\{v\}}\notin C(\{b_1,\ldots,b_t\})\}
    \subseteq
    \cE_{V_k}(\Omega)
  \end{align*}
  be the set of $b\in B_0$ whose coordinates indexed by a singletons do not appear in the set
  $C(\{b_1,\ldots,b_t\})$. We also let $B(\varnothing)\df B_0$. Note that
  $\mu(B(\{b_1,\ldots,b_t\})) = \mu(B_0) = 1$.

  We now construct sequences $(A_\sigma)_\sigma$ and $(b_\sigma)_\sigma$ indexed by finite strings
  over $\{0,1\}$ inductively in the length-lexicographic order $\leqLL$ as follows.
  \begin{enumerate}[label={\arabic*.}]
  \item Set $A_\varnothing\df X$.
  \item For a string $\sigma$, given $A_\sigma$ and $b_\tau$ for all $\tau\lessLL\sigma$, if there
    exist $j\in\{0,1\}$ and $b\in B(\{b_\tau \mid \tau\lessLL\sigma\})$ such that $0 <
    \mu(N^j_P(b)\cap A_\sigma) < \mu(A_\sigma)$, then set
    \begin{align*}
      b_\sigma & \df b,\\
      A_{\sigma j} & \df (A_\sigma\cap N^j_P(b))\setminus C(\{b_\tau \mid \tau\leqLL\sigma\}),\\
      A_{\sigma (1-j)} & \df A_\sigma\setminus (N^j_P(b)\cup C(\{b_\tau \mid \tau\leqLL\sigma\}));
    \end{align*}
    otherwise, stop the construction.
  \end{enumerate}

  By induction in the construction, it follows that if $A_\sigma$ is defined for every
  $\sigma\in\{0,1\}^t$ of a fixed length $t$, then $\{A_\sigma \mid \sigma\in\{0,1\}^t\}$ is a
  collection of pairwise disjoint sets of positive measure whose union has measure $1$ (hence each
  of these sets is non-empty). Furthermore, if $a_\sigma\in A_\sigma$ ($\sigma\in\{0,1\}^t$), then
  we can find a tree of height $t$ in $\cN$ as follows. Let
  \begin{align*}
    U & \df \{0,1\}^t\cup\bigcup_{m=0}^{t-1} (\{0,1\}^m\times V_k)
  \end{align*}
  and define $x\in\cE_U(\Omega)$ as follows.
  \begin{enumerate}[label={\alph*.}, ref={(\alph*)}]
  \item For each $\sigma\in\{0,1\}^t$, let $x_{\{\sigma\}}\df a_\sigma$.%
    \label{it:adef}
  \item For each $m\in\{0,1,\ldots,t-1\}$, each $\tau\in\{0,1\}^m$ and each $V\in r(V_k)$, let
    \begin{align*}
      x_{\{(\tau,v) \mid v\in V\}} & \df (b_\tau)_V,
    \end{align*}
    that is, for the injection $\alpha_\tau\injection{V_k}{U}$ given by $\alpha_\tau(v)\df(\tau,v)$,
    we have $\alpha_\tau^*(x) = b_\tau$.%
    \label{it:bdef}
  \item For each $\sigma\in\{0,1\}^t$ and each $m\in\{0,1,\ldots,t-1\}$, let
    \begin{align*}
      c_{\sigma,m} & \in
      \begin{dcases*}
        \cN_P(a_\sigma,b_{\sigma\rest_{[m]}}), & if $\sigma_{m+1} = 1$,\\
        X^{r(k)\setminus (r(1)\cup r(V_k))}\setminus\cN_P(a_\sigma,b_{\sigma\rest_{[m]}}),
        & if $\sigma_{m+1} = 0$,
      \end{dcases*}
    \end{align*}
    and for each $V\in r(V_k)$, let
    \begin{align*}
      x_{\{\sigma\}\cup\{(\sigma\rest_{[m]},v) \mid v\in V\}} & \df (c_{\sigma,m})_{\{1\}\cup V}.
    \end{align*}
    \label{it:cdef}
  \item Define all other coordinates of $x$ arbitrarily.
  \end{enumerate}

  Let us make some observations about this construction. First, it is straightforward to check that
  no coordinate of $x$ is defined more than once. Second, by induction in the construction, it
  follows that each $A_\sigma\subseteq A_{\sigma\rest_{[m+1]}}$; this means that the element
  $c_{\sigma,m}$ of item~\ref{it:cdef} is guaranteed to exist from the definition of
  $A_{\sigma\rest_{[m+1]}}$. Third, all coordinates of $x$ that are indexed by singletons are
  defined in items~\ref{it:adef} and~\ref{it:bdef} and since these coordinates must be either
  $a_\sigma$ for different $\sigma\in\{0,1\}^t$ or coordinates indexed by singletons of some
  $b_\tau$ for some $\tau\in\{0,1\}^m$ with $m\in\{0,1,\ldots,t-1\}$, it follows by construction
  that they must all be distinct, that is, we must have $x\notin\cD_U(\Omega)$. Finally, if
  $G_t(\vec{x},\vec{y})$ is the formula encoding a tree of height $t$ with respect to
  $P(x,\vec{y})$, that is, we have
  \begin{align*}
    G_t(\vec{x},\vec{y})
    & \df
    \bigwedge_{\sigma\in\{0,1\}^t} \bigwedge_{m=0}^{t-1} \neg^{1-\sigma_{m+1}} P(x_\sigma,\vec{y}_{\sigma\rest_{[m]}}),
  \end{align*}
  then $x\in T(G,\cN)$ after an appropriate \emph{bijective} relabeling of variables.

  From~\cite[Lemma~6.7.9]{Hod93}, we know that if a model has a tree of height $2^{n+2}-2$ with
  respect to $P(x,\vec{y})$, then it must have a half-graph of order $n$, where the $\vec{a}$ and
  $\vec{b}$ parts of the half-graph are picked from the $\vec{a}$ and $\vec{b}$ parts of the tree,
  respectively, with all of them distinct. In particular, if $t\geq 2^{n+2}-2$, then there exists an
  injection $\alpha\injection{[nk]}{U}$ such that $\alpha^*(x)\in T(H_{n,P},\cN)$.

  Since $x\notin\cD_U(\Omega)$ and $T(H_{n,P},\cN)\subseteq\cD_{nk}(\Omega)$, the construction must
  stop before constructing all $A_\sigma$ with $\lvert\sigma\rvert = 2^{n+1}-2$.

  Let then $\widetilde{\sigma}$ be the last string considered by the construction and let $A\df
  A_{\widetilde{\sigma}}$ and $B\df B(\{b_\tau \mid \tau\lessLL\widetilde{\sigma}\})$. We know that
  for every $b\in B$, we have
  \begin{align*}
    \frac{\mu(N^1_P(b)\cap A)}{\mu(A)}
    & =
    1 - \frac{\mu(N^0_P(b)\cap A)}{\mu(A)}
    \in\{0,1\}.
  \end{align*}

  We now setup our induction: let $\cL'\df(\cL\setminus\{P\})\cup\{P'\}$, where $P'$ is a new
  predicate symbol of arity $k(P')\df k-1$, let $\widetilde{F}$ be a measure-isomorphism modulo $0$
  from $\Omega_A$ to $\Omega$ and define the $T_{\cL'}$-on $\cN'$ over $\Omega_A$ by letting
  $\cN'_Q\df(\cN\rest_A^{\widetilde{F}})_Q$ for every $Q\in\cL'\setminus\{P'\}$ and letting
  \begin{align*}
    \cN'_{P'}
    & \df
    \left\{\iota^*(x) \;\middle\vert\;
    x^{\widetilde{F}}\in B\land
    \frac{\mu(N^1_P(x^{\widetilde{F}})\cap A)}{\mu(A)}=1
    \right\},
  \end{align*}
  where $\iota\injection{[k-1]}{V_k}$ is the relabeling $\iota(v) = v+1$ and $x^{\widetilde{F}}$ is
  given by~\eqref{eq:xF}.

  We claim that every $(1,k(Q)-1)$-split of a predicate symbol $Q\in\cL'$ is almost weakly stable in
  $\phi_{\cN'}$. For $Q\neq P'$ this obviously follows from the same property for $\phi_\cN$ as
  $\mu_A$ is absolutely continuous with respect to $\mu$. For $P'$, if $S'(x,\vec{y})\df
  P'(y_1,\ldots,y_{i_0-1},x,y_{i_0},\ldots,y_{k-2})$ ($i_0\in[k-1]$) is a $(1,k-2)$-split of $P'$,
  then we want to show that $T(H_{n,S'},\cN')$ has measure zero. To prove this, let $S(x,\vec{y})$
  be the $(1,k-1)$-split of $P$ given by $P(y_1,\ldots,y_{i_0},x,y_{i_0+1},\ldots,y_{k-1})$ and let
  us change the indexing of the variables of the formulas $H_{n,S'}$ and $H_{n,S}$ as follows:
  \begin{align*}
    H_{n,S'}(x_{i,j} \mid i\in V_k, j\in[n])
    & \df
    \begin{multlined}[t]
      \bigwedge_{1\leq j_1\leq j_2\leq n}
      P'(x_{\beta_{j_1,j_2}(1)},\ldots,x_{\beta_{j_1,j_2}(k-1)})
      \\
      \land
      \bigwedge_{1\leq j_2 < j_1\leq n}
      \neg P'(x_{\beta_{j_1,j_2}(1)},\ldots,x_{\beta_{j_1,j_2}(k-1)}),
    \end{multlined}
    \\
    H_{n,S}(x_{i,j} \mid i\in [k], j\in[n])
    & \df
    \begin{multlined}[t]
      \bigwedge_{1\leq j_1\leq j_2\leq n}
      P(x_{\gamma_{j_1,j_2}(1)},\ldots,x_{\gamma_{j_1,j_2}(k)})
      \\
      \land
      \bigwedge_{1\leq j_2 < j_1\leq n}
      \neg P(x_{\gamma_{j_1,j_2}(1)},\ldots,x_{\gamma_{j_1,j_2}(k)}),
    \end{multlined}
  \end{align*}
  where the injections $\beta_{j_1,j_2}\injection{[k-1]}{V_k\times[n]}$ and
  $\gamma_{j_1,j_2}\injection{[k]}{[k]\times[n]}$ are given by
  \begin{align*}
    \beta_{j_1,j_2}(v) & \df
    \begin{dcases*}
      (v+1,j_1), & if $v\neq i_0$,\\
      (v+1,j_2), & if $v = i_0$,
    \end{dcases*}
    &
    \gamma_{j_1,j_2}(v) & \df
    \begin{dcases*}
      (v,j_1), & if $v\neq i_0+1$,\\
      (v,j_2), & if $v = i_0+1$.
    \end{dcases*}
  \end{align*}

  Suppose now that $z\in T(H_{n,S'},\cN')\setminus\cD_{V_k\times [n]}(\Omega_A)$ is such that all of
  its coordinates indexed by singletons are in $A$ and for every injection
  $\alpha\injection{V_k}{V_k\times [n]}$, we have $\alpha^*(z)\in B$. Then we can define a point
  $\widehat{z}\in T(H_{n,S},\cN)\setminus\cD_{[k]\times [n]}(\Omega)$ as follows.
  \begin{enumerate}[label={\Alph*.}, ref={(\Alph*)}]
  \item For each $i\in V_k$ and each $j\in[n]$, let $\widehat{z}_{\{(i,j)\}}\df z_{\{(i,j)\}}$.
  \item For each $V\subseteq V_k\times [n]$ with $\lvert V\rvert\geq 2$, let $\widehat{z}_V\df
    \widetilde{F}(z_V)$.
  \item For each $j_1,j_2\in[n]$ with $j_1\leq j_2$, since $\beta_{j_1,j_2}^*(z)\in\cN'_{P'}$, the
    definition of $\cN'_{P'}$ implies that the point
    $w\df(\beta_{j_1,j_2}\comp\iota^{-1})^*(z)^{\widetilde{F}}\in\cE_{V_k}(\Omega)$ satisfies
    $\mu(N^1_P(w)\cap A) = \mu(A) > 0$, so we can let $\widehat{z}_{\{(1,j_1)\}}\in N^1_P(w)\cap A$
    be different from all coordinates defined so far and define the coordinates $\widehat{z}_V$ with
    $\{(1,j_1)\}\subsetneq V\subseteq\im(\gamma_{j_1,j_2})$ based on a point in
    $\cN_P(\widehat{z}_{\{(1,j_1)\}},w)$ so that $\gamma_{j_1,j_2}^*(\widehat{z})\in\cN_P$.%
    \label{it:P'true}
  \item Analogously, for each $j_1,j_2\in [n]$ with $j_2 < j_1$, since
    $\beta_{j_1,j_2}^*(z)\notin\cN'_{P'}$, the definition of $\cN'_{P'}$ implies that the point
    $w\df(\beta_{j_1,j_2}\comp\iota^{-1})^*(z)^{\widetilde{F}}\in\cE_{V_k}(\Omega)$ satisfies
    \begin{align*}
      \mu(N^0_P(w)\cap A)
      =
      \mu(A) - \mu(N^1_P(w)\cap A)
      =
      \mu(A)
      >
      0,
    \end{align*}
    so we can let $\widehat{z}_{\{(1,j_1)\}}\in N^0_P(w)\cap A$ be different from all coordinates
    defined so far and define the coordinates $\widetilde{z}_V$ with $\{(1,j_1)\}\subsetneq
    V\subseteq\im(\gamma_{j_1,j_2})$ based on a point in the complement of
    $\cN_P(\widehat{z}_{\{(1,j_1)\}},w)$ so that $\gamma_{j_1,j_2}^*(\widehat{z})\notin\cN_P$.%
    \label{it:P'false}
  \item Finally, we define all other coordinates of $\widehat{z}$ arbitrarily.
  \end{enumerate}

  Since in items~\ref{it:P'true} and~\ref{it:P'false} we ensured that coordinates were not repeated,
  we get $\widetilde{z}\in T(H_{n,S},\cN)\setminus\cD_{[k]\times[n]}(\Omega)$, a contradiction. Thus
  $T(H_{n,S'},\cN')$ has measure zero.

  Therefore, the $(1,k-2)$-split $S'$ of $P'$ is almost weakly stable.

  \medskip

  By inductive hypothesis, it follows that there exists a measurable set $U\subseteq A$
  with $\mu_A(U) > 0$ and a measure-isomorphism $F'$ modulo $0$ from $\Omega_U$ to
  $\Omega_A$ such that $\phi_{\cN'\rest_U^{F'}}$ is trivial. It follows from the
  definition of $\cN'$ that $\phi_{\cN\rest_U^{\widetilde{F}\comp F'}}$ is trivial, completing the proof.
\end{proof}

The proof of Theorem~\ref{thm:stabletheon} from Lemma~\ref{lem:stabletheon} below is analogous to
its graphon counterpart Theorem~\ref{thm:stablegraphon} from Lemma~\ref{lem:stablegraphon}.

\begin{proofof}{Theorem~\ref{thm:stabletheon}}
  The implication~\ref{thm:stabletheon:trivialrestriction}$\implies$\ref{thm:stabletheon:trivialsubobject} is
  trivial and the
  implication~\ref{thm:stabletheon:trivialsubobject}$\implies$\ref{thm:stabletheon:stablesubobject} follows
  since (all splits of) all open formulas are almost stable in a trivial sub-object.

  \medskip

  For the final
  implication~\ref{thm:stabletheon:stablesubobject}$\implies$\ref{thm:stabletheon:trivialrestriction},
  again it is enough to show the existence of some $U$ and $F$ such that $\phi_{\cN\rest_U^F}$ is
  trivial as if $F'$ is any other measure-isomorphism modulo $0$ from $\Omega_U$ to $\Omega$, then
  $\phi_{\cN\rest_U^{F'}}$ is also trivial.

  Let then $\psi$ be a sub-object in which every $(1,k(P)-1)$-split of every predicate symbol is
  almost weakly stable. By Lemma~\ref{lem:limitsubobject}, there exists $f\function{X}{[0,1]}$ with
  $\int_X f\ d\mu > 0$ such that for any measure-isomorphism $F$ module $0$ from $\Omega_f$ to
  $\Omega$, we have $\psi = \phi_{\cN\rest_f^F}$.

  Let $V\df\{x\in X \mid f(x) > 0\}$, let $\widetilde{F}$ be any measure-isomorphism modulo $0$ from
  $\Omega_V$ to $\Omega$ and consider the sub-object
  $\widetilde{\psi}\df\phi_{\cN\rest_V^{\widetilde{F}}}$ of $\phi_\cN$.

  The same measure theoretic trick of Theorem~\ref{thm:stablegraphon} gives that every $(1,k(P)-1)$
  split of every predicate symbol $P$ is almost weakly stable in $\widetilde{\psi}$: given one such
  split $S(\vec{x},y)$, since it is almost weakly stable in $\psi$, we know that there exists
  $n\in\NN$ such that $\mu_f(T(H_{n,S},\cN\rest_f^F)) = 0$ for the half-graph formula $H_{n,S}$
  of~\eqref{eq:halfgraphformula}. For each $\epsilon > 0$, let
  \begin{align*}
    T^\epsilon(H_{n,S},\cN\rest_f^F)
    & \df
    \{x\in\Tind(H_{n,S},\cN\rest_f^F) \mid \forall v, f(x_{\{v\}}) > \epsilon\},
    \\
    T^\epsilon(H_{n,S},\cN\rest_V^{\widetilde{F}})
    & \df
    \{x\in\Tind(H_{n,S},\cN\rest_V^{\widetilde{F}}) \mid \forall v, f(x_{\{v\}}) > \epsilon\}.
  \end{align*}
  Then it follows that
  \begin{align*}
    \mu_f(T^\epsilon(H_{n,S},\cN\rest_f^F))
    & \geq
    \left(\epsilon\cdot\frac{\mu(V)}{\int_X f\ d\mu}\right)^{n\cdot k(P)}
    \mu_V(T^\epsilon(H_{n,S},\cN\rest_V^{\widetilde{F}}))
  \end{align*}
  and since we have $T(H_{n,S},\cN\rest_f^F) = \bigcup_{m\in\NN_+} T^{1/m}(H_{n,S},\cN\rest_f^F)$
  and $T(H_{n,S},\cN\rest_V^{\widetilde{F}}) = \bigcup_{m\in\NN_+}
  T^{1/m}(H_{n,S},\cN\rest_V^{\widetilde{F}})$, it follows that
  $\mu_V(T(H_{n,S},\cN\rest_V^{\widetilde{F}})) = 0$, so $S$ is almost weakly stable in
  $\widetilde{\psi}$.

  By Lemma~\ref{lem:stabletheon}, there exists a measurable set $U'\subseteq X$ such that $\mu_V(U')
  > 0$ and the sub-object $\phi_\cH$ is trivial, where $\cH\df
  (\cN\rest_V^{\widetilde{F}})\rest_{U'}^{F'}$ for any given measure-isomorphism $F'$ modulo $0$
  from $\Omega_{U'}$ to $\Omega_V$. The result now follows by letting $U\df U'\cap V$ and using the
  measure-isomorphism $\widetilde{F}\comp F'$ modulo $0$ from $\Omega_U$ to $\Omega$.
\end{proofof}

Let us now revisit Example~\ref{ex:quasirandompermuton}.

\begin{example}\label{ex:quasirandompermuton2}
  An alternative way of constructing the $\{0,1\}$-valued graphon of
  Example~\ref{ex:quasirandompermuton} that does not have any linear-sized almost clique or almost
  anti-clique is as follows.

  Given a permutation $\sigma\in S_n$ and a set $U\subseteq [n]$, the \emph{subpermutation} induced
  by $U$ is the unique permutation $\tau\in S_{\lvert U\rvert}$ such that for every $i,j\in[\lvert
    U\rvert]$, we have $\tau(i) < \tau(j) \iff \tau(\iota_U(i)) < \tau(\iota_U(j))$, where
  $\iota_U\function{[\lvert U\rvert]}{[n]}$ is the unique increasing function with $\im(\iota_U) =
  U$; equivalently, we have $\tau = \iota_{\sigma(U)}^{-1}\comp\sigma\comp\iota_U$.

  Consider now the theory $\TPerm\df\TLinOrder\cup\TLinOrder$, where $\TLinOrder$ is the theory of
  (strict) linear orders, that is, $\TPerm$ is the theory of two linear orders on the same base
  set. There is a natural correspondence between $S_n$ and $\cM_n[\TPerm]$ in which $\sigma\in S_n$
  corresponds to the model $M_\sigma\in\cM_n[\TPerm]$, in which the first order $\prec_1$ is the
  natural order on $[n]$ and the second order $\prec_2$ is given by $i\prec_2 j \iff
  \sigma^{-1}(i)\prec_2\sigma^{-1}(j)$. Furthermore, under this correspondence, subpermutations
  correspond to submodels (up to isomorphism).

  It is straightforward to check that if $\rn{\sigma_n}$ is distributed uniformly at random in
  $S_n$, then with probability $1$ the sequence $(\rn{\sigma_n})_{n\in\NN}$ is convergent (as models
  of $\TPerm$) and it converges to the $\TPerm$-on $\cH$ over $[0,1]^2$ given by
  \begin{align*}
    \cH_{\prec_i} & \df \{x\in\cE_2([0,1]^2) \mid \pi_i(x_{\{1\}}) < \pi_i(x_{\{2\}})\} \qquad (i\in[2]),
  \end{align*}
  where $\pi_i\function{[0,1]^2}{[0,1]}$ is the projection onto the $i$th coordinate. The limit
  $\phi_\cH$ is called the \emph{quasirandom permuton} and it is easy to see that $\phi_\cH(\sigma)
  = 1/\lvert\sigma\rvert!$ for every permutation $\sigma$.

  Consider then the open interpretation $I\interpret{\TGraph}{\TPerm}$ corresponding to the
  construction of the \emph{graph of agreements of a permutation} given by
  \begin{align*}
    I(E)(x_1,x_2) & \df x_1\neq x_2\land (x_1\prec_1 x_2 \tot x_1\prec_2 x_2).
  \end{align*}
  The interpreted $\TGraph$-on $I(\cH)$ over $[0,1]^2$ is then given by
  \begin{align*}
    I(\cH)_E
    & =
    \{(x,y)\in\cE_2\times\cE_2 \mid x_{\{1\}} < x_{\{2\}} \tot y_{\{1\}} < y_{\{2\}}\},
  \end{align*}
  which means that $\phi_\cH^I = \phi_{I(\cH)}$ is precisely the limit $\phi_W$ encoded by the graphon of
  Example~\ref{ex:quasirandompermuton}.

  Since a clique (anti-clique, resp.) in a graph $G$ of agreements of a permutation $\sigma$
  corresponds to an increasing (decreasing, resp.) sequence in $\sigma$, we have
  \begin{align*}
    \phi_\cH^I(\overline{K_n}) = \phi_\cH^I(K_n) & = \frac{1}{n!}.
  \end{align*}
\end{example}

\section{Consequences for finite models}
\label{sec:consmodels}

In this section, we transfer Theorem~\ref{thm:stabletheon} to the finite world just as we did in
Section~\ref{sec:consgraphs} for the theory of graphs. To do so, we will use the generalization of the
ultraproduct method of Elek--Szegedy~\cite{ES12} by Aroskar--Cummings~\cite{AC14} below.

\begin{theorem}[Elek--Szegedy~\protect{\cite{ES12}}, Aroskar--Cummings~\protect{\cite{AC14}}]\label{thm:theonultraproduct}
  Let $T$ be a canonical universal theory in a finite relational language $\cL$, let
  $(N_n)_{n\in\NN}$ be a convergent sequence of models of $T$, let $\cD$ be a non-principal
  ultrafilter over $\NN$ and let $k\in\NN_+$ be such that $k(P)\leq k$ for every $P\in\cL$.

  Then there exists a separable realization $\Theta\function{\prod_{n\in\NN} V(N_n)^k/\cD}{\cE_k}$
  of order $k$ and measurable sets $\cN_P\subseteq\cE_{k(P)}$ for each $P\in\cL$ such that
  \begin{align*}
    \mu^{k(P)}\left(\Theta^{-1}_{k(P)}(\cN_P)\symdiff\prod_{n\in\NN} P^{N_n}/\cD\right) & = 0
  \end{align*}
  for every $P\in\cL$, every restriction $\Theta_{k(P)}$ of $\Theta$ of order $k(P)$ and where
  $\mu^{k(P)}$ is the Loeb measure on $\prod_{n\in\NN} V(N_n)^{k(P)}/\cD$.
\end{theorem}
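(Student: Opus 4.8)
The plan is to follow the ultraproduct construction of Elek--Szegedy~\cite{ES12} — which already gives the hypergraph case, i.e.\ Theorem~\ref{thm:graphonultraproduct} — and its extension to arbitrary finite relational languages by Aroskar--Cummings~\cite{AC14}; the heart of the matter is a structured, iterated \emph{relative} form of Maharam's Theorem applied to Loeb measure spaces. First I would set up the objects: for a finite index set $U$ write $V^U\df\prod_{n\in\NN}V(N_n)^U/\cD$ (canonically $(\prod_{n\in\NN}V(N_n)/\cD)^U$), equipped with the Loeb $\sigma$-algebra $\sigma(U)$ and Loeb measure $\mu^U$ coming from the normalized counting measures on the $V(N_n)^U$. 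Three standard facts are used throughout: (i) since the sequence is increasing, $\lvert V(N_n)\rvert\to\infty$, so each $(V^U,\mu^U)$ with $U\neq\varnothing$ is atomless; (ii) for an injection $\alpha\injection{U'}{U}$ the projection $\alpha^*\function{V^U}{V^{U'}}$ is measure-preserving and pulls $\sigma(U')$ into $\sigma(U)$; (iii) Fubini for Loeb measures (Theorem~\ref{thm:FubiniLoeb}) computes $\mu^U$-measures of sets in $\sigma(U)$ by iterated integration along any partition $U=U_1\sqcup U_2$. Finally, for each $P\in\cL$ of arity $d\df k(P)$, the set $\mathbf{P}\df\prod_{n\in\NN}P^{N_n}/\cD$ is internal, hence lies in $\sigma(d)$, and the goal is to realize it, up to a $\mu^d$-null set, as $\Theta_d^{-1}$ of a Borel subset $\cN_P$ of $\cE_d$.

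Next I would build $\Theta$ together with all its restrictions (and afterwards its liftings) by recursion on the size of index subsets. Suppose that for every finite index set $U$ we have already produced compatible measure-preserving maps $V^U\to[0,1]^{r(U,j-1)}$ realizing the ``order $<j$'' structure; to extend to subsets of size $j$, fix such a subset, say $[j]$, and consider the conditional (disintegrated) measure algebra of $V^{[j]}$ over the $\sigma$-algebra $\mathcal{G}$ generated by the coordinates already built (the pullbacks of $\sigma(A)$ for $A\subsetneq[j]$), enlarged so as to contain the internal sets that genuinely involve all $j$ coordinates — in particular $\mathbf{P}$ and its coordinate-permutations when $d=j$, plus countably many further internal sets needed to make the lifting property attainable. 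This enlargement is countably generated \emph{over} $\mathcal{G}$, and the fibres are atomless, so a relative version of Maharam's Theorem produces a single measure-preserving coordinate $V^{[j]}\to[0,1]$ that is, conditionally on $\mathcal{G}$, uniform and independent; performing this one size-$j$ subset at a time while conditioning on everything built so far forces all the $r(k)$ coordinates to be mutually independent, so the resulting $\Theta$ is measure-preserving onto $[0,1]^{r(k)}$ with its product (Lebesgue) measure. Compatibility under the maps $\alpha^*$ is built in, since the same recursive rule is applied to every $V^U$ and the Loeb $\sigma$-algebras cohere under $\alpha^*$ (which also makes the restrictions $\Theta_m$ a.e.\ unique); the liftings $\Theta_m$ for $m\geq k$ are obtained by one more pass of the same machinery, now with no new relations to absorb. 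Once this is done, each $\mathbf{P}$ lies in $\Theta_d^{-1}(\mathrm{Borel}(\cE_d))$ modulo $\mu^d$-null, so one simply takes $\cN_P\subseteq\cE_d$ to be a Borel set with $\Theta_d^{-1}(\cN_P)=\mathbf{P}$ up to a $\mu^d$-null set.

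The main obstacle is the recursive step, in two respects: verifying that the conditional measure algebra carrying the relations is indeed separable \emph{over} the lower-order algebra with atomless fibres, and — more delicately — arranging the coordinate maps for the different orderings of a fixed index set so that \emph{all} the projection/restriction squares commute simultaneously, i.e.\ that the exchangeability constraints coming from all injections $\alpha$ can be met at once. This consistency is precisely what Fubini for Loeb measures and the bookkeeping of \cite{ES12,AC14} are designed to guarantee; everything else (atomlessness, measurability of internal sets, and \Los's Theorem identifying the ultraproduct relations and $\Tind$-sets) is routine. As this statement is taken from the literature, in the main text it suffices to record it and cite \cite{ES12,AC14}, with the full construction recalled in Appendix~\ref{sec:ultraproduct}.
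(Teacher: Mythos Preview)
Your proposal is appropriate: the paper does not prove this theorem at all --- it is stated with attribution to~\cite{ES12,AC14} and used as a black box, with Appendix~\ref{sec:ultraproduct} only recalling the definitions (Loeb measure, separable realizations, restrictions, liftings) needed to make sense of the statement. Your final sentence already captures exactly what the paper does, and your sketch of the Elek--Szegedy/Aroskar--Cummings construction is a reasonable summary of the cited sources.
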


In plain English, the theorem above says that the ultraproduct construction is the pre-image of the
$T$-on $\cN$ under the separable realization $\Theta$, except for a zero-measure error. The
properties of restrictions and liftings of separable realizations then imply that for an open
formula $F(x_1,\ldots,x_m)$, we have
\begin{align*}
  \mu^m\left(F\left(\prod_{n\in\NN} N_n / \cD\right)\right)
  & =
  \mu^m(\Theta_m^{-1}(T(F,\cN)))
  =
  \lambda(T(F,\cN)),
\end{align*}
and thus $(N_n)_{n\in\NN}$ converges to $\phi_\cN$.

\medskip

Just as in the graph case, we can use this connection to pull back the set yielding a trivial sub-object in
the theon world through the separable realization and produce a linear-sized ``almost trivial'' submodel in
the convergent sequence. For this we make the following natural definitions.

\begin{definition}
  An increasing sequence $(N_n)_{n\in\NN}$ of structures in a language $\cL$ is \emph{almost
    trivial} if for every $P\in\cL$, we have $\lim_{n\to\infty} \lvert P^{N_n}\rvert / \lvert
  N_n\rvert^{k(P)} \in \{0,1\}$, i.e., either all but $o(\lvert N_n\rvert^{k(P)})$ amount of
  $k(P)$-tuples satisfy $P$ or at most an $o(\lvert N_n\rvert^{k(P)})$ amount of $k(P)$-tuples
  satisfy $P$.

  We say that $F(\vec{x},\vec{y})$ is \emph{almost stable} in $(N_n)_{n\in\NN}$ if there exists
  $m\in\NN$ such that $\lim_{n\to\infty} \lvert H_{m,F}(N_n)\rvert / \lvert
  N_n\rvert^{\lvert\vec{x}\rvert + \lvert\vec{y}\rvert} = 0$, i.e., only an $o(\lvert
  N_n\rvert^{\lvert\vec{x}\rvert + \lvert\vec{y}\rvert})$ amount of tuples satisfy the formula
  $H_{n,F}$.
\end{definition}

Note that the notion of almost stability for convergent sequences corresponds to almost \emph{weak}
stability in the limit; this is because solutions of $H_{m,F}$ that repeat variables can only
account for at most $O(\lvert N_n\rvert^{\lvert\vec{x}\rvert + \lvert\vec{y}\rvert - 1})$ tuples.

\begin{theorem}\label{thm:stableconvsequniversal}
  The following are equivalent for a convergent sequence $(N_n)_{n\in\NN}$ of models of a universal
  theory $T$ in a finite relational language $\cL$.
  \begin{enumerate}
  \item There exist $c > 0$ and sets $U_n\subseteq V(N_n)$ such that $\lvert U_n\rvert\geq c\lvert
    N_n\rvert$ for every $n\in\NN$ and $(N_n\rest_{U_n})_{n\in\NN}$ is almost trivial.%
    \label{thm:stableconvsequniversal:trivial}
  \item There exist a subsequence $(N_{n_\ell})_{\ell\in\NN}$ of $(N_n)_{n\in\NN}$ and sets
    $U_{n_\ell}\subseteq V(N_{n_\ell})$ such that $\limsup_{\ell\to\infty} \lvert
    U_{n_\ell}\rvert/\lvert N_{n_\ell}\rvert > 0$ and every $(1,k(P)-1)$-split of every predicate
    symbol $P\in\cL$ is almost stable in $(N_{n_\ell}\rest_{U_{n_\ell}})_{\ell\in\NN}$.%
    \label{thm:stableconvsequniversal:stable}
  \end{enumerate}
\end{theorem}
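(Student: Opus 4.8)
The plan is to mirror the structure of the proof of Theorem~\ref{thm:stableconvseqgraph}, replacing graphons by theons and the Graphon Uniqueness Theorem by the flag-algebraic uniqueness of $\phi\in\HomT{T}$, which makes the ``every limit works'' step essentially free. The direction \ref{thm:stableconvsequniversal:trivial}$\implies$\ref{thm:stableconvsequniversal:stable} is trivial: if $(N_n\rest_{U_n})_{n\in\NN}$ is almost trivial, then for each $P$ the number of $k(P)$-tuples satisfying $P$ is either $\lvert N_n\rvert^{k(P)} - o(\cdot)$ or $o(\cdot)$, and in either case any $(1,k(P)-1)$-split $S$ of $P$ has $\lvert H_{2,S}(N_n\rest_{U_n})\rvert = o(\lvert N_n\rvert^{k(P)+1})$ since a half-graph of order $2$ with respect to $S$ requires both a satisfied and an unsatisfied instance of $S$ among the chosen tuples; so $m=2$ witnesses almost stability of every split. (One should take the subsequence to be the whole sequence.)

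For \ref{thm:stableconvsequniversal:stable}$\implies$\ref{thm:stableconvsequniversal:trivial}, first fix a $T$-on $\cN$ over $\Omega=(X,\cA,\mu)$ with $\phi_\cN = \lim_{n\to\infty} p(\place,N_n)$; since the sequence is convergent, every subsequence also converges to $\phi_\cN$. Passing to a further subsequence of $(N_{n_\ell})$ we may assume $(N_{n_\ell}\rest_{U_{n_\ell}})_{\ell\in\NN}$ is convergent with $\lim \lvert U_{n_\ell}\rvert/\lvert N_{n_\ell}\rvert = c' > 0$; its limit $\psi$ is then a measure-$c'$ limit sub-object of $\phi_\cN$ in which every $(1,k(P)-1)$-split of every $P$ is almost weakly stable (here one uses the remark following Definition~\ref{thm:stableconvsequniversal} that almost stability in the sequence corresponds to almost \emph{weak} stability in the limit, since variable-repeating solutions of $H_{m,S}$ contribute only $O(\lvert N_n\rvert^{k(P)})$). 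By Theorem~\ref{thm:stabletheon}, $\phi_\cN$ contains a trivial sub-object, and by the ``property closed under sub-objects'' observation after that theorem there is a positive measure set $U\subseteq X$ and a constant $c\df\mu(U)>0$ with $\phi_{\cN\rest_U^F}$ trivial for every measure-isomorphism $F$ modulo $0$ from $\Omega_U$ to $\Omega$; say $\cN\rest_U^F$ has each peon of measure $b_P\in\{0,1\}$ (after applying Theorem~\ref{thm:theonremoval} to $\cN\rest_U^F$ we may take these measures to hold up to a genuine zero-measure error without affecting densities).

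Now the key transfer step: for each $n$ let $U_n^{c/2}\subseteq V(N_n)$ minimize $\sum_{P\in\cL}\bigl\lvert \lvert P^{N_n\rest_{U_n}}\rvert/\lvert U_n\rvert^{k(P)} - b_P\bigr\rvert$ over all $U_n$ with $\lvert U_n\rvert\geq (c/2)\lvert N_n\rvert$; it suffices to show each of these quantities tends to $0$. If not, pass to a subsequence $(N_{m_\ell})$ where it stays bounded away from $0$ and is convergent, form the ultraproduct $N\df\prod_\ell N_{m_\ell}/\cD$ with a separable realization $\Theta$ and measurable sets $\cN_P$ as in Theorem~\ref{thm:theonultraproduct}, build from $(\cN_P)$ a $T$-on over $[0,1]$ representing $\phi_\cN$ (via the restriction $\Theta_{k(P)}$), invoke the ``every limit works'' claim — which here is immediate from flag-algebraic uniqueness, since any two $T$-ons representing $\phi_\cN$ have the same labeled densities and hence the same answer to whether a positive-measure restriction is trivial, proved ad hoc as in Theorem~\ref{thm:stableconvseqgraph} by running Lemma~\ref{lem:limitsubobject} both ways — to get $U'\subseteq[0,1]$ with $\lambda(U')\geq c$ and each peon of $\cN\rest_{U'}^{(\cdot)}$ of measure $b_P$ a.e. Pulling $U'$ back through $\Theta_1$ to an internal $\widehat U=\prod_\ell U_\ell/\cD$ of Loeb measure $\geq c$, Theorem~\ref{thm:theonultraproduct} and Fubini for Loeb measures (Theorem~\ref{thm:FubiniLoeb}) give $\mu^{k(P)}(P^{N\rest_{\widehat U}}) = b_P\cdot\mu^1(\widehat U)^{k(P)}$ for every $P$, so $\lim_{\ell\to\cD}\lvert U_\ell\rvert/\lvert N_{m_\ell}\rvert\geq c$ and $\lim_{\ell\to\cD}\lvert P^{N_{m_\ell}\rest_{U_\ell}}\rvert/\lvert U_\ell\rvert^{k(P)} = b_P$ for all $P$, contradicting the choice of $U_{m_\ell}^{c/2}$ along that subsequence. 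Finally, $\lim_n \lvert P^{N_n\rest_{U_n^{c/2}}}\rvert/\lvert U_n^{c/2}\rvert^{k(P)} = b_P$ for every $P$ is exactly almost triviality of $(N_n\rest_{U_n^{c/2}})_{n\in\NN}$ with $c$ replaced by $c/2$. The main obstacle, as in the graph case, is the ``every limit works'' step: one must be careful that the ultraproduct $T$-on genuinely represents $\phi_\cN$ (this is where the convergence hypothesis and \Los's theorem are used) and that triviality of a positive-measure restriction is a property of $\phi_\cN$ alone, not of the chosen representing theon — handled by Lemma~\ref{lem:limitsubobject} together with the measure-theoretic $\epsilon$-truncation trick used in the proof of Theorem~\ref{thm:stabletheon}.
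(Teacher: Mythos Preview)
Your proof plan is correct and follows essentially the same argument as the paper's own proof, including the optimization over sets of size at least $(c/2)\lvert N_n\rvert$, the ultraproduct contradiction via Theorem~\ref{thm:theonultraproduct} and Fubini for Loeb measures, and the ad hoc ``every limit works'' step via two applications of Lemma~\ref{lem:limitsubobject}. One small caution: the pullback $\widehat{U}\df\Theta_1^{-1}(U')$ is Loeb-measurable but not internal in general, so you must first approximate it by an internal set $U=\prod_\ell U_\ell/\cD$ with $\mu^1(U\symdiff\widehat{U})=0$ (as the paper does explicitly) before you can extract the finite sets $U_\ell$ and conclude the ultralimit identities.
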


\begin{proof}
  The
  implication~\ref{thm:stableconvsequniversal:trivial}$\implies$\ref{thm:stableconvsequniversal:stable}
  follows since (all splits of) all open formulas are almost stable in an almost trivial sequence.

  \medskip

  For the
  implication~\ref{thm:stableconvsequniversal:stable}$\implies$\ref{thm:stableconvsequniversal:trivial},
  let $\phi\in\HomT{T}$ be the limit of $(N_n)_{n\in\NN}$. By hypothesis and possibly passing to a
  further subsequence $(N_{m_\ell})_{\ell\in\NN}$, there exist sets $U_{m_\ell}\subseteq
  V(N_{m_\ell})$ with $\lim_{\ell\to\infty}\lvert U_{m_\ell}\rvert/\lvert N_{m_\ell}\rvert > 0$ such
  that the sequence $(N_{m_\ell}\rest_{U_{m_\ell}})_{\ell\in\NN}$ is convergent and any
  $(1,k(P)-1)$-split of any predicate symbol $P\in\cL$ is almost stable in it. Since
  $(N_{m_\ell})_{\ell\in\NN}$ also converges to $\phi$, by Lemma~\ref{lem:limitsubobject}, it
  follows that $\phi$ contains a sub-object in which every $(1,k(P)-1)$-split of every predicate
  symbol $P\in\cL$ is almost stable.

  Let then $\cH$ be a $T$-on over some space $\Omega=(X,\cA,\mu)$ with $\phi = \phi_\cH$. By
  Theorem~\ref{thm:stabletheon}, there exists a positive measure $U\subseteq X$ such that
  $\psi\df\phi_{\cH\rest_U^F}$ is trivial for every measure-isomorphism $F$ modulo $0$ from
  $\Omega_U$ to $\Omega$.

  Let $c\df\mu(U) > 0$. We claim that for every $T$-on $\cH'$ over some space
  $\Omega'=(X',\cA',\mu')$ with $\phi = \phi_{\cH'}$, there exists a measurable set $U'\subseteq X'$
  such that $\phi_{\cH'\rest_{U'}^{F'}} = \psi$ for every measure-isomorphism $F'$ modulo $0$ from
  $\Omega_{U'}$ to $\Omega$ and $\mu'(U')\geq c$.

  This is completely trivial from the Theon Uniqueness Theorem~\cite[Theorems~3.9 and~3.11 and
    Proposition~7.7]{CR20a}, but an ad hoc proof analogous to the one in
  Theorem~\ref{thm:stableconvseqgraph} can be obtained from Lemma~\ref{lem:limitsubobject}: by this
  lemma applied to $\cH$, there exists a sequence $(N'_n)_{n\in\NN}$ converging to $\phi$ and sets
  $U'_n\subseteq V(N'_n)$ with $\lim_{n\to\infty} \lvert U'_n\rvert/\lvert N'_n\rvert = c$ and
  $(N'_n\rest_{U'_n})_{n\in\NN}$ converging to $\psi$. Applying this lemma again to $\cH'$, it
  follows that for some measurable function $f\function{X'}{[0,1]}$ with $\int_{X'} f\ d\mu' = c$
  and every measure-isomorphism $F$ modulo $0$ from $\Omega_f$ to $\Omega$, we have
  $\phi_{\cH'\rest_f^F}=\psi$. Taking $U'\df\{x\in X' \mid f(x) > 0\}$ gives $\mu'(U')\geq c$ and
  $\phi_{\cH'\rest_{U'}^{F'}}=\psi$ for every measure-isomorphism $F'$ modulo $0$ from $\Omega_{U'}$
  to $\Omega$ (note that the rescaling $f$ does not change densities of submodels within $U'$
  because $\psi$ is trivial) completing the proof of the claim.

  \smallskip

  Since $\psi=\phi_{\cH\rest_U^F}$ is trivial, for each $P\in\cL$, we know that $b_P\df
  \mu_U(\cH\rest_U^F)\in\{0,1\}$. For each $n\in\NN$, let $U_n^c\subseteq V(N_n)$ be a set that
  minimizes the quantity
  \begin{align*}
    d_n(U_n)
    \df
    \sum_{P\in\cL}
    \left\lvert
    \frac{\lvert P^{N_n\rest_{U_n}}\rvert}{\lvert U_n\rvert^{k(P)}} - b_P
    \right\rvert
  \end{align*}
  over all possible sets $U_n\subseteq V(N_n)$ with $\lvert U_n\rvert\geq (c/2)\cdot\lvert
  N_n\rvert$. To conclude the proof, it is sufficient to show that $\lim_{n\to\infty} d_n(U_n^c) =
  0$. Suppose not. Then there exists a subsequence $(N_{m_\ell})_{\ell\in\NN}$ of $(N_n)_{n\in\NN}$
  such that $\lim_{\ell\to\infty} d_{m_\ell}(U_{m_\ell}^c) > 0$ and by possibly passing to a further
  subsequence, we can also assume that $(N_{m_\ell}\rest_{U_{m_\ell}^c})_{\ell\in\NN}$ is
  convergent.

  We now let $N\df\prod_{\ell\in\NN} N_{m_\ell}/\cD$ for some non-principal ultrafilter $\cD$ over
  $\NN$, let $k\geq k(P)$ for every $P\in\cL$, let $\Theta\function{\prod_{\ell\in\NN}
    V(N_{m_\ell})^k}{\cE_k}$ and $\cN$ be as in Theorem~\ref{thm:theonultraproduct} and per our
  previous claim, there exists a measurable set $U'\subseteq [0,1]$ with $\lambda(U')\geq c$ and
  $\phi_{\cN\rest_{U'}^{F'}}=\psi$ for every measure-isomorphism $F'$ modulo $0$ from
  $([0,1],\lambda_{U'})$ to $([0,1],\lambda)$. Define further $\widehat{U}\df
  \Theta_1^{-1}(U')\subseteq\prod_{\ell\in\NN} V(N_{m_\ell})/\cD$, where $\Theta_1$ is a restriction
  of $\Theta$ or order $1$, and note that since $\Theta_1$ is measure-preserving, we have
  $\mu^1(\widehat{U})\geq c$ for the Loeb measure $\mu^1$.

  Consider now the model $N\rest_{\widehat{U}}$ and note that for every predicate symbol $P\in\cL$
  and every restriction $\Theta_{k(P)}$ of $\Theta$ of order $k(P)$, we have
  \begin{align*}
    P^{N\rest_{\widehat{U}}}
    & =
    \Theta_{k(P)}^{-1}(\{x\in\cN_P \mid\forall v\in [k(P)], x_{\{v\}}\in U'\})
    \quad\text{a.e.}
  \end{align*}
  and since $\lambda_{U'}((\cN\rest_{U'}^{F'})_P) = b_P$, it follows that
  \begin{align*}
    \mu^{k(P)}(P^{N\rest_{\widehat{U}}})
    & =
    b_P\cdot\mu^{k(P)}(\widehat{U}^{k(P)})
    =
    b_P\cdot\mu^1(\widehat{U})^{k(P)},
  \end{align*}
  where the last equality follows from Fubini's Theorem for Loeb measures,
  Theorem~\ref{thm:FubiniLoeb}.
  
  Let now $U\df\prod_{\ell\in\NN} U_\ell/\cD$ be an internal set such that
  $\mu^1(U\symdiff\widehat{U}) = 0$. By Fubini's Theorem again, it follows that
  $\mu^{k(P)}(P^{N\rest_U}) = b_P\cdot\mu^1(U)^{k(P)}$, so we must have
  \begin{align*}
    \lim_{\ell\to\cD} \frac{\lvert U_\ell\rvert}{\lvert N_{m_\ell}\rvert}
    & =
    \mu^1(U)
    \geq
    c,
    \\
    \lim_{\ell\to\cD} \frac{\lvert P^{N_{m_\ell}\rest_{U_\ell}}\rvert}{\lvert U_\ell\rvert^{k(P)}}
    & =
    \frac{\mu^{k(P)}(P^{N\rest_U})}{\mu^1(U)^{k(P)}}
    =
    b_P
    \qquad
    (P\in\cL).
  \end{align*}

  However, this is a contradiction because it implies that along some subsequence we have $\lvert
  U_\ell\rvert/\lvert N_{m_\ell}\rvert\geq c/2$ and $d_{m_\ell}(U_\ell)\to 0$, contradicting the
  fact that the former implies $d_{m_\ell}(U_{m_\ell}^c)\leq d_{m_\ell}(U_\ell)$ and we have
  $d_{m_\ell}(U_{m_\ell}^c)\not\to 0$.
\end{proof}

Finally, with an argument similar to that of Theorem~\ref{thm:stablecountablegraph}, we can prove a
stability dichotomy for countable models.

\begin{theorem}\label{thm:stablecountablemodel}
  Let $T$ be a universal theory in a finite relational language $\cL$. The following for a countable
  model $N$ of $T$ with $V(N)=\NN_+$.
  \begin{enumerate}
  \item There exist a set $U\subseteq\NN_+$ and an increasing sequence $(n_\ell)_{\ell\in\NN}$ of
    positive integers such that $(N\rest_{U\cap [n_\ell]})_{\ell\in\NN}$ is almost trivial and
    $\lim_{\ell\to\infty} \lvert U\cap [n_\ell]\rvert/n_\ell > 0$.%
    \label{thm:stablecountablemodel:trivial}
  \item There exist a set $U\subseteq\NN_+$ and an increasing sequence $(n_\ell)_{\ell\in\NN}$ of
    positive integers such that every $(1,k(P)-1)$-split of every predicate symbol $P\in\cL$ is
    almost stable in $(N\rest_{U\cap[n_\ell]})_{\ell\in\NN}$ and $\lim_{\ell\to\infty} \lvert U\cap
    [n_\ell]\rvert/n_\ell > 0$.%
    \label{thm:stablecountablemodel:stable}
  \end{enumerate}
\end{theorem}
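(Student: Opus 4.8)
The plan is to mirror the proof of Theorem~\ref{thm:stablecountablegraph}, treating the convergent-sequence dichotomy Theorem~\ref{thm:stableconvsequniversal} as a black box and then diagonalizing the linear-sized almost-trivial restrictions it produces along a fast-growing sequence of lengths. The implication \ref{thm:stablecountablemodel:trivial}$\implies$\ref{thm:stablecountablemodel:stable} is immediate: in an almost trivial sequence, for every predicate symbol $P$ all but a $o(\lvert\cdot\rvert^{k(P)})$ fraction of the $k(P)$-tuples lie on the same side of $P$, so no $(1,k(P)-1)$-split can produce a positive-density half-graph of order $\geq 2$; hence every split of every predicate symbol is almost stable in it.

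For \ref{thm:stablecountablemodel:stable}$\implies$\ref{thm:stablecountablemodel:trivial}, I would first pass to a subsequence of $(n_\ell)_{\ell\in\NN}$ — first thinning so that $\lvert U\cap[n_\ell]\rvert/n_\ell$ converges to its original positive $\limsup$, then thinning further (by compactness of $[0,1]^{\cM[T]}$) so that $(N\rest_{[n_\ell]})_{\ell\in\NN}$ is convergent; along this subsequence $\lim_\ell \lvert U\cap[n_\ell]\rvert/n_\ell>0$ still holds and every $(1,k(P)-1)$-split of every $P\in\cL$ remains almost stable in $(N\rest_{U\cap[n_\ell]})_{\ell\in\NN}$. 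This convergent sequence together with the sets $U\cap[n_\ell]$ witnesses item~\ref{thm:stableconvsequniversal:stable} of Theorem~\ref{thm:stableconvsequniversal}, so by item~\ref{thm:stableconvsequniversal:trivial} there are a constant $c>0$ and sets $U_\ell\subseteq[n_\ell]$ with $\lvert U_\ell\rvert\geq c\,n_\ell$ for every $\ell$ and $(N\rest_{U_\ell})_{\ell\in\NN}$ almost trivial.

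Then I would diagonalize exactly as in Theorem~\ref{thm:stablecountablegraph}: set $m_0\df n_0$ and $m_{t+1}\df\min\{n_\ell\mid n_\ell\geq 2^t m_t\}$ (which exists since $n_\ell\to\infty$), pick $\ell_t$ with $m_t=n_{\ell_t}$, let $m_{-1}\df 0$, and define
\[
  U\df\bigcup_{t\in\NN}\bigl(U_{\ell_t}\cap([m_t]\setminus[m_{t-1}])\bigr).
\]
Since $U$ agrees with $U_{\ell_t}$ on $[m_t]\setminus[m_{t-1}]$, we get $\lvert(U\cap[m_t])\symdiff U_{\ell_t}\rvert\leq m_{t-1}\leq 2^{-(t-1)}m_t\leq 2^{-(t-1)}\lvert U_{\ell_t}\rvert/c$, so $\lvert U\cap[m_t]\rvert/\lvert U_{\ell_t}\rvert\to 1$ and $\liminf_t\lvert U\cap[m_t]\rvert/m_t\geq c>0$ (and one may pass to a further subsequence of $(m_t)$ along which this ratio converges to a positive limit). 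Moreover, for each $P\in\cL$, modifying $o(\lvert U_{\ell_t}\rvert)$ vertices changes the number of $k(P)$-tuples satisfying $P$ by at most $O(m_{t-1}\cdot\lvert U_{\ell_t}\rvert^{k(P)-1})=o(\lvert U_{\ell_t}\rvert^{k(P)})$ while the normalization changes by a factor $1+o(1)$, so $\lvert P^{N\rest_{U\cap[m_t]}}\rvert/\lvert U\cap[m_t]\rvert^{k(P)}$ has the same limit as $\lvert P^{N\rest_{U_{\ell_t}}}\rvert/\lvert U_{\ell_t}\rvert^{k(P)}\in\{0,1\}$; hence $(N\rest_{U\cap[m_t]})_{t\in\NN}$ is almost trivial, as required.

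The only genuine content beyond invoking Theorem~\ref{thm:stableconvsequniversal} is bookkeeping, and the sole place the argument differs from the graph case is the observation that for $k(P)$-ary predicates a single vertex lies in $O(n^{k(P)-1})$ of the $k(P)$-tuples of an $n$-element set, so that both relative sizes and normalized predicate densities are insensitive to $o(\lvert U_{\ell_t}\rvert)$-sized perturbations of the $U_{\ell_t}$; this is the step I would expect to state most carefully, although it is entirely elementary.
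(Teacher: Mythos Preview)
Your proposal is correct and follows essentially the same approach as the paper: both deduce the trivial direction from the fact that all splits are almost stable in an almost trivial sequence, then for the converse pass to a convergent subsequence of $(N\rest_{[n_\ell]})_\ell$, invoke Theorem~\ref{thm:stableconvsequniversal} to obtain the linear-sized almost-trivial pieces $U_\ell$, and diagonalize along the fast-growing $(m_t)_t$ exactly as in Theorem~\ref{thm:stablecountablegraph}. Your explicit verification that an $o(\lvert U_{\ell_t}\rvert)$-perturbation changes $k(P)$-ary densities by $o(1)$ is precisely the ``argument similar to that of Theorem~\ref{thm:stablecountablegraph}'' that the paper leaves to the reader.
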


\begin{proof}
  The implication~\ref{thm:stablecountablemodel:trivial}$\implies$\ref{thm:stablecountablemodel:stable} is
  trivial as (all splits of) all open formulas are almost stable in almost trivial sequences.

  \medskip

  For the
  implication~\ref{thm:stablecountablemodel:stable}$\implies$\ref{thm:stablecountablemodel:trivial},
  by possibly passing to a subsequence of $(n_\ell)_{\ell\in\NN}$, we may further assume that
  $(N\rest_{[n_\ell]})_{n\in\NN}$ is convergent, so by Theorem~\ref{thm:stableconvsequniversal},
  there exist $c > 0$ and sets $U_\ell\subseteq [n_\ell]$ such that $\lvert U_\ell\rvert \geq c\cdot
  n_\ell$ for every $\ell\in\NN$ and $(N\rest_{U_\ell})_{\ell\in\NN}$ is almost trivial.

  We then use the same recursive definition from Theorem~\ref{thm:stablecountablegraph} of the
  sequence $(m_t)_{t\in\NN}$ by
  \begin{align*}
    m_0 & \df n_0, & m_{t+1} & \df \min\{n_\ell \mid \ell\in\NN\land n_\ell\geq 2^t\cdot m_t\}
  \end{align*}
  and for each $t\in\NN$, let $\ell_t\in\NN$ be such that $m_t = n_{\ell_t}$.

  Finally, letting
  \begin{align*}
    U & \df \bigcup_{t\in\NN} U_{\ell_t}\cap ([m_t]\setminus [m_{t-1}]),
  \end{align*}
  where $m_{-1}\df 0$ gives the result by an argument similar to that of
  Theorem~\ref{thm:stablecountablegraph}.
\end{proof}

\section{The approximate \Erdos--Hajnal property}
\label{sec:AEHP}

In this section, we study more systematically the approximate \Erdos--Hajnal property defined below.

\begin{definition}
  We say that a universal theory $T$ has \emph{approximate \Erdos--Hajnal property} ($\AEHP$) if
  every limit $\phi\in\HomT{T}$ contains a trivial sub-object.
\end{definition}

By
Theorem~\ref{thm:stabletheon}\ref{thm:stabletheon:trivialsubobject}$\iff$\ref{thm:stabletheon:trivialrestriction},
we have $T\in\AEHP$ if and only if every $T$-on $\cN$ over some space $\Omega=(X,\cA,\mu)$ has a
positive measure $U\subseteq X$ such that $\phi_{\cN\rest_U^F}$ is trivial for some (equivalently,
every) measure-isomorphism $F$ modulo $0$ from $\Omega_U$ to $\Omega$. See also
Discussion~\ref{dsc:AEHPconvseq} for an equivalent formulation in terms of convergent sequences.

\begin{discussion}
  Before we proceed to showing basic properties of $\AEHP$, let us note that
  Examples~\ref{ex:quasirandompermuton} and~\ref{ex:quasirandompermuton2} already bring to light a
  curious difference between the usual \Erdos--Hajnal Conjecture and its approximate version.

  For the usual version, a perfect graph $G$ of size $n$ is guaranteed to contain either a clique or
  an anti-clique of size at least $\sqrt{n}$. This is because of the trivial bound
  $\alpha(G)\chi(G)\geq n$ involving the independence and chromatic numbers of $G$ and the fact that
  the chromatic and clique numbers of $G$ are the same. On the other hand, the stable Ramsey
  Theorem~\cite{MS14,MS21} only guarantees that any stable graph on $n$ vertices contains a clique
  or anti-clique of size $n^c$ for some fixed $c\in(0,1)$ that depends only on the largest order of
  a half-graph of $G$. More generally, the \Erdos--Hajnal Conjecture is believed to be
  true~\cite{FPS19} for hereditary graphs that whose neighborhoods of vertices have bounded
  Vapnik--Chervonenkis dimension~\cite{VC71} (these are also known as classes with NIP, i.e.,
  without the independence property, in model theory).

  However, for the approximate version, Theorem~\ref{thm:stableconvseqgraph} implies that any
  convergent sequence of stable graphs contains a linear-sized almost clique or almost anti-clique,
  but since every graph of agreements of a permutation is a perfect graph,
  Example~\ref{ex:quasirandompermuton2} says that there exists a convergent sequence of perfect
  graphs without any linear-sized almost clique or anti-clique.

  Furthermore, it is easy to see that the theory $T$ of graphs of agreements of permutations has
  NIP, i.e., neighborhoods of vertices have bounded VC~dimension: this is because any hereditary class of
  graphs without NIP is required to have at least $2^{\Omega(n^2)}$ different graphs
  with vertex set $[n]$ and $T$ has at most $(n!)^2$ different graphs with vertex set $[n]$ (as
  models of $\TPerm$ over $[n]$ consist of two linear orders on $[n]$).

  This means that neither perfection nor NIP are enough to ensure that convergent sequences of graphs contain
  linear-sized almost cliques or anti-cliques. As we will see in Section~\ref{sec:AEHPgraphs}, for graph
  theories, the approximate \Erdos--Hajnal property is equivalent to forbidding some induced subgraph of some
  recursive blow-up of the $4$-cycle.
\end{discussion}

Let us now prove some basic properties about $\AEHP$.
\begin{proposition}\label{prop:AEHPaddaxioms}
  If $T'\vdash T$ and $T\in\AEHP$, then $T'\in\AEHP$.
\end{proposition}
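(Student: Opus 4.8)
The plan is to push the hypothesis $T\in\AEHP$ down to $T'$ via the simple observation that, because $T'\vdash T$, every $T'$-on is automatically a $T$-on and every $T'$-limit is (the restriction to $\cM[T']$ of) a $T$-limit. So let $\phi'\in\HomT{T'}$ be arbitrary; we must produce a trivial sub-object. Fix a $T'$-on $\cN'$ over a space $\Omega=(X,\cA,\mu)$ with $\phi_{\cN'}=\phi'$ (Existence Theorem for theons). Since $T'\vdash T$, any $\cL$-structure that is not a model of $T$ is also not a model of $T'$; hence $\tind(M,\cN')=0$ for every such structure, i.e.\ $\cN'$ is also a $T$-on, and its density function restricted to $\cM[T']$ equals $\phi'$.

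Next I would invoke the hypothesis through the theon characterization. Since $T\in\AEHP$, by Theorem~\ref{thm:stabletheon} (equivalence of items~\ref{thm:stabletheon:trivialsubobject} and~\ref{thm:stabletheon:trivialrestriction}) applied to the $T$-on $\cN'$, there is a positive measure set $U\subseteq X$ such that, for every measure-isomorphism $F$ modulo $0$ from $\Omega_U$ to $\Omega$, the limit $\phi_{\cN'\rest_U^F}$ is trivial in $\HomT{T}$. Fix one such $F$, put $\psi\df\phi_{\cN'\rest_U^F}$ and $c\df\mu(U)>0$. Observe that $\cN'\rest_U^F$ is again a $T'$-on: restricting a $T'$-on to a positive measure set only conditions the measure on $U$ (a measure absolutely continuous with respect to $\mu$) and relabels the coordinates of arity $\geq 2$ by a measure-isomorphism, neither of which can turn a $\mu$-null copy set into a non-null one; hence $\psi\in\HomT{T'}$. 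Applying Lemma~\ref{lem:limitsubobject} to the $T'$-on $\cN'$ with weight $f\df\One_U$ — for which $\mu_f=\mu_U$ and hence $\cN'\rest_f^F=\cN'\rest_U^F$ — the implication \ref{lem:limitsubobject:rescaleexistsF}$\implies$\ref{lem:limitsubobject:limitsubobject} shows that $\psi$ is a measure $c$ limit sub-object of $\phi'$.

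It remains to see that $\psi$ is trivial \emph{as an element of $\HomT{T'}$}. By the definition of triviality for $T$, pick a $T$-on $\cM$ with $\phi_\cM=\psi$ and $\mu(\cM_P)\in\{0,1\}$ for every predicate symbol $P$. Since $\psi\in\HomT{T'}$ is a limit of finite models of the universal theory $T'$, we have $\psi(M)=0$ for every $\cL$-structure $M$ that is not a model of $T'$; therefore $\tind(M,\cM)=\tfrac{\lvert\Aut(M)\rvert}{\lvert M\rvert!}\,\phi_\cM(M)=0$ for all such $M$, so $\cM$ is a $T'$-on, and it witnesses that $\psi$ is trivial in $\HomT{T'}$. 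Hence $\phi'$ contains a trivial sub-object, and as $\phi'$ was arbitrary, $T'\in\AEHP$. The only subtle point — really the crux — is this last step: a trivial $T$-sub-object of $\phi'$ is automatically a $T'$-sub-object, because the universality of $T'$ forces the $T$-on witnessing triviality to already be a $T'$-on; everything else is bookkeeping.
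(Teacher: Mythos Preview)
Your proof is correct and follows essentially the same approach as the paper's, which is the one-liner ``This follows immediately since every $T'$-on is also a $T$-on.'' You have simply unpacked the details that the paper leaves implicit: that the restriction $\cN'\rest_U^F$ remains a $T'$-on by absolute continuity, that Lemma~\ref{lem:limitsubobject} with $f=\One_U$ certifies it as a genuine sub-object of $\phi'$, and that the $T$-on witnessing triviality of $\psi$ is automatically a $T'$-on because $\psi$ vanishes on all non-$T'$-models. None of these steps is wrong or unnecessary; the paper just considers them routine once the theon reformulation of $\AEHP$ (stated immediately before the proposition) is in hand.
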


\begin{proof}
  This follows immediately since every $T'$-on is also a $T$-on.
\end{proof}

Next we will study the universal theory analogue of the substitution operation studied for the
original \Erdos--Hajnal property (see also Remark~\ref{rmk:subst} below and cf.~\cite{APS01}
and~\cite[\S 2]{Chu14}).
\begin{definition}
  Let $T_1$ and $T_2$ be universal theories in the same language and let $\forall x_1,\ldots,x_n,
  F(x_1,\ldots,x_n)$ be an axiom of $T_1$.

  We define the universal theory $T_1^{F\to T_2}$ as the theory obtained from $T_1$ by removing the
  axiom $\forall x_1,\ldots,x_n, F(x_1,\ldots,x_n)$ and for every axiom of $T_2$ of the form
  $\forall y_1,\ldots,y_m,G(y_1,\ldots,y_m)$, adding the axiom
  \begin{align*}
    \forall x_1,\ldots,x_{n+m-1},F^G(x_1,\ldots,x_{n+m-1}),
  \end{align*}
  where $F^G(x_1,\ldots,x_{n+m-1})$ is the formula
  \begin{align*}
    G(x_n,\ldots,x_{n+m-1})\lor\bigvee_{i=n}^{n+m-1} F(x_1,\ldots,x_{n-1},x_i).
  \end{align*}
\end{definition}

\begin{remark}\label{rmk:subst}
  When all predicate symbols of the language $\cL$ have arity at most $2$ and the theories $T_1$ and
  $T_2$ are of the form\footnote{Note that any canonical theory can be reaxiomatized as
    $\Forb_{T_\cL}(\cF)$ for some $\cF$.} $\Forb_T(\cF)$ for some canonical theory $T$ in $\cL$ and
  some family $\cF$ of finite models of $T$, that is, the axioms of $\Forb_T(\cF)$ are those of $T$
  along with $\forall\vec{x},\neg\Dopen(F)(\vec{x})$ for each $F\in\cF$, then the substitution
  operation can be done at the level of the family $\cF$ (cf.~\cite{APS01} and~\cite[\S 2]{Chu14}).

  Namely, given finite $\cL$-structures $F_1$ and $F_2$ and some $v\in V(F_1)$, the
  \emph{substitution} $F_1^{v\to F_2}$ of $v$ by $F_2$ in $F_1$ is the $\cL$-structure obtained by
  replacing the vertex $v$ with $\lvert F_2\rvert$ copies of it inducing a copy of $F_2$ (see
  Figure~\ref{fig:graphsubst} for an example in the theory of graphs).

  \begin{figure}[htb]
    \begingroup
\def\baseangle{0}
\def\baseradius{1cm}
\def\labelradius{1.3cm}

\def\bigbaseangle{0}
\def\bigradius{2.5cm}
\def\biglabelradius{2.8cm}
\def\smallbaseangle{0}
\def\smallradius{0.4cm}
\def\smalllabelradius{0.9cm}

\def\ptsize{2pt}
\def\smallptsize{1pt}
\def\captionheight{-2cm}
\def\secondcaptionheight{-5cm}

\def\drawgraph#1#2#3#4{
  \begin{tikzpicture}
    \pgfmathtruncatemacro{\nmo}{#1-1}
    \foreach \i in {0,...,\nmo}{
      \pgfmathsetmacro{\angle}{\baseangle + \i * 360 / #1}
      \coordinate (P\i) at (\angle:\baseradius);
      \coordinate (L\i) at (\angle:\labelradius);

      \filldraw (P\i) circle (\ptsize);
      \node at (L\i) {#3};
    }

    \foreach \i/\j in #2{
      \draw (P\i) -- (P\j);
    }

    \node at (0,\captionheight) {#4};
  \end{tikzpicture}
}

\def\drawsubstitutedgraph#1#2#3#4#5#6#7#8{
  \begin{tikzpicture}
    \pgfmathtruncatemacro{\nmo}{#1-1}
    \foreach \i in {0,...,\nmo}{
      \pgfmathsetmacro{\angle}{\bigbaseangle + \i * 360 / #1}
      \coordinate (GP\i) at (\angle:\bigradius);
      \coordinate (GL\i) at (\angle:\biglabelradius);
    }

    \foreach \i in {1,...,\nmo}{
      \filldraw (GP\i) circle (\ptsize);
      \node at (GL\i) {#3};
    }

    \pgfmathtruncatemacro{\mmo}{#5-1}
    \foreach \i in {0,...,\mmo}{
      \pgfmathsetmacro{\angle}{\smallbaseangle + \i * 360 / #5}
      \coordinate (FP\i) at ($(GP#4) + (\angle:\smallradius)$);
      \coordinate (FL\i) at ($(GP#4) + (\angle:\smalllabelradius)$);

      \filldraw (FP\i) circle (\smallptsize);
      \node at (FL\i) {#7};
    }

    \foreach \i/\j in #6{
      \draw (FP\i) -- (FP\j);
    }

    \foreach \i/\j in #2{
      \ifnum\i=#4
        \foreach \k in {0,...,\mmo}{
          \draw (FP\k) -- (GP\j);
        }
      \else
        \ifnum\j=#4
          \foreach \k in {0,...,\mmo}{
            \draw (GP\i) -- (FP\k);
          }
        \else
          \draw (GP\i) -- (GP\j);
        \fi
      \fi
    }

    \node at (0,\secondcaptionheight) {#8};
  \end{tikzpicture}
}

\def\Gn{6}
\def\Gedges{0/1,0/2,0/5,1/2,1/3,1/4,3/5}
\def\Glabel{$v_{\i}$}

\def\Fn{4}
\def\Fedges{0/2,1/2,1/3}
\def\Flabel{$w_{\i}$}

\def\substvert{0}

\begin{center}
  \begin{subfigure}{0.45\textwidth}
    \begin{center}
      \drawgraph{\Gn}{\Gedges}{\Glabel}{$G$}
    \end{center}
  \end{subfigure}
  \qquad
  \begin{subfigure}{0.45\textwidth}
    \begin{center}
      \drawgraph{\Fn}{\Fedges}{\Flabel}{$F$}
    \end{center}
  \end{subfigure}

  \begin{subfigure}{0.95\textwidth}
    \begin{center}
      \drawsubstitutedgraph{\Gn}{\Gedges}{\Glabel}{\substvert}{\Fn}{\Fedges}{{\scriptsize\Flabel}}{$G^{v_{\substvert}\to F}$}
    \end{center}
  \end{subfigure}

  \caption{Substitution operation for graphs.}
  \label{fig:graphsubst}
\end{center}
\endgroup
  \end{figure}

  Note that if $V(F_1)=[n]$, then the formula $\neg\Dopen(F_1)^{\neg\Dopen(F_2)}$ is equivalent to
  $\neg\Dopen(F_1^{n\to F_2})$. Thus for families $\cF_1$ and $\cF_2$ of finite $\cL$-structures and
  for $F_1\in\cF_1$ with $V(F_1)=[n]$, the theory $\Forb_T(\cF_1)^{\neg\Dopen(F_1)\to\Forb_T(\cF_2)}$ is
  equivalent to the theory $\Forb_T(\cF')$, where
  \begin{align*}
    \cF' & \df (\cF_1\setminus\{F_1\})\cup\{F_1^{n\to F_2} \mid F_2\in\cF_2\}.
  \end{align*}

  However, note that when $\cL$ has predicate symbols of arity at least $3$, such easy description
  is not possible: the substitution operation will be completely agnostic about tuples containing at
  least two vertices of $F_2$ and at least one vertex of $F_1$ that is not $v$.
\end{remark}

\begin{remark}\label{rmk:persistent}
  Theories of the form $\Forb_T(\{F\})$ with $\AEHP$ also bring to light models that have
  \emph{positive density in all limits without trivial sub-objects}, namely, for each
  $\phi\in\HomT{T}$, let $\cC_P(\phi)\df\cM[\Th(\phi)]$ be the set of finite models $M$ of $T$ (up
  to isomorphism) such that $\phi(M) > 0$ and let
  \begin{align*}
    \cC_P(T) & \df \bigcap_\phi \cC_P(\phi),
  \end{align*}
  where the intersection is over all $\phi\in\HomT{T}$ that do not have any trivial sub-object (if
  $T$ already has $\AEHP$, this empty intersection is assumed to result $\cM[T]$ by convention).

  We claim that $\cC_P(T)$ is exactly the class of finite models $F$ of $T$ (up to isomorphism) such
  that $\Forb_T(\{F\})\in\AEHP$. Both containments are more easily shown by their contrapositive. If
  $\Forb_T(\{F\})\notin\AEHP$, then there must be some $\phi\in\HomT{\Forb_T(\{F\})}$ without any
  trivial sub-object, but for the \emph{axiom-erasing interpretation}
  $I\interpret{T}{\Forb_T(\{F\})}$ that acts identically on the language of $T$, we have
  $\phi^I\in\HomT{T}$ and since $\phi^I(F) = 0$, we have $F\notin\cC_P(T)$. On the other hand, if
  $F$ is a model of $T$ that is not in $\cC_P(T)$, then there exists some $\phi\in\HomT{T}$ without
  any trivial sub-object such that $\phi(F) = 0$, but the latter condition implies that $\phi$ can
  be seen as an element of $\HomT{\Forb_T(\{F\})}$ and thus $\Forb_T(\{F\})\notin\AEHP$.

  Note that we could have equivalently have defined $\cC_P(T)$ as the set of all finite models $F$
  that ``persist'' in the stronger sense that they have positive density in every sub-object $\psi$
  of every $\phi\in\HomT{T}$ that does not have trivial sub-objects. This seemingly stronger
  ``persistence'' definition yields precisely the same class of objects because of the
  quantification of $\phi$ and the fact that if $\phi$ does not have any trivial sub-object, then
  any sub-object of $\phi$ also has this property.
\end{remark}

Before we proceed, let us recall the definition of substitutionally closed theories
from~\cite[Definition~3.6]{CR20a}.
\begin{definition}
  Given an open formula $F(x_1,\ldots,x_n)$ and an equivalence relation $\sim$ on $[n]$ with $m$
  equivalence classes $C_1,\ldots,C_m$, we define the open formula $F_\sim(y_1,\ldots,y_m)$ as the
  formula $F(y_{\nu_1},\ldots,y_{\nu_n})$, where $\nu$ is the unique function such that $x_t\in
  C_{\nu_t}$ for every $t\in[n]$.

  A universal theory $T$ is said to be \emph{substitutionally closed} if for each axiom
  $\forall\vec{x},F(\vec{x})$ and each equivalence relation $\sim$, $T$ proves
  $\forall\vec{y},F_\sim(\vec{y})$ using only propositional rules and \emph{injective} renamings of
  variables (but replacing two different variables with the same variable is disallowed).

  The \emph{substitutional closure} of $T$ is the theory whose axioms are
  $\forall\vec{y},F_\sim(\vec{y})$ for each axiom $\forall x_1,\ldots,x_n,F(\vec{x})$ of $T$ and
  each equivalence relation $\sim$ on $[n]$.
\end{definition}

Note that if $T'$ is the substitutional closure of $T$, then $T\vdash T'$ and $T'\vdash T$, that is,
substitutional closedness is a property of the \emph{axiomatization} of $T$ rather than its set of
theorems. For substitutionally closed theories $T$, the next theorem from~\cite{CR20a} gives a
simpler characterization of $T$-ons as Euclidean structures satisfying the axioms of $T$.

\begin{theorem}[\protect{\cite[Theorem~3.7]{CR20a}}]\label{thm:substclosed}
  Let $T$ be a canonical substitutionally closed universal theory in a finite relational language
  and $\cN$ be an Euclidean structure on some space $\Omega=(X,\cA,\mu)$ in the language of
  $T$. Then $\cN$ is a weak (strong, respectively) $T$-on if and only if for every axiom $\forall
  x_1,\ldots,x_n,F(\vec{x})$ of $T$, we have $\mu(T(F,\cN))=1$
  ($T(F,\cN)\supseteq\cE_n(\Omega)\setminus\cD_n(\Omega)$, respectively).
\end{theorem}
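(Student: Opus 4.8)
The plan is to reduce both biconditionals to a single pointwise dictionary between truth sets and satisfaction, after which substitutional closedness enters in exactly one place. First I would establish the dictionary: for $x\in\cE_n(\Omega)\setminus\cD_n(\Omega)$ there is a \emph{unique} canonical $\cL$-structure $M_x$ with vertex set $[n]$ such that $x\in\Tind(M_x,\cN)$, namely the one declaring $M_x\vDash P(i_1,\dots,i_{k(P)})$ exactly when $i\injection{[k(P)]}{[n]}$ is injective and $i^*(x)\in\cN_P$; and then, by a routine induction over the clauses defining $T(\place,\cN)$ (the diagonal conventions making the non-injective atomic cases match, since a canonical structure satisfies no relation on a repeated tuple), one obtains $x\in T(F,\cN)\iff M_x\vDash F(1,\dots,n)$ for every open formula $F(x_1,\dots,x_n)$. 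Since $\cD_n(\Omega)$ is null and relabeling vertices is harmless, this yields $\tind(M,\cN)=\mu(\{x\notin\cD_n(\Omega)\mid M_x=M\})$ and $\Tind(M,\cN)\setminus\cD_n(\Omega)=\{x\notin\cD_n(\Omega)\mid M_x=M\}$ for every canonical $M$ on $[n]$.

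With the dictionary in hand, the forward implications are quick and use neither substitutional closedness nor canonicity beyond what is already built in. If some axiom $\forall\vec x,F(\vec x)$ ($n$ variables) had $\mu(T(F,\cN))<1$ (respectively $T(F,\cN)\not\supseteq\cE_n(\Omega)\setminus\cD_n(\Omega)$), then a positive-measure set of (respectively at least one) off-diagonal $x$ would satisfy $M_x\not\vDash F(1,\dots,n)$, so $M_x$ is a non-model of $T$; in the weak case a pigeonhole over the finitely many structures on $[n]$ produces a fixed non-model $M^*$ with $\tind(M^*,\cN)>0$, while in the strong case we directly have $x\in\Tind(M_x,\cN)\setminus\cD_n(\Omega)$ for the non-model $M_x$ --- either way contradicting the definition of a weak (respectively strong) $T$-on.

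For the converse, assume each axiom $\forall\vec z,G(\vec z)$ of $T$ satisfies the relevant property $Q$ (``$\mu(T(G,\cN))=1$'' in the weak case, ``$T(G,\cN)\supseteq\cE_{k}(\Omega)\setminus\cD_{k}(\Omega)$'', $k$ the number of variables, in the strong case). The core observation is that $Q$ is preserved by the two operations permitted by substitutional closedness. Propositional consequence preserves $Q$ because $T(\place,\cN)$ commutes with connectives, so propositional tautologies have full truth set and $H_1\to H_2$ a tautology gives $T(H_1,\cN)\subseteq T(H_2,\cN)$. An injective renaming $\beta\injection{[r]}{[s]}$ turns $H(x_1,\dots,x_r)$ into a formula whose truth set is $(\beta^*)^{-1}(T(H,\cN))$, where $\beta^*\function{\cE_s(\Omega)}{\cE_r(\Omega)}$ is a coordinate projection: as a projection it is measure-preserving, and because $\beta$ is injective it maps $\cE_s(\Omega)\setminus\cD_s(\Omega)$ into $\cE_r(\Omega)\setminus\cD_r(\Omega)$, so $Q$ survives in either version. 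Since $T$ is substitutionally closed, for every axiom $G$ and equivalence relation $\sim$ the formula $G_\sim$ is derived from axioms of $T$ using only these two operations, hence has $Q$; combining with the injective case, for every axiom $G$ with $k$ variables and every tuple $\vec c\in[n]^k$ the instantiation $G_{\vec c}(x_1,\dots,x_n)\df G(x_{c_1},\dots,x_{c_k})$ --- which is an injective renaming of $G_\sim$ for $\sim$ the kernel of $c$ --- also has $Q$.

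To finish: given a non-model $M$ of $T$ with $V(M)=[n]$, pick an axiom $\forall\vec z,G(\vec z)$ with $k$ variables and $\vec c\in[n]^k$ with $M\not\vDash G(\vec c)$. By the dictionary, every off-diagonal $x$ with $M_x=M$ lies outside $T(G_{\vec c},\cN)$; since $G_{\vec c}$ has $Q$, in the weak case $\mu(\cE_n(\Omega)\setminus T(G_{\vec c},\cN))=0$ forces $\tind(M,\cN)=0$, and in the strong case $T(G_{\vec c},\cN)\supseteq\cE_n(\Omega)\setminus\cD_n(\Omega)$ forces $\Tind(M,\cN)\subseteq\cD_n(\Omega)$, which is exactly what is required for $\cN$ to be a weak (respectively strong) $T$-on. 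I expect the main obstacle to be precisely this passage through $G_\sim$ for non-injective $\vec c$: a non-model of $T$ may violate an axiom only at a repeated tuple, and without substitutional closedness there is no way to transfer the measure-$1$ (or off-diagonal-containment) property from $G$ to $G_\sim$, since a naive collapsing substitution lands on the diagonal, where $\cN$ records nothing. The remaining ingredients --- the truth-set/structure dictionary and the measure-preservation of coordinate projections --- should be routine bookkeeping once the diagonal conventions in the definition of $T(\place,\cN)$ are kept straight.
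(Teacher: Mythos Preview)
The paper does not give its own proof of this theorem: it is quoted verbatim from~\cite[Theorem~3.7]{CR20a} and followed only by a one-line remark that the forward direction does not require substitutional closedness (with a pointer to~\cite[Example~37]{CR20a} for why the backward direction does). So there is nothing in the present paper to compare against beyond that remark, and your proposal is consistent with it: your forward implications use only the truth-set/structure dictionary and a pigeonhole, while substitutional closedness is invoked exactly once, in the backward direction, to pass from an axiom $G$ to its collapsed form $G_\sim$.

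Your argument is correct. The dictionary $x\mapsto M_x$ is well-defined off the diagonal and gives $x\in T(F,\cN)\iff M_x\vDash F(1,\dots,n)$ by the inductive clauses defining $T(\place,\cN)$; non-canonical structures $M$ have $\Tind(M,\cN)=\varnothing$ automatically since $\Dopen(M)$ contains a conjunct $P(x_{i_1},\dots,x_{i_k})$ with a repeated index, whose truth set is empty by convention. For the backward direction, your closure-under-rules argument is the right one: the property $Q$ is stable under propositional consequence (truth sets commute with connectives) and under injective renaming $\beta\injection{[r]}{[s]}$ (because $\beta^*\function{\cE_s(\Omega)}{\cE_r(\Omega)}$ is a coordinate projection, hence measure-preserving, and sends $\cE_s\setminus\cD_s$ into $\cE_r\setminus\cD_r$); substitutional closedness then delivers $Q$ for every $G_\sim$, and factoring an arbitrary instantiation $G_{\vec c}$ through the kernel of $c$ exhibits it as an injective renaming of $G_\sim$. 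This is exactly the mechanism one expects the original proof in~\cite{CR20a} to use, and your identification of the obstacle --- that a non-model may violate an axiom only at a repeated tuple, so one cannot avoid the passage through $G_\sim$ --- is precisely the content of the remark following the theorem.
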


\begin{remark}
  It is easy to see that the forward direction of Theorem~\ref{thm:substclosed} does not require the
  substitutional closedness property. For the backward direction, the necessity of the property is
  illustrated in~\cite[Example~37]{CR20a}.
\end{remark}

\begin{theorem}\label{thm:AEHPsubstitution}
  Let $T_1$ and $T_2$ be canonical universal theories in the same finite relational language and let
  $\forall x_1,\ldots,x_n,F(x_1,\ldots,x_n)$ be an axiom of $T_1$. If $T_1,T_2\in\AEHP$ and
  $T_1^{F\to T_2}$ is canonical, then $T_1^{F\to T_2}\in\AEHP$.
\end{theorem}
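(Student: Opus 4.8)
The plan is to prove directly that every $\phi\in\HomT{T_1^{F\to T_2}}$ contains a trivial sub-object. Fix a convergent sequence $(N_j)_{j\in\NN}$ of models of $T_1^{F\to T_2}$ with $\lim_j p(\place,N_j)=\phi$ (such a sequence exists by the Existence Theorem, since $T_1^{F\to T_2}$ is canonical by hypothesis), and for each $j$ let $v_j$ be the fraction of $n$-tuples of $V(N_j)$ on which the removed axiom $F$ fails. I would then split into two cases according to whether $\liminf_j v_j$ is $0$ or positive.

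\textbf{Case $\liminf_j v_j=0$.} Passing to a subsequence with $v_j\to 0$: if $M$ is a finite model of $T_1^{F\to T_2}$ that is not a model of $T_1$, then, since every non-$F$ axiom of $T_1$ is still an axiom of $T_1^{F\to T_2}$, the structure $M$ must fail $\forall\vec x,F(\vec x)$; hence $p(M,N_j)\le c_M v_j\to 0$ for some $c_M$ depending only on $M$, so $\phi(M)=0$. Thus $\phi$ vanishes on all finite structures outside $\cM[T_1]$, so (via the identity-on-language interpretation $T_1^{F\to T_2}\to T_1$, cf.\ Remark~\ref{rmk:persistent}) it determines an element $\phi'\in\HomT{T_1}$. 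Since $T_1\in\AEHP$, $\phi'$ has a trivial sub-object; the witnessing convergent sequences of $T_1$-models are also convergent sequences of $T_1^{F\to T_2}$-models (because $T_1\vdash T_1^{F\to T_2}$), so this is a trivial sub-object of $\phi$ as well.

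\textbf{Case $\liminf_j v_j=:\delta>0$.} Passing to a subsequence with $v_j\ge\delta/2$ for every $j$, I would exploit the new axioms $F^G$. For $\vec p\in V(N_j)^{n-1}$ set $C_j(\vec p)\df\{a\in V(N_j)\mid N_j\vDash\neg F(\vec p,a)\}$; since $\sum_{\vec p}\lvert C_j(\vec p)\rvert$ is the number of $F$-violating $n$-tuples, which is $\ge(\delta/2)\lvert N_j\rvert^n$, averaging over the $\lvert N_j\rvert^{n-1}$ parameters gives some $\vec p_j$ with $\lvert C_j(\vec p_j)\rvert\ge(\delta/2)\lvert N_j\rvert$. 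The crucial observation is that $U_j\df C_j(\vec p_j)$, with the coordinates of $\vec p_j$ removed, induces a model of $T_2$: for any axiom $\forall\vec y,G(\vec y)$ of $T_2$ and any tuple $\vec z$ from $U_j$, instantiating $\forall\vec x,F^G(\vec x)$ at $(\vec p_j,\vec z)$ forces $G(\vec z)$, because each disjunct $F(\vec p_j,z_i)$ is false by definition of $C_j$ (and $N_j\rest_{U_j}$ is also a model of $T_1^{F\to T_2}$, being a substructure of one). Passing to a further subsequence along which $(N_j\rest_{U_j})$ converges, say to $\psi$, and $\lvert U_j\rvert/\lvert N_j\rvert$ converges to some $c\ge\delta/2>0$, we obtain that $\psi$ is a positive-measure sub-object of $\phi$ with $\psi\in\HomT{T_2}$. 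Then $T_2\in\AEHP$ gives $\psi$ a trivial sub-object $\chi$, and I would conclude by noting that sub-objects compose: by Lemma~\ref{lem:limitsubobject}, $\psi$ is a weight-rescaling of a theon of $\phi$ and $\chi$ a further rescaling of a theon of $\psi$, and composing the two rescalings exhibits the trivial $\chi$ as a sub-object of $\phi$.

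I expect the main difficulty to be Case~2: squeezing a positive density of $F$-violations into a single parameter whose cell is simultaneously linear-sized and closed under $T_2$, and then promoting these finite cells to an honest positive-measure $T_2$-sub-object in the limit (with the composition-of-sub-objects step as a secondary technical ingredient). One routine but genuinely necessary point I am glossing over is the behaviour at non-injective tuples: $F$ may also fail at tuples with repeated coordinates — equivalently one of its collapses $F_\sim$ fails — so $v_j$ should in fact record these collapses too, and the correspondingly collapsed instances of the axioms $F^G$ should be used in Case~2; I anticipate this is precisely where the hypothesis that $T_1^{F\to T_2}$ be canonical is needed.
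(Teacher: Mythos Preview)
Your two-case split and the cell argument in Case~2 are precisely the paper's strategy, except that you carry it out on convergent sequences while the paper works directly in the limit: it takes a strong $T_1^{F\to T_2}$-on $\cN$, checks whether $\mu(T(\neg F,\cN))=0$ (your Case~1) or $>0$ (your Case~2), and in the latter case uses Fubini to pick a parameter $z\in\cE_{n-1}(\Omega)$ whose cell $U$ has positive measure, then shows $\cN\rest_U$ is a $T_2$-on. So the approaches coincide; the theon version is a bit cleaner (no subsequence bookkeeping, and the composition of sub-objects is immediate), but your finite version is equally legitimate.

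The gap is in the non-injective handling, which is not routine. If you redefine $v_j$ to also record the densities of collapsed violations $\neg F_\sim$ (as you must, for Case~1 to force $\phi$ into $\HomT{T_1}$), then in Case~2 the dominating collapse $\sim$ may identify $x_n$ with an earlier variable $x_{j_0}$. In that situation the violations are of the shape $\neg F(\vec p,p_{j_0})$: there is no free last coordinate to average over, the only element forced into $C_j(\vec p)$ is $p_{j_0}$ itself, and no parameter has a linear-sized cell. Neither $F^G$ nor its collapses rescue this, since collapsing $x_n$ into the parameter block drags the first variable of $G$ into the fixed parameters rather than letting it range over a cell. The paper's fix is to first reduce to the case where every axiom of $T_1$ and $T_2$ has the form $\forall\vec x,\bigl(\bigwedge_{i<j}x_i\ne x_j\to A(\vec x)\bigr)$, so that $\neg F$ forces injectivity and all collapses become vacuous; the general case is then recovered by replacing $F$ with all its collapses-with-distinctness-premise $F'_\sim$ and performing the substitution one $\sim$ at a time. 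That reduction, not the cell argument, is where the real work hides, and the same reduction would make your finite-sequence proof go through.
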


\begin{proof}
  Let us first prove the case when all axioms of $T_1$ and $T_2$ are of the form
  \begin{align}\label{eq:forcesubstclosed}
    \forall x_1,\ldots,x_t, \left(\bigwedge_{1\leq i < j\leq t} x_i\neq x_j \to A(\vec{x})\right)
  \end{align}
  for some open formula $A$. Note that under these conditions $T_1$, $T_2$ and $T_1^{F\to T_2}$ are
  substitutionally closed as any replacement of two different variables with the same variable leads
  to a tautology.

  Let $T\df T_1^{F\to T_2}$. We need to show that every $T$-on $\cN$ over some space $\Omega =
  (X,\cA,\mu)$ contains a trivial sub-object. By possibly applying Theorem~\ref{thm:theonremoval},
  we may suppose that $\cN$ is a strong $T$-on. If $\cN$ is a $T_1$-on, then this follows from
  $T_1\in\AEHP$, so suppose $\cN$ is not a $T_1$-on. Since the only axiom of $T_1$ that is not an
  axiom of $T$ is $F$, by Theorem~\ref{thm:substclosed}, we must have $\mu(T(F,\cN)) < 1$ and thus
  $\mu(T(\neg F,\cN)) > 0$.  By Fubini's Theorem, there exists some $z\in\cE_{n-1}(\Omega)$ such
  that the set
  \begin{align*}
    C(z) & \df
    \{y\in X^{\{\{n\}\}} \mid \mu(C(z,y)) > 0\},
  \end{align*}
  has positive measure, where
  \begin{align*}
    C(z,y) & \df \{w\in X^{r(n)\setminus (r(n-1)\cup\{\{n\}\})} \mid (z,y,w)\in T(\neg F,\cN)\}.
  \end{align*}
  By identifying $X^{\{\{n\}\}}$ with $X$, the set
  \begin{align*}
    U & \df \{y\in C(z) \mid \forall j\in[n-1], y\neq z_{\{j\}}\}
  \end{align*}
  also has positive measure.

  Let $\widetilde{F}$ be a measure-isomorphism modulo $0$ from $\Omega_U$ to $\Omega$. We claim that
  $\cN\rest_U^{\widetilde{F}}$ is a $T_2$-on. Suppose not. By Theorem~\ref{thm:substclosed}, there
  exists some axiom of $T_2$ of the form $\forall x_n,\ldots,x_{n+m-1},G(x_n,\ldots,x_{n+m-1})$ (we
  index the variables by $V\df\{n,\ldots,n+m-1\}$ for convenience) such that
  $\mu(T(G,\cN\rest_U^{\widetilde{F}})) < 1$. In particular, this means that there exists a point
  $y\in\cE_V(\Omega)\setminus\cD_V(\Omega)$ such that $y\notin T(G,\cN\rest_U^{\widetilde{F}})$ and
  $y_{\{v\}}\in U$ for every $v\in V$. Then we can define a point
  $\widetilde{z}\in\cE_{n+m-1}(\Omega)\setminus\cD_{n+m-1}(\Omega)$ as follows.
  \begin{enumerate}[label={\alph*.}, ref={(\alph*)}]
  \item For each $A\in r(n-1)$, define $\widetilde{z}_A\df z_A$.
  \item For each $A\in r(V)$, define $\widetilde{z}_A\df y_A^{\widetilde{F}}$, where $y_A^{\widetilde{F}}$ is
    given by~\eqref{eq:xF}.
  \item For each $i\in V$, since $y_{\{i\}}\in U\subseteq C(z)$, let $w^i\in C(z,y_{\{i\}})$ and define
    $\widetilde{z}_{A\cup\{i\}}\df w^i_{A\cup\{i\}}$ for every $A\in r(n-1)$.
  \item Define all other coordinates arbitrarily.
  \end{enumerate}
  The definition of $U$ ensures that $\widetilde{z}\notin\cD_{n+m-1}(\Omega)$. Furthermore, since
  $(z,y_{\{i\}},w^i)\in T(\neg F,\cN)$ for every $i\in V$ and $y\in T(\neg
  G,\cN\rest_U^{\widetilde{F}})$, it follows that $\widetilde{z}\notin T(F^G,\cN)$, contradicting
  the fact that $\cN$ is a \emph{strong} $T$-on.

  Therefore $\cN\rest_U^{\widetilde{F}}$ is a $T_2$-on and since $T_2\in\AEHP$, it must contain a
  trivial sub-object, which must also be a sub-object of $\phi_\cN$ (as
  $\phi_{\cN\rest_U^{\widetilde{F}}}$ is a sub-object of $\phi_\cN$).

  \medskip

  Let us now prove the case in which all axioms of $T_2$ are of the form~\eqref{eq:forcesubstclosed}
  but those of $T_1$ are not necessarily of this form. Let $T_1'$ be the theory whose axioms are
  \begin{align}\label{eq:forcesubstclosed2}
    \forall y_1,\ldots,y_m, \left(\bigwedge_{1\leq i < j\leq m} y_i\neq y_j \to A_\sim(y_1,\ldots,y_m)\right)
  \end{align}
  for every axiom $\forall x_1,\ldots,x_t, A(x_1,\ldots,x_t)$ of $T_1$, every equivalence relation
  $\sim$ on $[t]$ with $m$ equivalence classes $C_1,\ldots,C_m$ and without loss of generality, let
  us assume that we always enumerate these classes in a way that $x_t\in C_m$. Let us use the
  notation $A_\sim'$ for the open formula in~\eqref{eq:forcesubstclosed2}. Note that $T_1'\vdash
  T_1$, so $T_1'\in\AEHP$ by Proposition~\ref{prop:AEHPaddaxioms}.

  Let us now focus our attention on the open formula $F(x_1,\ldots,x_n)$ and let us enumerate all
  equivalence relations on $[n]$ as $\sim_1,\ldots,\sim_\ell$.

  We now define theories $T^i$ for $i\in\{0,\ldots,\ell\}$ inductively by letting $T^0\df T_1'$ and
  $T^{i+1}\df (T^i)^{F_{\sim_i}\to T_2}$. A simple induction shows that $T^i$ can be reaxiomatized
  so that all of its axioms are of the form~\eqref{eq:forcesubstclosed} and thus by the previous
  case (and Proposition~\ref{prop:AEHPaddaxioms}) another induction gives $T^i\in\AEHP$. On the
  other hand, it is straightforward to see that $T^\ell$ is a reaxiomatization of $T_1^{F\to T_2}$,
  so we get $T_1^{F\to T_2}\in\AEHP$ by Proposition~\ref{prop:AEHPaddaxioms}.

  \medskip

  Finally, for the case when both $T_1$ and $T_2$ are general, we can let $T_2'$ be the theory whose axioms
  are~\eqref{eq:forcesubstclosed2} but for every axiom of $T_2$ instead so that its axioms are all of the
  form~\eqref{eq:forcesubstclosed}. Then we clearly have $T_2'\vdash T_2$ and $T_1^{F\to T_2}\vdash T_1^{F\to
    T_2'}$ so the result follows from two applications of Proposition~\ref{prop:AEHPaddaxioms} and the
  previous case.
\end{proof}

Similarly to the results of Sections~\ref{sec:consgraphs} and~\ref{sec:consmodels}, the approximate
\Erdos--Hajnal property can also be pulled back to the finite world. Since the proofs are completely
analogous to those of Section~\ref{sec:consmodels}, we state these results without proof here.

\begin{theorem}\label{thm:AEHPconvseq}
  Let $T$ be a universal theory in a finite relational language $\cL$ such that $T\in\AEHP$ and let
  $(N_n)_{n\in\NN}$ be a convergent sequence of structures in $\cL$.

  Suppose there exists a subsequence $(N_{n_\ell})_{\ell\in\NN}$ of $(N_n)_{n\in\NN}$ and sets
  $U_{n_\ell}\subseteq V(N_{n_\ell})$ such that $\limsup_{\ell\to\infty} \lvert
  U_{n_\ell}\rvert/\lvert N_{n_\ell}\rvert > 0$ and for every finite $\cL$-structure $M$ that is
  \emph{not} a model of $T$, we have $\lim_{\ell\to\infty} p(M,N_{n_\ell}\rest_{U_{n_\ell}}) = 0$.

  Then there exist $c > 0$ and sets $U_n\subseteq V(N_n)$ such that $\lvert U_n\rvert\geq c\lvert
  N_n\rvert$ for every $n\in\NN$ and $(N_n\rest_{U_n})_{n\in\NN}$ is almost trivial.
\end{theorem}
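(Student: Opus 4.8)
The argument follows the proof of the implication~\ref{thm:stableconvsequniversal:stable}$\implies$\ref{thm:stableconvsequniversal:trivial} of Theorem~\ref{thm:stableconvsequniversal} almost verbatim; the only change is that the hypothesis here already produces a sub-object lying (after reduction) in $\HomT{T}$, so the role played there by Theorem~\ref{thm:stabletheon} is played here by the assumption $T\in\AEHP$. View each $N_n$ as a model of the pure canonical theory $T_\cL$ and let $\phi\in\HomT{T_\cL}$ be the limit of $(N_n)_{n\in\NN}$; every subsequence of $(N_n)_{n\in\NN}$ converges to $\phi$ as well. Passing to a further subsequence of $(N_{n_\ell})_{\ell\in\NN}$, we may assume $\lvert U_{n_\ell}\rvert/\lvert N_{n_\ell}\rvert\to c_0>0$ and $(N_{n_\ell}\rest_{U_{n_\ell}})_{\ell\in\NN}$ is convergent; by Lemma~\ref{lem:limitsubobject} its limit $\psi\in\HomT{T_\cL}$ is a measure $c_0$ sub-object of $\phi$, and by hypothesis $\psi(M)=0$ for every finite $\cL$-structure $M$ that is not a model of $T$. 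Hence, reducing $\psi$ along the axiom-erasing interpretation $I\interpret{T_\cL}{T}$ that acts identically on $\cL$, we may write $\psi=\psi_T^I$ for a unique $\psi_T\in\HomT{T}$. Since $T\in\AEHP$, $\psi_T$ contains a trivial sub-object; pushing it forward along $I$ and using that sub-objects compose with measures multiplying (via Lemma~\ref{lem:limitsubobject}), $\phi$ contains a trivial sub-object $\psi'\in\HomT{T_\cL}$, which is a measure $c$ sub-object of $\phi$ for some $c>0$.

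Exactly as in the proof of Theorem~\ref{thm:stableconvsequniversal}, this upgrades to an arbitrary representation of $\phi$: if $\cN$ is any $T_\cL$-on over a space $\Omega=(X,\cA,\mu)$ with $\phi_\cN=\phi$, then by Lemma~\ref{lem:limitsubobject} there is a measurable $f\function{X}{[0,1]}$ with $\int_X f\ d\mu=c$ and $\phi_{\cN\rest_f^F}=\psi'$ for every measure-isomorphism $F$ modulo $0$ from $\Omega_f$ to $\Omega$, so setting $U'\df\{x\in X\mid f(x)>0\}$ gives $\mu(U')\geq c$ and $\phi_{\cN\rest_{U'}^{F'}}=\psi'$ for every measure-isomorphism $F'$ modulo $0$ from $\Omega_{U'}$ to $\Omega$ (the rescaling of the measure within $U'$ does not change densities of submodels, because $\psi'$ is trivial). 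For each predicate symbol $P\in\cL$ put $b_P\df\mu_{U'}((\cN\rest_{U'}^{F'})_P)\in\{0,1\}$; this value is intrinsic to $\psi'$.

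For each $n\in\NN$, let $U_n^c\subseteq V(N_n)$ minimize
\begin{align*}
  d_n(U_n) & \df \sum_{P\in\cL}\left\lvert\frac{\lvert P^{N_n\rest_{U_n}}\rvert}{\lvert U_n\rvert^{k(P)}}-b_P\right\rvert
\end{align*}
over all $U_n\subseteq V(N_n)$ with $\lvert U_n\rvert\geq(c/2)\lvert N_n\rvert$; it suffices to show $\lim_{n\to\infty} d_n(U_n^c)=0$, for then $U_n\df U_n^c$ satisfies $\lvert U_n\rvert\geq(c/2)\lvert N_n\rvert$ and makes $(N_n\rest_{U_n})_{n\in\NN}$ almost trivial. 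Suppose not. Passing to a subsequence with $d_{m_\ell}(U_{m_\ell}^c)\not\to 0$ and, refining, with $(N_{m_\ell}\rest_{U_{m_\ell}^c})_{\ell\in\NN}$ convergent, form the ultraproduct $N\df\prod_{\ell\in\NN} N_{m_\ell}/\cD$ for a non-principal ultrafilter $\cD$ over $\NN$, and apply Theorem~\ref{thm:theonultraproduct} to obtain a separable realization $\Theta$ of order $k\geq\max_{P\in\cL} k(P)$ and a $T_\cL$-on $\cN$ over $[0,1]$ with $\phi_\cN=\phi$. Take $U'\subseteq[0,1]$ as above, with $\lambda(U')\geq c$ and $\phi_{\cN\rest_{U'}^{F'}}=\psi'$, and set $\widehat{U}\df\Theta_1^{-1}(U')$ for a restriction $\Theta_1$ of $\Theta$ of order $1$, so $\mu^1(\widehat{U})\geq c$ for the Loeb measure $\mu^1$. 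Since $P^{N\rest_{\widehat{U}}}=\Theta_{k(P)}^{-1}(\{x\in\cN_P\mid\forall v\in[k(P)],\ x_{\{v\}}\in U'\})$ a.e.\ and $\lambda_{U'}((\cN\rest_{U'}^{F'})_P)=b_P$, Fubini's Theorem for Loeb measures (Theorem~\ref{thm:FubiniLoeb}) gives $\mu^{k(P)}(P^{N\rest_{\widehat{U}}})=b_P\cdot\mu^1(\widehat{U})^{k(P)}$. Choosing an internal set $U\df\prod_{\ell\in\NN} U_\ell/\cD$ with $\mu^1(U\symdiff\widehat{U})=0$, Fubini's Theorem again yields $\lim_{\ell\to\cD}\lvert U_\ell\rvert/\lvert N_{m_\ell}\rvert=\mu^1(U)\geq c$ and $\lim_{\ell\to\cD}\lvert P^{N_{m_\ell}\rest_{U_\ell}}\rvert/\lvert U_\ell\rvert^{k(P)}=b_P$ for every $P\in\cL$; hence along some subsequence $\lvert U_\ell\rvert\geq(c/2)\lvert N_{m_\ell}\rvert$ and $d_{m_\ell}(U_\ell)\to 0$, contradicting the minimality of $U_{m_\ell}^c$ together with $d_{m_\ell}(U_{m_\ell}^c)\not\to 0$.

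The proof is thus a routine transcription of that of Theorem~\ref{thm:stableconvsequniversal}; the only genuinely new point, and the step to be careful with, is the passage between $\HomT{T_\cL}$ and $\HomT{T}$. One must verify that the limit $\psi$ of the induced subsequence, which a priori lives only in $\HomT{T_\cL}$, really does reduce to an element of $\HomT{T}$ to which $\AEHP$ can be applied (this is exactly where the hypothesis $\lim_{\ell\to\infty} p(M,N_{n_\ell}\rest_{U_{n_\ell}})=0$ for $M\not\models T$ is used), and that the resulting trivial sub-object transports back to a trivial sub-object of $\phi$ of a definite positive measure. Everything downstream — the uniform-$c$ claim and the Loeb-measure pull-back with the minimization argument — is identical to the corresponding parts of the proof of Theorem~\ref{thm:stableconvsequniversal}.
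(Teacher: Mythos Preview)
Your proposal is correct and follows exactly the approach the paper indicates: the proof is declared to be ``completely analogous'' to that of Theorem~\ref{thm:stableconvsequniversal}, with the role of Theorem~\ref{thm:stabletheon} replaced by the hypothesis $T\in\AEHP$, and you carry out precisely this adaptation. The one step that is genuinely new relative to Theorem~\ref{thm:stableconvsequniversal}---identifying the sub-object $\psi$ with an element of $\HomT{T}$ via the axiom-erasing interpretation and then invoking transitivity of sub-objects to bring the resulting trivial sub-object back to $\phi$---you handle correctly (transitivity is also used implicitly elsewhere in the paper, e.g., at the end of the proof of Theorem~\ref{thm:AEHPsubstitution}).
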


\begin{theorem}\label{thm:AEHPcountablemodel}
  Let $T$ be a universal theory in a finite relational language $\cL$ such that $T\in\AEHP$ and let
  $N$ be a countable $\cL$-structure with $V(N)=\NN_+$.

  Suppose there exist a set $U\subseteq\NN_+$ and an increasing sequence $(n_\ell)_{\ell\in\NN}$ of
  positive integers such that for every finite $\cL$-structure $M$ that is \emph{not} a model of
  $T$, we have $\lim_{\ell\to\infty} p(M,N\rest_{U\cap [n_\ell]}) = 0$ and $\lim_{\ell\to\infty}
  \lvert U\cap [n_\ell]\rvert/n_\ell > 0$.

  Then there exist a set $U\subseteq\NN_+$ and an increasing sequence $(n_\ell)_{\ell\in\NN}$ of
  positive integers such that $(N\rest_{U\cap [n_\ell]})_{\ell\in\NN}$ is almost trivial and
  $\lim_{\ell\to\infty} \lvert U\cap [n_\ell]\rvert/n_\ell > 0$.
\end{theorem}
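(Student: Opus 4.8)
The plan is to mirror the proof of Theorem~\ref{thm:stablecountablemodel}, replacing the role of Theorem~\ref{thm:stableconvsequniversal} by its approximate \Erdos--Hajnal analogue Theorem~\ref{thm:AEHPconvseq}: the statement is one-directional (hypothesis $\implies$ conclusion), so there is nothing to verify in the ``trivial'' direction, and the entire work is a reduction to the convergent-sequence version followed by the same sparse-gluing bookkeeping used for countable graphs.

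First I would reduce to a convergent ambient sequence. By possibly passing to a subsequence of $(n_\ell)_{\ell\in\NN}$ (using compactness of the product topology on $[0,1]^{\cM[T_\cL]}$, exactly as in the proof of Theorem~\ref{thm:stablecountablemodel}), I may assume that $(N\rest_{[n_\ell]})_{\ell\in\NN}$ is convergent as a sequence of $\cL$-structures; passing to a subsequence preserves both $\lim_{\ell\to\infty}\lvert U\cap[n_\ell]\rvert/n_\ell>0$ and, for every finite $\cL$-structure $M$ that is not a model of $T$, $\lim_{\ell\to\infty}p(M,N\rest_{U\cap[n_\ell]})=0$. Now I apply Theorem~\ref{thm:AEHPconvseq} with ambient convergent sequence $(N\rest_{[n_\ell]})_{\ell\in\NN}$, taking its ``subsequence'' to be the whole sequence and $U$-sets equal to $U\cap[n_\ell]$ (note $N\rest_{[n_\ell]}\rest_{U\cap[n_\ell]}=N\rest_{U\cap[n_\ell]}$). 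The hypotheses $\limsup_{\ell}\lvert U\cap[n_\ell]\rvert/n_\ell>0$ and $\lim_{\ell}p(M,N\rest_{U\cap[n_\ell]})=0$ for non-models $M$ are exactly what Theorem~\ref{thm:AEHPconvseq} demands, so it yields a constant $c>0$ and sets $U_\ell\subseteq[n_\ell]$ with $\lvert U_\ell\rvert\geq c\cdot n_\ell$ for every $\ell$ such that $(N\rest_{U_\ell})_{\ell\in\NN}$ is almost trivial.

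Next I would glue these along a sparse subsequence exactly as in the proofs of Theorems~\ref{thm:stablecountablegraph} and~\ref{thm:stablecountablemodel}: set $m_0\df n_0$, $m_{t+1}\df\min\{n_\ell\mid\ell\in\NN\land n_\ell\geq 2^t\cdot m_t\}$, pick $\ell_t$ with $m_t=n_{\ell_t}$, and let $U\df\bigcup_{t\in\NN}U_{\ell_t}\cap([m_t]\setminus[m_{t-1}])$ with $m_{-1}\df 0$. Then $\lvert(U\cap[m_t])\symdiff U_{\ell_t}\rvert\leq m_{t-1}$, and since $m_t\geq 2^{t-1}m_{t-1}$ and $\lvert U_{\ell_t}\rvert\geq c\cdot m_t$ we get $m_{t-1}/\lvert U_{\ell_t}\rvert\leq 2^{-t+1}/c\to 0$; hence $\lvert U\cap[m_t]\rvert/\lvert U_{\ell_t}\rvert\to 1$ and $\liminf_{t\to\infty}\lvert U\cap[m_t]\rvert/m_t\geq c>0$. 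To see that $(N\rest_{U\cap[m_t]})_{t\in\NN}$ is almost trivial, fix $P\in\cL$ of arity $k\df k(P)$: only $k$-tuples of distinct vertices meeting the symmetric difference can change, so $\bigl\lvert\lvert P^{N\rest_{U\cap[m_t]}}\rvert-\lvert P^{N\rest_{U_{\ell_t}}}\rvert\bigr\rvert = O(m_{t-1}\cdot m_t^{k-1})=o(\lvert U_{\ell_t}\rvert^k)$, while $\lvert U\cap[m_t]\rvert$ and $\lvert U_{\ell_t}\rvert$ are comparable; therefore $\lvert P^{N\rest_{U\cap[m_t]}}\rvert/\lvert U\cap[m_t]\rvert^k$ has the same limit as $\lvert P^{N\rest_{U_{\ell_t}}}\rvert/\lvert U_{\ell_t}\rvert^k$, which lies in $\{0,1\}$ by almost triviality of $(N\rest_{U_\ell})_{\ell\in\NN}$. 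Passing to a further subsequence of $(m_t)_{t\in\NN}$ along which $\lvert U\cap[m_t]\rvert/m_t$ actually converges (the limit is then $\geq c>0$, and almost triviality persists) gives the conclusion with this $U$ and that sequence.

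I do not expect a serious obstacle: once Theorem~\ref{thm:AEHPconvseq} is available, everything is routine. The only points requiring a little care are (i) making the ambient sequence $(N\rest_{[n_\ell]})$ convergent before invoking Theorem~\ref{thm:AEHPconvseq}, and observing that the ``subsequence'' in that theorem may be taken to be the whole sequence with $U$-sets $U\cap[n_\ell]$, so that the countable hypothesis translates verbatim into the convergent-sequence hypothesis; and (ii) the elementary $o(1)$ perturbation estimate showing that deleting $o(\lvert U_{\ell_t}\rvert)$ vertices does not disturb ``almost trivial.'' Both are precisely the bookkeeping already carried out in the proof of Theorem~\ref{thm:stablecountablemodel}, which is why the paper omits the proof here.
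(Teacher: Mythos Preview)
Your proposal is correct and matches the paper's intended argument exactly: the paper explicitly omits the proof, stating that it is completely analogous to that of Theorem~\ref{thm:stablecountablemodel} with Theorem~\ref{thm:AEHPconvseq} playing the role of Theorem~\ref{thm:stableconvsequniversal}, which is precisely what you do. Your handling of the two minor bookkeeping points (making the ambient sequence convergent before invoking Theorem~\ref{thm:AEHPconvseq}, and the $o(1)$ perturbation estimate for almost triviality under the sparse gluing) is also in line with the paper's treatment of the analogous steps.
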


\begin{remark}
  Differently from the case of Theorems~\ref{thm:stableconvseqgraph},
  \ref{thm:stablecountablegraph}, \ref{thm:stableconvsequniversal}
  and~\ref{thm:stablecountablemodel}, in Theorems~\ref{thm:AEHPconvseq}
  and~\ref{thm:AEHPcountablemodel} we do not get an equivalence as the ``trivial'' implication
  breaks down: an almost trivial sequence $(N_n)_{n\in\NN}$ does not need to be a sequence of
  ``almost'' models of $T$. For example, any universal theory $T$ without infinite models (in flag
  algebra language, a degenerate theory) vacuously satisfies $\AEHP$ as it does not have any
  increasing sequence of models and by the same token it cannot have an increasing sequence of
  ``almost'' models. For a slightly less trivial example, if $T$ is the theory of empty graphs and
  $(N_n)_{n\in\NN}$ is a an increasing sequence of complete graphs, then $(N_n)_{n\in\NN}$ is
  (almost) trivial but does not contain any increasing subsequence of induced subgraphs that are
  ``almost'' empty graphs.
\end{remark}

\begin{discussion}\label{dsc:AEHPconvseq}
  A consequence of Theorem~\ref{thm:AEHPconvseq} is that $T\in\AEHP$ is equivalent to every
  \emph{convergent} sequence of models of $T$ having an almost trivial sequence of linear-sized
  induced submodels. Again, the convergence condition is essential (see
  Discussion~\ref{dsc:convergence}) and requiring almost trivial as opposed to trivial is also
  essential (see Discussion~\ref{dsc:disjunioncliques}).

  As it was already observed in~\cite[\S 5]{Chu14}, without the conditions above the problem
  completely trivializes for graphs: if we require a universal theory $T$ of graphs (i.e.,
  $T\vdash\TGraph$) to be such that every sufficiently large model $M$ of $T$ either contains a
  clique or anti-clique of size strictly larger than $\sqrt{\lvert M\rvert}$, then $T$ must forbid
  some disjoint union of cliques and some complete partite graph. This stems from the graphs of
  Discussion~\ref{dsc:disjunioncliques}: the largest cliques and anti-cliques in the graph $H_{m,m}$
  consisting of a disjoint union of $m$ cliques of size $m$ have size $m$ so some induced subgraph
  of $H_{m,m}$, which is necessarily a disjoint union of cliques, must be forbidden by
  $T$. Similarly, the complement $\overline{H}_{m,m}$ of $H_{m,m}$ shows that $T$ must forbid some
  complete partite graph.
\end{discussion}

\section{Characterization via forbidden subgraphs}
\label{sec:AEHPgraphs}

The purpose of this section is to completely characterize the approximate \Erdos--Hajnal property ($\AEHP$)
for universal theories of graphs. Specifically, we show (Theorem~\ref{thm:AEHPgraph}) that universal theories
of graphs with $\AEHP$ are precisely characterized as the ones that forbid some induced subgraph of some
recursive blow-up of the $4$-cycle $C_4$ (defined below). Let us remind the reader that even the
existence of such family characterizing $\AEHP$ for $\TGraph$ is a surprise: in general, it is not
clear that given a universal theory $T$, there exists a family $\cC$ such that any universal theory
$T'\vdash T$ has $\AEHP$ if and only if it forbids some element of $\cC$.

\begin{definition}\label{def:C4omega}
  For $\ell\in\NN$, the \emph{recursive blow-up of the $4$-cycle of height $\ell$} is the graph
  $C_4^\ell$ defined by $V(C_4^\ell) = [4]^\ell$ and in which two distinct vertices $\sigma,\tau\in
  [4]^\NN$ are adjacent if and only if $\sigma_i - \tau_i\equiv \pm 1\pmod{4}$, where $i\in[\ell]$
  is the first position in which $\sigma$ and $\tau$ differ (see Figure~\ref{fig:C4ell}).

  We also let $\cC_C$ be the set of all graphs (up to isomorphism) that are induced subgraphs of
  $C_4^\ell$ for some $\ell\in\NN$.
  
  The \emph{recursive blow-up of the $4$-cycle of countable height} is the graph $C_4^\omega$
  defined by $V(C_4^\omega) = [4]^\NN$ and in which two distinct vertices $\sigma,\tau\in [4]^\NN$
  are adjacent if and only if $\sigma_i - \tau_i\equiv \pm 1\pmod{4}$, where $i\in\NN$ is the first
  position in which $\sigma$ and $\tau$ differ.
\end{definition}

\begin{figure}[htb]
  \begingroup
\def\levels{4}
\def\initialradius{4.5}
\def\fracprop{0.35}
\def\initiallinewidth{30}

\begin{center}
  \begin{tikzpicture}
    \coordinate (P) at (0cm,0cm);

    \foreach[%
      remember=\centers as \prevcenters (initially P),
      remember=\radius as \prevradius (initially \initialradius),
      remember=\linewidth as \prevlinewidth (initially \initiallinewidth),
      evaluate=\radius using \prevradius * \fracprop,
      evaluate=\linewidth using \prevlinewidth * \fracprop,
      evaluate=\drawradius using 1/(1-\fracprop)*\radius%
    ] \i in {0,...,\levels}{
      \xdef\centers{\relax}
      \xdef\lastradius{\drawradius}
      \foreach \c in \prevcenters {
        \coordinate (\c1) at ($(\c) + (45:\prevradius)$);
        \coordinate (\c2) at ($(\c) + (135:\prevradius)$);
        \coordinate (\c3) at ($(\c) + (225:\prevradius)$);
        \coordinate (\c4) at ($(\c) + (315:\prevradius)$);

        \draw[line width=\prevlinewidth]
        (\c1) -- (\c2) -- (\c3) -- (\c4) -- cycle;

        \draw[fill=white] (\c1) circle (\drawradius);
        \draw[fill=white] (\c2) circle (\drawradius);
        \draw[fill=white] (\c3) circle (\drawradius);
        \draw[fill=white] (\c4) circle (\drawradius);

        \if\centers\relax
        \xdef\centers{\c1,\c2,\c3,\c4}
        \else
        \xdef\centers{\centers,\c1,\c2,\c3,\c4}
        \fi
      }
    }

    \foreach \c in \centers {
      \filldraw (\c) circle (\lastradius);
    }
  \end{tikzpicture}
  \caption{Pictorial view of the recursive blow-up $C_4^{\levels}$ of the $4$-cycle of height
    $\levels$.}
  \label{fig:C4ell}
\end{center}

\endgroup

\end{figure}


\begin{remark}\label{rmk:Cc}
  It is easy to see that $\cC_C$ can alternatively be described as the class of finite graphs $G$
  that are induced subgraphs of $C_4^\omega$. If we wanted a smaller single graph $H$ whose class of
  finite induced subgraphs is $\cC_C$, we could also take $H$ as the disjoint union
  $\bigsqcup_{\ell\in\NN} C_4^\ell$ or as any direct limit $\mathop{\underrightarrow{\lim}}
  C_4^\ell$ (in the categorical sense) relative to any direct system of embeddings
  $C_4^\ell\rightarrowtail C_4^k$ ($\ell\leq k$); both of these are countable graphs.
\end{remark}

\begin{remark}\label{rmk:otherCc}
  Let us note that there is not much particularly special about $C_4$ in the definition of
  $\cC_C$. Namely, if $G\in\cC_C$ contains at least one edge and one non-edge, then by analogously
  defining the recursive blow-ups $G^\ell$ and $G^\omega$ of $G$ of height $\ell\in\NN$ and of
  countable height, respectively, it straightforward to check that $\cC_C$ is precisely the set
  graphs that are induced subgraphs of some $G^\ell$ or alternatively, the set of finite induced
  subgraphs of $G^\omega$.
\end{remark}

Let us now give an intuition of the steps required to show that a universal theory of graphs
$T\vdash\TGraph$ has $\AEHP$ if and only if some $F\in\cC_C$ is \emph{not} a model of $T$.

First, recall from Remark~\ref{rmk:persistent} that $\cC_P(\TGraph)$ is the class of all graphs $G$
(up to isomorphism) that ``persistently have positive density'' in the sense that $\phi(G)>0$ for
every $\phi\in\HomT{\TGraph}$ that does not have any trivial sub-object (i.e., every graphon without
any subgraphon that is an almost clique or almost anti-clique). Recall also from
Remark~\ref{rmk:persistent} that $\cC_P(\TGraph)$ can be described alternatively as the class of
finite graphs $F$ (up to isomorphism) such that $\Forb_{\TGraph}(\{F\})\in\AEHP$. Let now $\cC_M$ be
the union of all classes of graphs $\cF$ (up to isomorphism) that are minimal for the property that
$\Forb_{\TGraph}(\cF)\in\AEHP$, i.e., $\cC_M$ is the set of graphs that appear in some such minimal
class. Our characterization can then be restated as the equality $\cC_P(\TGraph) = \cC_M = \cC_C$.

To show these equalities, let us introduce one more class: let $\cC_S$ be the smallest class of
graphs (up to isomorphism) that contains all graphs of size at most $2$ (i.e., the trivial graph
$K_0$ with no vertices, the single vertex graph $K_1$, the edge $K_2$ and the non-edge
$\overline{K}_2$) and that is closed under the substitution operation of Remark~\ref{rmk:subst}
(note that substitutions of the form $G^{v\to K_0}$ are isomorphic to $G\rest_{V(G)\setminus\{v\}}$,
so we could have defined equivalently $\cC_S$ as the smallest class containing the edge, the
non-edge and that is closed under both the substitution operation and taking induced subgraphs).

The proof of Theorem~\ref{thm:AEHPgraph} can be informally summarized by the following steps.
\begin{enumerate}[label={\arabic*.}]
\item By Remark~\ref{rmk:subst} and Theorem~\ref{thm:AEHPsubstitution} (and the fact that we
  trivially have $\Forb_{\TGraph}(\{F\})\in\AEHP$ whenever $F$ has at most $2$
  vertices\footnote{There is a small difference between $\Forb_{\TGraph}(\{K_0\})$ and
    $\Forb_{\TGraph}(\{K_1\})$: the former has no models at all while the latter has only $K_0$ as
    its model (recall from Footnote~\ref{ftnt:emptyvertexset} that we allow our models to have empty
    vertex set). However, since neither of them contain any increasing sequences of models, they
    satisfy $\AEHP$ vacuously as they do not contain any limit object.}), it follows that
  $\cC_S\subseteq\cC_P(\TGraph)$.
\item In Lemma~\ref{lem:phiC4zerodensities}, we will show that $\cC_C\subseteq\cC_S$ with an
  inductive argument. The combined inclusion $\cC_C\subseteq\cC_S\subseteq\cC_P(\TGraph)$ along with
  Proposition~\ref{prop:AEHPaddaxioms} then implies that if $F\in\cC_C$ is \emph{not} a model of
  some universal theory of graphs $T$, then $T\in\AEHP$ (as $T\vdash\Forb_{\TGraph}(\{F\})$).
\item For the other implication, note that if all $F\in\cC_C$ are models of a universal theory of
  graphs $T$, then the limit $\phi_{C_4}$ of $(C_4^n)_{n\in\NN}$ (see Definition~\ref{def:recC4}) is
  a limit of $T$. By showing in Lemma~\ref{lem:phiC4notrivialsubobject} that $\phi_{C_4}$ does not
  have trivial sub-objects, we get $T\notin\AEHP$ and the theorem follows. Another interpretation of
  this final step is that the fact that $\phi_{C_4}$ does not have trivial sub-objects implies that
  any collection of finite graphs $\cF$ such that $\Forb_{\TGraph}(\cF)\in\AEHP$ must necessarily
  have some element of $\cC_C$ (otherwise $\phi_{C_4}$ would be a limit of $\Forb_{\TGraph}(\cF)$ as
  $\cC_C$ is downward closed). Since $\cC_C\subseteq\cC_P(\TGraph)$, any minimal such collection
  $\cF$ must be of the form $\{F\}$ for some $F\in\cC_C$ and thus $\cC_M\subseteq\cC_C$, which along
  with the trivial containment $\cC_P(\TGraph)\subseteq\cC_M$ gives the equality of all classes
  $\cC_S=\cC_C=\cC_P(\TGraph)=\cC_M$.
\end{enumerate}

\begin{remark}\label{rmk:cCPcCM}
  In the same way that $\cC_P(T)$ is defined for arbitrary universal theories $T$, we can also
  define $\cC_M(T)$ as the union of all families $\cF$ of finite models of $T$ (up to isomorphism)
  that are minimal for the property that $\Forb_T(\cF)\in\AEHP$. Again we trivially have
  $\cC_P(T)\subseteq\cC_M(T)$, but the other inclusion need not hold for general $T$. In fact, the
  equality $\cC_P(T) = \cC_M(T)$ is equivalent to the statement that there exists a family $\cC$
  such that $T'\vdash T$ if and only if $T'$ forbids some model of $\cC$ (namely, the family is
  $\cC\df\cC_P(T) = \cC_M(T)$).
\end{remark}

\begin{definition}\label{def:recC4}
  The \emph{limit recursive blow-up of $C_4$} is the limit object $\phi_{C_4}\in\HomT{\TGraph}$ that
  is the limit of the sequence $(C_4^n)_{n\in\NN}$. It is straightforward to check that this
  sequence is convergent, but we can also alternatively define $\phi_{C_4}$ by giving an explicit
  $\TGraph$-on $\cN^{C_4}$ representing it as follows. Let $\Omega\df([4]^\NN,\cA,\nu)$ be the
  quaternary Cantor probability space, that is, $\cA$ is the Borel $\sigma$-algebra of the product
  topology on $[4]^\NN$ and $\nu$ is the unique Borel measure such that $\nu(K_\sigma) = 4^{-t}$ for
  every $t\in\NN$ and every $\sigma\in [4]^{\{0,1,\ldots,t-1\}}$, where
  \begin{align}\label{eq:Ksigma}
    K_\sigma & \df \{\tau\in[4]^\NN \mid \forall i\in\{0,1,\ldots,t-1\},\tau_i = \sigma_i\}.
  \end{align}
  The $\TGraph$-on $\cN^{C_4}$ over $\Omega$ is defined by
  \begin{align*}
    \cN^{C_4}_E
    & \df
    \{x\in\cE_2(\Omega)\setminus\cD_2(\Omega) \mid (x_{\{1\}})_i - (x_{\{2\}})_i \equiv\pm 1\pmod{4}\},
  \end{align*}
  where $i$ is the first position in which $x_{\{1\}}$ and $x_{\{2\}}$ differ.

  The corresponding graphon $W^{C_4}$ over $\Omega$ as in Remark~\ref{rmk:TGraphons} is given by
  \begin{align*}
    W^{C_4}(x,y)
    & \df
    \begin{dcases*}
      1, & if $x\neq y$ and $x_i - y_i \equiv\pm 1\pmod{4}$,\\
      0, & otherwise,
    \end{dcases*}
  \end{align*}
  where $i$ is the first position in which $x$ and $y$ differ. By using the measure-isomorphism $F$ modulo $0$
  from $\Omega$ to $[0,1]$ that maps $\sigma\in[4]^\NN$ to $\sum_{i\in\NN} \sigma_i\cdot 4^{-i-1}$, we obtain
  the equivalent graphon $\widehat{W}^{C_4}$ of Figure~\ref{fig:recC4} given indirectly by
  $\widehat{W}^{C_4}(F(\sigma),F(\tau)) = W^{C_4}(\sigma,\tau)$. Under the interpretation that a
  $\{0,1\}$-valued graphon is simply a measurable graph, $W^{C_4}$ is just the recursive blow-up $C_4^\omega$
  of the $4$-cycle of countable height equipped with the quaternary Cantor probability measure.
\end{definition}


\begin{figure}[htb]
  \begingroup
\def\order{3}
\def\side{5}
\def\axis{5.5}

\begin{center}
  \begin{tikzpicture}
    \draw (0,0) -- (\side,0) -- (\side,\side) -- (0,\side) -- cycle;
    \draw[->] (0,0) -- (\axis,0);
    \draw[->] (0,0) -- (0,\axis);

    \node[below right] at (\axis,0) {$x$};
    \node[above left] at (0,\axis) {$y$};

    \foreach \i in {1,...,\order}{
      \pgfmathsetmacro{\step}{4^(-\i+1) * \side}
      \pgfmathsetmacro{\baseside}{4^(-\i) * \side}
      \pgfmathtruncatemacro{\rep}{4^(\i-1)}
      \foreach \j in {1,...,\rep}{
        \pgfmathsetmacro{\p}{(\j-1) * \step}
        \pgfmathsetmacro{\ppone}{\p + \baseside}
        \pgfmathsetmacro{\pptwo}{\ppone + \baseside}
        \pgfmathsetmacro{\ppthree}{\pptwo + \baseside}
        \foreach \x/\y in {\p/\ppone,\ppone/\pptwo,\pptwo/\ppthree,\ppthree/\p}{
          \fill (\x,\y) -- ++(\baseside,0) -- ++(0,\baseside) -- ++(-\baseside,0) -- cycle;
          \fill (\y,\x) -- ++(\baseside,0) -- ++(0,\baseside) -- ++(-\baseside,0) -- cycle;
        }
      }
    }
  \end{tikzpicture}

  \caption{Approximation of the graphon $\widehat{W}^{C_4}$ of Definition~\ref{def:recC4}. The graphon
    $\widehat{W}^{C_4}$ has a fractal structure, whose first $\order$ steps are represented in the picture.}
  \label{fig:recC4}
\end{center}
\endgroup

\end{figure}

\begin{remark}
  As we will show in Lemma~\ref{lem:phiC4notrivialsubobject} below, $\phi^{C_4}$ does not contain
  any trivial sub-object and thus by Theorem~\ref{thm:stablegraphon}, it does not contain any almost
  stable sub-object. In particular, this means that $\cC_C$ must contain half-graphs of arbitrarily
  large order, which can be verified in an ad hoc fashion as follows.

  First, it is easy to see that $\cC_C$ is closed under substitutions as if $\alpha$ and $\beta$ are
  embeddings of $F_1,F_2\in\cC_C$ in $C_4^{\ell_1}$ and $C_4^{\ell_2}$, respectively and $v\in
  V(F_1)$, then defining the concatenation map $\gamma\function{V(F_1^{v\to
      F_2})}{[4]^{\ell_1+\ell_2}}$ by
  \begin{align*}
    \gamma(w) & \df
    \begin{dcases*}
      (\alpha(w), 1^{\ell_2}), & if $w\in V(F_1)\setminus\{v\}$,\\
      (\alpha(v),\beta(w)), & if $w\in V(F_2)$
    \end{dcases*}
  \end{align*}
  gives an embedding of $F_1^{v\to F_2}$ in $C_4^{\ell_1+\ell_2}$. Thus, we have
  $\cC_S\subseteq\cC_C$.

  Now, define a sequence $(\widehat{H}_n)_{n\in\NN}$ of \emph{clique-empty-half-graphs} inductively
  by $\widehat{H}_1\df K_2$ and
  \begin{align*}
    \widehat{H}_{n+1} & \df K_2^{v\to \overline{K}_2^{w\to\widehat{H}_n}},
  \end{align*}
  that is, starting from the edge $K_2=\widehat{H}_1$, we alternate substitution operations in
  $\overline{K}_2$ and in $K_2$ (obviously, the choices of the substituted vertex do not matter
  since $K_2$ and $\overline{K}_2$ are vertex-transitive). As the name suggests, $\widehat{H}_n$ is
  a half-graph of order $n$ in which one of the sides forms a clique and the other forms an empty
  graph (see Figure~\ref{fig:cehalfgraph}) and since $\widehat{H}_n\in\cC_S\subseteq\cC_C$, it
  follows that $\cC_C$ contains half-graphs of arbitrarily large order.

  
\begin{figure}[htb]
    \begingroup
\def\n{7}
\def\horsep{1.7}
\def\vertsep{3}
\def\ptsize{2pt}
\def\controlbase{0.5}

\begin{center}
  \begin{footnotesize}
    \begin{tikzpicture}
      \foreach \i in {1,...,\n}{
        \pgfmathsetmacro{\x}{\i * \horsep}
        \coordinate (X\i) at (\x,0);
        \coordinate (Y\i) at (\x,-\vertsep);

        \filldraw (X\i) circle (\ptsize);
        \filldraw (Y\i) circle (\ptsize);

      }

      \foreach \i in {1,...,\n}{
        \foreach \j in {\i,...,\n}{
          \draw (X\i) -- (Y\j);
        }
      }

      \pgfmathtruncatemacro{\nmo}{\n-1}
      \foreach \i in {1,...,\nmo}{
        \pgfmathtruncatemacro{\ipo}{\i+1}
        \foreach \j in {\ipo,...,\n}{
          \pgfmathsetmacro{\c}{\controlbase * (\j - \i)}
          \draw (X\i) .. controls ($(X\i) + (0,\c)$) and ($(X\j) + (0,\c)$) .. (X\j);
        }
      }
    \end{tikzpicture}
  \end{footnotesize}

  \caption{Clique-empty-half-graph $\widehat{H}_{\n}$ of order $\n$. The top part forms a clique, the
    bottom part induces an empty graph and the edges in between form a half-graph.}
  \label{fig:cehalfgraph}
\end{center}
\endgroup

  \end{figure}
\end{remark}

Recall that a finite graph $G$ is called \emph{prime} if it cannot be obtained from smaller graphs
via substitution, that is, $G$ is not of the form $F_1^{v\to F_2}$ for any graphs $F_1,F_2$ and
$v\in V(F_1)$ with $\lvert F_1\rvert,\lvert F_2\rvert < \lvert G\rvert$.

\begin{lemma}\label{lem:phiC4zerodensities}
  We have $\cC_C\subseteq\cC_S$. In particular, if $G$ is a finite graph such that $\phi_{C_4}(G) >
  0$, then $G\in\cC_S$.
\end{lemma}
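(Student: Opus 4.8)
The plan is to prove the (formally stronger) inclusion $\cC_C\subseteq\cC_S$, from which the ``in particular'' clause is immediate. First I would reduce to showing that $C_4^\ell\in\cC_S$ for every $\ell\in\NN$: indeed $\cC_C$ consists of the induced subgraphs of the graphs $C_4^\ell$, and $\cC_S$ is closed under taking induced subgraphs (this is the equivalent description of $\cC_S$ recalled just before the lemma, using that $G^{v\to K_0}\cong G\rest_{V(G)\setminus\{v\}}$ and $K_0\in\cC_S$). So it suffices to place each $C_4^\ell$ itself inside $\cC_S$, which I would do by induction on $\ell$.

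The base case is $C_4^0=K_1\in\cC_S$. For the inductive step the key structural observation is that $C_4^\ell$ is obtained from $C_4$ by substituting a copy of $C_4^{\ell-1}$ into each of the four vertices of $C_4$ (one vertex after another; the order is irrelevant since $C_4$ is vertex-transitive). To see this, group the vertices of $C_4^\ell=[4]^\ell$ according to their first coordinate $\sigma_1\in[4]$: this yields four blocks, and by Definition~\ref{def:C4omega} two vertices lying in the same block have first differing coordinate at position $\geq 2$, so that block induces a copy of $C_4^{\ell-1}$ on the tails in $[4]^{\ell-1}$; while two vertices in distinct blocks labelled $a\neq b$ have first differing coordinate at position $1$ and are therefore adjacent if and only if $a-b\equiv\pm1\pmod 4$, i.e.\ if and only if $a$ and $b$ are adjacent in $C_4$ — which is exactly the adjacency rule of the substitution. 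It then remains only to check $C_4\in\cC_S$: substituting the non-edge $\overline{K}_2$ into each of the two vertices of the edge $K_2$ produces the complete bipartite graph $K_{2,2}=C_4$, and $K_2,\overline{K}_2\in\cC_S$, so $C_4\in\cC_S$. Combining these, if $C_4^{\ell-1}\in\cC_S$ then $C_4^\ell$, being a substitution of copies of $C_4^{\ell-1}$ into $C_4\in\cC_S$, lies in $\cC_S$ as well (closure under substitution). This finishes the induction and hence $\cC_C\subseteq\cC_S$.

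Finally, for the ``in particular'' clause: if $G$ is finite with $\phi_{C_4}(G)>0$, then since $\phi_{C_4}$ is the limit of the convergent sequence $(C_4^n)_{n\in\NN}$ we have $\lim_{n\to\infty}p(G,C_4^n)>0$, so $p(G,C_4^n)>0$ for some $n$, i.e.\ $G$ is isomorphic to an induced subgraph of $C_4^n$; hence $G\in\cC_C\subseteq\cC_S$. I do not expect a genuine obstacle here: the only point requiring care is the verification of the adjacency correspondence in the identity $C_4^\ell=C_4[C_4^{\ell-1},\dots,C_4^{\ell-1}]$, which is a direct unwinding of Definition~\ref{def:C4omega}, together with making sure the closure of $\cC_S$ under induced subgraphs is invoked to pass from ``$C_4^\ell\in\cC_S$'' to ``$\cC_C\subseteq\cC_S$''.
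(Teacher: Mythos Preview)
Your proof is correct and takes a genuinely different route from the paper's. The paper proceeds by induction on $\lvert G\rvert$ for an arbitrary $G\in\cC_C$: in the base case it shows that any \emph{prime} $G\in\cC_C$ must embed into $C_4^1=C_4$ and hence have size at most $2$ (since $C_4$ has no prime induced subgraphs of size $\geq 3$), while in the inductive step a non-prime $G=F_1^{v\to F_2}$ has $F_1,F_2\in\cC_C$ of smaller size, so $F_1,F_2\in\cC_S$ by induction and thus $G\in\cC_S$ by closure under substitution. You instead fix the target and build upward: you show $C_4^\ell\in\cC_S$ by induction on $\ell$, using the recursive identity $C_4^\ell\cong C_4[C_4^{\ell-1},\ldots,C_4^{\ell-1}]$ together with $C_4=(K_2)^{\cdot\to\overline{K}_2,\cdot\to\overline{K}_2}\in\cC_S$, and then conclude $\cC_C\subseteq\cC_S$ from the closure of $\cC_S$ under induced subgraphs.

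Both arguments are short; yours is arguably more direct because it exploits the explicit self-similar structure of $C_4^\ell$ and avoids any discussion of primality. The paper's route, on the other hand, yields the small extra structural fact that the only prime graphs in $\cC_C$ are $K_0,K_1,K_2,\overline{K}_2$, which is not needed here but is pleasant to know. Your handling of the ``in particular'' clause is identical to the paper's.
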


\begin{proof}
  Let $G\in\cC_C$ and let us show that $G\in\cC_S$ by induction on the size $n$ of $G$.

  The base cases are when $G$ is a prime graph. In this case, we will show that $G$ must be an
  induced subgraph of $C_4=C_4^1$. Let $\alpha$ be an embedding of $G$ in $C_4^\ell$ for some
  $\ell\in\NN_+$. If $G$ has size at most $1$, then it is trivially a subgraph of $C_4$. If not, let
  $\sigma$ be the longest string over $[4]$ that is a prefix of every string in $\im(\alpha)$ and
  let $t$ be its length. For each $i\in[4]$, let $V_i\df \{v\in V(G) \mid \alpha(v)_{t+1} = i\}$ and
  let $G_i\df G\rest_{V_i}$. Let also $I\df\{i\in[4]\mid V_i\neq\varnothing\}$ and let
  $H=C_4\rest_I$. Note that the structure of $C_4^\ell$ implies that $G$ can be obtained from $H$ by
  substituting each $i\in I$ by $G_i$. Since $\lvert G_i\rvert < \lvert G\rvert$ for every $i\in I$
  and $G$ is prime, we must have $\lvert H\rvert = \lvert G\rvert$, that is, $\lvert V_i\rvert = 1$
  for every $i\in I$ and thus the unique $\beta\injection{V(G)}{[4]}$ such that $v\in V_{\beta(v)}$
  is an embedding of $G$ in $C_4$.

  We claim that $G$ has size at most $2$. Indeed, this follows because there are no prime graphs of
  size $3$ and $C_4$ itself is not prime. Since $\lvert G\rvert\leq 2$, we trivially have
  $G\in\cC_S$.

  \medskip
  
  For the inductive step, note that if $G$ is not prime, then it is of the form $F_1^{v\to F_2}$ for
  some graphs $F_1,F_2$ and $v\in V(F_1)$ with $\lvert F_1\rvert,\lvert F_2\rvert < \lvert
  G\rvert$. By inductive hypothesis, we have $F_1,F_2\in\cC_S$ and since $\cC_S$ is closed under
  substitutions, we get $G\in\cC_S$.

  \medskip

  Finally, since $\phi_{C_4}$ is the limit of $(C_4^n)_{n\in\NN}$, every $G$ with $\phi_{C_4}(G) >
  0$ must be an element of $\cC_C$ and thus of $\cC_S$.
\end{proof}

\begin{lemma}\label{lem:phiC4notrivialsubobject}
  $\phi_{C_4}$ does not contain any trivial sub-object.
\end{lemma}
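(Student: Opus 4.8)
The plan is to use Theorem~\ref{thm:stablegraphon} to reduce the claim to the non‑existence of an a.e.\ constant ``square'' in the graphon $W^{C_4}$, and then to derive a contradiction from the self‑similar (fractal) structure of the recursive blow‑up together with a density argument on the quaternary Cantor space $\Omega=([4]^{\NN},\cA,\nu)$.

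Recalling Remark~\ref{rmk:TGraphons} and the definition of a trivial limit, a trivial sub‑object of $\phi_{C_4}=\phi_{\cN^{C_4}}=\phi_{W^{C_4}}$ is exactly a subgraphon of $W^{C_4}$ that is a.e.\ equal to $0$ or a.e.\ equal to $1$; that is, $\phi_{C_4}$ has a trivial sub‑object if and only if $W^{C_4}$ satisfies item~\ref{thm:stablegraphon:uniformsubgraphon} of Theorem~\ref{thm:stablegraphon}. Since $\Omega$ is an atomless standard probability space, Theorem~\ref{thm:stablegraphon} applies to $W^{C_4}$, and by the equivalence \ref{thm:stablegraphon:uniformsubgraphon}$\iff$\ref{thm:stablegraphon:uniformsquare} it suffices to show that there is \emph{no} positive measure set $U\subseteq[4]^{\NN}$ with $W^{C_4}\rest_{U\times U}=1$ $\nu$‑a.e.\ or $W^{C_4}\rest_{U\times U}=0$ $\nu$‑a.e.

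So suppose, for contradiction, that $\nu(U)>0$ and $W^{C_4}\rest_{U\times U}=b$ $\nu$‑a.e.\ for some $b\in\{0,1\}$. By the martingale convergence theorem applied to the filtration generated by the coordinate projections $[4]^{\NN}\to[4]$ (equivalently, by the Lebesgue density theorem on the Cantor space), since $\nu(U)>0$ there are $t\in\NN$ and a string $\rho\in[4]^{\{0,\dots,t-1\}}$ with $\nu(U\cap K_\rho)>\tfrac34\,\nu(K_\rho)$, where $K_\rho$ is as in~\eqref{eq:Ksigma}. For $i\in[4]$ write $\rho i\in[4]^{\{0,\dots,t\}}$ for $\rho$ extended by the value $i$ at position $t$, and put $U_i\df U\cap K_{\rho i}$; since $K_\rho=\bigsqcup_{i\in[4]}K_{\rho i}$ with $\nu(K_{\rho i})=\tfrac14\nu(K_\rho)$ for each $i$, the density bound forces $\nu(U_i)>0$ for every $i\in[4]$. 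If now $\sigma\in K_{\rho i}$ and $\tau\in K_{\rho j}$ with $i\neq j$, then $\sigma$ and $\tau$ first differ at coordinate $t$, where $\sigma_t-\tau_t=i-j$, so by Definition~\ref{def:recC4} we have $W^{C_4}(\sigma,\tau)=1$ precisely when $i-j\equiv\pm1\pmod 4$. Hence $W^{C_4}\equiv 1$ on $U_1\times U_2$ (as $1-2\equiv-1$) and $W^{C_4}\equiv 0$ on $U_1\times U_3$ (as $1-3\equiv 2$), and both of these subsets of $U\times U$ have positive $\nu\times\nu$‑measure. Therefore $W^{C_4}\rest_{U\times U}$ is neither a.e.\ $0$ nor a.e.\ $1$, contradicting the choice of $b$ and completing the proof.

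The argument is essentially routine once the reduction to Theorem~\ref{thm:stablegraphon} is in place; the only step requiring a little care is the density step — one must pass to a cylinder in which $U$ occupies strictly more than a $3/4$ fraction, so that $U$ is guaranteed to meet with positive measure both an adjacent pair and a non‑adjacent pair among the four children $K_{\rho 1},\dots,K_{\rho 4}$ of $K_\rho$ — and this is exactly what martingale convergence (L\'evy's zero--one law) supplies.
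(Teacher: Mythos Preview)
Your proof is correct and takes a genuinely different route from the paper's. The paper appeals to \cite[Theorem~6]{CKP21} to identify the size of the largest a.e.\ clique (resp.\ anti-clique) with $\lim_{n\to\infty}\phi_{C_4}(K_n)^{1/n}$ (resp.\ $\lim_{n\to\infty}\phi_{C_4}(\overline{K}_n)^{1/n}$), derives a recursion for $\phi_{C_4}(K_n)$ from the self-similar structure, and then argues by contradiction that this limit must be $0$. Your argument is more elementary and more direct: instead of computing any densities, you invoke the equivalence~\ref{thm:stablegraphon:uniformsubgraphon}$\iff$\ref{thm:stablegraphon:uniformsquare} of Theorem~\ref{thm:stablegraphon}, pass by martingale convergence to a cylinder $K_\rho$ in which $U$ has relative measure strictly above $3/4$ so that $U$ must hit all four children $K_{\rho i}$ with positive measure, and then read off from the $C_4$ structure a positive-measure pair of children between which $W^{C_4}\equiv 1$ and another between which $W^{C_4}\equiv 0$. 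This avoids the external reference and the recursive computation, and it makes transparent the generalization noted after the paper's proof: the same density-point argument works for the recursive blow-up of any $G\in\cC_C$ with at least one edge and one non-edge. The paper's approach, on the other hand, yields additional quantitative information (the exact recursion~\eqref{eq:phiC4clique} for $\phi_{C_4}(K_n)$) that your argument does not provide.
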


\begin{proof}
  By~\cite[Theorem~6]{CKP21} (see also Examples~\ref{ex:quasirandompermuton}
  and~\ref{ex:quasirandompermuton2}), to show that $\phi_{C_4}$ does not have trivial sub-objects,
  we need to show that
  \begin{align*}
    \lim_{n\to\infty} \phi_{C_4}(K_n)^{1/n} = \lim_{n\to\infty} \phi_{C_4}(\overline{K}_n)^{1/n} = 0.
  \end{align*}

  We claim that for every $n\geq 2$, we have
  \begin{equation}\label{eq:phiC4clique}
    \begin{aligned}
      \phi_{C_4}(K_n)
      & =
      \tind(K_n,\cN^{C_4})
      \\
      & =
      \sum_{m\in\NN} 4^m\cdot 4^{-nm}\cdot 4\cdot 4^{-n}\cdot
      \sum_{t=1}^{n-1}\binom{n}{t}\cdot\phi_{C_4}(K_t)\cdot\phi_{C_4}(K_{n-t})
      \\
      & =
      \frac{1}{4^{n-1} - 1}\cdot
      \sum_{t=1}^{n-1}\binom{n}{t}\cdot\phi_{C_4}(K_t)\cdot\phi_{C_4}(K_{n-t}).
    \end{aligned}
  \end{equation}
  The first formula can be deduced by considering the measure of all copies of $K_n$ in $\cN^{C_4}$
  such that the largest common prefix $\sigma$ of the vertex variables (which are strings in
  $[4]^\NN$) has length $m$: there are exactly $4^m$ such $\sigma$ and the set
  $U_\sigma\subseteq\cE_n(\Omega)$ of points whose vertex variables all start with the prefix
  $\sigma$ has measure $4^{-nm}$ (i.e., the vertex variables are in the set $K_\sigma$
  of~\eqref{eq:Ksigma}). Once in $U_\sigma$, to yield a copy of $K_n$, two vertex variables
  corresponding to different vertices $i,j\in[n]$ that differ in the $(m+1)$th position must satisfy
  $(x_{\{i\}})_{m+1} - (x_{\{j\}})_{m+1}\equiv \pm 1\pmod{4}$. This means that
  \begin{align*}
    \cC & \df \{t\in[4] \mid \exists i\in[n], (x_{\{i\}})_{m+1} = t\}
  \end{align*}
  must induce a clique of size at least $2$ in $C_4$ and in fact, of size $2$ as $C_4$ is
  triangle-free. There are exactly $4$ edges in $C_4$ and a requirement of the form
  $(x_{\{i\}})_{m+1}=c_i$ for each $i\in[n]$ gives a conditional probability of $4^{-n}$ conditioned
  on $U_\sigma$. Finally, the vertex variables must be split along the chosen edge of $C_4$ with $t$
  vertices to one side forming a $K_t$ and $n-t$ vertices to the other side forming a $K_{n-t}$ and
  the recursive structure of $\cN^{C_4}$ allows us to compute the conditional probability of these
  events inductively.

  With a similar argument, for every $n\geq 2$, we have
  \begin{align*}
    \phi_{C_4}(\overline{K}_n)
    & =
    \sum_{m\in\NN} 4^m\cdot 4^{-nm}\cdot 2\cdot 4^{-n}\cdot
    \sum_{t=1}^{n-1}\binom{n}{t}\cdot\phi_{C_4}(\overline{K}_t)\cdot\phi_{C_4}(\overline{K}_{n-t})
    \\
    & =
    \frac{1}{2\cdot(4^{n-1} - 1)}\cdot
    \sum_{t=1}^{n-1}\binom{n}{t}\cdot\phi_{C_4}(\overline{K}_t)\cdot\phi_{C_4}(\overline{K}_{n-t}).
  \end{align*}

  From this, a simple induction shows $\phi_{C_4}(\overline{K}_n)\leq\phi_{C_4}(K_n)$. Let $c$ be
  the limit $\lim_{n\to\infty} \phi_{C_4}(K_n)^{1/n}$ (which is guaranteed to exist
  by~\cite[Theorem~6]{CKP21}) and suppose toward a contradiction that $c > 0$. Let $n_0\in\NN$ be
  large enough so that
  \begin{align*}
    \frac{3}{4}\cdot c & \leq \phi_{C_4}(K_n)^{1/n} \leq \frac{5}{4}\cdot c
  \end{align*}
  for every $n\geq n_0$. Since $c > 0$, we can let
  \begin{align*}
    a
    & \df
    \min\left\{\phi_{C_4}(K_n)\cdot\left(\frac{3}{4}\cdot c\right)^{-n}
    \;\middle\vert\; n\leq n_0\right\}\cup\{1\},
    \\
    b
    & \df
    \max\left\{\phi_{C_4}(K_n)\cdot\left(\frac{5}{4}\cdot c\right)^{-n}
    \;\middle\vert\; n\leq n_0\right\}\cup\{1\},
  \end{align*}
  and note that since $\phi_{C_4}(K_n) > 0$ for every $n\in\NN$,
  it follows that $a > 0$. The definitions of $a$ and $b$ ensure that
  \begin{align*}
    a\cdot \left(\frac{3}{4}\cdot c\right)^n
    & \leq
    \phi_{C_4}(K_n)
    \leq
    b\cdot\left(\frac{5}{4}\cdot c\right)^n
  \end{align*}
  for \emph{every} $n\in\NN$ (as $a\leq 1\leq b$).

  Plugging these inequalities in~\eqref{eq:phiC4clique}, we get that for $n\geq 2$ we have
  \begin{align*}
    a\cdot \left(\frac{3}{4}\cdot c\right)^n
    & \leq
    \frac{1}{4^{n-1} - 1}
    \cdot
    b^2
    \cdot
    \left(\frac{5}{4}\cdot c\right)^n
    \sum_{t=1}^{n-1}\binom{n}{t}
    \\
    & \leq
    \frac{1}{4^{n-1} - 1}
    \cdot
    b^2
    \cdot
    \left(\frac{5}{4}\cdot c\right)^n
    \cdot 2^n,
  \end{align*}
  from which we conclude
  \begin{align*}
    a & \leq b^2\cdot \left(\frac{5}{3}\right)^n\cdot\frac{2^n}{4^{n-1} - 1},
  \end{align*}
  which by letting $n\to\infty$ yields $a = 0$, a contradiction. Therefore
  \begin{align*}
    \lim_{n\to\infty} \phi_{C_4}(K_n)^{1/n} = \lim_{n\to\infty} \phi_{C_4}(\overline{K}_n)^{1/n} = 0,
  \end{align*}
  as desired.
\end{proof}

\begin{remark}
  Similarly to Remark~\ref{rmk:otherCc}, the proof of Lemma~\ref{lem:phiC4notrivialsubobject} can be
  generalized to show that if $G\in\cC_C$ has at least one edge and one non-edge, then the limit
  $\phi_G$ of the sequence $(G^n)_{n\in\NN}$ of recursive blow-ups of $G$ does not contain any
  trivial sub-object.
\end{remark}

We can finally put all pieces together to characterize $\AEHP$ for universal theories of graphs.

\begin{theorem}\label{thm:AEHPgraph}
  The following are equivalent for a universal theory $T$ of graphs (i.e., $T\vdash\TGraph$).
  \begin{enumerate}
  \item We have $T\in\AEHP$.%
    \label{thm:AEHPgraph:AEHP}
  \item There exists a an induced subgraph $G\in\cC_C$ of the recursive blow-up $C_4^\omega$ of the
    $4$-cycle of countable height such that $G$ is \emph{not} a model of $T$.%
    \label{thm:AEHPgraph:Cc}
  \item $C_4^\omega$ is not a model of $T$.%
    \label{thm:AEHPgraph:C4omega}
  \end{enumerate}

  In particular, we have $\cC_S=\cC_P(\TGraph)=\cC_M=\cC_C$.
\end{theorem}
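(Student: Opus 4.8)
The plan is to assemble Theorem~\ref{thm:AEHPgraph} from the three auxiliary classes $\cC_S$, $\cC_C$, $\cC_P(\TGraph)$ and $\cC_M$ by proving a cycle of inclusions, following exactly the roadmap sketched before Definition~\ref{def:recC4}. The equivalences \ref{thm:AEHPgraph:Cc}$\iff$\ref{thm:AEHPgraph:C4omega} are immediate from Remark~\ref{rmk:Cc}: a finite graph belongs to $\cC_C$ precisely when it is an induced subgraph of $C_4^\omega$, so ``some $G\in\cC_C$ is not a model of $T$'' says exactly that $T$ forbids some finite induced subgraph of $C_4^\omega$, which (since $T$ is universal, hence $\cM[T]$ is closed under induced subgraphs) is equivalent to $C_4^\omega$ itself not being a model of $T$. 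The substance is in \ref{thm:AEHPgraph:AEHP}$\iff$\ref{thm:AEHPgraph:Cc}.

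First I would establish $\cC_S\subseteq\cC_P(\TGraph)$. This is the combination of Remark~\ref{rmk:subst} (substitution of families corresponds to the $T_1^{F\to T_2}$ operation on theories) with Theorem~\ref{thm:AEHPsubstitution}: graphs of size at most $2$ trivially give $\Forb_{\TGraph}(\{F\})\in\AEHP$ (they yield degenerate or near-degenerate theories with no increasing sequence of models), and closure of $\cC_S$ under substitution is matched by the closure of the class of theories-with-$\AEHP$ under the substitution operation, so by induction on the construction of $\cC_S$ every $F\in\cC_S$ has $\Forb_{\TGraph}(\{F\})\in\AEHP$, i.e.\ $F\in\cC_P(\TGraph)$ by Remark~\ref{rmk:persistent}. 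Next, $\cC_C\subseteq\cC_S$ is Lemma~\ref{lem:phiC4zerodensities}. Combining, $\cC_C\subseteq\cC_S\subseteq\cC_P(\TGraph)$. Then for any universal theory of graphs $T$ with some $G\in\cC_C$ not a model of $T$, we have $T\vdash\Forb_{\TGraph}(\{G\})$ and $\Forb_{\TGraph}(\{G\})\in\AEHP$ (as $G\in\cC_P(\TGraph)$), so $T\in\AEHP$ by Proposition~\ref{prop:AEHPaddaxioms}; this gives \ref{thm:AEHPgraph:Cc}$\implies$\ref{thm:AEHPgraph:AEHP}.

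For the converse \ref{thm:AEHPgraph:AEHP}$\implies$\ref{thm:AEHPgraph:Cc}, argue by contrapositive: if every $G\in\cC_C$ is a model of $T$, then since $\cC_C$ is exactly the class of finite induced subgraphs of $C_4^\omega$ and $\phi_{C_4}$ is the limit of $(C_4^n)_{n\in\NN}$ (Definition~\ref{def:recC4}), every finite graph with $\phi_{C_4}(G)>0$ is a model of $T$, so $\phi_{C_4}\in\HomT{T}$ (more precisely, $\phi_{C_4}$ factors through the axiom-erasing interpretation from $T$, as in Remark~\ref{rmk:persistent}). By Lemma~\ref{lem:phiC4notrivialsubobject}, $\phi_{C_4}$ contains no trivial sub-object, so $T\notin\AEHP$. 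This closes the cycle and proves the equivalence; along the way we have also shown $\cC_C\subseteq\cC_P(\TGraph)$ and $\cC_P(\TGraph)\subseteq\cC_C$ (any $F\in\cC_P(\TGraph)\setminus\cC_C$ would, via Lemma~\ref{lem:phiC4notrivialsubobject}, make $\phi_{C_4}$ a limit of $\Forb_{\TGraph}(\{F\})$ with no trivial sub-object, contradicting $\Forb_{\TGraph}(\{F\})\in\AEHP$), hence $\cC_S=\cC_C=\cC_P(\TGraph)$; finally $\cC_P(\TGraph)\subseteq\cC_M$ is trivial from the definitions, and $\cC_M\subseteq\cC_C$ follows because any minimal family $\cF$ with $\Forb_{\TGraph}(\cF)\in\AEHP$ must meet $\cC_C$ (else $\phi_{C_4}$, being a limit of $\Forb_{\TGraph}(\cF)$ since $\cC_C$ is downward-closed, would witness $\Forb_{\TGraph}(\cF)\notin\AEHP$) and, $\cC_C$ being contained in $\cC_P(\TGraph)$, minimality forces $\cF=\{F\}$ with $F\in\cC_C$; thus $\cC_M\subseteq\cC_C$ and all four classes coincide. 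The main obstacle is purely bookkeeping: threading the four set inclusions in the right order so that each uses only the previously established ones, and being careful in the contrapositive direction that ``all of $\cC_C$ consists of models of $T$'' genuinely produces an element $\phi_{C_4}\in\HomT{T}$ rather than merely of some weaker theory — this is handled exactly by the axiom-erasing interpretation device of Remark~\ref{rmk:persistent}.
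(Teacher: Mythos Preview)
Your proposal is correct and follows essentially the same approach as the paper's proof: the same cycle of inclusions $\cC_C\subseteq\cC_S\subseteq\cC_P(\TGraph)$ via Lemma~\ref{lem:phiC4zerodensities} and Theorem~\ref{thm:AEHPsubstitution}, the contrapositive direction via Lemma~\ref{lem:phiC4notrivialsubobject}, and the equivalence \ref{thm:AEHPgraph:Cc}$\iff$\ref{thm:AEHPgraph:C4omega} via Remark~\ref{rmk:Cc}. The only cosmetic difference is that you close the set equalities by first showing $\cC_P(\TGraph)\subseteq\cC_C$ directly (via $\phi_{C_4}$) before handling $\cC_M$, whereas the paper routes everything through $\cC_M\subseteq\cC_C$; both orderings use the same ingredient and are equally valid.
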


\begin{proof}
  The equivalence~\ref{thm:AEHPgraph:Cc}$\iff$\ref{thm:AEHPgraph:C4omega} follows from Remark~\ref{rmk:Cc} and
  the fact that $T$ is universal.

  \medskip

  For the implication~\ref{thm:AEHPgraph:Cc}$\implies$\ref{thm:AEHPgraph:AEHP}, first note that if
  $F\in\cC_C$, then Lemma~\ref{lem:phiC4zerodensities} implies that $F\in\cC_S$ so by Remark~\ref{rmk:subst}
  and Theorem~\ref{thm:AEHPsubstitution} (and the fact that trivially $\Forb_{\TGraph}(\{F\})\in\AEHP$
  whenever $\lvert F\rvert\leq 2$), we have $\Forb_{\TGraph}(\{F\})\in\AEHP$. At this point we
  have $\cC_C\subseteq\cC_S\subseteq\cC_P(\TGraph)$.

  On the other hand, if $T\vdash\TGraph$ is such that there exists $F\in\cC_C$ that is not a model of $T$,
  then $T\vdash\Forb_{\TGraph}(\{F\})$, so by Proposition~\ref{prop:AEHPaddaxioms}, we have $T\in\AEHP$.

  \medskip

  We prove the implication~\ref{thm:AEHPgraph:AEHP}$\implies$\ref{thm:AEHPgraph:Cc} by the
  contra-positive: if every $G\in\cC_C$ is a model of $T$, then $(C_4^n)_{n\in\NN}$ is a convergent
  sequence of models of $T$ whose limit $\phi_{C_4}$ does not have any trivial sub-object by
  Lemma~\ref{lem:phiC4notrivialsubobject}, thus $T\notin\AEHP$.

  This implication shows that any family $\cF$ that is minimal for the property
  $\Forb_{\TGraph}(\cF)\in\AEHP$ must intersect $\cC_C$. Since $\cC_C\subseteq\cC_P(\TGraph)$, the
  minimality of $\cF$ gives $\cF=\{F\}$ for some $F\in\cC_C$, thus $\cC_M\subseteq\cC_C$, which
  along with the trivial inclusion $\cC_P(\TGraph)\subseteq\cC_M$ and the already shown inclusion
  $\cC_C\subseteq\cC_S\subseteq\cC_P(\TGraph)$ gives the equality
  $\cC_S=\cC_P(\TGraph)=\cC_M=\cC_C$.
\end{proof}

We conclude this section by showing that any universal theory $T$ of graphs with $\AEHP$ also
satisfies the usual \emph{\Erdos--Hajnal property} ($\EHP$)\footnote{The reader familiar with $\EHP$
  may be more accustomed to the definition of $\EHP$ as a property of a graph $H$ corresponding to
  $\Forb_{\TGraph}(\{H\})$ having $\EHP$ as defined here.}, that is, there exists $c_T > 0$ such
that every graph of $T$ of size $n$ has a clique or anti-clique of size $n^{c_T}$. Note that a
priori it is not clear that the existence of linear-sized almost cliques or almost anti-cliques in
convergent sequences of $T$ should imply the existence of ``polynomial-sized'' cliques or
anti-cliques in all graphs of $T$. The proof of $\AEHP\implies\EHP$ for graphs instead relies on the
characterization of $\AEHP$ of Theorem~\ref{thm:AEHPgraph} and analogue of
Theorem~\ref{thm:AEHPsubstitution} for $\EHP$ from~\cite[Theorem~1.1]{APS01} (see
also~\cite[Theorem~2.3]{Chu14}) that inspired Theorem~\ref{thm:AEHPsubstitution}.

\begin{theorem}\label{thm:AEHP->EHP}
  If $T$ is a universal theory of graphs with $\AEHP$, then $T$ has $\EHP$.
\end{theorem}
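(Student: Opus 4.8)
The plan is to use the characterization of $\AEHP$ for graph theories from Theorem~\ref{thm:AEHPgraph}: $T\in\AEHP$ iff some $G\in\cC_C$ fails to be a model of $T$, i.e.\ $T\vdash\Forb_{\TGraph}(\{G\})$ for some induced subgraph $G$ of a recursive blow-up of $C_4$. Since $\EHP$ is closed under adding axioms (if $T'\vdash T$ and $T$ has $\EHP$ then $T'$ has $\EHP$, with the same constant), it suffices to prove that $\Forb_{\TGraph}(\{G\})$ has $\EHP$ for every $G\in\cC_C$. In other words, I reduce to showing that every graph $G$ that is an induced subgraph of some $C_4^\ell$ has the (usual) Erd\H{o}s--Hajnal property as a single forbidden graph.

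For that reduction I would invoke the known stability of $\EHP$ under substitution, \cite[Theorem~1.1]{APS01} (see also \cite[Theorem~2.3]{Chu14}): if $F_1$ and $F_2$ each have $\EHP$ (as forbidden graphs), then so does $F_1^{v\to F_2}$. By Lemma~\ref{lem:phiC4zerodensities} we have $\cC_C\subseteq\cC_S$, so every $G\in\cC_C$ is built up from graphs on at most two vertices by iterated substitution. Graphs on at most two vertices trivially have $\EHP$: forbidding $K_0$ or $K_1$ leaves essentially no graphs, and forbidding $K_2$ (resp.\ $\overline{K}_2$) forces the graph to be empty (resp.\ complete), which has a clique or anti-clique of size $n$. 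Hence by induction on the construction of $\cC_S$, every $G\in\cC_S\supseteq\cC_C$ has $\EHP$, say with constant $c_G>0$. Combining with the reduction: given $T\in\AEHP$, pick $G\in\cC_C$ not a model of $T$, so $T\vdash\Forb_{\TGraph}(\{G\})$, and then $c_T\df c_G$ witnesses $\EHP$ for $T$.

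The main point to get right is the precise statement and applicability of the substitution theorem for $\EHP$. The version in \cite{APS01} is phrased for forbidding a single graph $H$; I need the substitution operation to match the one in Remark~\ref{rmk:subst}, namely $F_1^{n\to F_2}$, and to note (as recorded there) that forbidding $F_1^{n\to F_2}$ as an induced subgraph corresponds exactly to the substitution at the level of forbidden families. There is also the small bookkeeping that $\Forb_{\TGraph}(\{K_0\})$ and $\Forb_{\TGraph}(\{K_1\})$ are degenerate (they contain only finitely many models), so $\EHP$ holds vacuously for them, and that these can be absorbed since any $G\in\cC_S$ of size $\le 1$ already lies in $\cC_C\cap\cC_S$. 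None of this involves real computation; the only genuine external input is the cited substitution lemma for $\EHP$, and the only genuine internal input is Lemma~\ref{lem:phiC4zerodensities} together with Theorem~\ref{thm:AEHPgraph}. The anticipated obstacle is purely expository: making sure the ``$\EHP$ of a theory'' formulation and the ``$\EHP$ of a forbidden graph'' formulation are correctly bridged, and that Proposition-style monotonicity of $\EHP$ under $T'\vdash T$ is stated cleanly.

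\begin{proof}
By Theorem~\ref{thm:AEHPgraph}, since $T\in\AEHP$, there exists $G\in\cC_C$ that is not a model of $T$, and hence $T\vdash\Forb_{\TGraph}(\{G\})$. It therefore suffices to show that $\Forb_{\TGraph}(\{G\})$ has $\EHP$ for every $G\in\cC_C$, since $\EHP$ is preserved under passing to a stronger theory (a graph of $T$ is in particular a graph of $\Forb_{\TGraph}(\{G\})$). By Lemma~\ref{lem:phiC4zerodensities}, $\cC_C\subseteq\cC_S$, so we show by induction on the construction of $\cC_S$ that $\Forb_{\TGraph}(\{G\})$ has $\EHP$ for every $G\in\cC_S$.

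For the base case, the graphs of size at most $2$ are $K_0$, $K_1$, $K_2$ and $\overline{K}_2$. The theories $\Forb_{\TGraph}(\{K_0\})$ and $\Forb_{\TGraph}(\{K_1\})$ have only finitely many (in fact at most one) models, so they satisfy $\EHP$ vacuously. Forbidding $K_2$ forces every model to be an empty graph, which on $n$ vertices has an anti-clique of size $n$, and forbidding $\overline{K}_2$ forces every model to be a complete graph, which has a clique of size $n$; in both cases $\EHP$ holds with constant $1$.

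For the inductive step, if $G\in\cC_S$ is not one of the above, then by definition of $\cC_S$ it is of the form $F_1^{v\to F_2}$ for some $F_1,F_2\in\cC_S$ with $\lvert F_1\rvert,\lvert F_2\rvert<\lvert G\rvert$ and some $v\in V(F_1)$. By inductive hypothesis $\Forb_{\TGraph}(\{F_1\})$ and $\Forb_{\TGraph}(\{F_2\})$ have $\EHP$. By Remark~\ref{rmk:subst}, forbidding $G=F_1^{v\to F_2}$ (with $V(F_1)=[n]$, $v=n$) as an induced subgraph amounts to the substitution operation at the level of forbidden graphs, so by the analogue of Theorem~\ref{thm:AEHPsubstitution} for $\EHP$ from \cite[Theorem~1.1]{APS01} (see also \cite[Theorem~2.3]{Chu14}), the theory $\Forb_{\TGraph}(\{G\})$ has $\EHP$ as well. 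This completes the induction.

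Finally, applying this to the graph $G\in\cC_C$ with $T\vdash\Forb_{\TGraph}(\{G\})$ furnished above, we obtain a constant $c_G>0$ such that every graph of $\Forb_{\TGraph}(\{G\})$ on $n$ vertices has a clique or anti-clique of size $n^{c_G}$; in particular this holds for every graph of $T$, so $T$ has $\EHP$ with $c_T\df c_G$.
\end{proof}
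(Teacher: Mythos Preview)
Your proof is correct and uses the same ingredients as the paper's: Theorem~\ref{thm:AEHPgraph}, the inclusion $\cC_C\subseteq\cC_S$ from Lemma~\ref{lem:phiC4zerodensities}, the trivial base cases, and the substitution theorem \cite[Theorem~1.1]{APS01}. The only difference is organization: you argue directly, picking $G\in\cC_C$ not a model of $T$ and showing by induction along a substitution decomposition that $\Forb_{\TGraph}(\{G\})\in\EHP$, whereas the paper argues by contrapositive, assuming $T\notin\EHP$ and using the contrapositive of \cite[Theorem~1.1]{APS01} to find in every forbidden $F$ a prime induced subgraph of size $\geq 3$, whence $\cC_S\subseteq\cM[T]$ and $T\notin\AEHP$. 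One small point worth tightening: your inductive step asserts that any $G\in\cC_S$ with $\lvert G\rvert\geq 3$ decomposes as $F_1^{v\to F_2}$ with $F_1,F_2\in\cC_S$ and $\lvert F_1\rvert,\lvert F_2\rvert<\lvert G\rvert$; this is true, but the cleanest justification is that such $G$ is not prime (as established in the proof of Lemma~\ref{lem:phiC4zerodensities}), so it has a nontrivial modular decomposition with both parts induced subgraphs of $G$, hence in $\cC_S$ by closure under induced subgraphs.
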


\begin{proof}
  We prove this by the contra-positive. Assume $T$ does not have $\EHP$ and write $T$ as
  $\Forb_{\TGraph}(\cF)$ for some $\cF$ (see Remark~\ref{rmk:subst}).

  For each $F\in\cF$, since $T$ does not have $\EHP$, we know that
  $\Forb_{\TGraph}(\{F\})\notin\EHP$ (since all models of $T$ are obviously models of
  $\Forb_{\TGraph}(\{F\})$). Let $\cP_F$ be the set of prime graphs that are induced subgraphs of
  $F$. Since $F$ can be obtained from the graphs in $\cP_F$ via substitution, by the contra-positive
  of~\cite[Theorem~1.1]{APS01}, there exists $P_F\in\cP_F$ such that the theory
  $\Forb_{\TGraph}(\{P_F\})$ does not have $\EHP$.

  Since $\Forb_{\TGraph}(\{K_0\})$, $\Forb_{\TGraph}(\{K_1\})$, $\Forb_{\TGraph}(\{K_2\})$ and
  $\Forb_{\TGraph}(\{\overline{K}_2\})$ all have $\EHP$ (as the first theory has no models, the
  second is the theory whose unique model is $K_0$, the third is the theory of empty graphs and the
  fourth is the theory of complete graphs), we have $P_F\notin\{K_0,K_1,K_2,\overline{K}_2\}$.

  Thus every graph $F$ in $\cF$ has some prime subgraph $P_F$ that is not $K_0$, $K_1$, $K_2$ or
  $\overline{K}_2$, hence $\cC_S\subseteq\Forb_{\TGraph}(\cF)=\cM[T]$ as $\cC_S$ is the closure of
  $\{K_0,K_1,K_2,\overline{K}_2\}$ under substitutions.

  By Theorem~\ref{thm:AEHPgraph}, it follows that every graph in $\cC_C = \cC_S$ is a model of $T$,
  hence $T\notin\AEHP$.
\end{proof}

\section{Conclusion and open problems}
\label{sec:concl}

In this paper we studied the asymptotic consequences of stability in the finite when coupled with the notion
of convergence of densities, focusing particularly on producing linear-sized almost uniform sets in limits of
convergent sequences of models. Once such uniform sets are produced in the limit, they can be pulled back to
linear-sized almost uniform sets in convergent sequences of models or to positive upper-density almost uniform
sets in countable models. We then studied which universal theories have the approximate \Erdos--Hajnal
property ($\AEHP$), i.e., theories that must necessarily have linear-sized almost uniform sets in all of its
limit objects (equivalently, in all of its convergent sequences) and we characterized the particular case of
universal theories of graphs with $\AEHP$ as those that forbid some induced subgraph of some recursive blow-up
of the $4$-cycle.

\medskip

A consequence of Theorems~\ref{thm:stablecountablegraph} and~\ref{thm:stablecountablemodel} is that
any stable countable model must necessarily have an almost uniform set with positive upper
density. As we mentioned in Discussion~\ref{dsc:countable}, one cannot hope to upgrade these
theorems to produce almost uniform sets with positive density instead. A natural question is then
what extra hypothesis would allow such upgrade? More concretely, a natural extra condition would be
that of convergence of the marginals of the countable model, that is, is it true that a stable
countable model $N$ such that $(N\rest_{[n]})_{n\in\NN}$ is convergent must necessarily contain a
positive density almost uniform set? On the one hand, this rules out the example of
Discussion~\ref{dsc:countable} as it does not have convergent marginals, but on the other hand, in
the proofs of Theorems~\ref{thm:stablecountablegraph} and~\ref{thm:stablecountablemodel}, it is not
clear how to put together the sets $U_\ell$ returned by Theorems~\ref{thm:stableconvseqgraph}
and~\ref{thm:stableconvsequniversal} into a single almost uniform set $U$ of positive density even
in the presence of convergence of the marginals.

One of the interpretations of the usual \Erdos--Hajnal Conjecture is that graphs that are not random
have larger cliques or anti-cliques than the usual bound provided by Ramsey's Theorem, in other
words, the usual \Erdos--Hajnal property can be seen as ``failure of randomness''. In the case of
the approximate \Erdos--Hajnal property, this ``failure of randomness'' interpretation is even more
prominent: every $T$-on $\cN$ over a space $\Omega=(X,\cA,\mu)$ gives rise to a natural random
exchangeable countable model $\rn{K}$ of $T$ by sampling $\rn{x}$ in $\cE_{\NN_+}(\Omega)$ according
to $\mu$ and letting
\begin{align}\label{eq:rnK}
  (\rn{K}\vDash P(\alpha)) & \iff \alpha^*(\rn{x})\in\cN_P.
\end{align}
Given a positive measure $U\subseteq X$, we can also define a natural random exchangeable countable
model $\rn{K_U}$ via~\eqref{eq:rnK} but taking $\rn{x}$ in $\cE_{\NN_+}(\Omega)$ according to the
product measure that uses $\mu_U$ for variables indexed by vertices and $\mu$ for all other
variables. The natural quasirandomness property $\UInduce[1]$ in~\cite{CR20b} (generalizing the
graph quasirandomness property~\cite[$P_4$]{CGW89}) requires that $\rn{K_U}$ is equidistributed with
$\rn{K}$ for every positive measure $U\subseteq X$; informally, $\cN$ is ``random'' in the sense of
$\UInduce[1]$ if restricting $\phi$ to any positive measure set yields the same limit $\phi$. In the
context of $\AEHP$, a limit $\phi$ that does not contain any trivial sub-object fails randomness in
an even stronger sense: there exists a positive measure $U$ such that $\rn{K}_U$ is a deterministic
countable model (i.e., it is equal to some fixed $K$ with probability $1$).

Elaborating further on this notion of weak randomness, we could call a limit object
$\phi\in\HomT{T}$ \emph{weakly random} if it satisfies the following weakening of $\UInduce[1]$:
every sub-object $\psi$ of $\phi$ satisfies $\Th(\phi)=\Th(\psi)$, that is, restricting to positive
measure sets does not change which finite models have positive density. A consequence of the
equality $\cC_C=\cC_P(\TGraph)$ proved in Section~\ref{sec:AEHPgraphs} is that the limit recursive
blow-up of $C_4$ (and more generally, the limit recursive blow-up of any graph in $\cC_C$ that has
at least one edge and one non-edge) is weakly random. Just as in the theory of quasirandomness, it
is natural to ask for equivalent characterizations of this weak randomness notion and higher arity
generalizations of it.

In Theorem~\ref{thm:AEHP->EHP}, we used the characterization of $\AEHP$ from
Theorem~\ref{thm:AEHPgraph} to show that $\AEHP$ implies $\EHP$ for graphs, but the proof of
Theorem~\ref{thm:AEHP->EHP} is non-constructive, so it is very natural to ask if a constructive
proof is possible. More specifically, how does one find a clique or anti-clique of size $n^c$
knowing only that a linear-sized almost clique or almost anti-clique is guaranteed to exist in any
convergent sequence? A very basic instance of this question is as follows:
Theorem~\ref{thm:stablegraphon} provides an almost clique or anti-clique in a stable graphon by
constructing a $0$-good set in the limit (see Discussion~\ref{dsc:goodsets}), which along with
Theorem~\ref{thm:AEHP->EHP} implies that stable classes of graphs have $\EHP$; on the other hand,
this result is already known by stable Ramsey~\cite{MS14} but does not involve the known
construction of $\epsilon$-good sets in the finite. It is natural to begin by trying to prove that
stable classes of graphs have $\EHP$ from the existence of $\epsilon$-good sets alone.

In Theorem~\ref{thm:AEHPgraph}, we characterized universal theories of graphs with the approximate
\Erdos--Hajnal property ($\AEHP$) as precisely those that forbid some induced subgraph of a
recursive blow-up of the $4$-cycle. It is natural to ask what happens for more complicated universal
theories. For example, for universal theories of $k$-hypergraphs ($k\geq 3$), the substitution
operation of Theorem~\ref{thm:AEHPsubstitution} necessarily yields ``agnostic edges'' (see
Remark~\ref{rmk:subst}) and recursive blow-ups of a hypergraph $H$ also have a similar degree of
freedom: when we divide the space into parts $(V_i)_{i\in V(H)}$, how should we handle tuples
containing at least two vertices in one part $V_i$ but not all vertices in $V_i$?

The behavior of $\AEHP$ completely changes if allow predicates to be asymmetric, namely, in a
language $\cL\df\{E\}$ with a single binary predicate symbol $E$, if $\overline{K_2}$, $K_2$ and $A$
denote the anti-edge, the anti-parallel edges and the single edge, respectively (i.e.,
$V(\overline{K}_2)\df V(K_2)\df V(A)\df[2]$, $E^{\overline{K}_2}\df\varnothing$,
$E^{K_2}\df\{(1,2),(2,1)\}$ and $E^A\df\{(1,2)\}$), then the canonical theories
\begin{align*}
  \Forb_{T_\cL}(\{\overline{K}_2\}),\Forb_{T_\cL}(\{K_2\}),\Forb_{T_\cL}(\{A\})
\end{align*}
clearly do not have $\AEHP$: the first two because the sequence of transitive tournaments avoids
linear-sized almost uniform sets and the last because $\Forb_{T_\cL}(\{A\})\cong\TGraph$. On the
other hand, $\Forb_{T_\cL}(\{\overline{K}_2,K_2,A\})$ does not have any models of size $2$, so it
trivially has $\AEHP$ (as it has no limit object). This means that in general we cannot hope that
for a universal theory $T$ there exists a family $\cC$ such that any universal theory $T'\vdash T$
has $\AEHP$ if and only if it forbids some element of $\cC$, that is, in general we do not expect
that $\cC_P(T) = \cC_M(T)$ (see Remark~\ref{rmk:cCPcCM}). A natural problem is then to characterize
which theories have this ``principality'' property, and more generally, to study how different can
$\cC_P(T)$ be from $\cC_M(T)$.

One might think that the example of the previous paragraph stems from the requirement of almost
trivial being too strong for asymmetric predicates; after all, sets returned by Ramsey's Theorem do
not necessarily yield almost trivial sequences when asymmetric predicates are involved. Instead, one
could define a property $\AEHP'$ as $T\in\AEHP'$ if every $\phi\in\HomT{T}$ has a \emph{finitely
  categorical} sub-object $\psi$, that is, $\Th(\psi)$ is finitely categorical in the
model-theoretic sense (equivalently, for each $n\in\NN$ there is exactly one model $M_n$ of size $n$
up to isomorphism such that $\psi(M_n) > 0$). Finitely categorical limits are precisely the limits
of convergent sequences of sets that can be returned by Ramsey's Theorem. To show failure of
$\AEHP'$ the sequence of transitive tournaments is not good as it converges to a finitely
categorical limit. However, $\Forb_{T_\cL}(\{\overline{K}_2\})$ and $\Forb_{T_\cL}(\{K_2\})$ still
do not have $\AEHP'$ by using the quasirandom sequence of tournaments instead; thus ``principality''
still fails for $\AEHP'$ over $\cL\df\{E\}$. It is not clear which of $\AEHP$ or $\AEHP'$ is more
appropriate in the presence of asymmetric predicates.

\section*{Acknowledgments}

We are grateful to Avi Wigderson and Alexander Razborov for some useful comments on an earlier
version of this manuscript.

\bibliographystyle{alpha}
\bibliography{refs}

\appendix

\section{Ultraproduct method}
\label{sec:ultraproduct}

In this section, we present the formal definition of separable realizations from~\cite{ES12} (see
also~\cite{AC14}). Throughout this section, we assume that we have a fixed sequence $(V_n)_{n\in\NN}$ of
finite sets of increasing sizes (intended to be the vertex sets of a convergent sequence of models) and we
have fixed a non-principal ultrafilter $\cD$ over $\NN$.

\begin{definition}[Loeb measure]
  Given a finite set $U$, let $\tau(U)$ be the Boolean algebra of internal subsets of $\prod_{n\in\NN}
  V_n^U/\cD$. Let also $\mu^U\function{\tau(U)}{[0,1]}$ be the finitely additive measure defined by the
  ultralimit
  \begin{align*}
    \mu^U\left(\prod_{n\in\NN} A_n/\cD\right) & \df \lim_{n\to\cD} \mu_n^U(A_n),
  \end{align*}
  where $\mu_n^U(A)\df \lvert A\rvert/\lvert V_n^U\rvert$ is the normalized counting measure on $V_n^U$.

  A \emph{$\mu^U$-nullset} is a set $N\subseteq\prod_{n\in\NN} V_n^U$ such that for every $\epsilon > 0$,
  there exists $B\in\tau(U)$ such that $N\subseteq B$ and $\mu^U(B)\leq\epsilon$. Let $\sigma(U)$ be the
  collection of sets $A\subseteq\prod_{n\in\NN} V_n^U/\cD$ that differ from some set in $\tau(U)$ only by a
  $\mu^U$-nullset.
  
  A standard saturation argument (see~\cite[Lemma~2.4]{ES12}) shows that if $A_m\in\tau(U)$
  ($m\in\NN$) are internal sets, then there exists an internal set $B\supseteq\bigcup_{m\in\NN} A_m$
  with $\mu^U(B) = \lim_{m\to\infty}\mu^U(\bigcup_{m'\leq m} A_{m'})$. This in particular implies
  that $\sigma(U)$ is a $\sigma$-algebra and that $\mu^U$ is a finite pre-measure on $\tau(U)$ and
  thus \Caratheodory's Theorem shows that $\mu^U$ can be uniquely extended to a (complete) measure
  on $\sigma(U)$, called \emph{Loeb measure} and which we denote also by $\mu^U$ by abuse.
\end{definition}

As mentioned in Section~\ref{sec:consgraphs}, the probability space $(\prod_{n\in\NN}
V_n^U/\cD,\sigma(U),\mu^U)$ is far from being standard, namely, it is non-separable. Furthermore,
even the structure between these spaces is somewhat counter-intuitive: for $U_1,U_2$ disjoint and
non-empty, $\sigma(U_1\cup U_2)$ is much larger than the completion of the product $\sigma$-algebra
$\sigma(U_1)\otimes\sigma(U_2)$. Nevertheless, the following analogue of Fubini's Theorem still
holds.

\begin{theorem}[Fubini's Theorem for Loeb measures]\label{thm:FubiniLoeb}
  If $A\in\sigma(U)$ and $U'\subseteq U$, then for $\mu_{U'}$-almost every $x\in\prod_{n\in\NN}
  V_n^{U'}/\cD$, the set
  \begin{align*}
    A(x) & \df \left\{y\in\prod_{n\in\NN} V_n^{U\setminus U'}/\cD \;\middle\vert\; (x,y)\in A\right\}
  \end{align*}
  is in $\sigma(U\setminus U')$, the function $x\mapsto\mu^{U\setminus U'}(A(x))$ (defined
  arbitrarily when $A(x)$ is not in $\sigma(U\setminus U')$) is measurable with respect to
  $\sigma(U')$ and
  \begin{align*}
    \mu^U(A) & = \int_{\prod_{n\in\NN} V_n^{U'}/\cD} \mu^{U\setminus U'}(A(x))\ d\mu^{U'}(x).
  \end{align*}
\end{theorem}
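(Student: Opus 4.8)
The plan is to prove the identity first for internal sets $A\in\tau(U)$ and then bootstrap to all of $\sigma(U)$ using the definitional fact that every $A\in\sigma(U)$ is the symmetric difference of an internal set and a $\mu^U$-nullset. Throughout I would identify $\prod_{n\in\NN}V_n^U/\cD$ with $\prod_{n\in\NN}V_n^{U'}/\cD\times\prod_{n\in\NN}V_n^{U\setminus U'}/\cD$ (ultraproducts commute with the finite factorization $V_n^U=V_n^{U'}\times V_n^{U\setminus U'}$). For internal $A=\prod_n A_n/\cD$ and $x=\prod_n x_n/\cD$ one checks directly from the definition of the ultraproduct that the section $A(x)$ equals $\prod_n A_n(x_n)/\cD$ with $A_n(x_n)=\{y\in V_n^{U\setminus U'}:(x_n,y)\in A_n\}$; in particular $A(x)$ is internal, so $A(x)\in\sigma(U\setminus U')$ and $g(x):=\mu^{U\setminus U'}(A(x))=\lim_{n\to\cD}\mu_n^{U\setminus U'}(A_n(x_n))$.

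For the internal case, measurability of $g$ is immediate: writing $\phi_n(x_n):=\mu_n^{U\setminus U'}(A_n(x_n))$, each sublevel set $\{g<t\}$ is the countable union over rationals $q<t$ of the internal sets $\prod_n\{x_n:\phi_n(x_n)<q\}/\cD$, hence lies in $\sigma(U')$. For the integral formula, fix a partition $0=t_0<\dots<t_m=1$ whose points avoid the at most countably many $t$ with $\mu^{U'}(\{g=t\})>0$. The trivial counting Fubini identity $\mu_n^U(A_n)=\lvert V_n^{U'}\rvert^{-1}\sum_{x_n}\phi_n(x_n)$ sandwiches $\mu_n^U(A_n)$ between the lower and upper Darboux sums of $\phi_n$ for this partition; taking the ultralimit, and using that the choice of the $t_j$ makes the internal set $\prod_n\phi_n^{-1}[t_{j-1},t_j)/\cD$ differ from $g^{-1}[t_{j-1},t_j)$ only by a $\mu^{U'}$-nullset, the bounds converge to the Darboux sums of $g$. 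Refining the mesh to $0$ gives $\mu^U(A)=\lim_{n\to\cD}\mu_n^U(A_n)=\int g\,d\mu^{U'}$.

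For general $A\in\sigma(U)$, write $A=B\symdiff N$ with $B\in\tau(U)$ and $N$ a $\mu^U$-nullset; since sectioning commutes with $\symdiff$, $A(x)=B(x)\symdiff N(x)$. Pick internal $C_k\supseteq N$ with $\mu^U(C_k)\le 4^{-k}$; the internal case applied to $C_k$ together with Markov's inequality gives $\mu^{U'}(\{x:\mu^{U\setminus U'}(C_k(x))\ge 2^{-k}\})\le 2^{-k}$, so by Borel--Cantelli, for $\mu^{U'}$-a.e.\ $x$ the internal sets $C_k(x)\supseteq N(x)$ have $\mu^{U\setminus U'}$-measure $<2^{-k}$ for all large $k$, whence $N(x)$ is a $\mu^{U\setminus U'}$-nullset. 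For such $x$, $A(x)=B(x)\symdiff N(x)\in\sigma(U\setminus U')$ with $\mu^{U\setminus U'}(A(x))=\mu^{U\setminus U'}(B(x))$; thus $x\mapsto\mu^{U\setminus U'}(A(x))$ agrees $\mu^{U'}$-a.e.\ with the $\sigma(U')$-measurable function furnished by the internal case (hence is itself $\sigma(U')$-measurable, by completeness of $\sigma(U')$), and integrating yields $\int\mu^{U\setminus U'}(A(x))\,d\mu^{U'}(x)=\mu^U(B)=\mu^U(A)$, the last step because $A\symdiff B=N$ is a $\mu^U$-nullset.

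The main obstacle is the internal case, and within it the one subtle point is the Darboux-sandwich step: one must select the partition points $t_j$ off the atoms of the law of $g$ so that the hyperfinite averages $\mu_n^U(A_n)$ genuinely compute $\int g\,d\mu^{U'}$ in the ultralimit. This is exactly the standard-part subtlety behind the general fact that the Loeb integral of a bounded internal function equals the standard part of its internal average; one could alternatively invoke that fact directly, reducing the internal case to a one-line application of finite Fubini. Everything else --- commuting sections with $\symdiff$, Borel--Cantelli, and completeness of the Loeb $\sigma$-algebra --- is routine.
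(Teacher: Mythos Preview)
The paper does not give a proof of this theorem; it is stated in the appendix as a known background fact about Loeb measures, with no argument supplied. Your proposal is a correct, self-contained proof following the standard route: verify the conclusion for internal sets using finite Fubini plus a standard-part/Darboux argument, then pass to arbitrary $A\in\sigma(U)$ via the defining decomposition $A=B\symdiff N$ and a Borel--Cantelli argument showing almost every section of a $\mu^U$-nullset is a $\mu^{U\setminus U'}$-nullset. The details you sketch (sections of internal sets are internal, measurability of $g$ via sublevel sets, the atom-avoidance trick for the partition points, Markov plus Borel--Cantelli for the nullset sections, and the appeal to completeness of the Loeb $\sigma$-algebra) are all sound. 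There is nothing to compare against in the paper, but your write-up matches the classical proof one finds in the nonstandard analysis literature.
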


Recall that an injection $\alpha\injection{U_1}{U_2}$ defines contra-variantly the ``projections''
$\alpha^*\function{\prod_{n\in\NN} V_n^{U_2}/\cD}{\prod_{n\in\NN} V_n^{U_1}/\cD}$ and
$\alpha^*\function{[0,1]^{r(U_2)}}{[0,1]^{r(U_1)}}$ via $\alpha^*(x)_u\df x_{\alpha(u)}$.

\begin{definition}[Separable realizations]
  Given finite sets $U'\subseteq U$, define the $\sigma$-algebra
  \begin{align*}
    \sigma(U',U) & \df (\iota^*)^{-1}(\sigma(U')) \df \{(\iota^*)^{-1}(A) \mid A\in\sigma(U')\},
  \end{align*}
  where $\iota\injection{U'}{U}$ is the inclusion map. We also let $\sigma(U',U)^*$ be the
  $\sigma$-algebra generated by $\{\sigma(U'',U) \mid U''\subsetneq U'\}$.

  Given $k\in\NN_+$, a \emph{separable realization of order $k$} is a measure-preserving function
  $\Theta\function{\prod_{n\in\NN} V_n^k/\cD}{[0,1]^{r(k)}}$ such that
  \begin{enumerate}
  \item For every $U\in r(k)$ and every Lebesgue measurable $A\subseteq[0,1]$, the set
    $(\pi_U\comp\Theta)^{-1}(A)$ is in $\sigma(U,[k])$ and is independent from $\sigma(U,[k])^*$,
    where $\pi_U\function{[0,1]^{r(k)}}{[0,1]}$ is the projection onto the $U$ coordinate.
  \item For every permutation $\sigma\in S_k$, we have $\sigma^*\comp\Theta = \Theta\comp\sigma^*$.
  \end{enumerate}

  Given $m\in[k]$, a \emph{restriction of $\Theta$ of order $m$} is a measure-preserving function
  $\Theta_m\function{\prod_{n\in\NN} V_n^m/\cD}{[0,1]^{r(m)}}$ such that for every injection
  $\alpha\injection{[m]}{[k]}$, we have $\alpha^*\comp\Theta = \Theta_m\comp\alpha^*$.

  Given $m\geq k$, a \emph{lifting of $\Theta$ of order $m$} is a measure-preserving function
  $\Theta_m\function{\prod_{n\in\NN} V_n^m/\cD}{[0,1]^{r(m,k)}}$ such that for every injection
  $\alpha\injection{[k]}{[m]}$, we have $\alpha^*\comp\Theta_m = \Theta\comp\alpha^*$.
\end{definition}

It is straightforward to check that the properties of separable realizations, restrictions and
liftings imply that $\Theta_m\comp\sigma^* = \sigma^*\comp\Theta_m$ for every $\sigma\in S_m$ both
for liftings and restrictions. This in particular implies that restrictions of order $m$ are also
separable realizations of order $m$. Furthermore, the definitions of restrictions and liftings
themselves already show their uniqueness: restrictions must be defined by
\begin{align}\label{eq:restriction}
  \Theta_m(x)_A & \df \Theta(y)_A,
\end{align}
where $y\in\prod_{n\in\NN} V_n^k/\cD$ is any point in $(\iota^*)^{-1}(x)$ and
$\iota\injection{[m]}{[k]}$ is the inclusion map and liftings must be defined by
\begin{align}\label{eq:lifting}
  \Theta_m(x)_A & \df \Theta(\alpha^*(x))_{[\ell]},
\end{align}
where $\alpha\injection{[k]}{[m]}$ is any injection with $\alpha([\ell]) = A$. It is then
straightforward to check that~\eqref{eq:restriction} and~\eqref{eq:lifting} give a restriction and a
lifting, respectively (see~\cite[Lemma~3.2]{ES12}).

\end{document}